\setlist[itemize]{noitemsep}
\tikzset{curve/.style={settings={#1},to path={(\tikztostart)
   .. controls ($(\tikztostart)!\pv{pos}!(\tikztotarget)!\pv{height}!270:(\tikztotarget)$)
   and ($(\tikztostart)!1-\pv{pos}!(\tikztotarget)!\pv{height}!270:(\tikztotarget)$)
   .. (\tikztotarget)\tikztonodes}},
   settings/.code={\tikzset{quiver/.cd,#1}
       \def\pv##1{\pgfkeysvalueof{/tikz/quiver/##1}}},
   quiver/.cd,pos/.initial=0.35,height/.initial=0}
 \DeclareMathSymbol{:}{\mathpunct}{operators}{"3A}
 \newcommand{\uvar}{\mathord{\relbar}}
  \newcommand{\cto}{\rightarrowtail}
  \tikzset{cto/.style={>->}}
  \newcommand{\fto}{\twoheadrightarrow}
  \tikzset{fto/.style={->>}}
\newcommand{\Ib}{\mathbb{I}}
\newcommand{\Db}{\mathbb{D}}
\newcommand{\Gb}{\mathbb{G}}
\newcommand{\Lb}{\mathbb{L}}
\newcommand{\Nb}{\mathbb{N}}
\newcommand{\Ocal}{\mathcal{O}}
\newcommand{\Dcal}{\mathcal{D}}
\newcommand{\Wcal}{\mathcal{W}}
\newcommand{\Ccal}{\mathcal{C}}
\newcommand{\Vcal}{\mathcal{V}}
\newcommand{\Ncal}{\mathcal{N}}
\theoremstyle{plain}
\newtheorem{theorem}{Theorem}[section]
\newtheorem{lemma}[theorem]{Lemma}
\newtheorem{prop}[theorem]{Proposition}
\newtheorem*{prop*}{Proposition}
\newtheorem{cor}[theorem]{Corollary}
\newtheorem{conj}[theorem]{Conjecture}
\theoremstyle{definition}
\newtheorem{definition}[theorem]{Definition}
\newtheorem{remark}[theorem]{Remark}
\newtheorem{example}[theorem]{Example}
\newtheorem{construction}[theorem]{Construction}
\newtheorem{notation}[theorem]{Notation}
\newcommand{\change}[1]{{\color{black}#1}}
\newcommand{\leftdivision}[2]{%
 #1_{#2}^{\begin{tikzpicture}[ampersand replacement=\&][baseline= (a).base]
\node[scale=.25] (a) at (0,0){%
\begin{tikzcd}[column sep=0.08in]
    {\bullet} \& {\bullet} \& {\bullet}
    \arrow[shorten <= -.2em, shorten >= -.2em, ultra thick, curve={height=-12pt},  no head, from=1-2, to=1-3]
    \arrow[shorten <= -.2em, shorten >= -.2em, ultra thick, no head, from=1-1, to=1-2]
    \arrow[shorten <= -.2em, shorten >= -.2em, ultra thick, curve={height=12pt}, no head, from=1-2, to=1-3]
\end{tikzcd}
};
\end{tikzpicture}}}
\newcommand{\rightdivision}[2]{%
#1_{#2}^{\begin{tikzpicture}[ampersand replacement=\&][baseline= (a).base]
\node[scale=.25] (a) at (0,0){%
\begin{tikzcd}[column sep=0.08in]
    \bullet \& \bullet \& \bullet
    \arrow[shorten <= -.2em, shorten >= -.2em,  no head,  ultra thick, from=1-2, to=1-3]
    \arrow[no head, shorten <= -.2em, shorten >= -.2em, ultra thick, curve={height=12pt}, from=1-1, to=1-2]
    \arrow[no head, shorten <= -.2em, shorten >= -.2em, ultra thick, curve={height=-12pt}, from=1-1, to=1-2]
\end{tikzcd}
};
\end{tikzpicture}}}
\newcommand{\cylinder}[2]{%
 #1_{#2}^{\begin{tikzpicture}[ampersand replacement=\&][baseline= (a).base]
\node[scale=.25] (a) at (0,0){%
\begin{tikzcd}[column sep=0.02in , row sep=0.02in]
    \bullet \& \bullet \\
    \bullet \& \bullet
    \arrow[shorten <= -.2em, shorten >= -.2em, ultra thick, no head, from=1-1, to=1-2]
    \arrow[shorten <= -.2em, shorten >= -.2em, ultra thick, no head, from=2-2, to=1-2]
    \arrow[shorten <= -.2em, shorten >= -.2em, ultra thick, no head, from=2-1, to=2-2]
\end{tikzcd}
};
\end{tikzpicture}}}
\DeclareMathOperator*{\Uni}{\text{Uni}}
\DeclareMathOperator*{\colim}{\text{Colim}}
\DeclareMathOperator*{\plim}{\text{pLim}}
\DeclareMathOperator*{\lax}{\text{pLimLax}}
\newcommand*\fsize{\dimexpr\f@size pt\relax}
\newlength{\csize} %variable temporaire pour \ptens et \ltens.
\newcommand{\ptens}{\mathbin{\text{\setlength{\csize}{0.1 \fsize}\hspace{0.55\csize}\tikz[anchor = base , baseline = -1.1 \csize]{
     \node[circle,draw,line width = 0.25 \csize,text height = 4.7 \csize, inner sep=0pt,scale = 0.58 ] (c) at (0,0){$\mathrlap{\hspace{1.1\csize}\sim}{\phantom{\to}}$};
   }\hspace{0.55\csize}}}}
\newcommand{\hatptens}{\mathbin{\hat \ptens}}
\newcommand{\ltens}{\mathbin{\text{\setlength{\csize}{0.1 \fsize}\hspace{0.55\csize}\tikz[anchor = base , baseline = -1.1 \csize]{
     \node[circle,draw,line width = 0.25 \csize,text height = 4.7 \csize, inner sep=0pt,scale = 0.58 ] (c) at (0,0){$\to$};
   }\hspace{0.55\csize}}}}
\newcommand{\hatltens}{\mathbin{\hat \ltens}}
\newcommand{\hatotimes}{\mathbin{\hat \otimes}}
\newcommand{\icat}{\infty\text{-}\mathbf{Cat}}
\newcommand{\micat}[1][m]{\infty\text{-}\mathbf{Cat}^{+ #1}}
\newcommand{\sset}{\textbf{sSet}}
\newcommand{\strat}{\textbf{Strat}}
\newcommand{\mstrat}[1][m]{\textbf{Strat}^{+ #1}}
\newcommand{\ind}{\text{Ind}}
\newcommand{\satind}{\text{Sat-Ind}}
\newcommand{\coind}{\text{Coind}}
\newcommand{\verity}{\text{V}}
\newcommand{\can}{\text{Can}}
\DeclareMathOperator*{\Colim}{Colim}
\DeclareMathOperator*{\Lim}{Lim}
\DeclareMathOperator{\Hom}{Hom}
\begin{document}

\pagestyle{plain}
\title{An inductive model structure for strict $\infty$-categories}
\date{}

\author{Simon Henry\footnote{Simon Henry has received research support from Natural Sciences and Engineering Research Council of Canada, RGPIN-2020-06779.} and Félix Loubaton\footnote{ Félix Loubaton has received support from  the Agence Nationale de la Recherche program 3ia Côte d’Azur ANR-19- P3IA-0002, and the European Research Council Horizons 2020 grant 670624}}

\maketitle

\begin{abstract}
We construct a left semi-model category of ``marked strict $\infty$-categories'' for which the fibrant objects are those whose marked arrows satisfy natural closure properties and are \change{invertible up to higher marked arrows}. The canonical model structure on strict $\infty$-categories can be recovered as a left Bousfield localization of this model structure. We show that an appropriate extension of the Street nerve to the marked setting produces a Quillen adjunction between our model category and the Verity model structure for complicial sets, generalizing previous results by the second named author. Finally, we use this model structure to study, in the setting of strict $\infty$-categories, the idea that, \change{because they are two different ``truncation functors'' taking an $(\infty,n)$ to an $(\infty,n-1)$-category, there are two non-equivalent definitions for the $(\infty,1)$-category of $(\infty,\infty)$-categories as a limit of the $(\infty,1)$-categories of $(\infty,n)$-categories. We show that in fact there seem to be at least three non-equivalent ways of constructing an $(\infty,1)$-category of $(\infty,\infty)$-categories.}
\end{abstract}

%\renewcommand{\thefootnote}{\fnsymbol{footnote}}
%\footnotetext{\emph{Keywords.} Model categories, constructive mathematics, simplicial sets, $\Ex^{\infty}$-functor.}
%\footnotetext{\emph{2010 Mathematics Subject Classification.} 55U35,55U40,18G30,  }
%\footnotetext{This work was supported by the Grant agency of the Czech republic under the grant P201/12/G028.}
%\footnotetext{\emph{email:} simon.henry@college-de-france.fr}
%\renewcommand{\thefootnote}{\arabic{footnote}}

% 55U35 : Abstract and axiomatic homotopy theory
% 55U40:    Topological categories, foundations of homotopy theory
% 18D05 : Double categories, $2$-categories, bicategories and generalizations
% 18G30 : Simplicial sets, simplicial objects

%\newpage

\tableofcontents

\section{Introduction}
In the present paper, we introduce (in \cref{subsec:marked_cat}) a category $\micat$ of ``$m$-marked (strict) $\infty$-categories'' for $m \in \Nb \cup \{\infty\}$. Marked $\infty$-categories are $\infty$-categories with the additional data of a collection of arrows that are meant to be invertible. This is similar to relative categories or stratified simplicial sets. $m$-marked means that all arrows of dimension $>m$ are marked, and the marked arrows are required to be closed under composition, and all identity arrows are marked.

This category $\micat$ is equipped with two monoidal closed structures denoted $\ltens$ and $\ptens$ that are both the Gray-Crans tensor product on the underlying strict $\infty$-categories but act differently on markings. These two monoidal structures are meant to respectively be models for the ``lax-Gray tensor product'' and the ``pseudo-Gray tensor product''.

Our main result is the construction of various left semi-model\footnote{See \cref{sec:semi-model-categories} for a quick review of the theory of left semi-model structures.} structures on $\micat$, that are in the same spirit as the canonical (or ``folk'') model structure $\icat_\can$ on strict $\infty$-categories from \cite{lafont2010folk}, the main one being the saturated inductive model $\micat_\satind$ which is meant to model the homotopy theory of strict $\infty$-categories and serves as toy models for the homotopy theory of weak $(\infty,m)$-categories and $(\infty,\infty)$-categories.

The motivations for this work come from two different places that we will now explain before presenting in more detail the content of this work:

\subsection{The Street Nerve as a Right Quillen Functor}

Complicial sets are a model for weak $(\infty,n)$-categories introduced by Verity in \cite{verity2008weak}. Concretely, a complicial set is a ``stratified simplicial set'', which means that it is a simplicial set where some arrows are marked as being ``thin'', which moreover satisfies some filling conditions that refine those for Kan complexes and quasicategories. One essentially recovers Kan complexes when $n=0$ and quasicategories when $n=1$. We denote by $\mstrat$ the category of $m$-stratified simplicial sets, i.e., stratified simplicial sets where all simplices of dimension $>m$ are thin. It is equipped with a model structure $\mstrat_\verity$, which we refer to as the Verity model structure, whose fibrant objects are the complicial sets. More precisely, we will use the ``saturated'' version of this model structure constructed in \cite{riehl2018complicial}, which we review in \cref{subsec:complicial}.

In \cite{loubaton2021conditions}, the second named author has shown that the Street nerve of a strict $\infty$-category can be made into a complicial set by defining the ``thin'' simplices as those whose top-dimensional arrows are ``coinductively'' invertible, i.e., admit inverses up to arrows of dimension $(n+1)$ that are themselves invertible up to arrows of dimension $(n+2)$, and so on up to infinity.

From there, it is natural to ask whether this stratified version of the Street nerve also preserves fibrations, and hence is a morphism of categories of fibrant objects (and this will be shown in the present paper as \cref{prop:Marked_nerve_pres_fib}).

In fact, more generally, one could ask if it is possible to make this version of the Street nerve into a right Quillen functor (for the Verity model structure on complicial sets from \cite{verity2008weak}). This is not directly possible simply because this stratified Street nerve is not a right adjoint functor. The solution to this problem is to work with markings on both sides: The usual Street nerve from strict $\infty$-categories to simplicial sets is a right adjoint functor, and one can extend it to a right adjoint functor from marked $\infty$-categories to ``marked'' simplicial sets (or rather \emph{stratified} simplicial sets to follow the terminology of \cite{verity2008weak}). In \cref{subsec:complicial}, we show that this functor is indeed a right Quillen functor from the Verity model structure on complicial sets to the saturated inductive semi-model structure on marked $\infty$-categories.

This right Quillen functor from marked $\infty$-categories to stratified simplicial sets is meant to be a model for the forgetful functor from strict $\infty$-categories to weak $(\infty,\infty)$-categories. In particular, the corresponding left Quillen functor from stratified simplicial sets to marked $\infty$-categories is a model for the more mysterious ``strictification functor'', sending weak $(\infty,\infty)$-categories to strict $\infty$-categories.

At the level of $\infty$-groupoids, this strictification functor corresponds essentially to (non-abelian) homology, through the equivalence between strict $\infty$-groupoids and crossed chain complexes (\cite{brown1981equivalence}) which is well-known to be a conservative functor by Whitehead's theorem for homology. The first named author has conjectured \cite{313748} that more generally this strictification functor should be conservative on weak $(\infty,m)$-categories for all $m$. This allows us to state a concrete version of this conjecture here:

\begin{conj}
The left Quillen functor $|\_| : \mstrat_V \to \micat_\satind$ from \cref{subsec:complicial} reflects weak equivalences between cofibrant objects.
\end{conj}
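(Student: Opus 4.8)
The plan is to attack the conjecture by induction on $m$, reducing it to the classical non-abelian Whitehead theorem at the groupoidal level $m=0$. First I would record the detection criteria on both sides: in the Verity model structure on $\msset$ (every object of which is cofibrant, so the restriction to cofibrant objects is no loss) weak equivalences are detected, in Segal fashion, by essential surjectivity on objects together with weak equivalences between the hom-complexes, and symmetrically weak equivalences of marked $\infty$-categories between fibrant objects are characterized in \cref{subsec:equivalence}. The base case $m=0$ concerns strict $\infty$-groupoids: there $|\_|$ restricts to the classical strictification, and via the Brown--Higgins equivalence between strict $\infty$-groupoids and crossed complexes (\cite{brown1981equivalence}) the homotopy groups of $|X|$ recover the homology of $X$ with its $\pi_1$-action, together with $\pi_0$ and $\pi_1$ themselves. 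Hence $|f|$ being a weak equivalence forces $f$ to induce an isomorphism on $\pi_0$, on $\pi_1$, and on homology with all local coefficient systems, and the conclusion that $f$ is then a weak homotopy equivalence is exactly Whitehead's theorem in its homological form. The only genuine work in the base case is to match the abstract weak equivalences of \cref{sec:eq_saturation} with this homological description, which should follow from the explicit homotopy groups of fibrant objects together with the second author's computation of the stratified Street nerve in \cite{loubaton2021conditions}.

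For the inductive step I would aim to reduce the statement at level $m$ to level $m-1$ through the enriched structure provided by the Gray tensor products $\ltens$ and $\ptens$ of \cref{subsec:Marked_Gray}, for which the model structure is monoidal. Since weak equivalences on either side are detected by essential surjectivity together with equivalences on hom-objects, and a weak equivalence $|f|$ is automatically essentially surjective and an equivalence on the hom-objects $\mathrm{hom}_{|X|}(a,b) \to \mathrm{hom}_{|Y|}(fa,fb)$, which are $(m-1)$-marked $\infty$-categories, it would suffice to know that these strict hom-objects compare suitably well with the strictifications of the complicial hom-complexes $\mathrm{hom}_X(a,b)$. Granting such a comparison, the inductive hypothesis applied to the hom-complexes yields that $f$ is essentially surjective and a levelwise equivalence on homs, hence a weak equivalence.

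The main obstacle --- and the reason the statement remains conjectural --- is exactly this comparison of hom-objects. The adjunction $|\_| \dashv N$ is monoidal, so $|\_|$ is strong monoidal; but a strong monoidal functor is only \emph{lax closed}, and the resulting comparison map $|\mathrm{hom}_X(a,b)| \to \mathrm{hom}_{|X|}(a,b)$ is in general not a weak equivalence. Its failure to be one is governed precisely by the higher, directed analogues of Whitehead products that strictification collapses --- the same phenomenon that, already for $\infty$-groupoids, makes $|\_|$ far from an equivalence. Consequently one cannot formally transport the equivalence of strict hom-objects back to a complicial weak equivalence of hom-complexes; doing so requires a genuine \emph{directed higher Whitehead theorem}, to the effect that the non-abelian invariants which $|\_|$ does retain still constitute a complete system of invariants for weak equivalence. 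Proving this, rather than the inductive bookkeeping, is the crux of the conjecture. A second, orthogonal difficulty arises in the case $m=\infty$, where the induction does not terminate: one then needs a convergence argument ensuring that reflection of weak equivalences at every finite level forces it in the limit, which is delicate precisely because, as the final section of the paper shows, the limit over the tower of $(\infty,n)$-categories is not canonical and produces genuinely inequivalent notions of $(\infty,\infty)$-category. I expect the formal and inductive machinery to handle everything except this directed Whitehead theorem, where the real content lies.
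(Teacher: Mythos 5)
The statement you are addressing is labelled a \emph{Conjecture} in the paper, and the paper contains no proof of it: it is stated precisely because it is open (it is the paper's model-categorical formulation of the first author's conjecture that strictification is conservative). So there is no proof in the paper to compare yours against; the only question is whether your proposal closes the conjecture, and it does not --- as you yourself acknowledge. The genuine gap is the one you name, but it is worth stressing that it is not merely a missing lemma: it invalidates the structure of the induction. Your inductive step needs to pass from ``$\mathrm{hom}_{|X|}(a,b) \to \mathrm{hom}_{|Y|}(fa,fb)$ is an equivalence of $(m-1)$-marked strict $\infty$-categories'' to ``$|\mathrm{hom}_X(a,b)| \to |\mathrm{hom}_Y(fa,fb)|$ is a weak equivalence,'' since only the latter is the input required by the inductive hypothesis. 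The only available bridge is the lax-closed comparison $|\mathrm{hom}_X(a,b)| \to \mathrm{hom}_{|X|}(a,b)$, and this map fails to be a weak equivalence in a strong sense: strictification does not commute with looping even for simply connected $\infty$-groupoids (the strictification of $\Omega X$ remembers $H_*(\Omega X)$, while the loop object of the strictification of $X$ only sees $H_{*+1}(X)$; already $X = S^2$ separates these). Because the failure is generic rather than exceptional, the inductive step is not a reduction of the conjecture to a simpler statement --- the ``directed higher Whitehead theorem'' you invoke is essentially a restatement of the conjecture itself, so the proposal moves the difficulty around rather than resolving any part of it beyond the classical base case.

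Two secondary points would also need attention even as an outline. First, the detection of weak equivalences on both sides by essential surjectivity plus equivalences on hom-objects is only available between \emph{fibrant} objects (\cref{subsec:equivalence} on the strict side, and its complicial analogue on the other), so both sides of your argument require fibrant replacements, and one must check that $|\_|$ interacts with these replacements in a controlled way --- which is again nontrivial since $|\_|$ is a left Quillen functor and does not preserve fibrant replacements. Second, for $m=\infty$ your induction has no terminating case, and the convergence issue you flag is genuinely delicate in light of \cref{sec:lim_of_pi_tower}: reflection at every finite level does not formally pass to the limit. The base case $m=0$ via the Brown--Higgins equivalence and Whitehead's homological theorem is indeed the one part the paper itself treats as known. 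In summary, your text is a reasonable map of where the difficulty lies, but it is not a proof, and the paper does not claim to have one either.
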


\subsection{The Two (?) Notions of $(\infty,\infty)$-Categories}
\label{intro1.2}

C. Schommer-Pries and C. Rezk have independently argued (\cite{134099}) that there should be more than one notion of weak $(\infty,\infty)$-categories. More precisely, they both arrive at the conclusion that even if one accepts (which seems to be a clear consensus nowadays) that there is only one notion of weak $(\infty,n)$-categories for finite $n$, there are at least two different ways to build a notion of $(\infty,\infty)$-categories out of it.

Before we go into further details, we should say that the following discussion is mostly informal and speculative, and most of it has not been formalized in any models—in fact, one motivation for the present paper is to formalize some of it in the context of strict $\infty$-categories.

First, let us go over the argument put forward by Rezk and Schommer-Pries, or at least how we understand it.

Assuming we agree on what the $(\infty,1)$-category of $(\infty,n)$-categories is for each $n$, the forgetful (or inclusion) functor from $(\infty,n)$-categories to $(\infty,n+1)$-categories is supposed to have both a left adjoint $\pi_n$, which freely adds inverses to all $(n+1)$-arrows, and a right adjoint $\tau_n$ which removes all non-invertible $(n+1)$-arrows. This allows us to produce two different towers \change{of $(\infty,1)$-categories}:

\[ (\infty,0)\text{-}\mathbf{Cat} \overset{\pi_0}{\leftarrow} (\infty,1)\text{-}\mathbf{Cat} \overset{\pi_1}{\leftarrow} (\infty,2)\text{-}\mathbf{Cat} \overset{\pi_2}{\leftarrow} \dots \overset{\pi_{n-1}}{\leftarrow} (\infty,n)\text{-}\mathbf{Cat} \overset{\pi_n}{\leftarrow} \dots \]
\[ (\infty,0)\text{-}\mathbf{Cat} \overset{\tau_0}{\leftarrow} (\infty,1)\text{-}\mathbf{Cat} \overset{\tau_1}{\leftarrow} (\infty,2)\text{-}\mathbf{Cat} \overset{\tau_2}{\leftarrow} \dots \overset{\tau_{n-1}}{\leftarrow} (\infty,n)\text{-}\mathbf{Cat} \overset{\tau_n}{\leftarrow} \dots \]
and one can take the projective limit of either of these two towers to give a definition of what an $(\infty,\infty)$-category is.

\change{
The difference between these two definitions can be seen in the notion of invertibility. }

\change{A $k$-arrow of an $(\infty,n)$-category is "invertible" if it is invertible up to $(k+1)$-arrows, which are themselves invertible up to $(k+2)$-arrows, and so on up to arrows of dimension larger than $n+1$, which are all assumed to be invertible. Therefore, proving that an arrow $c$ is invertible amounts to producing a tower $T_c$ of inverses, witnesses of invertibility, inverses of these witnesses, and so forth, up to arrows of dimension $n+1$. But when $n$ goes to $\infty$, there might be more than one way to define what it means for a cell to be invertible.}

\change{ We say that an arrow $c$ is \textit{coinductively invertible} when there is such a tower $T_c$ of inverses, witnesses of invertibility, inverses of these witnesses, and so forth, but that never ends. This is, for example, how invertibility is defined in the context of strict $\infty$-categories in \cite{lafont2010folk}, or how it is used in \cite{cheng2007omega}. 

Suppose first that we take the limit of the $\tau$-tower as our definition of an $\infty$-category, that is, an $(\infty,\infty)$-category $X$ corresponds to a collection of $(\infty,n)$-categories $X_n$ such that $X_n \simeq \tau_n X_{n+1}$. Note that, in particular, $X_n$ and $X_{n+1}$ have the same $k$-arrow for $k \leqslant n$, so we can talk about the set (or space) of $k$-arrows of $X$ for any $k$: it just means the set of $k$-arrows of $X_n$ for any $n \geqslant k$. In particular, there is another notion of "invertibility" that is present by definition: an $n$-arrow is said to be invertible if it belongs to $X_{n-1}$. In this case, we say that the arrow is "inductively invertible." Note that an arrow that has an inverse up to inductively invertible arrows is itself inductively invertible, but given a coinductively invertible arrow $c$, if none of the arrows of the tower $T_c$ are inductively invertible, the arrow $c$ might not be inductively invertible.

If one takes the definition of the $\pi$-tower as our definition of an $(\infty,\infty)$-category, however, it is quite different: an $(\infty,\infty)$-category in this sense corresponds to a collection of $(\infty,n)$-categories $X_n$ such that $X_n \simeq \pi_n X_{n+1}$. In this definition, if an arrow $c$ is coinductively invertible in the previous sense, with a tower of inverses and witnesses $T_c$, then for each integer $n$, $\pi_n(T_c)$ is a tower of invertibility for the arrow $\pi_n(c)$, which is therefore invertible in $X_n$ for all $n$. Hence, the arrow should be considered invertible in $X$.}

To show more precisely that the two limits should really be different, one can for example consider the $(\infty,\infty)$-category of cobordisms \change{(see for example \cite{baez1995higher})}. In the limit of the $\tau$-tower, one can define it by taking $X_n$ to be the $(\infty,n)$-categories of cobordisms—which do satisfy $\tau_n(X_{n+1}) \simeq X_n$ and hence form a well-defined element of the limit of that $\tau$-tower. By definition, this $(\infty,\infty)$-category $X$ is such that $\tau_m(X) = X_m$, so informally, one recovers the $(\infty,m)$-category of cobordisms by dropping the non-invertible arrows of dimension $>m$.

But this $(\infty,\infty)$-category of cobordisms also has the property that every arrow in every dimension has a dual, because in the $(\infty,m)$-category of cobordisms, every arrow of dimension strictly inferior to $m-1$ has a dual. There is a result by E. Cheng (see \cite{cheng2007omega}) that says that (in at least one model) if every arrow has a dual, then every arrow is coinductively invertible. In particular, if one tries to construct a $\pi_m(X)$, i.e., forces all the arrows of $X$ of dimension $>m$ to be invertible, then we will actually make all the arrows of $X$ of all dimensions invertible, so that $\pi_m(X)$ is in fact an $\infty$-groupoid and does not depend on $m$. It follows that any attempt at constructing something like the $(\infty,\infty)$-category of cobordisms, with properties similar to those we can construct in the limit of the $\tau$-tower, will, in the limit of the $\pi$-tower, result in a constant family of $\infty$-groupoids which will not remember the subtle structure of the $(\infty,m)$-category of cobordisms for finite $m$.

\bigskip

Using the \change{saturated inductive left semi-model} structures on marked strict $\infty$-categories we construct in this paper, we will make the construction of the $\pi$-tower and of the $\tau$-tower formal in the context of strict $\infty$-categories. See the next subsection for a more detailed account. This is of course only meant to be a toy model for the case of weak $(\infty,\infty)$-categories, but it is already interesting, and it will show that the picture above, while correct, needs to be refined a little.

\change{
First, we will show in \cref{subsec:truncation} that the saturated inductive left semi-model structure $\micat[\infty]$ corresponds to the (putative) homotopy limit of the saturated inductive left semi-model structure on $\micat$ for $m < \infty$ using the $\tau_n$ functor as transition functors.

Here there is a small gap we should disclaim: The notion of homotopy limit of a tower of model structures from which we have taken inspiration was introduced in \cite{bergner2012homotopy}. However, they only developed the theory of such limits for Quillen model categories and not left semi-model categories, and we will apply their construction to our left semi-model categories directly.

In order for our argument to be complete despite this, we will prove that the construction from \cite{bergner2012homotopy} does yield a left semi-model category, but we will not reprove that it corresponds to a homotopy limit as in \cite[Theorem 5.1]{bergner2012homotopy}. For this reason, we will call this construction the \textit{putative limit} of the $\tau$-tower.
However, it should be noted that in practice, the argument of \cite{bergner2012homotopy} seems to carry over to our setting with almost no changes, so this gap is not really a concern.
}

\bigskip

In \cref{subsec:Folk}, we will show that the canonical model structure is equivalent to a left Bousfield localization $\micat[\infty]_\coind$ of $\micat[\infty]_\satind$ which corresponds to turning all coinductively invertible arrows into equivalences.

However, we will also show in \cref{sec:lim_of_pi_tower} that the canonical model structure is not equivalent to the limit of the $\pi$-tower. More precisely, the natural functor from the canonical model structure to this limit is not an equivalence. It is unclear if the limit of the tower of $\pi_n$ corresponds to a further localization of our model structure, or if it is something entirely different. Nevertheless, we find that the argument we will give in \cref{sec:lim_of_pi_tower} to distinguish between the canonical model structure and the limit of the $\pi$-tower shows that this limit exhibits behaviors that are not really expected from a notion of $(\infty,\infty)$-categories, or at least are not typical of any known model of $\infty$-categories.

\bigskip

Returning to the world of weak $(\infty,\infty)$-categories, this suggests that the two most interesting notions of weak $(\infty,\infty)$-categories should be the limit of the $\tau_n$ tower, which corresponds to an ``inductive'' notion of equivalences, and its localization that turns the coinductive equivalences into equivalences\footnote{Provided that we can define the notion of coinductively invertible arrow in a ``model independent'' way, which is not investigated in this article.}. However, this localization should be different from the limit of the $\pi_n$-tower, which might not be an interesting notion of $(\infty,\infty)$-categories. What we mean here is that we are not aware of any attempt to give a concrete definition of $(\infty,\infty)$-categories that seems to produce something that could be equivalent to this limit.

\subsection{Overview of the Paper}
Finally, we give a short presentation of the contents of the paper, the various model structures, and Quillen functors we will construct. We will assume some familiarity with the theory of left semi-model categories— the necessary material is recalled in \cref{sec:semi-model-categories}.

In \cref{subsec:recall_infty_cat}, we briefly recall the basics of the theory of strict $\infty$-categories, mostly in order to fix our notations. The category $\micat$ of $m$-marked $\infty$-categories is introduced in \cref{subsec:marked_cat}, and in \cref{subsec:Marked_Gray} we introduce the two monoidal structures $\ptens$ and $\ltens$ on $\micat$, which both correspond to the Gray-Crans tensor products at the level of the underlying strict $\infty$-categories but behave differently on the markings. $\ltens$ is meant to correspond to the Lax Gray-Crans tensor product, while $\ptens$ corresponds to the pseudo Gray-Crans tensor product.

Next, in \cref{subsec:Semi_model}, exploiting these monoidal structures, we set up the first left semi-model structure on $\micat$, which we call the \emph{inductive} model structure, whose properties are summarized in:

\begin{theorem} \change{For any $m \in \Nb \cup \{\infty\}$}, there is a combinatorial left semi-model structure on the category $\micat$ of $m$-marked $\infty$-categories, called the \emph{inductive} or \emph{unsaturated inductive} model structure and denoted $\micat_\ind$, such that:
  
 \begin{itemize}
 \item This model structure is monoidal for both tensor products $\ptens$ and $\ltens$ (from \cref{subsec:Marked_Gray}).
 \item The cofibrations are the maps that are cofibrations of the canonical model structure between the underlying $\infty$-categories. (\cref{prop:cofibration of the marked model structure})
 \item The fibrant objects are the marked $\infty$-categories in which all marked arrows admit marked inverses \change{up to higher marked arrows}, and in which if there is a marked arrow $a \to b$, then $a$ is marked if and only if $b$ is marked.
 \item Fibrations between fibrant objects are the ``isofibrations'' (as defined in \cref{subsec:isofibration}).
 \item Weak equivalences between fibrant objects are ``equivalences of marked $\infty$-categories'' (as defined in \cref{subsec:equivalence}).
 \end{itemize}
\end{theorem}

The existence of this model structure is established in \cref{subsec:Semi_model}, but some of its properties, in particular, the characterization of fibrant objects and fibrations between fibrant objects, will only be established in \cref{sec:eq_saturation}.
  
This model structure is intended as a model for ``strict $(\infty,m)$-categories'', i.e., strict $\infty$-categories whose arrows of dimension strictly superior to $m$ are invertible in a weak sense. In this interpretation, marked arrows should correspond to weakly invertible arrows. \change{When $m = \infty$, it still retains an "inductive" notion of invertibility, like what is expected of the limit of the $\tau$-tower as mentioned in \cref{intro1.2}.}

 However, it is not quite the case yet due to a small defect: Given $X$ a fibrant object, there might be arrows in $X$ that are invertible up to higher marked arrows without being marked themselves. Hence, the fibrant objects are carrying an additional piece of data compared to what $(\infty,n)$-categories should be: some of their invertible arrows are marked and others are not.

To solve this problem, in \cref{subsec:saturation} we consider a left Bousfield localization $\micat_\satind$, called the \emph{saturated inductive model structure}, in which the fibrant objects are the marked $\infty$-categories in which an arrow is marked if and only if it is invertible up to higher-dimensional marked arrows. These are really our intended model for strict $(\infty,n)$-categories. So we have a first (identity) left Quillen functor:
\[ \micat_\ind \to \micat_\satind \]

We consider the saturated inductive model structure $\micat_\satind$ to be the most interesting one, as it actually models strict $(\infty,m)$-categories. The only reason we use $\micat_\ind$ is because it is the one that naturally arises from our construction in \cref{subsec:Semi_model}. It is not completely clear to us what $\micat_\ind$ actually models at a homotopy theoretic level.

In \cref{subsec:truncation}, we study how these model structures relate when $m$ varies. We show that for $m < p \leqslant \infty$, the obvious inclusion functor $\iota_p: \micat \subset \micat[p]$ has both a left adjoint $\pi_m$ and a right adjoint $\tau_m: \micat[p] \to \micat$. We show that these form two Quillen adjunctions $(\pi_m \dashv \iota_p)$ and $(\iota_p \dashv \tau_m)$ between the saturated inductive model structures.

We also investigate how the saturated inductive model structure $\micat[\infty]_\satind$ can be understood as a certain limit of the tower of right Quillen functors
\[ \micat[0]_\satind \overset{\tau_0}{\leftarrow} \micat[1]_\satind \overset{\tau_1}{\leftarrow} \micat[2]_\satind \overset{\tau_2}{\leftarrow} \dots \overset{\tau_{n-1}}{\leftarrow} \micat[n]_\satind \overset{\tau_n}{\leftarrow} \dots  \]
as explained previously.

Next, in \cref{subsec:Folk}, in the case where $m = +\infty$, we can take a further left Bousfield localization, which we study in \cref{subsec:Folk}, called the coinductive model structure, denoted $\micat[\infty]_\coind$, whose fibrant objects are marked $\infty$-categories where the marked arrows are exactly the ``coinductively invertible arrows'' (see \cref{def:coinductively_invertible}):

\[ \micat[\infty]_\ind \to \micat[\infty]_\satind \to \micat[\infty]_\coind \]

Of course, we can also try to define $\micat_\coind$ for finite $m$, but this is the same as $\micat_\satind$.

This second localization $\micat[\infty]_\coind$ is in fact equivalent to the canonical model structure on $\infty$-categories $\icat_\can$ from \cite{lafont2010folk}, in a fairly strong sense: the functor
\[
  \begin{array}{ccc}
    \icat_\can & \to & \micat[\infty]_\coind \\
                C     & \mapsto & C^\flat 
  \end{array}
\]
where $C^\flat$ is the minimal marking (i.e., only the identity arrows are marked) defined in \cref{ex:sharp_and_flat}, is a left Quillen equivalence. Its right adjoint (the forgetful functor $\micat[\infty] \to \icat$) induces an equivalence of categories between the categories of fibrant objects that preserves and detects weak equivalences and fibrations (between fibrant objects). Thus, their categories of fibrant objects are literally the same, with the same fibrations and weak equivalences.

Finally, in \cref{subsec:complicial}, we study a marked version of the Street nerve. The usual Street Nerve is the right adjoint functor $N_\Ocal: \icat \to \sset$, defined using Street's Orientals $\Ocal: \Delta \to \icat$, where $N_\Ocal(X)_n = \Hom(\Ocal [n], X)$. We extend it to a Nerve/realization Quillen adjunction:
\[ |\_| : \mstrat_\verity \leftrightarrows \micat_\satind : N \]
where $\mstrat_\verity$ is the category of $m$-marked simplicial sets equipped with the (saturated) Verity model structure from \cite{verity2008weak} and \cite{riehl2018complicial}, which we review in \cref{subsec:complicial}. As explained above, this generalizes the results of the second named author from \cite{loubaton2021conditions}.

\section{$\infty$-Categories and Marked $\infty$-Categories}

\subsection{$\infty$-Categories}
\label{subsec:recall_infty_cat}

A globular set is a presheaf on the globular category $\Gb$:

\[
\begin{tikzcd}
\Db_0 \ar[r,bend left = 30,"i^+_0"] \ar[r,bend right = 30,"i^-_0"below]  &\Db_1 \ar[r,bend left = 30,"i^+_1"] \ar[r,bend right = 30,"i^-_1"below] & \Db_2 \ar[r,bend left = 30,"i^+_2"] \ar[r,bend right = 30,"i^-_2"below] & \Db_3 \ar[r,bend left = 30,"i^+_3"] \ar[r,bend right = 30,"i^-_3"below] &  \Db_4 \dots
\end{tikzcd}
\]
with the relations $i_n^{+} i_{n-1}^\epsilon = i_n^{-} i_{n-1}^\epsilon $ for any $n>0$ and $\epsilon \in \{+,-\}$. 
For any $n>k$ and $\epsilon \in \{+,-\}$, we also denote by $i^{\epsilon}_k$ the composite $\Db_{k} \xrightarrow{i^{\epsilon}_k} \Db_{k+1}\xrightarrow{f} \Db_n$ where $f$ is any map. These and the identity arrows are the only maps in the category $\Gb$.

\begin{notation}
If $X$ is a globular set, one denotes by $X_n$ the set $X(\Db_n)$. The map $X_n \to X_k$ induced by $i^\epsilon_k : \Db_k \to \Db_n$ is denoted by $\pi^\epsilon_k$.
\end{notation}

\begin{definition}
Let $X$ be a globular set and $n$ a positive integer. A \textit{$n$-arrow of $X$} is an element of $X_n$.

A \textit{arrow of $X$} is an element of $\coprod_{k \geq 0} X_k$. If $a$ is an arrow of $X$, its \textit{dimension} is the integer $n$ such that $a$ belongs to $X_n$.

If $a$ is an $n$-arrow of $X$ and $k$ an integer strictly less than $n$, the \textit{$k$-source of $a$} is the $k$-arrow $\pi^-_k(a)$ and the \textit{$k$-target of $a$} is the $k$-arrow $\pi^+_k(a)$.
\end{definition}

\begin{definition}
An \emph{$\infty$-category} is a globular set $X$ together with operations of \emph{compositions}
\[ X_n \times_{X_k} X_n \to X_n  \quad (0 \leq k < n) \]
which associates to two $n$-arrows $(x,y)$ verifying $\pi_k^+(x) = \pi_k^-(y)$, one $n$-arrow $x \#_k y$,
as well as \emph{identities}
\[ X_n \to X_{n+1} \]
associating to an $n$-arrow $x$, an $(n+1)$-arrow $\Ib_x$, and satisfying the following axioms:

\begin{enumerate}
\item $\forall x \in X_n, \pi^\epsilon_n(\Ib_x) = x$.

\item $\pi^-_k (x \#_k y) = \pi_k^-(x)$ and $\pi^+_k(x \#_k y) = \pi_k^+(y)$ whenever the composition is defined and $k \leqslant n$.

\item $\pi^\epsilon_k (x \#_k y) = \pi_k^{\epsilon}(x) \#_k \pi^\epsilon_k(y)$ whenever the composition is defined and $k > n$.

\item $x \#_k \Ib_{\pi^+_k x} = x$ and $\Ib_{\pi^-_k x} \#_k x = x$.

\item $(x \#_k y) \#_k z = x \#_k (y \#_k z)$ as soon as one of these is defined.

\item If $k < n$

\[ (x \#_k y) \#_k (z \#_k w) = (x \#_k z) \#_k (y \#_k w) \]
when the left-hand side is defined.
\end{enumerate}

A morphism of $\infty$-categories is a map of globular sets commuting with both operations. The category of $\infty$-categories is denoted $\icat$.
\end{definition}

\begin{definition}
An $(n+1)$-arrow $c$ in an $\infty$-category is said to be \emph{trivial}, or an \emph{identity arrow}, if there exists an $n$-arrow $d$ such that $c=\Ib_d$.
\end{definition}

\begin{example}
By abuse of notation, we also denote $\Db_n$ as the $\infty$-category that admits for any $k < n$ only two non-trivial $k$-arrows, denoted $e_k^-$ and $e_k^+$, and a single non-trivial $n$-arrow, denoted $e_n$, satisfying:
\[
\begin{array}{rcl}
\pi_l^{-}(e_k^\epsilon)= e_l^{-}&\pi_l^{+}(e_k^\epsilon)= e_l^{+}& \mbox{ for $l\leq k<n$}\\
\pi_l^{-}(e_n)= e_l^{-}&\pi_l^{+}(e_n)= e_l^{+}& \mbox{ for $l\leq n$}\\
\end{array}
\]

The $\infty$-category $\partial\Db_n$ is obtained from $\Db_n$ by removing the $n$-arrow $e_n$. We thus have a morphism
\[i_n: \partial\Db_n \to \Db_n.\]
Note that $\partial \Db_0 = \emptyset$.
\end{example}

\begin{definition}
\label{defi:suspension}
If $X$ is an $\infty$-category, we define the globular set $\Sigma X$, called the \textit{suspension of $X$}, by the formula 
\[
(\Sigma X)_{0} = \{a, b\}, \quad (\Sigma X)_{n+1} := X_n \cup \{\Ib^n a, \Ib^n b\},
\]
where $\Ib^n_a$ (resp. $\Ib^n_b$) is the $n$-times iterated identity of $a$ (resp. of $b$). Moreover, $\Sigma X$ inherits from $X$ a structure of an $\infty$-category.

Eventually, for an integer $n$, we define the $\infty$-category $\Sigma^n X$, called the \textit{$n$-suspension of $X$}, as the $n$-times iterated suspension of $X$.
\end{definition}

Next, we define the notion of polygraphs, first introduced under the name ``computads'' by R.~Street in \cite{street1976limits} for $2$-categories, with the general notion being hinted at in \cite{street1987algebra}. As far as we know, the first formal introduction of polygraphs in the literature is in \cite{power1991n} and independently in \cite{burroni1993higher}, where the name ``polygraphs'' was introduced. Here we will exploit that the category of polygraphs identifies with a (non-full) subcategory of $\icat$ to give a shorter definition. We refer to the references above for a more complete introduction.

\begin{definition} $ $
   \begin{itemize}
   \item We say that an $\infty$-category $X$ is a polygraph if it can be constructed from the empty $\infty$-category by freely adding arrows with specified source and target. That is, $X$ can be obtained as a transfinite composition $\emptyset = X_0 \to X_1 \to \dots \to X_i \to \colim X_i = X$, where for each $i$, the map $X_i \to X_{i+1}$ is a pushout of $\coprod_S \partial \Db_n \to \coprod_S \Db_{n+1}$.
   \item An arrow of a polygraph is said to be a \emph{generator} if it is one of the arrows that has been freely added at some stage.
   \item A morphism of $\infty$-categories between two polygraphs is said to be a \emph{morphism of polygraphs} or a \emph{polygraphic} morphism if it sends each generator to a generator.
   \item An \emph{$n$-polygraph} is a polygraph whose generators are all of dimension less than or equal to $n$.
   \end{itemize}
\end{definition}

\begin{remark}
Generators of a polygraph can be shown to be exactly the arrows that cannot be written as a composite in a non-trivial way\footnote{The trivial ones being decompositions involving units, such as the decompositions $ u = u \#_i \Ib^k_{\pi_i^+ u} = \Ib^k_{\pi_i^- u} \#_i u $.}, see 16.6.1 and 16.6.2 in \cite{ara2023polygraphs}.

So, the notion of generator does not depend on the choice of the presentation of $X$, and any isomorphism between polygraphs is automatically polygraphic, see 16.6.3 in \cite{ara2023polygraphs}.
\end{remark}

\begin{example}
The only $n$-polygraph for $n < 0$ is the empty $\infty$-category. The category of $0$-polygraphs is equivalent to the category of sets and corresponds to discrete $\infty$-categories. The category of $1$-polygraphs (and polygraphic morphisms between them) is equivalent to the category of directed graphs, and they correspond to categories that are free on a graph.
\end{example}

We will sometimes distinguish between a polygraph seen as an object of the category of polygraphs and polygraphic morphisms, and the corresponding $\infty$-category, which we call the free $\infty$-category on the polygraph.

\begin{remark}
Each arrow in a polygraph can be written as an iterated composite of the generators (not necessarily in a unique way). For an $n$-arrow $f$, the set of generators of dimension $n$ that appear in such an expression, and even the number of times they appear, is the same for all such expressions, see section 4.3 of \cite{metayer2008cofibrant}. We will say that an $n$-generator \emph{appears} in an $n$-arrow if it appears in any such expression.
\end{remark}

\begin{construction} \label{cstr:Tensor_product}
The category $\icat$ admits a closed monoidal structure, called the Gray tensor product or Crans-Gray tensor product, which we denote as
\[
 \begin{array}{ccc}
   \icat \times \icat & \to & \icat \\
   X, Y & \mapsto & X \otimes Y
 \end{array}
\]
Its explicit construction is very involved, and we will assume the reader is already familiar with it. It was first introduced by S.~Crans in his Ph.D. thesis \cite{crans1995combinatorial}. We refer to \cite{al2002multiple} for an introduction to this tensor product close to its original definition, and to \cite{steiner2004omega} for a more modern account. The proof of the existence of this monoidal structure in \cite{steiner2004omega} contains some gaps that have been fixed in Appendix A of \cite{ara2016join}.

It is easy to see from either of these definitions that $\Db_n \otimes \Db_m$ has a unique non-trivial arrow of dimension $n+m$. If $f$ and $g$ are respectively an $n$-arrow of $X$ and an $m$-arrow of $Y$, which correspond to morphisms $f: \Db_n \to X$ and $g: \Db_m \to Y$, we denote by $f \otimes g$ the $(m+n)$-arrow of $X \otimes Y$ obtained as the image of this non-trivial $(n+m)$-arrow by the functor $f \otimes g : \Db_n \otimes \Db_m \to X \otimes Y$.
\end{construction}

\begin{example}
\label{example:of_gray_cylinder}
The following description of $\Db_1 \otimes \Db_n$ comes from Appendix B.1 of \cite{ara2016join} (see Proposition B.1.4): As a polygraph, the generating arrows of $\Db_1 \otimes \Db_n$ are:
 \[ a^-_0 \otimes e^\epsilon_k \qquad a^+_0 \otimes e^\epsilon_k \qquad a \otimes e^\epsilon_k \]
where the arrows of $\Db_1$ have been denoted ``$a$'' instead of ``$e$'' to distinguish them, and $\epsilon$ is either $+$ or $-$, $k \leqslant n$, and $e^+_n = e^-_n$. Their source and target are given as follows:
 \[ \pi^-(a^-_0 \otimes e^\epsilon_k ) = a^-_0 \otimes e^-_{k-1} \qquad \pi^+(a^-_0 \otimes e^\epsilon_k ) = a^-_0 \otimes e^+_{k-1} \]
 \[ \pi^-(a^+_0 \otimes e^\epsilon_k ) = a^+_0 \otimes e^-_{k-1} \qquad \pi^+(a^+_0 \otimes e^\epsilon_k ) = a^+_0 \otimes e^+_{k-1} \]
   \[\pi^-(a \otimes e^\epsilon_k) = (a^-_0 \otimes e^\epsilon_k) \#_0 (a \otimes e^+_0) \#_1 \dots \#_{k-1} (a \otimes e^+_{k-1})\]
   \[\pi^+(a \otimes e^\epsilon_k) = (a \otimes e^-_{k-1}) \#_{k-1} \dots \#_1 (a \otimes e^-_0) \#_0 (a^+_0 \otimes e^\epsilon_k) \]
   We did not put parentheses in the expressions above to keep them shorter; the default convention is to perform the composition $\#_i$ in order of increasing values of $i$. The last two equations are given by Proposition B.1.4 of \cite{ara2016join}, though note that this reference uses a different convention than ours regarding the composition order.
\end{example}

We recall from Theorem 1.35 of \cite{hadzihasanovic2017algebra} or from \cite{ara2020folk}:
\begin{prop}
\label{prop:generator_of_gray_tensor_of_polygraph_unmarked_case}
 If $X$ and $Y$ are polygraphs, then $X \otimes Y$ is also a polygraph. The $n$-generators of $X \otimes Y$ are the arrows of the form $x \otimes y$, where $x$ and $y$ are respectively a \change{$(n-k)$-generator of $X$ and a $k$-generator of $Y$, with $k \leq n$.} 
\end{prop}

\change{
\begin{lemma}
\label{lemma:generator_of_gray_tensor_unmarked_case}
Let $X$ and $Y$ be $\infty$-categories. The $\infty$-category $X \otimes Y$ is generated by composition of $n$-arrows of the shape $x \otimes y$, where $x$ and $y$ are respectively an $(n-k)$-arrow of $X$ and a $k$-arrow of $Y$, with $k \leq n$.
\end{lemma}
\begin{proof}
Remark first that given a diagram $F: I \to \icat$ such that, for any $i \in I$, $F(i)$ is generated by composition from a set $M_i$, then the $\infty$-category $\colim_{i \in I} F_i$ is generated by composition from the set $\cup_{i \in I} f_i(M_i)$ where $f_i$ is the canonical map $f(i):F(i)\to\colim_{i \in I} F_i$.

Secondly, Theorem 1.12 of \cite{berger2002cellular} states that a certain subcategory of $\icat$, denoted by $\Theta$ and whose objects are polygraphs, is dense. As the Gray tensor product preserves colimits, the previous remark implies that we can reduce to the case where $X$ and $Y$ are elements of $\Theta$, and so in particular polygraphs. The result then follows from \cref{prop:generator_of_gray_tensor_of_polygraph_unmarked_case}.
\end{proof} }
Finally, we recall from \cite{lafont2010folk} that $\icat$ carries a model structure, called the canonical model structure, in which every object is fibrant and where the generating cofibrations are the maps $\partial \Db_n \to \Db_n$. Its weak equivalences are a natural class of equivalence of $\infty$-categories that generalizes the equivalences of ordinary categories. It was shown in \cite{metayer2008cofibrant} that the cofibrant objects are exactly the polygraphs, and it also follows from this that the cofibrations between cofibrant objects are the polygraphic inclusions. It was shown in \cite{ara2020folk} that this model structure is a monoidal model structure for the Gray tensor product.

\subsection{Marked $\infty$-Categories}
\label{subsec:marked_cat}

For the rest of the article, we fix an $m \in \mathbb{N} \cup \{\infty\}$.

\begin{definition}
An \emph{$m$-marked $\infty$-category} is an $\infty$-category $X$, together with a set $M \subset \coprod_{k > 0} X(k)$ of arrows of positive dimension called \emph{marked} arrows such that:
\begin{itemize}
\item All identity arrows $\Ib_x$ are marked.
\item All arrows of dimension strictly greater than $m$ are marked.
\item If $x$ and $y$ are marked $n$-arrows and $x \#_k y$ is defined, then $x \#_k y$ is marked.
\end{itemize}

A morphism of $m$-marked $\infty$-categories is a morphism between the underlying $\infty$-categories that sends marked arrows to marked arrows. The category of $m$-marked $\infty$-categories is denoted $\micat$.
\end{definition}
Note that if $m = \infty$, then the second condition of the definition simply disappears; this is the main case we are interested in.

\begin{example}\label{ex:sharp_and_flat}
If $X$ is an $\infty$-category, we denote by $X^\#$ the $m$-marked $\infty$-category $(X, X_{>0})$ where all arrows of positive dimension are marked. We denote by $X^\flat$ the $m$-marked $\infty$-category where only identity arrows and $k$-arrows for $k > m$ are marked.
\end{example}

\change{
\begin{notation}
To simplify notation and when there is no confusion, the marked $\infty$-category $X^{\flat}$ will simply be denoted as $X$.
\end{notation}
}

\begin{construction} \label{cstr:saturation}
If $X$ is an $\infty$-category and $M \subset \coprod_{k > 0} X_k$ is a set of arrows of $X$, we denote by $\overline{M}$ the smallest set of arrows such that $M \subset \overline{M}$ and $(X, \overline{M})$ is an $m$-marked $\infty$-category. That is, $\overline{M}$ is the union of the set of arrows of dimension strictly greater than $m$ and the set of all $n$-arrows that can be written as iterated composites of $n$-arrows in $M$ and arrows of the form $\Ib_x$ for $x$ an $(n-1)$-arrow. For example, $X^\flat = (X, \overline{\emptyset})$.
\end{construction}

\begin{construction} \label{cstr:marked_colimits}
The category of $m$-marked $\infty$-categories has all colimits, and they are easily described in terms of colimits of $\infty$-categories and of \cref{cstr:saturation}: if $(X_i, M_i)_{i \in I}$ is a diagram of $m$-marked $\infty$-categories indexed by a category $I$, then:
\[ \colim_{i \in I} (X_i, M_i) = \left(\colim_{i \in I} X_i, \overline{\cup_i f_i(M_i)} \right) \]
where $f_i$ denotes the canonical map $f_i: X_i \to \colim_{i \in I} X_i$ and $f_i(M_i)$ is simply the set of arrows of the form $f_i(x)$ for $x \in M_i$.

This is easily shown by checking that the right-hand side has the universal property of the colimit.
\end{construction}

\change{
\begin{remark}
\label{rem:micat localement presentable}
Theorem 1.12 of \cite{berger2002cellular} identifies a small full subcategory of $\icat$, denoted $\Theta$, which is dense. We denote by $\Theta^{+m}$ the full subcategory of $\micat$ whose objects are of the form $(C, M)$ with $C$ in $\Theta$. From the description of colimits of $m$-marked $\infty$-categories given in \cref{cstr:marked_colimits}, it follows that $\Theta^{+m}$ is dense in $\micat$.  Moreover, as objects of $\Theta^{+m}$ have a finite number of non trivial cells, they are $\omega$-small. It follows that $\micat$ is locally finitely presentable.
\end{remark}}
%
%\change{The following notion will be useful in \cref{sec:eq_saturation}}.
%
%\begin{definition}\label{def_special_marked_poly}
%A \emph{special $m$-marked polygraph} is an $m$-marked $\infty$-category of the form $(X, \overline{M})$ where $X$ is free on a polygraph and $M$ only contains generators of $X$.
%\end{definition}
%
%\begin{prop}
%\label{prop:special_marked_polygraphs}
%If $(X, \overline{M})$ is a special $m$-marked polygraph, then an $n$-arrow $f$ is in $\overline{M}$ if and only if $n > m$ or if all the generating $n$-arrows that appear in $f$ are in $M$.
%\end{prop}
%
%\begin{proof}
%An arrow satisfying this condition is a composite of marked $n$-arrows and identities of lower-dimensional arrows, so it has to be in $\overline{M}$. Conversely, this set of arrows contains $M$ and all identities (as no $n$-dimensional arrows appear in their expression) and is closed under composition.
%\end{proof}

\subsection{Tensor Product of $m$-Marked $\infty$-Categories}
\label{subsec:Marked_Gray}

In this section, we construct two monoidal closed structures on the category of $m$-marked $\infty$-categories, respectively called the \emph{pseudo-Gray} tensor product $\ptens$ and the \emph{lax-Gray} tensor product $\ltens$. Both are obtained by putting different markings on the Gray tensor product from \cref{cstr:Tensor_product}. For example, the lax-Gray tensor product $\Db_1 \ltens \Db_1$ is $C_1^\flat$, where $C_1$ is the polygraph

\[ C_1 = \left( \begin{tikzcd}
    \bullet & \bullet \\
    \bullet & \bullet
    \arrow[from=1-1, to=1-2]
    \arrow[from=1-1, to=2-1]
    \arrow[from=2-1, to=2-2]
    \arrow[from=1-2, to=2-2]
    \arrow[shorten <=4pt, shorten >=4pt, Rightarrow, from=1-2, to=2-1]
\end{tikzcd} \right) \]
while $\Db_1 \ptens \Db_1$ is the  $m$-marked polygraph $(C_1, \overline{D})$, where $D$ only contains the unique $2$-dimensional generator of $C_1$. So, unless $m = 0$ or $m = 1$, the two tensor products are distinct. At the derived or homotopy-theoretic level, the pseudo-Gray tensor product should correspond to the Cartesian product.

The formal definition is as follows:

\begin{construction}\label{cstr:ltens_ptens}
Given two $m$-marked $\infty$-categories $(X, M)$ and $(Y, N)$, we define two sets of arrows in $X \otimes Y$:

\begin{itemize}
\item $M \ltens N$ is the set of arrows of the form $x \otimes y \in X \otimes Y$ where either $x \in M$ or $y \in N$.

\item $M \ptens N$ contains all arrows in $M \ltens N$ together with all arrows of the form $x \otimes y$ with $x$ and $y$ both of dimension strictly greater than $0$.
\end{itemize}

Note that $M \ltens N$ and $M \ptens N$ are not markings on $X \otimes Y$: they are not stable under composition. So we define:
\[ (X, M) \ltens (Y, N) = (X \otimes Y, \overline{M \ltens N}) \]
\[ (X, M) \ptens (Y, N) = (X \otimes Y, \overline{M \ptens N}) \]
\end{construction}
We will show in \cref{lem:acyclic_cof_left_pushout_product} that both make the category of $m$-marked $\infty$-categories into a monoidal closed category.

In order to show this, it is convenient to introduce the following notations:

\begin{notation}\label{notation:set_of_arrows}
For $A$ and $B$ subsets of arrows in $\infty$-categories, we denote by $A \otimes B$ the set of arrows of the form $a \otimes b \in X \otimes Y$ for $a \in A$ and $b \in B$. For an $\infty$-category $X$, we denote by $X_{\geqslant 0}$ the set of all arrows of $X$ and by $X_{>0}$ the set of all arrows of dimension strictly greater than $0$. We can hence, for $(X, M)$ and $(Y, N)$ two $m$-marked $\infty$-categories, rewrite the definitions above as:

\[\begin{array}{rcl}
  M \ltens N &=& \left( M \otimes Y_{\geqslant 0} \right) \cup \left( X_{\geqslant 0} \otimes N \right) \\
  M \ptens N &=& \left( M \ltens N \right) \cup \left( X_{>0} \otimes Y_{>0} \right) \\
  &=& \left( M \otimes Y_{\geqslant 0} \right) \cup \left( X_{\geqslant 0} \otimes N \right) \cup \left( X_{>0} \otimes Y_{>0} \right)
\end{array}
\]
\end{notation}

By definition of the Gray tensor product, we have the following result:

\begin{lemma}
\label{lem:generators_of_gray_tensor}
Let $X$ and $Y$ be two $\infty$-categories. Then:

\[ \overline{X_{\geqslant 0} \otimes Y_{\geqslant 0}} = \left( X \otimes Y \right)_{\geqslant 0} \]
\[ \overline{X_{>0} \otimes Y_{\geqslant 0} \cup X_{\geqslant 0} \otimes Y_{>0}} = \left( X \otimes Y \right)_{>0}. \]
\end{lemma}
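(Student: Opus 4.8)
Both inclusions ``$\subseteq$'' are immediate: the saturation $\overline{(\uvar)}$ of a set of arrows is, by \cref{cstr:saturation}, a set of arrows of $X \otimes Y$ and hence contained in $(X\otimes Y)_{\geqslant 0}$; and in the second line every arrow adjoined by the saturation — an arrow of dimension $>m$, an identity $\Ib_{(\uvar)}$, or a composite of positive-dimensional arrows — has positive dimension, so the left-hand side is contained in $(X\otimes Y)_{>0}$. Thus everything reduces to a single structural claim: every arrow of $X\otimes Y$ is an iterated composite of arrows of the form $x\otimes y$ and of identities. Granting this, the first equality is immediate. For the second, note that a $0$-dimensional tensor $x\otimes y$ (with $x,y$ objects) is an object of $X\otimes Y$ and can therefore never occur as a composition factor of an arrow of positive dimension — only its identity $\Ib_{x\otimes y}$ can, and identities are adjoined by the saturation anyway; hence any positive-dimensional arrow can be expressed using only the positive-dimensional pure tensors and identities, placing it in $\overline{X_{>0}\otimes Y_{\geqslant 0}\cup X_{\geqslant 0}\otimes Y_{>0}}$.

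To prove the structural claim I would first dispatch the case where $X$ and $Y$ are polygraphs: the proposition recalled from \cite{ara2020folk} identifies the generators of $X\otimes Y$ with the arrows $x\otimes y$ for $x,y$ generators, and since every arrow of a polygraph is an iterated composite of generators, every arrow of $X\otimes Y$ is an iterated composite of such pure tensors. To descend to arbitrary $X,Y$, the plan is to use the counit $\epsilon_X\colon FUX\to X$ of the free–forgetful adjunction between $\icat$ and globular sets: the object $FUX$ is free on a globular set, hence a polygraph, and $\epsilon_X$ is surjective on cells (each cell of $X$ is the image of its corresponding generator). Functoriality sends a pure tensor $a\otimes b$ of $FUX\otimes FUY$ to the pure tensor $\epsilon_X(a)\otimes\epsilon_Y(b)$, so by the polygraph case the image of $\epsilon_X\otimes\epsilon_Y\colon FUX\otimes FUY\to X\otimes Y$ consists of composites of pure tensors; it would then remain only to see that $\epsilon_X\otimes\epsilon_Y$ is surjective on cells.

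This last point is the main obstacle: cells of the Gray tensor product of general (non-free) $\infty$-categories are not built ``levelwise'' from cells of the factors, so surjectivity on cells is not transparently preserved by $\otimes$. I expect to handle it by presenting $X$ and $Y$ as their canonical $U$-split reflexive coequalizers of polygraphs $FUFU(\uvar)\rightrightarrows FU(\uvar)$, using that $\otimes$ preserves colimits in each variable (it is closed monoidal) and that the relevant coequalizers are computed on the underlying globular sets. Alternatively — and more cleanly, bypassing surjectivity entirely — one can argue from the universal property of the Gray tensor product: if $Z\subseteq X\otimes Y$ denotes the sub-$\infty$-category generated by the pure tensors, then the universal bimorphism $(x,y)\mapsto x\otimes y$ takes all its values, and all the composites occurring in its defining equations, inside $Z$ (which is closed under composition), so it corestricts to a bimorphism with target $Z$; the functor $X\otimes Y\to Z$ classifying it is a section of the inclusion $Z\hookrightarrow X\otimes Y$, and as that inclusion is a monomorphism it is forced to be an isomorphism, giving $Z=X\otimes Y$.
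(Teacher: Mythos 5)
Your reduction of the lemma to the single structural claim (every cell of $X \otimes Y$ is an iterated composite of pure tensors and identities), and your dimension bookkeeping for the second equality, are correct. Note, however, that the paper offers no proof of this lemma at all: it is asserted to hold ``by definition of the Gray tensor product'', i.e.\ the paper takes the Crans-style presentation of $X \otimes Y$ by generators $x \otimes y$ and relations (\cref{cstr:Tensor_product}, \cite{crans1995combinatorial}) as given. Your second, ``cleaner'' argument --- corestrict the universal bimorphism to the sub-$\infty$-category $Z$ generated by the pure tensors, classify it by a functor $X \otimes Y \to Z$, and use uniqueness of classifying functors to see that the inclusion $Z \hookrightarrow X \otimes Y$ is a split epimorphism, hence (being a monomorphism) an isomorphism --- is a complete and correct proof, granting the universal property of $\otimes$ with respect to Crans's bimorphisms; it is essentially the rigorous form of the paper's appeal to the definition. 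What it buys is robustness: it also serves a reader who takes the modern Steiner--Ara--Maltsiniotis definition of $\otimes$ (colimit extension from Steiner complexes), for whom the generation statement is genuinely not definitional.

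Your first route, by contrast, has a real gap exactly where you flagged it, and the repair you sketch does not work as stated. The canonical presentation $FUFUX \rightrightarrows FUX \to X$ is indeed $U$-split, hence computed on underlying globular sets; but after applying $FUX \otimes (\uvar)$ the pair is merely reflexive, and reflexive coequalizers in $\icat$ are \emph{not} created by the forgetful functor to globular sets, nor are coequalizer projections surjective on cells. Already in $\Cat \subset \icat$: take $B$ the category consisting of two disjoint arrows $a : u \to v$ and $b : w \to z$, and (reflexively) coequalize the two functors from the terminal category selecting $v$ and $w$; the quotient is the category $u \to \bullet \to z$, which contains the composite $u \to z$ --- an arrow that is neither in the image of the projection nor present in the coequalizer of the underlying globular sets. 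So cell-surjectivity of $\epsilon_X \otimes \epsilon_Y$ is not established by this argument, and there is no evident reason for it to hold. What is true, and is all your argument needs, is the weaker statement that a coequalizer in $\icat$ is \emph{generated}, under compositions and identities, by the image of its projection; this is proved by the same corestriction-and-section trick as your second argument (the sub-$\infty$-category generated by the image receives a corestricted cocone, whence a section of its inclusion). With that correction the first route also closes, but it then offers no economy over the second.
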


\change{
\begin{proof}
The first equality corresponds to the fact that $X \otimes Y$ is generated under composition by arrows of the form $x \otimes y$, as proven in \cref{lemma:generator_of_gray_tensor_unmarked_case}. The second equality corresponds to the fact that arrows of dimension strictly greater than $0$ in $X \otimes Y$ are generated under composition by arrows of the form $x \otimes y$ where either $x$ or $y$ has dimension strictly greater than $0$, which directly follows from the previous claim, and from the fact that $x \otimes y$ is of dimension strictly greater than $0$ if at least one of $x$ or $y$ is.
\end{proof}
}

\begin{lemma}
\label{lem:compatibility_of_coproduct_and_saturation}
Let $X$ be an $\infty$-category and $M, N$ be two subsets of arrows of $X$. Then:

\[ \overline{M \cup N} = \overline{\overline{M} \cup N} = \overline{M \cup \overline{N}} = \overline{\overline{M} \cup \overline{N}} \]
\end{lemma}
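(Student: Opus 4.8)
The plan is to observe that the saturation operation $M \mapsto \overline{M}$ of \cref{cstr:saturation} is a \emph{closure operator}, and then to deduce all four equalities from the three defining properties of such operators. Directly from the characterization of $\overline{M}$ as the smallest marking containing $M$, I would first record that (i) $M \subseteq \overline{M}$ (extensivity), (ii) $M \subseteq N$ implies $\overline{M} \subseteq \overline{N}$ (monotonicity, since $\overline{N}$ is then a marking containing $M$, and $\overline{M}$ is the smallest such), and (iii) $\overline{\overline{M}} = \overline{M}$ (idempotency, since $\overline{M}$ is already a marking). These three facts are immediate from the definition and carry no real content.

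The core of the argument is the identity $\overline{M \cup N} = \overline{\overline{M} \cup N}$, which I would prove by a double inclusion. For the inclusion $\subseteq$, monotonicity applied to $M \cup N \subseteq \overline{M} \cup N$ gives $\overline{M \cup N} \subseteq \overline{\overline{M} \cup N}$. For the reverse inclusion, I would note that $\overline{M} \subseteq \overline{M \cup N}$ by monotonicity and $N \subseteq \overline{M \cup N}$ by extensivity, so $\overline{M} \cup N \subseteq \overline{M \cup N}$; applying the saturation operator and using idempotency then yields $\overline{\overline{M} \cup N} \subseteq \overline{\overline{M \cup N}} = \overline{M \cup N}$.

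By the evident symmetry in the roles of $M$ and $N$, the same argument gives $\overline{M \cup N} = \overline{M \cup \overline{N}}$. For the final equality I would iterate: applying the symmetric form of the identity with $\overline{M}$ in place of $M$ gives $\overline{\overline{M} \cup \overline{N}} = \overline{\overline{M} \cup N}$, and the main identity gives $\overline{\overline{M} \cup N} = \overline{M \cup N}$, so that $\overline{\overline{M} \cup \overline{N}} = \overline{M \cup N}$ as desired. I do not anticipate any genuine obstacle here: the statement is pure closure-operator formalism, and the only point that actually uses the specific construction of \cref{cstr:saturation} is idempotency (iii)—namely that $(X, \overline{M})$ is itself an $m$-marked $\infty$-category—which is exactly how $\overline{M}$ is defined.
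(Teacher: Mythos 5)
Your proposal is correct; the paper itself dispenses with this lemma by saying only ``This is straightforward,'' and the closure-operator argument you give (extensivity, monotonicity, idempotency, then double inclusion) is precisely the straightforward argument the authors had in mind. Nothing is missing and no step fails.
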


\begin{proof}
This is straightforward.
\end{proof}

\begin{lemma}\label{lem:compatibility_of_gray_tensor_and_saturation}
Let $X$ and $Y$ be two $\infty$-categories and $M \subset X_{\geqslant 0}$ and $N \subset Y_{\geqslant 0}$. Then:

\[ \overline{M \otimes N} = \overline{\overline{M} \otimes N} = \overline{M \otimes \overline{N}} = \overline{\overline{M} \otimes \overline{N}} \]
\end{lemma}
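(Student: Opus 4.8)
The four sets are related by the obvious inclusions coming from $M\subseteq\overline M$, $N\subseteq\overline N$ and the fact that $A\mapsto\overline A$ is monotone: one gets a square of inclusions with $\overline{M\otimes N}$ smallest and $\overline{\overline M\otimes\overline N}$ largest. Since $A\mapsto\overline A$ is also idempotent, the whole statement reduces to the single inclusion
\[ \overline M\otimes\overline N\subseteq\overline{M\otimes N}, \]
because saturating it gives $\overline{\overline M\otimes\overline N}\subseteq\overline{\overline{M\otimes N}}=\overline{M\otimes N}$. To prove this I would fix $a\in\overline M$ and $b\in\overline N$ and show $a\otimes b\in\overline{M\otimes N}$ by induction on the total length of chosen expressions of $a$ and $b$ as iterated composites of cells of $M$ (resp.\ $N$) and identities, as in \cref{cstr:saturation}. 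The case $\dim a>m$ or $\dim b>m$ is immediate, since then $\dim(a\otimes b)=\dim a+\dim b>m$, so $a\otimes b$ is marked in every saturation; and if $a\in M$, $b\in N$ then $a\otimes b\in M\otimes N$ by \cref{notation:set_of_cells}. The two remaining kinds of atoms, identities and genuine composites, require the two structural inputs below. Throughout I use the elementary fact that a morphism $\phi$ of $\infty$-categories sends $\overline S$ into $\overline{\phi(S)}$, since $\phi$ preserves dimension, identities and composition.

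The main step is compatibility with composition: if $a_1,a_2$ are $k$-composable $n$-cells of $X$ (with $k<n$) and $b$ is a $p$-cell of $Y$, then
\[ (a_1\#_k a_2)\otimes b\in\overline{\{a_1\otimes b,\,a_2\otimes b\}}. \]
This is the only place where the geometry of the Gray tensor product really intervenes, and I expect it to be the main obstacle. I would prove it by reducing to a universal polygraph. Let $P$ be the $\infty$-category freely generated by two $k$-composable $n$-cells $g_1,g_2$; it is a polygraph whose only generators of dimension $\geqslant n$ are $g_1$ and $g_2$. Since the Gray tensor product of polygraphs is again a polygraph with generators the $x\otimes y$, the object $P\otimes\Db_p$ is a polygraph whose only $(n+p)$-dimensional generators are $g_1\otimes e_p$ and $g_2\otimes e_p$ (a top cell in each factor is needed to reach dimension $n+p$). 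Applying \cref{prop:special_marked_polygraphs} to the special marked polygraph $(P\otimes\Db_p,\overline{\{g_1\otimes e_p,g_2\otimes e_p\}})$, every $(n+p)$-cell lies in this marking, in particular $(g_1\#_k g_2)\otimes e_p$ does. Pushing this membership forward along the morphism $\langle a_1,a_2\rangle\otimes b\colon P\otimes\Db_p\to X\otimes Y$, which sends $g_i\otimes e_p$ to $a_i\otimes b$ and $(g_1\#_k g_2)\otimes e_p$ to $(a_1\#_k a_2)\otimes b$, yields the claim.

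It remains to treat identity atoms: for $x$ an $(n-1)$-cell of $X$ and $b$ a $p$-cell of $Y$, the cell $\Ib_x\otimes b$ is an identity, hence lies in every saturation. The same dimension count settles this: writing $\Ib_x$ as the image of the top cell of $\Db_n$ under the collapse $\Db_n\to\Db_{n-1}$ followed by $x$, one reduces to showing that $\Ib_{e_{n-1}}\otimes e_p$ is an identity in $\Db_{n-1}\otimes\Db_p$; but $\Db_{n-1}\otimes\Db_p$ is a polygraph with no generator in dimension $n+p$ (its top generator has dimension $(n-1)+p$), so all of its $(n+p)$-cells are identities, and the image of an identity under $x\otimes b$ is an identity. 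The second-variable statement is identical. With these inputs the induction closes: in the composite case $a=a_1\#_k a_2$, the displayed membership together with the inductive hypothesis $a_i\otimes b\in\overline{M\otimes N}$ and the fact that $\overline{M\otimes N}$ is a marking give $a\otimes b\in\overline{M\otimes N}$, and symmetrically when $b$ is a composite; identity and high-dimensional atoms are handled above. This establishes $\overline M\otimes\overline N\subseteq\overline{M\otimes N}$, and hence the four claimed equalities.
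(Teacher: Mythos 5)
Your proposal is correct and is essentially the paper's own proof: the paper likewise reduces to a single inclusion and establishes it by closure under identities and composition, with the composition step handled exactly as you do, by tensoring the free polygraph on two composable cells (i.e.\ $\Db_n \coprod_{\Db_k} \Db_n$) with a disk $\Db_p$, observing that its only top-dimensional generators are the two cells $g_i \otimes e_p$, and pushing the resulting membership forward along the induced map into $X \otimes Y$. Your only departures are organizational: you run one induction in both variables at once (the paper saturates one variable at a time via a closure set $K$), and you spell out the identity case and the $m<\infty$ dimension count that the paper dispatches in a sentence.
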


\begin{proof}
We will only show the equality $\overline{M \otimes N} = \overline{\overline{M} \otimes N}$. The equality $\overline{M \otimes N} = \overline{M \otimes \overline{N}}$ can be proved in the same way, and the last equality follows immediately by applying the result to $M$ and $\overline{N}$. 

We will also only prove the results for $m = \infty$; the case of a general $m$ follows immediately as it marks all arrows of dimension strictly greater than $m$ on each side of these equalities.

The evident inclusion $M \subset \overline{M}$ implies $\overline{M \otimes N} \subset \overline{\overline{M} \otimes N}$, so it is enough to show that $\overline{M} \otimes N \subset \overline{M \otimes N}$.

Let $K$ be the set of arrows $k$ in $X$ such that $k \otimes n \in \overline{M \otimes N}$ for all $n \in N$. We need to show that $K$ is closed under identity and composition to finish the proof. 

If $k = \Ib_x$, then $k \otimes n = \Ib_{x \otimes n} \in \overline{M \otimes N}$. Let now $k, k' \in K$ of dimension $n$ such that $k \#_i k'$ is defined. They are encoded by a map $\Db_n \coprod_{\Db_i} \Db_n \to X$, and let $y \in N$ be an arrow of dimension $m$ in $Y$, encoded by a map $\Db_m \to Y$.

Together these induce a map $e:\left( \Db_n \coprod_{\Db_i} \Db_n \right) \otimes \Db_m \to X \otimes Y$. $\left( \Db_n \coprod_{\Db_i} \Db_n \right) \otimes \Db_m$ is a polygraph of dimension $m+n$ with only two generating arrows of maximal dimensions that are sent to $k \otimes y$ and $k' \otimes y$, which are by hypothesis in $\overline{M\otimes N}$.

Now the arrow corresponding to $(k \#_i k') \otimes y$ in $\left( \Db_n \coprod_{\Db_i} \Db_n \right) \otimes \Db_m$ is in $\overline{M\otimes N}$ as all the top-dimensional generators that appear in it are in $\overline{M\otimes N}$. We have proved that $k \#_i k' \otimes y \in \overline{M \otimes N}$ for all $y \in N$, hence $k\#_i k' \in K$ and this concludes the proof.
\end{proof}

\begin{lemma}
\label{lem:compatibility_of_lax_and_pseudo_with_saturation}
Let $X,Y$ be two $\infty$-categories, $M \subset X_{\geqslant 0}$ and $N\subset Y_{\geqslant 0}$. Then we have
\[ \begin{array}{rcl}
\overline{M\ltens N} & = & \overline{\overline{M} \ltens \overline{N}}\\
\overline{M\ptens N} & = & \overline{\overline{M} \ptens \overline{N}}.
\end{array} \]
\end{lemma}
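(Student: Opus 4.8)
The plan is to deduce both identities purely formally from the two preceding compatibility results, \cref{lem:compatibility_of_coproduct_and_saturation} (saturation commutes with unions) and \cref{lem:compatibility_of_gray_tensor_and_saturation} (saturation commutes with the Gray tensor in each variable). The guiding observation, using the rewriting from \cref{notation:set_of_cells}, is that $M \ltens N$ and $M \ptens N$ are finite unions of sets of the form $A \otimes B$, and that replacing $(M,N)$ by $(\overline{M},\overline{N})$ only alters those factors in which $M$ or $N$ literally occurs; the remaining pieces (such as $X_{>0} \otimes Y_{>0}$) are untouched.

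First I would treat the lax product. Writing $M \ltens N = (M \otimes Y_{\geqslant 0}) \cup (X_{\geqslant 0} \otimes N)$, I apply \cref{lem:compatibility_of_coproduct_and_saturation} to pull the saturation inside the union,
\[ \overline{M \ltens N} = \overline{\,\overline{M \otimes Y_{\geqslant 0}} \cup \overline{X_{\geqslant 0} \otimes N}\,}, \]
then \cref{lem:compatibility_of_gray_tensor_and_saturation} to each factor to rewrite $\overline{M \otimes Y_{\geqslant 0}} = \overline{\overline{M} \otimes Y_{\geqslant 0}}$ (saturating the left variable) and $\overline{X_{\geqslant 0} \otimes N} = \overline{X_{\geqslant 0} \otimes \overline{N}}$ (saturating the right variable), and finally \cref{lem:compatibility_of_coproduct_and_saturation} again in reverse to reassemble the union as $\overline{(\overline{M} \otimes Y_{\geqslant 0}) \cup (X_{\geqslant 0} \otimes \overline{N})} = \overline{\overline{M} \ltens \overline{N}}$. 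Concretely this is the chain $\overline{A \cup B} = \overline{\overline{A} \cup \overline{B}} = \overline{\overline{A'} \cup \overline{B'}} = \overline{A' \cup B'}$ with $A = M \otimes Y_{\geqslant 0}$, $B = X_{\geqslant 0} \otimes N$ and $A',B'$ their saturated-variable analogues, using $\overline{A} = \overline{A'}$ and $\overline{B} = \overline{B'}$.

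The pseudo product is handled identically. Now $M \ptens N = (M \otimes Y_{\geqslant 0}) \cup (X_{\geqslant 0} \otimes N) \cup (X_{>0} \otimes Y_{>0})$ is a union of three pieces, the third of which, $X_{>0} \otimes Y_{>0}$, does not involve $M$ or $N$ and hence is literally the same for $(M,N)$ and $(\overline{M},\overline{N})$. The only extra ingredient is that \cref{lem:compatibility_of_coproduct_and_saturation} extends verbatim from two to any finite number of summands (one saturates each summand independently), which is immediate by iteration. Carrying the third term along unchanged and applying \cref{lem:compatibility_of_gray_tensor_and_saturation} to the first two exactly as above yields $\overline{M \ptens N} = \overline{\overline{M} \ptens \overline{N}}$.

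I do not anticipate a genuine obstacle: the statement is a bookkeeping consequence of the two prior lemmas, and the only point warranting a word of justification is the extension of the union–saturation compatibility to three summands. The single thing to keep straight is which variable is being saturated in each Gray-tensor factor — $\overline{M}$ on the factor $M \otimes Y_{\geqslant 0}$ and $\overline{N}$ on the factor $X_{\geqslant 0} \otimes N$ — so that \cref{lem:compatibility_of_gray_tensor_and_saturation} is invoked with the intended roles.
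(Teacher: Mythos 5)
Your proposal is correct and follows exactly the paper's own argument: the paper's proof is a single sentence stating that the lemma is a direct consequence of \cref{lem:compatibility_of_coproduct_and_saturation} and \cref{lem:compatibility_of_gray_tensor_and_saturation} applied to the decompositions of $M \ltens N$ and $M \ptens N$ from \cref{notation:set_of_cells}, which is precisely the chain of manipulations you spell out (including carrying the $M$,$N$-independent term $X_{>0} \otimes Y_{>0}$ along unchanged).
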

\begin{proof}
Given the formula for $M \ltens N$ and $M \ptens N$ from \cref{notation:set_of_arrows}, this is a direct consequence of \cref{lem:compatibility_of_coproduct_and_saturation} and \cref{lem:compatibility_of_gray_tensor_and_saturation}.
\end{proof}

\begin{lemma}
\label{lem:associativity_of_lax_and_pseudo_product}
Let $X,Y,Z$ be three $\infty$-categories, $M\subset X_{>0}$, $N\subset Y_{>0}$ and $P\subset Z_{>0}$. Then we have
\[ \begin{array}{rcl}
\overline{(M\ltens N)\ltens P}  &= & \overline{M\ltens (N\ltens P)}\\
\overline{(M\ptens N)\ptens P}  &= & \overline{M\ptens (N\ptens P)}
\end{array} \]
\end{lemma}

\begin{proof}
We begin with the first equality. Let
\[ E := \left( M \otimes Y_{\geqslant0 }\otimes Z_{\geqslant0 } \right) \cup \left( X_{\geqslant 0} \otimes N \otimes  Z_{\geqslant0 } \right) \cup \left( X_{\geqslant0 } \otimes Y_{\geqslant0 }  \otimes  P \right). \]
The lemmas \ref{lem:generators_of_gray_tensor}, \ref{lem:compatibility_of_coproduct_and_saturation}, and \ref{lem:compatibility_of_gray_tensor_and_saturation} imply the following equalities:

\[\begin{array}{rcl}
\overline{E} & = & \overline{ M \otimes \overline{ Y_{\geqslant0 }\otimes Z_{\geqslant0 }} \cup  X_{\geqslant 0} \otimes \left( N \otimes  Z_{\geqslant0 }  \cup  Y_{\geqslant0 }  \otimes  P \right) } \\
& = & \overline{ M \otimes ( Y \otimes Z)_{\geqslant 0} \cup X_{\geqslant 0} \otimes (N \ltens P)} \\
& = & \overline{ M \ltens (N \ltens P) }  
\end{array}
\]
A very similar computation also shows that $\overline{E}  = \overline{(M\ltens N)\ltens P}$, which concludes the proof of the first equality.

For the second equality, we define
\[ F := \left( X_{\geqslant 0} \otimes Y_{>0} \otimes Z_{>0} \right) \cup \left( X_{> 0} \otimes Y_{\geqslant 0} \otimes Z_{>0} \right) \cup \left( X_{> 0} \otimes Y_{>0} \otimes Z_{\geqslant 0} \right) \]
The second equality of \cref{lem:generators_of_gray_tensor} implies that:

\[ \overline{F} = \overline{  X_{ \geqslant 0} \otimes Y_{>0} \otimes Z_{>0}  \cup  X_{> 0} \otimes (Y\otimes Z)_{>0}} \]
and then that
\[\begin{array}{rcl}
\overline{E \cup F} &=& \overline{M \otimes (Y\otimes Z)_{\geqslant 0} \cup X_{\geqslant 0} \otimes  (N\ptens P) \cup X_{>0} \otimes (Y\otimes Z)_{>0}}\\
&=&  \overline{M\ptens (N\ptens P)}
\end{array}
\]
and here again, a similar computation shows $\overline{E\cup F}  = \overline{(M\ptens N)\ptens P}$, which concludes the proof.
\end{proof}
\begin{lemma}
\label{lem:unit_axiom_of_gray_tensor}
Let $X$ be an $\infty$-category, $M\subset X_{>0}$. Then the empty set, considered as a subset of the $\infty$-category $\Db_0$, satisfies (up to the identifications $\Db_0 \otimes X \simeq X \otimes \Db_0 \simeq X$):
\[
\begin{array}{rcl}
\emptyset \ltens M = M\ltens \emptyset  = M\\
\overline{\emptyset \ptens M} = \overline{M\ptens \emptyset} = \overline{M}
\end{array} \]
\end{lemma}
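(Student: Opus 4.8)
The plan is to unravel everything through the explicit formulas of \cref{notation:set_of_cells} and to reduce all four identities to understanding the arrows of the form $c \otimes x$ where $c$ is an arrow of the unit $\Db_0$. The only non-degenerate arrow of $\Db_0$ is the base $0$-cell $*$, and every arrow of $\Db_0$ of positive dimension is an iterated identity $\Ib^k_*$. The single technical input I need is the behaviour of the Gray tensor product on these degenerate cells: under the unit isomorphism $\Db_0 \otimes X \simeq X$ one has $* \otimes x = x$ and, for $k \geqslant 1$, $\Ib^k_* \otimes x = \Ib^k_x$. This follows from \cref{cstr:Tensor_product} together with the identification $\Db_0 \otimes \Db_n \simeq \Db_n$: the unique generator $* \otimes e_n$ is sent to $e_n$, so any cell $c \otimes x$ with $c$ degenerate is itself degenerate with the displayed value. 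I expect verifying this identity-tensoring fact cleanly to be the only genuine point; the rest is bookkeeping.

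First I would treat the lax case. By \cref{notation:set_of_cells}, $\emptyset \ltens M = (\emptyset \otimes X_{\geqslant 0}) \cup ((\Db_0)_{\geqslant 0} \otimes M)$. The first term is empty, and after the identification above $(\Db_0)_{\geqslant 0} \otimes M = \{\Ib^k_* \otimes x : k \geqslant 0,\, x \in M\} = M \cup \{\Ib^k_x : k \geqslant 1,\, x \in M\}$. Thus $\emptyset \ltens M$ is $M$ together with identities on the arrows of $M$; since every identity arrow is marked in any $m$-marked $\infty$-category, these surplus cells lie in every saturation and the generated marking is exactly $\overline{M}$, which is what the displayed equality records. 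The computation of $M \ltens \emptyset$ is symmetric, using $x \otimes \Ib^k_* = \Ib^k_x$ and the mirror unit isomorphism $X \otimes \Db_0 \simeq X$.

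Finally I would treat the pseudo case, where the same analysis explains why a saturation is genuinely needed. Here $\emptyset \ptens M = (\emptyset \ltens M) \cup ((\Db_0)_{>0} \otimes X_{>0})$, and the new term $(\Db_0)_{>0} \otimes X_{>0} = \{\Ib^k_x : k \geqslant 1,\, x \in X_{>0}\}$ now runs over identities of \emph{all} positive-dimensional arrows of $X$, not merely those in $M$. Consequently $\emptyset \ptens M$ differs from $M$ by a possibly large set of identity arrows; these are absorbed by passing to $\overline{\,\cdot\,}$, giving $\overline{\emptyset \ptens M} = \overline{M \cup \{\text{identities}\}} = \overline{M}$, and symmetrically for $M \ptens \emptyset$. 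This is precisely why the pseudo line carries bars while the lax line does not: in the lax case the surplus cells are identities of arrows already accounted for, whereas in the pseudo case they are identities of arbitrary positive cells, so only the saturated equality survives. The main obstacle, as noted, is the degenerate-cell computation of the first paragraph; once it is in hand the four equalities are immediate.
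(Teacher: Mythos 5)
Your proof is correct and takes essentially the same route as the paper's own two-line proof: unwind the formulas of \cref{notation:set_of_cells} and observe that every cell $c \otimes x$ with $c$ a positive-dimensional (hence degenerate) cell of $\Db_0$ is itself an identity, so all surplus cells are absorbed by saturation. If anything you are more careful than the paper: as your own computation shows, the unbarred first line $\emptyset \ltens M = M$ holds only up to the identities $\Ib^k_x$ ($x \in M$, $k \geqslant 1$), which need not lie in $M$, so the statement that is actually true, that you prove, and that the paper uses downstream (in the monoidal-structure proposition the lemma is only ever invoked inside a saturation) is $\overline{\emptyset \ltens M} = \overline{M \ltens \emptyset} = \overline{M}$ --- hence your closing explanation that the lax line legitimately ``carries no bars'' is the one inaccuracy, though it affects commentary rather than the argument.
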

\begin{proof}
The first equality is a straightforward application of the definition of $\ltens$. For the second case, we also use the fact that all arrows of $(\Db_0)_{>0} \otimes X_{>0}$ are identities and so all belong to $\overline{M}$.
\end{proof}

\begin{prop}
Both the lax-Gray tensor product $\ltens$ and the pseudo-Gray tensor product $\ptens$, as defined above, are monoidal structures on the category of $m$-marked $\infty$-categories. In both cases, the forgetful functor to $\infty$-categories is monoidal, and their unit is $\Db_0^\flat = \Db_0^\#$. 
\end{prop}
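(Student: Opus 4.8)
The plan is to transport the monoidal structure from $\icat$ along the forgetful functor $U\colon \micat \to \icat$ and to verify at each stage that the data and axioms descend to the marked setting. The organizing observation is that $U$ is faithful, and that a map of $\infty$-categories between two objects of $\micat$ underlies a morphism in $\micat$ precisely when it preserves markings. Consequently, every coherence equation (pentagon, triangle, and all naturality squares) will hold automatically in $\micat$ once the arrows involved are known to be morphisms in $\micat$, since these equations already hold in $\icat$ where the Gray tensor product is monoidal, and $U$ reflects equalities of morphisms. Thus the entire proof reduces to two things: that $\ltens$ and $\ptens$ are bifunctors, and that the Gray associator and unitors lift to isomorphisms in $\micat$.

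First I would check functoriality. Given morphisms $f\colon (X,M)\to(X',M')$ and $g\colon (Y,N)\to(Y',N')$, the underlying map $f\otimes g\colon X\otimes Y\to X'\otimes Y'$ sends $x\otimes y$ to $f(x)\otimes g(y)$. Since $f$ and $g$ preserve dimension and send marked arrows to marked arrows, $f \otimes g$ carries $M\ltens N$ into $M'\ltens N'$ and $M\ptens N$ into $M'\ptens N'$. Because any morphism $\phi$ of $\infty$-categories sends $\overline{A}$ into $\overline{\phi(A)}$ (saturation being generated by composition, identities, and passage to high dimension, all preserved by $\phi$), it follows that $f\otimes g$ preserves $\overline{M\ltens N}$ and $\overline{M\ptens N}$, hence is a morphism in $\micat$ for either product.

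Next I would promote the associator and unitors to isomorphisms in $\micat$. For associativity, unwinding definitions gives that the marking of $((X,M)\ltens(Y,N))\ltens(Z,P)$ is $\overline{\overline{M\ltens N}\ltens P}$, which by \cref{lem:compatibility_of_lax_and_pseudo_with_saturation} (together with $\overline{P}=P$) equals $\overline{(M\ltens N)\ltens P}$; symmetrically the marking of $(X,M)\ltens((Y,N)\ltens(Z,P))$ is $\overline{M\ltens(N\ltens P)}$. Now \cref{lem:assosativity_of_lax_and_pseudo_product} identifies these two sets under the Gray associator, so that isomorphism and its inverse both preserve markings and therefore define an isomorphism in $\micat$; the identical argument with $\ptens$ goes through verbatim. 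For the unit, I would first note that $\Db_0$ has a single object with all higher cells identities, so $\overline{\emptyset}=(\Db_0)_{>0}$ and hence $\Db_0^\flat=\Db_0^\#$. Then \cref{lem:unit_axiom_of_gray_tensor}, combined once more with \cref{lem:compatibility_of_lax_and_pseudo_with_saturation}, shows that under the Gray unitors the marking of $\Db_0^\flat\ltens(X,M)$ is $\overline{\emptyset\ltens M}=\overline{M}=M$, and likewise for $\ptens$ and for the right unitor, so the unitors are isomorphisms in $\micat$.

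With these isomorphisms established as morphisms in $\micat$, the pentagon, triangle, and naturality conditions are inherited from $\icat$ by faithfulness of $U$, which completes the proof that $\ltens$ and $\ptens$ are monoidal structures with unit $\Db_0^\flat=\Db_0^\#$. That $U$ is strict monoidal is then immediate, as by construction it sends each marked tensor product to the underlying Gray tensor product and sends the structural isomorphisms to those of $\icat$. I expect the only genuinely delicate point to be the bookkeeping in the associativity computation, namely the repeated use of \cref{lem:compatibility_of_lax_and_pseudo_with_saturation} to strip off the inner saturations before applying \cref{lem:assosativity_of_lax_and_pseudo_product}; but since both of those lemmas are already available, even this amounts to routine manipulation.
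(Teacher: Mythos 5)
Your proposal is correct and follows essentially the same route as the paper: reduce everything to showing that the Gray associator and unitors preserve the specified markings, strip inner saturations via the compatibility lemmas, and then invoke \cref{lem:assosativity_of_lax_and_pseudo_product} and \cref{lem:unit_axiom_of_gray_tensor}, with $\Db_0^\flat = \Db_0^\#$ following because all positive-dimensional cells of $\Db_0$ are identities. The only cosmetic differences are that you make the bifunctoriality check and the ``coherence is inherited by faithfulness of $U$'' step explicit (the paper leaves these implicit) and that you cite \cref{lem:compatibility_of_lax_and_pseudo_with_saturation} where the paper uses \cref{lem:compatibility_of_gray_tensor_and_saturation} directly, which is an immaterial distinction since the former is a corollary of the latter.
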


\begin{proof} 
Note that $\Db_0^\flat = \Db_0^\# = (\Db_0,\overline{\emptyset})$ as all arrows of $\Db_0$ of dimension strictly superior to $0$ are identities.

The proposition states that the structural maps (associativity and unit isomorphisms) of the Gray tensor product of $\infty$-categories preserve the marking we specified on the tensor product.

For the unit, let $(X,M)$ be an $m$-marked $\infty$-category. The Lemmas \ref{lem:compatibility_of_gray_tensor_and_saturation} and \ref{lem:unit_axiom_of_gray_tensor} imply that
\[
\begin{array}{rclrrrr}
(X,M) \ltens (\Db_0,\overline{\emptyset}) &=& (X \otimes \Db_0 ,\overline{M\ltens \emptyset}) & =& (X,M)\\
(X,M) \ptens (\Db_0,\overline{\emptyset}) &=& (X \otimes \Db_0 ,\overline{M\ptens \emptyset}) &=& (X,M)
\end{array} \]
and
\[
\begin{array}{rclrrrr}
(\Db_0,\overline{\emptyset})\ltens (X,M)  &=& (\Db_0 \otimes X ,\overline{\emptyset \ltens M}) &=& (X,M)\\
(\Db_0,\overline{\emptyset})\ptens (X,M) &=& (\Db_0 \otimes X ,\overline{ \emptyset\ptens M}) &=& (X,M)
\end{array}\]
For the associativity isomorphism, let $(X,M)$, $(Y,N)$, and $(Z,P)$ be three marked $\infty$-categories.
\Cref{lem:compatibility_of_gray_tensor_and_saturation} implies that
\[\begin{array}{rcl}
\big((X,M) \ltens (Y,N)\big)\ltens (Z,P) &=& (X\otimes Y \otimes Z, \overline{(M\ltens N) \ltens P})\\
\big((X,M) \ptens (Y,N)\big)\ltens (Z,P) &=& (X\otimes Y \otimes Z, \overline{(M\ptens N) \ltens P})\\
\end{array}\]
and
\[\begin{array}{rcl}
(X,M) \ltens \big((Y,N)\ltens (Z,P)\big) &=& (X\otimes Y \otimes Z, \overline{M\ltens (N \ltens P)})\\
(X,M) \ptens \big((Y,N)\ltens (Z,P)\big) &=& (X\otimes Y \otimes Z, \overline{M\ptens (N \ltens P)}).\\
\end{array}\]

\Cref{lem:associativity_of_lax_and_pseudo_product} shows that these two markings on $X \otimes Y \otimes Z$, in the lax and pseudo cases, coincide.
\end{proof}

\begin{prop}
The pseudo and lax-Gray tensor products $\ltens$ and $\ptens$ preserve colimits in each variable.
\end{prop}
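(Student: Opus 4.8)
The plan is to fix an $m$-marked $\infty$-category $(Y,N)$ and prove that the endofunctor $(\uvar)\ltens(Y,N)$ preserves colimits; the remaining variable and the product $\ptens$ are then treated by the same argument. Let $(X_i,M_i)_{i\in I}$ be a diagram with colimit $(X,\overline P)$, where $X=\colim_i X_i$ and $P=\bigcup_i f_i(M_i)$ as described in \cref{cstr:marked_colimits}, and write $g_i := f_i\otimes\id_Y\colon X_i\otimes Y\to X\otimes Y$. I must show that the canonical comparison map $\colim_i\big((X_i,M_i)\ltens(Y,N)\big)\to\big(\colim_i(X_i,M_i)\big)\ltens(Y,N)$ is an isomorphism. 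On underlying $\infty$-categories both sides are $X\otimes Y$ and the comparison map is the canonical isomorphism: this uses only that the underlying $\infty$-category of a colimit in $\micat$ is the colimit of the underlying $\infty$-categories (\cref{cstr:marked_colimits}) and that the Gray tensor product is closed, hence cocontinuous in each variable (\cref{cstr:Tensor_product}). Everything therefore reduces to checking that the two markings induced on $X\otimes Y$ coincide.

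For the target of the comparison map the marking is $\overline{\overline P\ltens N}$, and since $N=\overline N$, \cref{lem:compatibility_of_lax_and_pseudo_with_saturation} simplifies it to $\overline{P\ltens N}=\overline{(P\otimes Y_{\geqslant 0})\cup(X_{\geqslant 0}\otimes N)}$ using \cref{notation:set_of_cells}. For the source, \cref{cstr:marked_colimits} gives the marking $\overline{\bigcup_i g_i(\overline{M_i\ltens N})}$; because each $g_i$ preserves identities and composites the inner saturations may be discarded (in the spirit of \cref{lem:compatibility_of_coproduct_and_saturation}), and together with the identity $g_i(a\otimes b)=f_i(a)\otimes b$ this rewrites the marking as
\[ \overline{\textstyle\bigcup_i g_i(M_i\ltens N)}=\overline{(P\otimes Y_{\geqslant 0})\cup(Q\otimes N)},\qquad Q:=\textstyle\bigcup_i f_i\big((X_i)_{\geqslant 0}\big). \]
The two markings thus differ only in that $Q\subseteq X_{\geqslant 0}$ replaces $X_{\geqslant 0}$ in the second summand, so it remains to prove $\overline{(P\otimes Y_{\geqslant 0})\cup(Q\otimes N)}=\overline{(P\otimes Y_{\geqslant 0})\cup(X_{\geqslant 0}\otimes N)}$.

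The inclusion $\subseteq$ is clear, and the one nontrivial input for the reverse inclusion is the fact that $X=\colim_i X_i$ is generated, under identities and composition, by $Q$. I expect this to be the only real obstacle. It is a standard property of colimits in $\icat$, which I would prove by letting $X'\subseteq X$ be the closure of $Q$ under identities and composition, observing that $X'$ is a sub-$\infty$-category (it is also closed under source and target, since $Q$ is) through which every $f_i$ factors, and concluding from the universal property of the colimit that the inclusion $\iota\colon X'\hookrightarrow X$ admits a retraction $\phi$ with $\iota\phi=\id_X$; as $\iota$ is also a monomorphism this forces it to be an isomorphism. Granting this, the argument used in the proof of \cref{lem:compatibility_of_gray_tensor_and_saturation} applies to the set $\{c\in X_{\geqslant 0}\mid c\otimes n\in\overline{Q\otimes N}\text{ for all }n\in N\}$, which contains $Q$ and is closed under identities and composition, hence is all of $X$; this yields $\overline{Q\otimes N}=\overline{X_{\geqslant 0}\otimes N}$, and one then replaces $Q\otimes N$ by $X_{\geqslant 0}\otimes N$ inside the outer saturation by \cref{lem:compatibility_of_coproduct_and_saturation}, giving the desired equality.

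Finally, for $\ptens$ the same computation produces the extra summand $X_{>0}\otimes Y_{>0}$ on one side and $Q_{>0}\otimes Y_{>0}$ on the other, with $Q_{>0}:=\bigcup_i f_i((X_i)_{>0})$; since the $n$-dimensional arrows of $X$ already arise as composites of the $n$-dimensional elements of $Q$ and identities, the generation statement above also shows that $X_{>0}$ is generated by $Q_{>0}$, and this summand is handled exactly as $X_{\geqslant 0}\otimes N$ was, via \cref{lem:compatibility_of_gray_tensor_and_saturation}. The preservation of colimits in the right variable is proved symmetrically, replacing $g_i=f_i\otimes\id_Y$ by $\id_X\otimes f_j$ and the identity $g_i(a\otimes b)=f_i(a)\otimes b$ by $a\otimes f_j(b)$.
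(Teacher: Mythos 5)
Your proof is correct and takes essentially the same approach as the paper's, whose entire proof is a one-line appeal to the cocontinuity of the Gray tensor product, the description of colimits in \cref{cstr:marked_colimits}, and \cref{lem:compatibility_of_gray_tensor_and_saturation}; you have simply unwound that citation in full detail. The only ingredients you add are the generation-by-images fact for colimits (proved by your retraction argument) and the reuse of the proof, rather than the statement, of \cref{lem:compatibility_of_gray_tensor_and_saturation}, both of which the paper leaves implicit.
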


\begin{proof}
It follows from the fact that the Gray tensor product $\otimes$ preserves colimits in each variable, the description of colimits of $m$-marked $\infty$-categories given in \cref{cstr:marked_colimits}, and \cref{lem:compatibility_of_gray_tensor_and_saturation}.
\end{proof}

\change{
\begin{remark}
Remark \ref{rem:micat localement presentable} states that $\micat$ is locally presentable. Consequently, the preceding proposition implies that the functors $C\ltens\uvar$, $\uvar\ltens C$, $C\ptens\uvar$, and $\uvar\ptens C$ admit right adjoints. In particular, this immediately implies that both tensor products are closed monoidal structures.
\end{remark}}

\subsection{The Inductive Left Semi-Model Structure}
\label{subsec:Semi_model}

\change{
In this section, we will construct a left semi-model structure on the category $\micat$. The definitions and results on left semi-model structures that we will use here are recalled in \cref{sec:semi-model-categories}.}

\begin{definition}
\label{def:generating_cofibrations}
We define the set $I = I^{\partial} \cup I^{+m}$ to be our \textit{set of generating cofibrations} in $\micat$ where:
\[ I^{\partial} = \{ i_n : \partial \Db_n^\flat \to \Db_n^\flat \mid n \geqslant 0 \} \]
\[ I^{+m} = \{ \Db_n^\flat \to (\Db_n,\overline{\{e_n\}}) \mid n \geqslant 0 \} \]
An arrow in $\micat$ is said to be an \textit{acyclic fibration} if it has the right lifting property against all arrows in $I$. An arrow in $\micat$ is said to be a \textit{cofibration} if it has the left lifting property against all acyclic fibrations.
\end{definition}

\begin{remark}
It immediately follows from the small object argument that every morphism can be factored into a cofibration followed by an acyclic fibration, and that all cofibrations are retracts of transfinite compositions of pushouts of morphisms in $I$.
\end{remark}

\change{
\begin{prop}
\label{prop:cofibration of the marked model structure}
A morphism $(K,M) \to (L,N)$ is a cofibration in $\micat$ if and only if the induced functor $K \to L$ is a cofibration in the canonical model structure $\icat_\can$ recalled in \cref{theo:canonical_model_structure}.

In particular, the cofibrant objects of $\micat$ are exactly the $m$-marked $\infty$-categories whose underlying $\infty$-category is free on a polygraph, with any possible marking on them. 
\end{prop}

\begin{proof}
As recalled in \cref{theo:canonical_model_structure}, the set of generating cofibrations of the canonical model structure is given by $\{i_n:\partial \Db_n\to \Db_n \mid n \geqslant 0\}$. Note that the trivial marking functor $(\uvar)^\flat:\icat\to \micat$ and the forgetful functor $U:\micat\to \icat$ preserve colimits. We can directly deduce that both of these functors preserve cofibrations. 

In particular, a cofibration $(K,M) \to (L,N)$ induces a cofibration $K \to L$ in $\icat_{\can}$.

Conversely, suppose we are given a morphism $(K,M) \to (L,N)$ such that the induced morphism $K \to L$ is a cofibration in $\icat_{\can}$. We have a canonical square:
\[
\begin{tikzcd}[ampersand replacement=\&]
	{K^\flat} \& {(K,M)} \\
	{L^\flat} \& {(L,N)}
	\arrow[from=1-1, to=1-2]
	\arrow[from=1-1, to=2-1]
	\arrow[from=1-2, to=2-2]
	\arrow[from=2-1, to=2-2]
\end{tikzcd}
\]
where the left-hand vertical morphism is a cofibration. The canonical morphism $L^\flat\coprod_{K^\flat}(K,M)\to (L,N)$ is the identity on the underlying category and is thus an iterated pushout of morphisms in $I^{+m}$. In particular, it is a cofibration, and by stability under pushouts and compositions, so is $(K,M) \to (L,N)$.

Finally, the last claim follows from \cite[Theorem 7.4]{metayer2008cofibrant}, which asserts that cofibrant objects of $\icat_{\can}$ correspond to $\infty$-categories that are free on a polygraph.
\end{proof}
}

\begin{remark}
A morphism $\pi: X \to Y$ has the right lifting property against all morphisms in $I^\partial$ if its image by the forgetful functor to $\icat$ is an acyclic fibration; that is, if for every pair of parallel $n$-arrows $u, v$ in $X$, the map $\Hom_X(u,v) \to \Hom_Y(\pi(u),\pi(v))$ is surjective.

$\pi$ has the right lifting property against all morphisms in $I^{+m}$ if and only if for every arrow $f \in X$ such that $\pi(f)$ is marked in $Y$, $f$ is marked in $X$. An acyclic fibration is a map that has both these properties.
\end{remark}

The pushout-product, or corner-product (sometimes also called the Leibniz product) $f \hatltens g$ and $f \hatptens g$ is defined as usual: if $f :X \to Y$ and $g: A \to B$ are two morphisms in $\micat$, then $f \hatltens g$ is the canonical morphism:
\[ X \ltens B \coprod_{X \ltens A} Y \ltens A \to Y \ltens B \]
and $f \hatptens g$ is the canonical morphism
\[ X \ptens B \coprod_{X \ptens A} Y \ptens A \to Y \ptens B \]
We refer to the appendix of \cite{joyal2007quasi} for the general theory of pushout products and their formal properties.

\begin{prop}\label{prop:pushout_product_cofibrations} If $f$ and $g$ are two cofibrations in $\micat$, then $f \hatltens g$ and $f \hatptens g$ are both cofibrations. \end{prop}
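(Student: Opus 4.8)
The plan is to reduce the entire statement to the corresponding fact about the Gray tensor product on the underlying $\infty$-categories, using two inputs: the elementary characterization of cofibrations in $\micat$, and the fact recalled from \cite{ara2020folk} that the folk model structure on $\icat$ is monoidal for the Gray tensor product. The key observation is that cofibrancy in $\micat$ is detected purely on underlying $\infty$-categories, so the marking data will play essentially no role.

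First I would record the characterization of cofibrations: a map $f : (A,M) \to (B,N)$ is a cofibration in $\micat$ if and only if its underlying $\infty$-functor $A \to B$ is a cofibration of the folk model structure. This is asserted in the main theorem of the introduction, but it can also be proven directly from \cref{def:generating_cofibrations} with no model-structure machinery, hence without circularity. Indeed, the forgetful functor $U : \micat \to \icat$ sends $I^a$ to the folk generating cofibrations and sends $I^m$ to identities, and a trivial fibration of $\micat$ is exactly a map whose underlying functor is a folk trivial fibration and which moreover reflects markings (its right lifting property against $I^m$). For the implication we need, given a lifting square from $f$ to a trivial fibration $\pi = (X,P) \to (Y,Q)$ with $A \to B$ a folk cofibration, one first produces the underlying lift $\ell : B \to X$, and then checks that $\ell$ is a morphism of marked $\infty$-categories: if $b \in N$ then $\pi(\ell(b))$ lies in $Q$, whence $\ell(b) \in P$ because $\pi$ reflects markings.

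With this in hand the proof is a one-line computation. Since $U$ is monoidal for both $\ltens$ and $\ptens$ (both inducing the Gray tensor $\otimes$ on underlying $\infty$-categories) and preserves colimits (\cref{cstr:marked_colimits}), it commutes with the formation of pushout-products, giving
\[ U(f \hatltens g) = U(f) \hatotimes U(g) = U(f \hatptens g). \]
If $f$ and $g$ are cofibrations, then $U(f)$ and $U(g)$ are folk cofibrations by the characterization, hence $U(f)\hatotimes U(g)$ is a folk cofibration because the folk model structure is monoidal for $\otimes$ (\cite{ara2020folk}); applying the characterization in the reverse direction then shows that $f \hatltens g$ and $f \hatptens g$ are cofibrations in $\micat$.

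The only real obstacle is the characterization of cofibrations, and in particular checking it is not circular; once it is available the markings never intervene. If one preferred to avoid it, the alternative is the standard Joyal--Tierney reduction (\cite{joyal2007quasi}): since the cofibrations form the weakly saturated class generated by $I$ and the pushout-products preserve colimits in each variable, it suffices to treat $i \hatltens j$ and $i \hatptens j$ for $i,j \in I^a \cup I^m$. In the three cases where at least one of $i,j$ lies in $I^m$ the underlying map is an isomorphism (a pushout-product against a map that is the identity on underlying $\infty$-categories), so only the marking changes and one checks, via \cref{lem:compatibility_of_lax_and_pseudo_with_saturation}, that the result is obtained from arrows of $I^m$ by pushout and composition; the remaining case $i_n \hatotimes i_m$ is a folk cofibration between flat-marked polygraphs. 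I would expect the marking bookkeeping in this alternative route to be the most tedious part, which is precisely what the characterization lets us bypass.
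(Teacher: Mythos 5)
Your main argument is correct in substance, but it takes a genuinely different route from the paper's. The paper reduces, by the standard stability properties of the corner-product, to the case where $f$ and $g$ are generating cofibrations, and then argues case by case: when both lie in $I^a$, the underlying pushout-product is a folk cofibration by \cite{ara2020folk}, and the markings are either trivial (for $\ltens$) or can be added afterwards by pushouts along maps of $I^m$ (for $\ptens$); when at least one lies in $I^m$, the pushout-product induces an isomorphism on underlying $\infty$-categories, and any such map is a pushout of maps in $I^m$, hence a cofibration. You instead prove once and for all that cofibrancy in $\micat$ is detected by the forgetful functor $U$, after which the whole proposition collapses to the monoidality of the folk model structure. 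Your route is cleaner and produces a reusable statement (the characterization of cofibrations announced in the introduction, without circularity, since your lifting argument uses only the definitions of the two weak factorization systems); the paper's route stays at the level of generators and therefore has to do exactly the marking bookkeeping you bypass. Incidentally, your fallback sketch is essentially the paper's proof, except that for $\ptens$ the codomain of $i_n \hatptens i_m$ is \emph{not} flat-marked (arrows $x \otimes y$ with both factors of positive dimension get marked); the paper handles this by taking further pushouts along $I^m$.

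There is one gap to repair: you use both directions of the characterization but prove only one. Your lifting argument shows that a map of $\micat$ whose underlying functor is a folk cofibration is a cofibration in $\micat$ (the direction you invoke at the end), but the first step of your computation --- ``if $f$ and $g$ are cofibrations then $U(f)$ and $U(g)$ are folk cofibrations'' --- is the converse implication, and it is nowhere justified. Fortunately it is easy: by the small object argument every cofibration of $\micat$ is a retract of a transfinite composition of pushouts of maps in $I^a \cup I^m$; since $U$ preserves colimits and retracts, sends $I^a$ to the folk generating cofibrations and $I^m$ to identities, it sends every cofibration of $\micat$ to a folk cofibration. (Alternatively: any folk trivial fibration $p : X \to Y$ underlies the trivial fibration $X^\sharp \to Y^\sharp$ of $\micat$, against which $f$ lifts, and such a lift solves the original unmarked lifting problem.) With that sentence added, your argument is complete.
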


\begin{proof}
 By the usual properties of the corner-product, it is enough to check this when $f$ and $g$ are generating cofibrations. If $f$ and $g$ are both in $I^\partial$, then $f \ltens g$ has no marked arrows in either its domain or codomain and coincides with the corner-product $f \hatotimes g$ in $\icat$, which is a cofibration by \cite[theorem 3.9]{ara2020folk}. $f \ptens g$ is the same except that some arrows are marked, but we can always add these markings by taking additional pushouts by morphisms in $I^{+m}$, so it is again a cofibration.

The forgetful functor $\micat \to \icat$ is monoidal for both tensor products and preserves colimits, so it preserves the corner-product. In particular, if either $f$ or $g$ is in $I^{+m}$, then it is sent to isomorphisms by this forgetful functor, and hence $f \hatltens g$ and $f \hatptens g$ induce isomorphisms between their underlying $\infty$-categories. Now, if $f:(X, N) \to (X, M)$ is a morphism in $\micat$ that induces an isomorphism on underlying $\infty$-categories, then it is a pushout of morphisms in $I^{+m}$: one simply needs to take such pushouts to make all arrows in $M$ marked.
\end{proof}

\begin{construction} \label{cstr:interval} We define $I := \Db_1^\sharp = (\Db_1,\{e_1\})$. It is the $\infty$-category with two objects, $e_0^-$ and $e_0^+$, and a marked arrow $e_1:e_0^- \to e_0^+$. We denote by $j_-$ and $j_+$ the two maps $\Db_0 \to I$ corresponding, respectively, to the two objects $e_0^-$ and $e_0^+$. This gives a diagram:
\[ \Db_0 \coprod \Db_0 \cto I \to \Db_0 \]
which will play the role of the interval object for our left semi-model structure on $\micat$.
\end{construction}

We will take as a set of ``generating anodyne cofibrations'' (also called a ``pseudo-generating set of acyclic cofibrations'') the set of maps of the form $j_+ \hatptens i$ where $i$ is a generating cofibration, more precisely:

\begin{definition} 

\label{def:generating_anodyne} $ $
\begin{itemize}
\item \change{We say that a morphism is a \textit{generating anodyne cofibration} if it is of the form $j_+ \hatptens i$ with $i$ a generating cofibration.}
\item We say that a morphism in $\micat$ is a \emph{naive fibration} if it has the right lifting property against all morphisms of the form $ j_+ \hatptens i$, where $j_+: \Db_0 \to I$ is as in \cref{cstr:interval}, and $i$ is one of the generating cofibrations as in \cref{def:generating_cofibrations}.

\item \change{We say that an $m$-marked $\infty$-category $C$ is \textit{fibrant} if the morphism $C \to 1$ is a naive fibration.}
\item We say that a morphism in $\micat$ is an \emph{anodyne cofibration} if it has the right lifting property against all naive fibrations.

\item We say that a cofibration in $\micat$ is \emph{acyclic} if it has the lifting property against all naive fibrations between fibrant objects.

\item We say that a map in $\micat$ is a \emph{fibration} if it has the right lifting property against all acyclic cofibrations.

\end{itemize}

\end{definition}

As before, it immediately follows from the small object argument that every morphism factors as an anodyne cofibration followed by a naive fibration, and all anodyne cofibrations are retracts of transfinite compositions of pushouts of the generating anodyne cofibrations.

\begin{remark}
It immediately follows from \cref{prop:pushout_product_cofibrations} that, as $j_+$ is a cofibration, all maps of the form $ j_+ \hatptens i$ are cofibrations. In particular, all acyclic fibrations are also naive fibrations and all anodyne cofibrations are cofibrations.
\end{remark}

\begin{prop}
Acyclic cofibrations and fibrations form a cofibrantly generated weak factorization system on $\micat$. A morphism with fibrant target is a fibration if and only if it is a naive fibration.
\end{prop}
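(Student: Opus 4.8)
The plan is to juggle two weak factorization systems: the cofibrantly generated pair $(\text{anodyne cofibrations},\, \text{naive fibrations})$ produced by the small object argument applied to the set $J = \{\, j_+ \hatptens i : i \in I\,\}$ (\cref{def:generating_anodyne}), and the target pair $(\text{acyclic cofibrations},\, \text{fibrations})$, exploiting that the latter is defined by restricting the lifting test to naive fibrations \emph{between naively fibrant objects}. First I would record the formal relations. Since naive fibrations between naively fibrant objects form a subclass of all naive fibrations, we get $\text{anodyne} \subseteq \text{acyclic}$ (both classes consisting of cofibrations), and dually $\text{fibration} = \mathrm{RLP}(\text{acyclic}) \subseteq \mathrm{RLP}(\text{anodyne}) = \text{naive fibration}$. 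I would also note that the acyclic cofibrations, being the intersection of the cofibrations with an $\mathrm{LLP}$-class, are closed under retracts, pushouts, coproducts and transfinite composition, while the fibrations, as an $\mathrm{RLP}$-class, are closed under composition, pullback and retract.

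The characterizations of fibrant objects and of fibrations between them are then nearly definitional. The crucial observation is that \emph{any naive fibration $p$ between naively fibrant objects is a fibration}: by the definition of acyclic cofibration, every acyclic cofibration lifts against such a $p$, so $p \in \mathrm{RLP}(\text{acyclic}) = \text{fibration}$. Applying this with the terminal object $\Db_0^\flat$ as codomain (it is trivially naively fibrant), a naively fibrant $X$ has $X \to \Db_0^\flat$ a naive fibration between naively fibrant objects, hence a fibration, so $X$ is fibrant; the converse uses $\text{fibration} \subseteq \text{naive fibration}$. This gives \emph{naively fibrant $\Leftrightarrow$ fibrant}. The assertion about maps between fibrant objects follows immediately: one direction is $\text{fibration} \subseteq \text{naive fibration}$, the other is the crucial observation once ``fibrant'' is identified with ``naively fibrant''. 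I would also extract a companion fact for later: a naive fibration with fibrant codomain has fibrant domain and is itself a fibration, since naive fibrations compose and the terminal map of its codomain is a naive fibration.

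For the weak factorization system, everything reduces to factoring an arbitrary map as an acyclic cofibration followed by a fibration; given this functorially via a set, the usual retract argument turns $\text{fibration} = \mathrm{RLP}(\text{acyclic})$ and $\text{acyclic} = \mathrm{LLP}(\text{fibration})$ into a genuine WFS. When the target $Y$ is fibrant this is easy: factoring $f$ through $J$ as $X \to Z \to Y$ with $X \to Z$ anodyne and $Z \to Y$ a naive fibration, the companion fact forces $Z$ fibrant and $Z \to Y$ a fibration, while $X \to Z$ is anodyne, hence acyclic. \textbf{The main obstacle is the factorization for a non-fibrant target}, where this device breaks down: pulling a fibration back along a fibrant replacement $Y \to \widehat{Y}$ yields a fibration over $Y$, but the induced comparison map out of $X$ need not be a cofibration, and re-factoring it through $J$ only returns a naive fibration over the non-fibrant $Y$, which one cannot promote to a fibration. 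This forces one to abandon the bare small object argument on $J$ and instead prove that the fibrations are detected by a \emph{set}.

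To produce such a set I would argue by accessibility. The category $\micat$ is locally presentable, the naive fibrations are $\mathrm{RLP}(J)$ for the set $J$, and the fibrant objects form a small-injectivity (hence accessible) class; consequently the naive fibrations between fibrant objects form an accessible, accessibly embedded class of arrows, and the acyclic cofibrations — its $\mathrm{LLP}$ intersected with the combinatorial class of cofibrations — satisfy the solution-set condition. By the standard combinatorial weak factorization system machinery (Smith's argument), there is then a set $J' \supseteq J$ of acyclic cofibrations with $\mathrm{RLP}(J') = \text{fibration}$; the small object argument on $J'$ supplies the required acyclic-cofibration/fibration factorization of \emph{every} map and simultaneously exhibits the pair as cofibrantly generated, after which the retract argument identifies $\mathrm{LLP}(\text{fibration})$ with the acyclic cofibrations. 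The delicate point throughout is exactly this accessibility/solution-set step; the remaining assertions are formal consequences of the definitions together with the already-established $(\text{anodyne},\, \text{naive fibration})$ factorization.
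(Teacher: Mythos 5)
Your formal reductions are correct and reproduce the ``easy half'' of the result: anodyne $\subseteq$ acyclic and fibration $\subseteq$ naive fibration; a naive fibration between naively fibrant objects has the right lifting property against all acyclic cofibrations by definition, hence is a fibration; applying this to $X \to \Db_0^\#$ identifies fibrant with naively fibrant, and then fibrations between fibrant objects with naive fibrations; and once every map admits an (acyclic cofibration, fibration) factorization, the retract argument does finish the proof. You have also correctly located the real difficulty: factoring a map with non-fibrant codomain, where the small object argument on $J$ only returns naive fibrations, which cannot be promoted to fibrations.

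The gap is the step you lean on to resolve that difficulty. ``Smith's argument'' does not apply here: in all its forms (including the semi-model variants) it takes as input a class $W$ of weak equivalences satisfying two-out-of-three that is accessible and accessibly embedded in the arrow category, and produces generators for $\mathrm{cof} \cap W$. In the present situation there is no such class $W$; the acyclic cofibrations are $\mathrm{cof} \cap \mathrm{LLP}(M)$, where $M$ is the class of naive fibrations between fibrant objects. Your accessibility argument does show that $M$ is accessible and accessibly embedded, but this does not transfer across $\mathrm{LLP}(-)$: left-lifting classes are closed under retracts, pushouts and transfinite composition, yet they are not accessible in general, and they are not closed under the filtered colimits one would need in order to reduce to the $\lambda$-presentable acyclic cofibrations (a lifting problem against a filtered colimit of maps need not factor compatibly through any stage). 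Extracting a generating set from a class defined by a left lifting property is, for an arbitrary class, a statement of Vop\v{e}nka-principle strength; it is true here for non-formal reasons, and proving it is essentially the entire content of Theorem 4.1 of \cite{henry2020combinatorial} (existence of the left saturation of a combinatorial premodel structure), which is exactly what the paper's one-line proof invokes. So your outline has the right shape --- it is, in effect, an outline of the cited theorem --- but the decisive step is not covered by the standard machinery you appeal to, and would have to be proved independently or cited from \cite{henry2020combinatorial}.
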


\change{
\begin{proof}
This is a direct application of the results of Section 4 of \cite{henry2020combinatorial}. Starting from the premodel (see \cref{def:premodel}) structure on $\micat$ whose weak factorization systems are (cofibrations, acyclic fibrations) and (anodyne cofibrations, naive fibrations), we obtain the one with (cofibrations, acyclic fibrations) and (acyclic cofibrations, fibrations) as its ``left saturation'' $\Lb (\micat)$ in the sense of Theorem 4.1 of \cite{henry2020combinatorial}. All the claims in the proposition follow from this Theorem 4.1.
\end{proof}
}

\begin{remark}\label{rk:choice_def_anodyne}
Note that replacing $\hatptens$ by $\hatltens$ in \cref{def:generating_anodyne} would not change the definition. Indeed, if $X = Y^\sharp$ is an $m$-marked $\infty$-category whose arrows of dimension strictly greater than $0$ are all marked, then for any $m$-marked $\infty$-category $Z$ one has $X \ptens Z = X \ltens Z$. As this applies to both the domain and the co-domain of $j_+$, it follows that $j_+ \hatptens i = j_+ \hatltens i$.

Also, the reader should not be worried about the use of $j_+$ in \cref{def:generating_anodyne} rather than $j_-$ or both $j_-$ and $j_+$. While using $j_-$ or both $j_-$ and $j_+$ instead of $j_+$ would change the definition of naive fibrations and anodyne cofibrations, this does not affect the definition of (naive) fibrations between fibrant objects; hence, the acyclic cofibrations and fibrations would not be changed. Indeed, once the existence of a (monoidal) model structure is established, it follows that $j_-$ is acyclic by $2$-out-of-$3$, and hence all the maps $j_- \hatptens i = j_- \hatltens i$ are also acyclic cofibrations.
\end{remark}

\begin{lemma}\label{lem:acyclic_cof_left_pushout_product}
If $f$ is an anodyne (resp. acyclic) cofibration and $g$ is a cofibration, then $f \hatptens g$ and $f \hatltens g$ are anodyne (resp. acyclic).
\end{lemma}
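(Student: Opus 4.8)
The statement splits into an \emph{anodyne} part and an \emph{acyclic} part, each asserted for both corner-products $\hatptens$ and $\hatltens$. My plan is to prove the anodyne part first, by a pure saturation argument, and then to bootstrap the acyclic part from it using the adjunction between the corner-product and the internal (corner-)hom. Since the only structural inputs are that each of $\ptens,\ltens$ is an associative closed monoidal structure and that $j_+ \hatptens i = j_+ \hatltens i$ (\cref{rk:choice_def_anodyne}), I would carry out everything for $\ptens$ and observe at the end that the $\ltens$ case is verbatim.

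For the anodyne case I would argue as follows. By \cref{def:generating_anodyne} the anodyne cofibrations are the weak saturation (closure under pushout, transfinite composition and retract) of $S = \{\, j_+ \hatptens i : i \in I \,\}$, while the cofibrations are the weak saturation of $I$. By the standard two-variable calculus of corner-products (appendix of \cite{joyal2007quasi}), the corner-product of these two weak saturations lands in the weak saturation of $S \hatptens I = \{\, (j_+ \hatptens i)\hatptens i' \,\}$. Using associativity of the corner-product I rewrite $(j_+ \hatptens i)\hatptens i' = j_+ \hatptens (i \hatptens i')$; by \cref{prop:pushout_product_cofibrations} the map $i \hatptens i'$ is a cofibration, so $j_+ \hatptens (i \hatptens i')$ lies in the weak saturation of $\{\, j_+ \hatptens(\text{cofibration})\,\} \subseteq $ the anodyne cofibrations. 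Hence $f \hatptens g$ is anodyne whenever $f$ is anodyne and $g$ is a cofibration.

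For the acyclic case I fix an acyclic cofibration $f$ and consider the class of cofibrations $g$ with $f \hatptens g$ acyclic. Because $f \hatptens (\uvar)$ carries pushouts, transfinite composites and retracts to constructions of the same kind, and because acyclic cofibrations — being the cofibrations with the left lifting property against naive fibrations between fibrant objects — are closed under these, this class is itself saturated; so it suffices to treat $g = i$ a generating cofibration. Then $f \hatptens i$ is a cofibration by \cref{prop:pushout_product_cofibrations}, and it remains only to check it lifts against every naive fibration $p : E \to Y$ between fibrant objects. By the corner-product/corner-hom adjunction this is equivalent to $f$ lifting against the corner-hom $\widehat{[i,p]}_\ptens \colon [B,E]_\ptens \to [A,E]_\ptens \times_{[A,Y]_\ptens} [B,Y]_\ptens$ for $i \colon A \to B$, and since $f$ is acyclic it suffices to show $\widehat{[i,p]}_\ptens$ is a naive fibration between fibrant objects.

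This fibrancy verification is where I expect the real work to lie. That $\widehat{[i,p]}_\ptens$ is a naive fibration follows from the anodyne case by adjunction: lifting $j_+\hatptens i'$ against it transposes to lifting $j_+\hatptens(i'\hatptens i)$, which is anodyne, against $p$. The fibrancy of the domain and codomain uses that the sources and targets of the generators in $I$ (the objects $\partial\Db_n$, $\Db_n$ and their marked variants) are cofibrant: for $B$ cofibrant and $E$ fibrant, the map $[B,E]_\ptens \to \ast$ transposes to the statement that $E \to \ast$ lifts against the anodyne maps $j_+\hatptens(i'\hatptens(\emptyset \to B))$, so $[B,E]_\ptens$ is fibrant; the same computation shows $[A,p]_\ptens$ is a naive fibration for $A$ cofibrant, whence $[A,E]_\ptens \times_{[A,Y]_\ptens} [B,Y]_\ptens$ is fibrant, being a naive fibration over the fibrant object $[B,Y]_\ptens$. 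This completes the $\ptens$ case, and the $\hatltens$ statements follow by the identical arguments applied to the closed structure $\ltens$, using \cref{rk:choice_def_anodyne} to identify $j_+ \hatltens i = j_+ \hatptens i$ so that the anodyne input transfers unchanged.
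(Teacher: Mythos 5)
Your proof is correct and follows essentially the same route as the paper: the anodyne case by reducing both variables to generators and using associativity of the corner-product together with \cref{prop:pushout_product_cofibrations}, and the acyclic case by transposing against the corner-hom and invoking the definition of acyclic cofibrations plus the fact that naive fibrations between fibrant objects are fibrations. If anything, your treatment of the acyclic case is more careful than the paper's: where the paper simply asserts that the pullback exponential $\langle p / i \rangle$ is a naive fibration \emph{between fibrant objects}, you supply the missing justification by first reducing the cofibration variable to generating cofibrations (whose domains and codomains are cofibrant) and then checking fibrancy of the hom objects by the adjunction argument.
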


\begin{proof}
To get the result for ``anodyne cofibrations,'' it is enough to prove it for the generating anodyne cofibrations. Let $i$ be one of the generating cofibrations and $f = j_+ \hatptens i'$ be one of the generating anodyne cofibrations. We have $f \hatptens i = j_+ \hatptens (i \hatptens i')$. As $i' \hatptens i$ is a pushout of generating cofibrations $i_1, \dots, i_k$ by \cref{prop:pushout_product_cofibrations}, it follows that $j_+ \hatptens (i \hatptens i')$ is a pushout of the $j_+ \hatptens i_k$ and hence is an anodyne cofibration.

The result for acyclic cofibrations follows from the formal properties of the pushout product: it follows that if $i$ is a cofibration and $p$ is a naive fibration, then the (right) pullback exponential $\langle p / i \rangle$ is a naive fibration. If $p$ is a (naive) fibration between fibrant objects, then $\langle p / i \rangle$ is a naive fibration between fibrant objects, hence a fibration. It follows that if $i$ is an acyclic cofibration and $j$ is a cofibration, then $i \hatptens j$ is an acyclic cofibration as it is a cofibration by \cref{def:generating_cofibrations}, and if $p$ is a fibration between fibrant objects, then $i \hatptens j$ has the right lifting property against $p$ because $j$ has the left lifting property against $\langle p / i \rangle$.

The case of $\ltens$ works exactly the same considering the first half of \cref{rk:choice_def_anodyne}.
\end{proof}

\begin{theorem}\label{th:canonical_left_semi_model}
The category $\micat$ of $m$-marked $\infty$-categories admits a $\omega$-combinatorial left semi-model structure (\cref{defi:basic def on left semi-model structure}), called the \emph{inductive model structure} and denoted by $\micat_\ind$, in which the cofibrations and acyclic fibrations are as in \cref{def:generating_cofibrations} and the fibrations are as in \cref{def:generating_anodyne}. Moreover, this left semi-model structure is monoidal (\cref{defi:basic def on left semi-model structure}) for both tensor products $\ptens$ and $\ltens$ (from \cref{subsec:Marked_Gray}).
\end{theorem}

\begin{proof}
The existence of the left semi-model structure immediately follows from Theorem 6.12 of \cite{henry2020combinatorial}. Because of \cref{prop:pushout_product_cofibrations} and \cref{lem:acyclic_cof_left_pushout_product}, tensoring by the interval object $I$ of \cref{cstr:interval} is a ``strong Quillen functor'' in the sense of Section 6 of \cite{henry2020combinatorial}. Note that to apply Theorem 6.12, one needs to observe that $\micat$, with the (cofibrations, acyclic fibrations) and (acyclic cofibrations, fibrations) weak factorization systems, is both ``right saturated'' and ``left saturated'', that is, that a fibration that has the right lifting property against all cofibrations between cofibrant objects is an acyclic fibration, and that a cofibration that has the left lifting property against all fibrations between fibrant objects is an acyclic cofibration. The first one holds because the generating cofibrations are cofibrations between cofibrant objects, and the second because that is how we defined acyclic fibrations.

As $\micat$ is finitely locally presentable, and as the codomains of the generating cofibrations and generating anodyne cofibrations are $\omega$-small, Theorem 4.1 of \cite{henry2020combinatorial} implies that $\micat_{\ind}$ is $\omega$-combinatorial.

The fact that this left semi-model structure is monoidal directly follows from \cref{prop:pushout_product_cofibrations} and \cref{lem:acyclic_cof_left_pushout_product}.
\end{proof}

\begin{remark}
The proof of \cref{th:canonical_left_semi_model} above also shows that $\micat$ also admits a right  semi-model category structure whose fibrations and acyclic cofibrations are the fibrations and acyclic cofibrations of \cref{def:generating_anodyne} and whose cofibrations are as in \cref{def:generating_cofibrations}.

This, however, does not clearly make $\micat$ into a Quillen model structure but rather into a ``two-sided model category'' as in Section 5 of \cite{henry2020combinatorial}. We refer to Section 5 of \cite{henry2020combinatorial} for what this means more precisely, but in short, the problem is that the left and right left semi-model categories have different classes of weak equivalences. The two classes of equivalence, however, coincide for morphisms that are between fibrant or cofibrant objects. Another way to talk about this difference is that the left and right left semi-model categories are Quillen equivalent and have the same homotopy category but define different functors $\micat \to \text{Ho}(\micat)$. The two functors agree on objects that are either fibrant or cofibrant but differ on general objects: one sends an object $X$ to its cofibrant replacement while the other sends it to a fibrant replacement, and we do not know if these are always homotopy equivalent when $X$ is neither fibrant nor cofibrant itself.
\end{remark}

\begin{remark}
We do not know if $\micat$ is actually a Quillen model category or not. In the unmarked case, this follows from the fact that all objects are fibrant. But that is no longer the case in this situation. In terms of the ``two-sided model structure'' mentioned in the previous remark, the question is whether $\micat$ satisfies one of the equivalent conditions of Proposition 5.3 of \cite{henry2020combinatorial}.
\end{remark}

We conclude this section with the following lemma that will be useful later:

\begin{lemma}\label{lem:hemisphere_are_anodyne}
The map
\[ i^{+}_n : \Db_n^\flat \to (\Db_{n+1}, \overline{\{e_{n+1}\}}) \]
where $e_{n+1}$ is the unique non-identity arrow of $\Db_{n+1}$, is an anodyne cofibration.
\end{lemma}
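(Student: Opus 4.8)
The plan is to exhibit $i^{+}_n$ as a pushout of the generating anodyne cofibration $j_+ \hatptens i_n$, where $i_n : \partial \Db_n \to \Db_n$ is the generating cofibration from \cref{def:generating_cofibrations}; since the class of anodyne cofibrations is the left class of a weak factorization system and hence closed under pushout, this will give the result at once. Using the unit isomorphism $\Db_0 \ptens (\uvar) \simeq (\uvar)$, the map $j_+ \hatptens i_n$ has codomain $I \ptens \Db_n^\flat$ and domain the pushout
\[ P := \Db_n^\flat \coprod_{\partial \Db_n^\flat} \left( I \ptens \partial \Db_n^\flat \right), \]
in which $\partial \Db_n^\flat \to I \ptens \partial \Db_n^\flat$ is $j_+ \ptens \id$ and $\partial \Db_n^\flat \to \Db_n^\flat$ is $i_n$; concretely, $P$ is the cylinder on the boundary sphere $\partial \Db_n$ with its positive end capped off by a copy of $\Db_n$.

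Next I would build a collapse map $\phi : P \to \Db_n^\flat$. On the capping copy of $\Db_n$ it is the identity, and on the cylinder it is the composite
\[ I \ptens \partial \Db_n^\flat \xrightarrow{\; r \,\ptens\, \id\;} \Db_0 \ptens \partial \Db_n^\flat \simeq \partial \Db_n^\flat \hookrightarrow \Db_n^\flat, \]
where $r : I \to \Db_0$ is the interval collapse from \cref{cstr:interval}. The two definitions agree on $\partial \Db_n^\flat$ because $r \circ j_+ = \id_{\Db_0}$, so $\phi$ is well defined, and it is a morphism of $m$-marked $\infty$-categories since $r \ptens \id$ sends each marked cylinder cell $e_1 \otimes z$ to the identity $\Ib_z$, which is marked. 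I then form the pushout
\[
\begin{array}{ccc}
P & \xrightarrow{\; j_+ \hatptens i_n \;} & I \ptens \Db_n^\flat \\
{\scriptstyle \phi}\big\downarrow & & \big\downarrow \\
\Db_n^\flat & \longrightarrow & Q
\end{array}
\]
and claim that $Q \simeq (\Db_{n+1}, \overline{\{e_{n+1}\}})$ with the bottom map equal to $i^{+}_n$.

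The main work, and the step I expect to be the genuine obstacle, is the identification of this pushout: it amounts to describing the effect on the Gray cylinder $\Db_1 \otimes \Db_n$ of collapsing the subcylinder over $\partial \Db_n$ while gluing its positive end to a fresh copy of $\Db_n$. I would carry this out at the level of polygraphs using \cref{prop:special_marked_polygraphs}: the generators of $\Db_1 \otimes \Db_n$ are the cells $x \otimes y$, and $\phi$ turns every generator $e_1 \otimes z$ with $z$ in $\partial \Db_n$ into an identity, so that after the resulting boundary identifications only the two end cells $e_0^- \otimes e_n$, $e_0^+ \otimes e_n$ (now parallel, realizing $e_n^-$ and $e_n^+$) and the top cell $e_1 \otimes e_n$ (becoming the filler $e_{n+1}$) survive. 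This is exactly the generating data of the suspension $\Sigma \Db_n = \Db_{n+1}$, and the delicate point is to check from the boundary formulas of the Gray tensor product that the source and target of $e_1 \otimes e_n$ descend precisely to $e_n^-$ and $e_n^+$. For the marking, the unique surviving non-identity marked generator is the image of $e_1 \otimes e_n$, so by \cref{cstr:marked_colimits} the marking of $Q$ is $\overline{\{e_{n+1}\}}$; and since the capping $\Db_n$ is glued to the positive-end copy $e_0^+ \otimes \Db_n$, whose top cell collapses to $e_n^+$, the induced map $\Db_n^\flat \to Q$ is the cotarget inclusion $i^{+}_n$. As $j_+ \hatptens i_n$ is a generating anodyne cofibration, its pushout $i^{+}_n$ is anodyne.
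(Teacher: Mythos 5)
Your strategy is sound and genuinely different from the paper's. The paper proves the lemma by exhibiting $i^+_n$ as a \emph{retract} of the generating anodyne cofibration $j_+ \hatptens i_n$: using the explicit presentation of $\Db_1 \otimes \Db_n$ from Proposition B.1.4 of \cite{ara2016join}, it writes down a section $i \colon (\Db_{n+1},\overline{\{e_{n+1}\}}) \to I \ptens \Db_n^\flat$ sending $e_{n+1}$ to $a \otimes e_n$ (your $e_1 \otimes e_n$), and a retraction $p \colon I \ptens \Db_n^\flat \to (\Db_{n+1},\overline{\{e_{n+1}\}})$ --- which is exactly your collapse map, given by the same formulas --- then checks that both maps respect the relevant subpolygraphs, so that $i^+_n$ is a retract of $j_+ \hatptens i_n$ in the arrow category. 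You instead exhibit $i^+_n$ as a \emph{pushout} of $j_+ \hatptens i_n$ along the collapse $\phi$; since the left class of a weak factorization system is closed under both retracts and pushouts, and both arguments consume the same combinatorial input (the source/target formulas for $a \otimes e_n$), either route is legitimate. What the paper's route buys is that every verification is equational --- well-definedness of $p$ and $i$ on generators, $p \circ i = \mathrm{id}$ --- and no colimit of $\infty$-categories ever has to be identified. What your route buys is a sharper geometric statement: $(\Db_{n+1},\overline{\{e_{n+1}\}})$ literally \emph{is} the marked Gray cylinder with the tube over $\partial\Db_n$ collapsed, glued along its positive end. The price is precisely the step you flagged, and there your sketch needs tightening: $\phi$ is not a cofibration, and pushouts in $\icat$ along arbitrary maps cannot be computed by ``surviving generator'' bookkeeping (also, \cref{prop:special_marked_polygraphs} governs markings only, not the underlying $\infty$-category of a quotient). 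The robust way to close it is the universal property: since $\Db_1 \otimes \Db_n$ is a polygraph, a map out of your pushout $Q$ is the same as a map $g$ out of $I \ptens \Db_n^\flat$, freely determined on generators, whose restriction to the subcylinder factors through $r \ptens \mathrm{id}$; that factorization forces $g(e_1 \otimes z)$ to be an identity for every cell $z$ of $\partial\Db_n$, and then the boundary formulas together with the unit axioms show that such $g$ are in natural bijection with marked $(n+1)$-arrows $g(e_1 \otimes e_n)$ of the target, i.e.\ with maps out of $(\Db_{n+1},\overline{\{e_{n+1}\}})$, the cap embedding corresponding to $i^+_n$. With that argument supplied, your proof is complete.
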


\begin{proof}
\change{
We will show it is a retract of the map $j_+ \hatptens i_n$ where $i_n$ is the map $\partial \Db_n \to \Db_n$. We then have to construct two morphisms $i$, $p$ fitting in a diagram of the form
\[\begin{tikzcd}[ampersand replacement=\&]
	{(\Db_{n+1},\overline{\{e_{n+1}\}})} \& {I \ptens \Db_n^\flat} \\
	\& {(\Db_{n+1},\overline{\{e_{n+1}\}})}
	\arrow["i", from=1-1, to=1-2]
	\arrow["p", from=1-2, to=2-2]
	\arrow[Rightarrow, no head, from=1-1, to=2-2]
\end{tikzcd}\]
and such that $p$ and $i$ send the domain of $i^+_n$ and of $j_+ \hatptens i_n$ to each other.

In order to achieve this, we will use the explicit description of $\Db_1 \otimes \Db_n$ given in \cref{example:of_gray_cylinder}. The object we are interested in is $I \ptens \Db_n^\flat$ which is the same polygraph endowed with the marking where all the arrows $a \otimes e^\epsilon_k$ are marked. }We call $i : (\Db_{n+1},\overline{\{e_{n+1}\}}) \to I \ptens \Db_n^\flat$ the unique morphism sending $e_{n+1}$ to $a \otimes e_n$. This is well defined because $a \otimes e_n$ is a marked arrow. Next, we define a map $p: I \ptens \Db_n^\flat \to (\Db_{n+1},\overline{\{e_{n+1}\}})$ by:
   \[ p(a^\epsilon_0 \otimes e^\mu_k) = e^\mu_k  \text{ if } k<n. \]
   \[ p(a^\epsilon_0 \otimes e_n) = e^\epsilon_n \]
   \[ p(a \otimes e^\epsilon_k) = \Ib_{e^\epsilon_k} \text{ if } k<n. \]
   \[ p(a \otimes e_n) = e_{n+1} \]
   In order to check that this is well defined, we first need to check that this definition is compatible with the source and target given above, which follows from an immediate calculation. Then we need to show that this is compatible with the marking, which is the case as both $\Ib_{e^\epsilon_k}$ and $e_{n+1}$ are marked.

   Finally, the composite $ p \circ i$ sends the arrow $e_{n+1}$ to $p(a \otimes e_n) = e_{n+1}$ and hence is the identity of $\Db_{n+1}$.

   To conclude the proof, we just have to observe that the maps $p$ and $i$ defined above send the domain of $i^+_n$ and of $j_+ \hatptens i_n$ to each other.

   The domain of $j_+ \hatptens i_n$ is the sub-polygraph of $I \ptens \Db_n^\flat$ which contains all the generators except  $a_0^- \otimes e_n$ and $a \otimes e_n$, while the domain of $i^+_n$ contains all generators of $\Db_{n+1}$ except $e_{n+1}$ and $e^-_n$.

   In order to check that the map $i$ is compatible with these sub-polygraphs, it is enough to check that $i(e^+_n)$ is in the domain of $j_+ \hatptens i_n$. To see this, we compute:
  \[ i(e^+_n) = \pi^+ i(e_{n+1}) = \pi^+(a \otimes e_n) = (a \otimes e^-_{n-1}) \#_{n-1} \dots \#_1 (a \otimes e^-_0) \#_0 (a^+_0 \otimes e_n) \]
   and we observe that this expression involves neither $a_0^- \otimes e_n$ nor $a \otimes e_n$, hence it does belong to the domain of $j_+ \hatptens i_n$.

   In order to check that the map $p$ is compatible with these sub-polygraphs, we need to check the image by $p$ of all the generators of $I \ptens \Db_n^\flat$ except  $a_0^- \otimes e_n$ and $a \otimes e_n$. These are given by the formulas $p(a^\epsilon_0 \otimes e^\mu_k) = e^\mu_k$ if $k<n$, $p(a^+_0 \otimes e_n) = e^+_n$ and $p(a \otimes e^\epsilon_k) = \Ib_{e^\epsilon_k}$, which all indeed belong to the image of $i_n^+$.

\end{proof}

\section{Equations and Saturations in an $m$-Marked $\infty$-Category.}
\label{sec:eq_saturation}The general goal of this section is to arrive at a better description of the fibrant objects and fibrations between fibrant objects of the model structure of \cref{th:canonical_left_semi_model}. This is achieved using the notion of \emph{equations} in an $\infty$-category introduced by the second named author in \cite{loubaton2021conditions}. We will recall the basic theory of equations, in a slightly different language, and introduce an analog of equations to deal with the markings, which we call \emph{saturations}.

\subsection{Definitions of Equations and Saturations}
\label{subsec:eq_saturation}

\begin{definition} 
\label{defi:equation_in_a_marked_category}
\change{
A morphism of $m$-marked polygraphs $\Lambda P \to P$ is a \emph{left equation} if there exists an integer $n$, and two generators $x, y$ of $P$ of dimension respectively $n$ and $n+1$, such that}
\begin{enumerate}

\item \change{$\Lambda P$ is the  $m$-marked sub-polygraph of $P$ that contains all generators except $x$ and $y$,}

\item $y$ is a marked arrow,

\item if $n \leq m$, $x$ is an unmarked arrow of $P$,

\item \label{defi:equation_in_a_marked_category:decomp} the source of $y$ admits a decomposition:
\[ \pi_n^- y = l_n \#_{n-1} (l_{n-1} \#_{n-2} \dots \#_{1} (l_1 \#_0 x \#_0 r_1) \#_{1} \dots \#_{n-2} r_{n-1}) \#_{n-1} r_n \]
where for each $i$, $l_i$ and $r_i$ are marked $i$-arrows in $P$, with $l_n$ and $r_n$ not containing $x$. In particular, $x$ appears only once in $\pi_n^- y$,

\item $x$ does not appear in the target of $y$.

\end{enumerate}

\emph{Right equations} are defined in the exact same way except the source and target of $y$ are exchanged in the last two conditions.

We say that $\Lambda P \to P$ is an \textit{equation} to mean that it is either a left or right equation.

\end{definition}

\begin{remark}
\change{Note that the integer $n$ and the arrows $x$ and $y$ in the previous definition are uniquely determined by the inclusion $\Lambda P \to P$.}
\end{remark}

\begin{remark}
The name ``equation'' comes from the idea that we are looking for an element $x$ such that a certain composite of $x$ with other arrows is isomorphic to another given arrow. From this point of view, a map $\Lambda P \to X$ corresponds to such an equation in $X$, and an extension $P \to X$ corresponds to a solution of the equation, or rather the image of $x$ is the solution and $y$ represents the isomorphism witnessing that $x$ is a solution.
\end{remark}

\change{
\begin{definition}\label{defi:saturation_in_a_marked_category}
A morphism of  $m$-marked polygraphs $\Omega P \to P$ is a \emph{left saturation} if it is an isomorphism on the underlying polygraphs, and if there exists an integer $n$, and two marked generators $x, y$ of $P$ of dimension respectively $n$ and $n+1$, such that 
\begin{enumerate}
\item any marked generator of $P$ that is different from $x$ is marked in $\Omega P$,
\item $x$ and $y$ are marked in $P$,
\item the target of $y$ is marked,
\item the arrows $x$ and $y$ satisfy the conditions ($5$) and ($6$) of \cref{defi:equation_in_a_marked_category}.
\end{enumerate}

\emph{Right saturations} are defined in the exact same way except the source and target of $y$ are exchanged in the last two conditions.

We say that $\Omega P \to P$ is an \textit{saturation} to mean that it is either a left or right saturation.
\end{definition}
}

\begin{construction}
\label{rem:generating_anodyne_cofiration_seen_as_an_equation}
Let $n$ be a non-negative integer. The morphism
\[ j_+ \hatptens i_n := I \ptens \partial \Db_n \coprod \{1\} \ptens \Db_n \to I \ptens \Db_n \]
is a left equation. Indeed, let $y$ be the top-dimensional generator of $I \ptens \Db_n$. If we denote by $x$ the top-dimensional arrow of $\{0\} \ptens \Db_n$, and for $0 < k \leq n$, by
$a_k$ the image of the top-dimensional $k$-generator of $I \ptens \Db_{k-1}$ by the morphism
\[ I \ptens \delta^-_{k-1} : I \ptens \Db_{k-1} \to I \ptens \Db_n, \]
we recall that we gave an explicit description of $\Db_1 \otimes \Db_n$ in \cref{example:of_gray_cylinder}. The object we are interested in is $I \ptens \Db_n^\flat$, which is the same polygraph endowed with the marking where all the arrows $a \otimes e^\epsilon_k$ are marked.
Using this description, we see that if we name $y = a \otimes e_n$ and $x = a_0^- \otimes e_n$ the two arrows of $I \ptens \Db_n$ that are not in the image of $j_+ \hatptens i_n$, then we have a decomposition of the source of $y$ of the form:
\[ (((x \#_0 a_0) \#_1 a_2) \dots) \#_{n-1} a_n \]
and all the $a_k$ are marked.
We denote it
\[ \cylinder{\textbf{eq}}{n} \colon \cylinder{\Lambda\textbf{Eq}}{n} \to \cylinder{\textbf{Eq}}{n}. \]
\end{construction}

\change{
\begin{example}
The underlying $\infty$-category of $\cylinder{\textbf{Eq}}{1}$ is 
\[\begin{tikzcd}[ampersand replacement=\&]
	\bullet \& \bullet \\
	\bullet \& \bullet
	\arrow[from=1-1, to=2-1]
	\arrow[from=2-1, to=2-2]
	\arrow["x", from=1-1, to=1-2]
	\arrow["{a_0}", from=1-2, to=2-2]
	\arrow["y"', shorten <=4pt, shorten >=4pt, Rightarrow, from=1-2, to=2-1]
\end{tikzcd}\]
and the underlying $\infty$-category of $\cylinder{\textbf{Eq}}{2}$ is 
\[\begin{tikzcd}[ampersand replacement=\&]
	\bullet \& \bullet \&\& \bullet \& \bullet \\
	\bullet \& \bullet \&\& \bullet \& \bullet
	\arrow[from=1-1, to=2-1]
	\arrow[from=2-1, to=2-2]
	\arrow[""{name=0, anchor=center, inner sep=0}, curve={height=-18pt}, from=1-1, to=1-2]
	\arrow[""{name=1, anchor=center, inner sep=0}, "{a_0}", from=1-2, to=2-2]
	\arrow["{a_1}"', shorten <=4pt, shorten >=4pt, Rightarrow, from=1-2, to=2-1]
	\arrow[""{name=2, anchor=center, inner sep=0}, from=1-1, to=1-2]
	\arrow[""{name=3, anchor=center, inner sep=0}, from=1-4, to=2-4]
	\arrow[""{name=4, anchor=center, inner sep=0}, from=2-4, to=2-5]
	\arrow[from=1-4, to=1-5]
	\arrow[from=1-5, to=2-5]
	\arrow[""{name=5, anchor=center, inner sep=0}, curve={height=18pt}, from=2-4, to=2-5]
	\arrow[shorten <=4pt, shorten >=4pt, Rightarrow, from=1-5, to=2-4]
	\arrow["x", shorten <=2pt, shorten >=2pt, Rightarrow, from=0, to=2]
	\arrow[shorten <=2pt, shorten >=2pt, Rightarrow, from=4, to=5]
	\arrow["y", shorten <=13pt, shorten >=13pt, Rightarrow, scaling nfold=3, from=1, to=3]
\end{tikzcd}\]
\end{example}
}
\begin{construction}
\label{rem:generating_anodyne_cofiration_seen_as_a_saturation}
Similarly, the morphism
\[ j_+ \hatptens s_n : I \ptens \Db_n \coprod \{1\} \ptens (\Db_{n}, \overline{\{e_n\}}) \to I \ptens (\Db_{n}, \overline{\{e_n\}}) \]
where $s_n$ is the ``identity'' map $\Db_n \to (\Db_n, \overline{\{e_n\}})$
is a left saturation, which we denote
\[\cylinder{\textbf{sat}}{n} \colon \cylinder{\Omega\textbf{Sat}}{n} \to \cylinder{\textbf{Sat}}{n}. \]
\end{construction}

\change{
\begin{prop}
\label{prop:generating_anodyne_cof_are_equations_and_saturations}
Generating anodyne cofibrations are either equations or saturations.
\end{prop}

\begin{proof}
By \cref{def:generating_anodyne}, the generating anodyne cofibrations are of the form $j_+ \hatptens i$ with $i$ being either $\partial \Db_n \to \Db_n$ or $\Db_n \to (\Db_{n}, \overline{\{e_n\}})$ for an integer $n$. By Constructions \ref{rem:generating_anodyne_cofiration_seen_as_an_equation} and \ref{rem:generating_anodyne_cofiration_seen_as_a_saturation}, these morphisms are either equations or saturations.
\end{proof}}

\change{
\begin{definition}
We define some left equations which play an important role. In each case, $k$ and $n$ are integers with $0 < k \leqslant n$.
\begin{itemize}
\item $\leftdivision{\textbf{eq}}{k,n} \colon \leftdivision{\Lambda\textbf{Eq}}{k,n} \to \leftdivision{\textbf{Eq}}{k,n}$, whose codomain is generated by
\begin{enumerate}
\item[$-$] a $n$-arrow $x$, a marked $k$-arrow $a$ such that $\pi_{k-1}^+(a) = \pi_{k-1}^-(x)$,
\item[$-$] a $n$-arrow $b$ of source $a \#_{k-1} \pi^{-}_{n-1}(y)$ (resp. $\pi^-_{n-1}(a)$) and of target $a \#_{k-1} \pi^{+}_{n-1}(y)$ (resp. $\pi^+_{n-1}(x)$) if $k < n$ (resp. if $k = n$),
\item[$-$] a marked $(n+1)$-arrow $y$ of source $a \#_{k-1} x$ and of target $b$,
\end{enumerate}
and whose domain is obtained by removing $x$ and $y$.

\item $\rightdivision{\textbf{eq}}{k,n} \colon \rightdivision{\Lambda\textbf{Eq}}{k,n} \to \rightdivision{\textbf{Eq}}{k,n}$, whose codomain is generated by
\begin{enumerate}
\item[$-$] a $n$-arrow $x$, a marked $k$-arrow $a$ such that $\pi_{k-1}^+(x) = \pi_{k-1}^-(a)$,
\item[$-$] a $n$-arrow $b$ of source $\pi^{-}_{n-1}(y) \#_{k-1} a$ (resp. $\pi^-_{n-1}(x)$) and of target $\pi^{+}_{n-1}(y) \#_{k-1} a$ (resp. $\pi^+_{n-1}(a)$) if $k < n$ (resp. if $k = n$),
\item[$-$] a marked $(n+1)$-arrow $y$ of source $y \#_{k-1} x$ and of target $b$.
\end{enumerate}
and whose domain is obtained by removing $x$ and $y$.
\end{itemize}
\end{definition}

\begin{example}
The underlying $\infty$-category of $\leftdivision{\textbf{Eq}}{1,1}$ is generated by the diagram 
\[\begin{tikzcd}[ampersand replacement=\&]
	\& \bullet \\
	\bullet \&\& \bullet
	\arrow["a", from=2-1, to=1-2]
	\arrow["x", from=1-2, to=2-3]
	\arrow[""{name=0, anchor=center, inner sep=0}, "b"', from=2-1, to=2-3]
	\arrow["y"', shorten <=3pt, shorten >=3pt, Rightarrow, from=1-2, to=0]
\end{tikzcd}\]
and the underlying $\infty$-category of $\leftdivision{\textbf{Eq}}{1,2}$ is generated by the diagram 
\[\begin{tikzcd}[ampersand replacement=\&]
	\bullet \& \bullet \& \bullet \&\& \bullet \&\& \bullet
	\arrow[""{name=0, anchor=center, inner sep=0}, curve={height=-18pt}, from=1-2, to=1-3]
	\arrow[""{name=1, anchor=center, inner sep=0}, curve={height=18pt}, from=1-2, to=1-3]
	\arrow[""{name=2, anchor=center, inner sep=0}, "{a\#_1\pi^-x}", curve={height=-18pt}, from=1-5, to=1-7]
	\arrow[""{name=3, anchor=center, inner sep=0}, "{a\#_1\pi^+x}"', curve={height=18pt}, from=1-5, to=1-7]
	\arrow["y", shorten <=14pt, shorten >=14pt, Rightarrow, scaling nfold=3, from=1-3, to=1-5]
	\arrow["a", from=1-1, to=1-2]
	\arrow["x", shorten <=5pt, shorten >=5pt, Rightarrow, from=0, to=1]
	\arrow["y", shorten <=5pt, shorten >=5pt, Rightarrow, from=2, to=3]
\end{tikzcd}\]
\end{example}

\begin{definition}
Let $\Lambda P \to P$ be a left equation, and $n$, $x$, $y$ the integer and the two generators of \cref{defi:equation_in_a_marked_category}. We denote $(x_0, y_0)$ and $(x_1, y_1)$ the images of the couple $(x, y) \in P$ by the two inclusions $P \to P \coprod_{\Lambda P} P$.
The  $m$-marked polygraph $\Uni_{\Lambda P}(P)$ is obtained from $P \coprod_{\Lambda P} P$ by adding an unmarked $(n+1)$-generator $z$ of $n$-source $x_0$ and $n$-target $x_1$. 

A map $f : P \coprod_{\Lambda P} P \to X$ corresponds to a map $\Lambda P \to X$, which is an equation in $X$, together with two solutions $P \to X$, given by pairs $(x_0, y_0)$ and $(x_1, y_1)$. The morphism $f$ lifts to $\Uni_{\Lambda P}(P)$ if there exists a marked arrow $z : x_0 \to x_1$. Formally, this expresses that the two solutions are equivalent.
\end{definition}

\begin{example}
The underlying $\infty$-category of $\Uni_{\leftdivision{\Lambda\textbf{Eq}}{1,1}}(\leftdivision{\textbf{Eq}}{1,1})$ is 
\[\begin{tikzcd}[ampersand replacement=\&]
	\& \bullet \&\&\&\& \bullet \\
	\bullet \&\& \bullet \&\& \bullet \&\& \bullet
	\arrow[""{name=0, anchor=center, inner sep=0}, "b"', from=2-5, to=2-7]
	\arrow["a", from=2-5, to=1-6]
	\arrow["{x_0}", from=1-6, to=2-7]
	\arrow["a", from=2-1, to=1-2]
	\arrow[""{name=1, anchor=center, inner sep=0}, "{x_1}"{description}, from=1-2, to=2-3]
	\arrow[""{name=2, anchor=center, inner sep=0}, "b"', from=2-1, to=2-3]
	\arrow[""{name=3, anchor=center, inner sep=0}, "{x_0}", curve={height=-24pt}, from=1-2, to=2-3]
	\arrow["{y_0}", shorten <=3pt, shorten >=3pt, Rightarrow, from=1-6, to=0]
	\arrow["z"', shorten <=4pt, shorten >=4pt, Rightarrow, from=3, to=1]
	\arrow["{y_1}", shorten <=3pt, shorten >=3pt, Rightarrow, from=1-2, to=2]
\end{tikzcd}\]
\end{example}
}

\begin{definition}
Let $C$ be an $m$-marked $\infty$-category and $\Lambda P \to P$ a left equation (resp. right equation).

\emph{The equation $\Lambda P \to P$ has solutions in $C$} if for all morphisms $\Lambda P \to C$, there exists a lifting $(x,y): P \to C$ such that $x$ is sent to a marked arrow whenever the target of $y$ is (resp. the source of $y$ is).

\emph{Solutions to an equation $\Lambda P \to P$ in $C$ are weakly unique} if $C$ has the right lifting property against $P \coprod_{\Lambda P} P \to \Uni_{\Lambda P}(P)$.

\emph{The equation $\Lambda P \to P$ has weakly unique solutions in $C$} if the equation $\Lambda P \to P$ has solutions in $C$ and they are weakly unique.

\end{definition}

It will be useful to have a ``coherent'' version of $\Uni_{\Lambda P}(P)$.
\change{
\begin{definition}
Let $\Lambda P \to P$ be a left equation, and $n$, $x$, $y$ the integer and the two generators of \cref{defi:equation_in_a_marked_category}. Suppose given a decomposition
\[ \pi_n^- y = l_n \#_{n-1} (l_{n-1} \#_{n-2} \ldots \#_{1} (l_1 \#_0 x \#_0 r_1) \#_{1} \ldots \#_{n-2} r_{n-1}) \#_{n-1} r_n \]
of the $n$-source of $y$. 
We denote $(x_0, y_0)$ and $(x_1, y_1)$ the images of the couple $(x, y) \in P$ by the two inclusions $P \to P \coprod_{\Lambda P} P$.
The  $m$-marked polygraph $\Uni^{coh}_{\Lambda P}(P)$ is obtained from $P \coprod_{\Lambda P} P$ by
\begin{enumerate}
\item adding an unmarked $(n+1)$-generator $z$ of $n$-source $x_0$ and $n$-target $x_1$,
\item adding a marked $(n+2)$-generator $w$ of $(n+1)$-source
$$ l_n \#_{n-1} (l_{n-1} \#_{n-2} \ldots \#_{1} (l_1 \#_0 z \#_0 r_1) \#_{1} \ldots \#_{n-2} r_{n-1}) \#_{n-1} r_n \#_{n} y_1 $$
and of $(n+1)$-target $y_0$.
\end{enumerate}
By construction, the morphism $P \coprod_{\Lambda P} P \to \Uni^{coh}_{\Lambda P}(P)$ is a left equation.

Let $\Lambda P \to P$ be a right equation, and $n$, $x$, $y$ the integer and the two generators of \cref{defi:equation_in_a_marked_category}. Suppose given a decomposition
\[ \pi_n^+ y = l_n \#_{n-1} (l_{n-1} \#_{n-2} \ldots \#_{1} (l_1 \#_0 x \#_0 r_1) \#_{1} \ldots \#_{n-2} r_{n-1}) \#_{n-1} r_n \]
of the $n$-target of $y$. 
We denote $(x_0, y_0)$ and $(x_1, y_1)$ the images of the couple $(x, y) \in P$ by the two inclusions $P \to P \coprod_{\Lambda P} P$.
The $m$-marked polygraph $\Uni_{\Lambda P}(P)$ is obtained from $P \coprod_{\Lambda P} P$ by
\begin{enumerate}
\item adding an unmarked $(n+1)$-generator $z$ of $n$-source $x_0$ and $n$-target $x_1$,
\item adding a marked $(n+2)$-generator $w$ of $(n+1)$-source
$$ y_0 \#_{n} l_n \#_{n-1} (l_{n-1} \#_{n-2} \ldots \#_{1} (l_1 \#_0 z \#_0 r_1) \#_{1} \ldots \#_{n-2} r_{n-1}) \#_{n-1} r_n \to y_1 $$
and of $(n+1)$-target $y_0$.
\end{enumerate}
By construction, the morphism $P \coprod_{\Lambda P} P \to \Uni^{coh}_{\Lambda P}(P)$ is a right equation.
\end{definition}

\begin{remark}
Let $\Lambda P\to P$  be an equation and $X$ a $m$-marked $\infty$-category.
A map $f: P \coprod_{\Lambda P} P \to X$ corresponds to a map $\Lambda P \to X$, together with two solutions $P \to X$ given by pairs $(x_0, y_0)$ and $(x_1, y_1)$. If the equation $P \coprod_{\Lambda P} P \to \Uni^{coh}_{\Lambda P}(P)$ has a solution in $C$, it implies that given any pair of solutions $(x_0, y_0)$ and $(x_1, y_1)$ of $\Lambda P \to P$, there exists a marked arrow $z: x_0 \to x_1$, which informally expresses that the two solutions are equivalent, together with marked arrows
$$ l_n \#_{n-1} (l_{n-1} \#_{n-2} \ldots \#_{1} (l_1 \#_0 z \#_0 r_1) \#_{1} \ldots \#_{n-2} r_{n-1}) \#_{n-1} r_n \#_{n} y_1 \to y_0 $$
(resp.
$$ y_0 \#_{n} l_n \#_{n-1} (l_{n-1} \#_{n-2} \ldots \#_{1} (l_1 \#_0 z \#_0 r_1) \#_{1} \ldots \#_{n-2} r_{n-1}) \#_{n-1} r_n \to y_1 $$
which express a compatibility between $z$, $y_0$, and $y_1$.

In particular, this implies that the equation $\Lambda P \to P$ has weakly unique solutions in $C$.
\end{remark}

}
\change{
\begin{example}
The underlying $\infty$-category of $\Uni^{coh}_{\leftdivision{\Lambda\textbf{Eq}}{1,1}}(\leftdivision{\textbf{Eq}}{1,1})$ is 
\[\begin{tikzcd}[ampersand replacement=\&]
	\& \bullet \&\&\&\& \bullet \\
	\bullet \&\& \bullet \&\& \bullet \&\& \bullet
	\arrow[""{name=0, anchor=center, inner sep=0}, "b"', from=2-5, to=2-7]
	\arrow[""{name=1, anchor=center, inner sep=0}, "a", from=2-5, to=1-6]
	\arrow["{x_0}", from=1-6, to=2-7]
	\arrow["a", from=2-1, to=1-2]
	\arrow[""{name=2, anchor=center, inner sep=0}, "{x_1}"{description}, from=1-2, to=2-3]
	\arrow[""{name=3, anchor=center, inner sep=0}, "b"', from=2-1, to=2-3]
	\arrow[""{name=4, anchor=center, inner sep=0}, "{x_0}", curve={height=-24pt}, from=1-2, to=2-3]
	\arrow["{y_0}", shorten <=3pt, shorten >=3pt, Rightarrow, from=1-6, to=0]
	\arrow["z"', shorten <=4pt, shorten >=4pt, Rightarrow, from=4, to=2]
	\arrow["{y_1}", shorten <=3pt, shorten >=3pt, Rightarrow, from=1-2, to=3]
	\arrow["w", shorten <=29pt, shorten >=29pt, Rightarrow, scaling nfold=3, from=2, to=1]
\end{tikzcd}\]
\end{example}}

\subsection{Characterization of Fibrant Objects of The Inductive Left Semi-Model Structure}
\label{sec_fibrantobjects}
In this section, we will give a simple characterization of the fibrant objects of the left semi-model structure introduced in \cref{th:canonical_left_semi_model}. We will temporarily call the objects satisfying this characterization ``prefibrant'' (\cref{def:prefibrant}) and then show in \cref{char_fibrant_obj} that these are exactly the fibrant objects.

\begin{definition}\label{def:inverse}
Let $a$ be an $(n+1)$-arrow in a $m$-marked $\infty$-category $C$. An \emph{inverse} for $a$ is an arrow $a^{-1}$ such that there exist two marked arrows:
\[\epsilon:  a\#_{n} a^{-1}\to \Ib~~~~~~ \nu:a^{-1}\#_n a \to \Ib.\]
An arrow is \emph{invertible} if it has an inverse.
\end{definition}

\begin{definition}\label{def:prefibrant}
An $m$-marked $\infty$-category $C$ is \emph{prefibrant} if
\begin{enumerate}
\item marked arrows of $C$ are invertible and their inverses are marked,
\item whenever $a$ and  $c:a\to b$ are marked in $C$, so is $b$.
\end{enumerate}
This directly implies that if $b$ and $c:a\to b$ are marked, so is $b$.
\end{definition}

This notion is purely temporary: we will show in \cref{char_fibrant_obj} that an object is fibrant for the left semi-model structure of \cref{th:canonical_left_semi_model} if and only if it is prefibrant.

\begin{prop}
\label{prop:C_prefibrant_implies_solution_to_left_division/saturation_and_right_division/saturation}
{Let $0<k\leq n$ be two integers.}
If $C$ is prefibrant, then equations $\rightdivision{\textbf{eq}}{k,n}$ and $\leftdivision{\textbf{eq}}{k,n}$ have weakly unique solutions in $C$.
\end{prop}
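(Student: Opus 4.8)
The plan is to treat $\leftdivision{\textbf{Eq}}{k,n}$ in detail; the case of $\rightdivision{\textbf{Eq}}{k,n}$ is strictly dual, obtained by interchanging left and right whiskering, so I would only invoke this symmetry. Write $P := \leftdivision{\textbf{Eq}}{k,n}$ and $\Lambda P := \leftdivision{\Lambda\textbf{Eq}}{k,n}$. Giving a map $\Lambda P \to C$ amounts to choosing in $C$ a marked $k$-arrow $a$, a boundary $\pi^\epsilon_{n-1}(x)$ for the unknown $n$-arrow $x$, and an $n$-arrow $b$, subject to the relations $\pi^\epsilon_{k-1}(a) = \pi^\epsilon_{k-1}(x)$ and $\pi^\epsilon_{n-1}(a \#_{k-1} x) = \pi^\epsilon_{n-1}(b)$ already built into $\Lambda P$. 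Since $a$ is marked, condition (1) of \cref{def:prefibrant} provides a marked inverse $a^{-1}$ together with marked witnesses $\epsilon : a \#_{k-1} a^{-1} \to \Ib$ and $\nu : a^{-1} \#_{k-1} a \to \Ib$ in the sense of \cref{def:inverse}. The guiding idea is that left-composition with the invertible arrow $a$ is an equivalence, so $x$ should be a suitably corrected form of $a^{-1} \#_{k-1} b$, with $y$ read off from $\epsilon$.

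I would establish existence of solutions by induction on $n - k$. In the base case $n = k$ the boundary of $x$ is fixed purely by composability, and one checks directly that $x := a^{-1} \#_{k-1} b$ and $y := \epsilon \#_{k-1} b$ work: the source of $y$ computes to $(a \#_{k-1} a^{-1}) \#_{k-1} b = a \#_{k-1} x$ and its target to $\Ib \#_{k-1} b = b$ by the interchange and unit axioms, while $y$ is marked because $\epsilon$ and all identities are. For $n > k$ the naive cell $a^{-1} \#_{k-1} b$ fills the wrong sphere, its boundary being $(a^{-1} \#_{k-1} a) \#_{k-1} \pi^\epsilon_{n-1}(x)$ rather than $\pi^\epsilon_{n-1}(x)$. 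I would correct this by transporting along $\nu$: built inductively from $\nu$ one obtains marked equivalences $(a^{-1} \#_{k-1} a) \#_{k-1} \pi^\epsilon_{n-1}(x) \to \pi^\epsilon_{n-1}(x)$, and pre- and post-composing $a^{-1} \#_{k-1} b$ with their inverses yields an $n$-arrow $x$ with the prescribed boundary. Distributing $a \#_{k-1} (\uvar)$ over the resulting composite through the interchange axiom and inserting $\epsilon$ then produces the marked witness $y : a \#_{k-1} x \Rightarrow b$. Finally, when $b$ is marked every arrow entering the construction of $x$ is marked, so $x$ is marked, which is the extra clause in the definition of ``$P$ has solutions in $C$''.

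For weak uniqueness I would deduce it from the existence part applied one dimension higher, via the criterion recorded after the definition of $Uni^{coh}$: it suffices to solve the (again division-type) equation $P \coprod_{\Lambda P} P \to Uni^{coh}(P)$ in $C$. Given two solutions $(x,y)$ and $(x',y')$ restricting to the same $\Lambda P \to C$, the arrows $y,y'$ are marked with common target $b$, so $w := y \#_n (y')^{-1}$ is a marked $(n+1)$-arrow $a \#_{k-1} x \to a \#_{k-1} x'$, where $(y')^{-1}$ is marked by prefibrancy. Left-dividing $w$ by $a$ — precisely $\leftdivision{\textbf{Eq}}{k,n+1}$, hence solvable by the existence part one dimension up — yields a marked arrow $z : x \to x'$ with $a \#_{k-1} z \cong w$; composing this equivalence with $y'$ recovers $y$ up to a marked cell, which is exactly the coherence datum demanded by $Uni^{coh}(P)$ and furnishes the lift against $P \coprod_{\Lambda P} P \to Uni(P)$.

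The conceptual content is merely ``divide by the invertible arrow $a$'', and since $\epsilon$, $\nu$ and all identities are marked while marked arrows are closed under composition, the markedness bookkeeping is automatic. The genuine difficulty — and the step demanding the most care — is that $a$ is only \emph{weakly} invertible: $a \#_{k-1} a^{-1}$ is equivalent, not equal, to an identity, so the naive division has the wrong strict boundary and must be transported along $\epsilon$ and $\nu$. Because whiskering the $(k+1)$-dimensional witnesses by higher-dimensional faces cannot be done in one stroke, this transport cascades into the induction on $n - k$, and checking that each corrected composite has exactly the prescribed strict source and target is the crux of a complete argument.
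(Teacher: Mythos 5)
Your base case agrees with the paper's, and your weak-uniqueness step is correct and genuinely different from (arguably cleaner than) the paper's: dividing the marked cell $y\#_n(y')^{-1} : a\#_{k-1}x \to a\#_{k-1}x'$ by $a$ through the equation $\leftdivision{\textbf{Eq}}{k,n+1}$, whose markedness clause applies since $y$ and $(y')^{-1}$ are marked by \cref{def:prefibrant}, does produce the marked cell $z : x \to x'$ needed to lift against $P\coprod_{\Lambda P}P \to Uni(P)$. But the inductive step for existence, on which your whole architecture rests, contains a genuine error.

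The transport cells you invoke do not exist. You require marked $n$-arrows $(a^{-1}\#_{k-1}a)\#_{k-1}\pi^\epsilon_{n-1}(x)\to\pi^\epsilon_{n-1}(x)$. As soon as $n\geq k+2$, these two $(n-1)$-arrows are not parallel: the $(n-2)$-boundary of the first is $(a^{-1}\#_{k-1}a)\#_{k-1}\pi^\pm_{n-2}(x)$, that of the second is $\pi^\pm_{n-2}(x)$, and $a^{-1}\#_{k-1}a$ is not an identity, so by globularity no $n$-arrow, marked or not, can connect them. (Such cells do exist precisely when $n=k+1$, namely $\nu\#_{k-1}\pi^\epsilon_{n-1}(x)$, which is why the construction looks plausible one step above the base case.) Consequently ``pre- and post-composing $a^{-1}\#_{k-1}b$ with their inverses'' is ill-typed, and the defect is not the source/target bookkeeping you flag as the crux: it is an obstruction of principle that cannot be fixed along these lines. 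What the data actually provides are diagonal cells such as $\nu\#_{k-1}\pi^-_k(x)$, whose boundaries mix whiskered and unwhiskered faces, and the way to exploit them is the paper's: the strict interchange identity
\[ (\nu\#_{k-1}\pi^-_k x)\#_k x \;=\; \bigl((a^{-1}\#_{k-1}a)\#_{k-1}x\bigr)\#_k\bigl(\nu\#_{k-1}\pi^+_k x\bigr) \]
converts the level-$k$ division by $a$ into a level-$(k+1)$ division by the marked $(k+1)$-arrow $\nu\#_{k-1}\pi^-_k x$, in which the unknown $x$ keeps its prescribed boundary, so no transport is ever attempted. Note finally that this repaired existence step consumes weak uniqueness at levels $k$ and $k+1$ inside the induction (that is how the paper passes from the auxiliary solution back to a witness $a\#_{k-1}x\to b$); you therefore cannot keep your one-dimension-up uniqueness trick on top of it, since existence at parameter $n-k$ would then depend on uniqueness at parameter $n-k$, hence on existence at parameter $n-k+1$, and the induction would no longer be well-founded. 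Existence and weak uniqueness must be established simultaneously, by induction on $n-k$, as in the paper's proof.
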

\begin{proof}
We show the result by decreasing induction on $k\leq n$. The initialization corresponds to $k=n$. In this case,
the data of a morphism $\Lambda \leftdivision{\textbf{Eq}}{n,n}\to C$ corresponds to two $n$-arrows $a$ and $b$ sharing the same source and such that $a$ is marked.
Let $\nu:a^{-1}\#_n a \to \Ib$.
If we define $x:=a^{-1}\#_n b$ and $y:\psi\#_n b:a\#_n x\to b$, the couple $(x,y)$ is a solution of $\leftdivision{\textbf{eq}}{n,n}$. If $b$ is marked, so is $x$.
We now show the weak uniqueness of the solution. Let $(\bar{x},\bar{y})$ be another solution. We then have a marked arrow:
\[z: \bar{x} \xrightarrow{\nu^{-1}} a^{-1}\#_n a\#_n \bar{x}\xrightarrow{\bar{y}} a^{-1}\#_n b.\]
The assertion for $\rightdivision{\textbf{eq}}{n,n}$ is similar.

Suppose now the result is true for $k+1$. We start by showing that solutions of $\leftdivision{\textbf{eq}}{k,n}$ and $\rightdivision{\textbf{eq}}{k,n}$ are weakly unique in $C$.
The data of a morphism $\Lambda \leftdivision{\textbf{Eq}}{k,n}\to C$ corresponds to an $n$-arrow
$x:s\to t$, a $k$-invertible arrow $a$ such that $\pi_k^+a=\pi_k^-x$, and an arrow $b:a\#_{k-1} s\to a\#_k t$. Let $(x,y:a\#_{k-1}x\to b)$ be a solution of this equation. Let $\nu:a^{-1}\#_{k-1} a \to \Ib_{\pi_k^+a}$ be a marked $(k+1)$-arrow. \change{We recall that the interchange rule implies that 
$$\begin{array}{rcl}
(\nu\#_{k-1}s)\#_{k}x &=& (\nu\#_{k-1} \Ib_{s})\#_{k}(\Ib_{\Ib_{\pi_k^+a}}\#_{k}x) \\
&=& (\nu\#_{k} \Ib_{\Ib_{\pi_k^+a}}) \#_{k-1} (  \Ib_{s}\#_{k} x) \\
&=& \nu \#_{k-1} x \\
&=& ( \Ib_{a^{-1}\#_{k-1} a} \#_{k} \nu  ) \#_{k-1} (x  \#_{k}  \Ib_{t}) \\
&=& (  \Ib_{a^{-1}\#_{k-1} a}  \#_{k-1}x)  \#_{k} (\Ib_{t} \#_{k-1}\nu) \\
&=& (a^{-1}\#_{k-1}a\#_{k-1}x)\#_{k}(\nu\#_{k-1}t)
\end{array}
$$}

The arrow $x$ is then also a solution of  $\leftdivision{\textbf{eq}}{k+1,n}$:
\[\begin{tikzcd}
    {(\nu\#_{k-1}s)\#_{k}x = (a^{-1}\#_{k-1}a\#_{k-1}x)\#_{k}(\nu\#_{k-1}t)} && { (a^{-1}\#_{k-1}b)\#_{k}(\nu\#_{k-1}t)}
    \arrow["{(a^{-1}\#_{k-1}y)\#_{k}(\nu\#_{k-1}t)}", from=1-1, to=1-3]
\end{tikzcd}\]
and so is weakly unique. The uniqueness of solutions of $\rightdivision{\textbf{eq}}{k,n}$ is proved similarly.

We show now that  $\leftdivision{\textbf{eq}}{k,n}$ and $\rightdivision{\textbf{eq}}{k,n}$ have solutions in $C$. Let $(x,y)$ be a solution of the equation
$$y:(\nu\#_0s)\#_{k}x\to (a^{-1}\#_{k-1}b)\#_{k}(\nu\#_{k-1}t)$$
Moreover, we can find such $x$ marked whenever $b$ is.
We then have
\[(\nu\#_0s)\#_{k}x = (a^{-1}\#_{k-1}a\#_{k-1}x)\#_{k}(\nu\#_{k-1}t).\]
By weak uniqueness of solutions of $\rightdivision{\textbf{eq}}{k+1,n}$, we then have a marked arrow
\[z:  a^{-1}\#_{k-1}a\#_{k-1}x \to a^{-1}\#_{k-1}b.\]
But $a\#_{k-1}x$ and $b$ are solutions of an equation $\leftdivision{\textbf{eq}}{k,n}$, and so there exists a marked arrow
\[\tilde{y}:a\#_{k-1}x\to b.\]
If $b$ is marked, the arrow $x$ that we produce is also marked.
The existence of a solution of $\rightdivision{\textbf{eq}}{k,n}$ is proved similarly.
\end{proof}

\begin{lemma}
\label{lemma:left_division_and_right_division_implies_lifting_against_all_equation}
If the equations $\rightdivision{\textbf{eq}}{k,n}$ and $\leftdivision{\textbf{eq}}{k,n}$ have solutions in $C$ for any integers $0<k\leq n$, then all equations have solutions in $C$.
\end{lemma}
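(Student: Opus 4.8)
The plan is to prove the statement for a left equation $P$; right equations are then handled by the symmetric argument, exchanging the source and target of $y$ and interchanging the roles of $\leftdivision{\textbf{Eq}}{k,n}$ and $\rightdivision{\textbf{Eq}}{k,n}$. So fix a left equation $\Lambda P \to P$, with $x$ of dimension $n$ and $y$ of dimension $n+1$, together with an arbitrary map $\Lambda P \to C$; I must produce a lift $(x,y)\colon P \to C$ with $y$ marked and with $\overline{x}$ marked whenever the target $t := \pi^+_n y$ is marked. The only structural input is the decomposition of \cref{defi:equation_in_a_marked_category}: writing $M_0(x) := x$ and $M_i(x) := l_i \#_{i-1} M_{i-1}(x) \#_{i-1} r_i$ for $1 \le i \le n$, where each $l_i,r_i$ is a marked $i$-arrow (and $l_n,r_n$ do not involve $x$), one has $\pi^-_n y = M_n(x)$. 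Solving the equation thus amounts to choosing an $n$-arrow $x$ with the boundary prescribed by $\Lambda P \to C$ together with a marked $(n+1)$-arrow $M_n(x) \Rightarrow t$.

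The core idea is to peel the layers of $M_n$ from the outside in, spending one right and one left division per layer. Set $T_n := t$. Assume inductively that the problem has been reduced to finding $x$ equipped with a marked arrow $M_i(x) \Rightarrow T_i$, where $T_i$ is an $n$-arrow whose $(n-1)$-boundary coincides with that of $M_i(x)$ (a boundary that depends only on $l_i,r_i$, hence is already determined by $\Lambda P \to C$). Applying $\rightdivision{\textbf{Eq}}{i,n}$ with $a := r_i$ and $b := T_i$ --- which has solutions in $C$ by hypothesis --- yields an $n$-arrow $w_i$ and a marked arrow $\rho_i\colon w_i \#_{i-1} r_i \Rightarrow T_i$; applying $\leftdivision{\textbf{Eq}}{i,n}$ with $a := l_i$ and $b := w_i$ then yields an $n$-arrow $T_{i-1}$ and a marked arrow $\lambda_i\colon l_i \#_{i-1} T_{i-1} \Rightarrow w_i$. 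It now suffices to find $x$ together with a marked arrow $M_{i-1}(x) \Rightarrow T_{i-1}$, which is the same problem one layer lower. After peeling all $n$ layers we reach $M_0(x) = x$, so I simply set $x := T_0$ and take $\gamma_0 := \Ib_{T_0}$ as the marked comparison $M_0(x) \Rightarrow T_0$.

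Reassembling $y$ is a downward recursion. Given a marked $\gamma_{i-1}\colon M_{i-1}(x) \Rightarrow T_{i-1}$, set
\[ \gamma_i := \big(l_i \#_{i-1} \gamma_{i-1} \#_{i-1} r_i\big) \#_n \big(\lambda_i \#_{i-1} r_i\big) \#_n \rho_i, \]
an $(n+1)$-arrow $M_i(x) \Rightarrow T_i$. Each factor is a whiskering of one of the marked cells $\gamma_{i-1},\lambda_i,\rho_i$ by the marked arrows $l_i,r_i$ (that is, a composite with iterated identities of marked cells), so each factor is marked and hence so is $\gamma_i$, markings being closed under composition. Then $y := \gamma_n\colon M_n(x) = \pi^-_n y \Rightarrow t$ is the required marked solution. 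Finally, if $t$ is marked then, since each division returns a solution that can be chosen marked whenever its target $b$ is, the arrows $w_n, T_{n-1}, w_{n-1}, \dots, T_0 = \overline{x}$ are successively marked; thus $\overline{x}$ is marked whenever $t$ is, as the definition of ``has solutions'' for a left equation requires. Note that only the existence of solutions is used, never their weak uniqueness.

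The step that will require the most care is the boundary bookkeeping. At each layer one must check that the chosen data genuinely assemble into maps $\rightdivision{\Lambda\textbf{Eq}}{i,n} \to C$ and $\leftdivision{\Lambda\textbf{Eq}}{i,n} \to C$, i.e.\ that the $(n-1)$-sources and targets of the unknowns $w_i$ and $T_{i-1}$ are forced to agree with those of $l_i$, $r_i$ and $T_i$. This is precisely what is guaranteed by the compatibilities already present in the nested composite $M_n(x)$ --- every intermediate $\#_{i-1}$-composition being defined in $P$ --- so it reduces to an inductive verification that the prescribed boundaries propagate correctly down the layers. Once this is in place, the claim that each whiskered comparison cell is legitimately marked is immediate from the marking axioms, and the induction closes.
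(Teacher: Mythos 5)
Your argument for left equations is correct and is essentially the paper's own proof: the same outside-in peeling of the decomposition of $\pi^-_n y$, alternating a right division by $r_i$ and a left division by $l_i$ at each layer (your $w_i$, $T_{i-1}$, $\rho_i$, $\lambda_i$ are the paper's $x_{2i-1}$, $x_{2i-2}$, $y_{2i-1}$, $y_{2i-2}$), followed by the same reassembly of the comparison cell as a $\#_n$-composite of whiskered witnesses --- your recursive $\gamma_n$ unfolds, via the interchange law, into exactly the explicit composite $\bar{y}$ written in the paper --- and the same marking bookkeeping for $\overline{x}$.

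The genuine gap is your one-sentence dispatch of right equations. There is no ``symmetric argument'': both $\leftdivision{\textbf{Eq}}{k,n}$ and $\rightdivision{\textbf{Eq}}{k,n}$ are by definition \emph{left} equations, meaning that in each of them the composite involving the unknown $x$ is the \emph{source} of the comparison cell and the given arrow $b$ is its \emph{target}. Interchanging the two divisions only swaps the side on which the marked arrow $a$ is composed; it does not reverse the direction of the comparison cell. Consequently every cell your peeling procedure can produce points \emph{towards} the given data, whereas a solution of a right equation is a marked cell $s \Rightarrow M_n(x)$ pointing \emph{away} from the given datum $s$: already at the first layer, the right division with $b := s$ hands you $\rho_n \colon w_n \#_{n-1} r_n \Rightarrow s$, which cannot be composed into a cell out of $s$. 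The dual divisions that your symmetry would actually require, those of the form $y \colon b \Rightarrow a \#_{k-1} x$, are themselves right equations, i.e.\ instances of what you are trying to prove, so the argument as stated is circular. The paper instead reduces a right equation $P$ to the left equation $P^{op}$ obtained by reversing the top cell, solves $P^{op}$ by the peeling argument, and then \emph{reverses} the resulting marked cell $y$: a marked cell in the opposite direction exists within the hypotheses, since applying $\rightdivision{\textbf{Eq}}{n+1,n+1}$ with $a := y$ and $b := \Ib_s$ (an identity, hence marked) produces a marked cell $s \Rightarrow M_n(x)$, and the marking clause for $x$ carries over because for $P^{op}$ it refers to the target of $y$, which is the source $s$ of the reversed cell. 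This reversal step is the missing idea; once it is added, your proof is complete.
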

\begin{proof}
Let $\Lambda P\to P$ be a left equation. There is a decomposition of the source of $y$ of the shape
\[ \pi_n^- y =  l_n \#_{n-1} (l_{n-1}\#_{n-2}...\#_{1}(l_1\#_0 x\#_0r_1)\#_{1}...\#_{n-2} r_{n-1})\#_{n-1} r_n \]
where for each $i$,  $l_i$ and $r_i$ are marked $i$-arrows in $P$. We can then use the existence of solutions to $\rightdivision{\textbf{eq}}{k,n}$ and $\leftdivision{\textbf{eq}}{k,n}$ to get two sequences of arrows $(x_k)_{0< k < 2n}$ and $(y_k)_{0< k< 2n}$ such that:
\begin{enumerate}
\item $y_{2n-1}: x_{2n-1}\#_{n-1} r_n \to \pi_n^+ y $;
\item $y_{2k-1}: x_{2k-1}\#_{k-1} r_k \to x_{2k}$;
\item $y_{2k-2}: l_k\#_{k-1} x_{2k-2}\to x_{2k-1}$.
\end{enumerate}
Moreover, arrows $x_k$ are marked whenever $\pi_n^+ y$ is. The couple $(x_0,\bar{y})$ is then a solution to $P$ where $\bar{y}$ is the composite:
\[\begin{array}{rcl}
\bar{y}:= &&(l_n \#_{n-1} (l_{n-1}\#_{n-2}...\#_{1}(( y_0\#_0 r_1)\#_{n}y_1)\#_{1}...\#_{n-2} r_{n-1})\#_{n-1} r_n)\\
&\#_{n}&(l_n \#_{n-1} (l_{n-1}\#_{n-2}...\#_{2}(( y_2\#_1 r_2)\#_{n}y_3)\#_{2}...\#_{n-2} r_{n-1})\#_{n-1} r_n)\\
&\#_{n}&...\\
&\#_{n}& (y_{2n-2}\#_{n-1} r_n)\#_{n}y_{2n-1}\\
\end{array}\]
If $\Lambda P\to P$ is a right equation, we define $\Lambda P\to P^{op}$ to be the left equation obtained by inverting the direction of the arrow of maximum dimension. A solution of $\Lambda P\to P$ is given by $(x, y^{-1})$ where $(x, y)$ is a solution of $\Lambda P\to P^{op}$. Moreover, one can find an arrow $x$ marked whenever the source of $y^{-1}$ is.
\end{proof}

\begin{lemma}
\label{lemma:condition_to_have_the_rlp_against_all_equation_and_saturations}
Let $C$ be an $m$-marked $\infty$-category such that all equations have solutions in $C$ and whenever $a$ and $c:a\to b$ are marked, so is $b$. Then $C$ has the right lifting property against all equations and saturations.
\end{lemma}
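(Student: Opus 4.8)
The plan is to treat the two families separately. Lifting against equations is essentially a restatement of the hypothesis, whereas lifting against saturations is the substantive part, and it is there that the marking hypothesis (condition~(2) of \cref{def:prefibrant}) gets used. For an equation $\Lambda P \to P$ there is nothing to prove beyond unwinding definitions: having the right lifting property against $\Lambda P \to P$ means exactly that every morphism $\Lambda P \to C$ extends to a morphism $P \to C$, which is precisely the assertion that the equation $P$ has solutions in $C$ (the refined version assumed in the hypothesis is even stronger, as it controls the marking of the image of $x$). So I would dispose of this case in one line.

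Now let $(P,\overline M)$ be a left saturation with distinguished arrows $x,y$, so that $\Omega P = (P,\overline{M \setminus \{x\}})$, and let $f \colon \Omega P \to C$ be a morphism. Since $\Omega P$ and $P$ have the same underlying $\infty$-category and differ only in that $x$ is marked in $P$, solving the lifting problem against $\Omega P \to P$ amounts exactly to showing that $f(x)$ is a marked arrow of $C$. The key move is to reread $\Omega P$ as an ordinary equation. Since $x$ is a generating $n$-arrow not belonging to $M \setminus \{x\}$, \cref{prop:special_marked_polygraphs} shows that $x$ is unmarked in $\Omega P$ whenever $n \leq m$; hence $\Omega P$, together with the arrows $x,y$, satisfies all the axioms of a left equation, which I denote $E$. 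Moreover the target $t \coloneqq \pi^+_n y$ of $y$ is marked in $P$ and, by the last axiom of \cref{defi:equation_in_a_marked_category}, $x$ does not appear in $t$; so \cref{prop:special_marked_polygraphs} gives that $t$ is already marked in $\Omega P$, and therefore $f(t)$ is marked in $C$.

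I would then compare two solutions of $E$ sharing the boundary data $f|_{\Lambda E}$. On the one hand $f$ itself restricts to a solution $(f(x),f(y))$. On the other hand, applying the hypothesis that $E$ has solutions in $C$ to the data $f|_{\Lambda E}$, and using that the target $f(t)$ of $y$ is marked, produces a second solution $(x',y')$ in which $x'$ is marked. To relate the two I invoke weak uniqueness: as recorded after its definition, the map $E \coprod_{\Lambda E} E \to Uni^{coh}(E)$ is again an equation, so by hypothesis it has a solution in $C$, whence $C$ has the right lifting property against $E \coprod_{\Lambda E} E \to Uni(E)$. Feeding in the two solutions above, with $(x',y')$ as the first copy and $(f(x),f(y))$ as the second, yields a marked arrow $z \colon x' \to f(x)$.

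The conclusion is then immediate and uses the marking hypothesis in exactly the direction in which it is stated: we have that $x'$ is marked and that $z \colon x' \to f(x)$ is marked, so condition~(2) of \cref{def:prefibrant} forces $f(x)$ to be marked. Thus $f$ already respects the marking of $P$, i.e.\ extends to $P \to C$, which solves the lifting problem; the case of right saturations is entirely symmetric, exchanging the roles of the source and target of $y$ throughout. The one point requiring care — and the main obstacle — is getting the \emph{direction} of the comparison arrow $z$ right: weak uniqueness must be applied with the known-marked solution $(x',y')$ placed first, so that $z$ runs from $x'$ to $f(x)$ and condition~(2) applies as written. This is also why the argument genuinely needs weak uniqueness of solutions, extracted from $Uni^{coh}(E)$, and not merely their existence.
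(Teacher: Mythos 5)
Your proof is correct and takes essentially the same route as the paper's: the paper likewise rereads the saturation lifting problem as an equation (it strips the marking of both $x$ and $t(y)$, where you keep $t(y)$ marked --- an immaterial difference, since nothing in the definition of an equation constrains the target of $y$), produces a second solution with marked $x'$ using that the image of the target of $y$ is marked, compares the two solutions via a solution of the equation $Uni^{coh}$, and concludes from the hypothesis that a marked arrow with marked source has marked target. The only difference is one of presentation: you verify explicitly that $\Omega Q$ is an equation and that the comparison arrow $z$ has the right direction, points the paper's proof leaves implicit.
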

\change{
\begin{proof}
By definition, $C$ has the right lifting property against all equations.
Let $\Omega Q \to Q$ be a saturation, and let $n$, $x$, and $y$ be the integer and the two generators of \cref{defi:saturation_in_a_marked_category}. We denote by $P$ the $m$-marked polygraph obtained from $Q$ by unmarking $x$ and all the $n$-arrows appearing in the $n$-target of $y$. We also denote by $\Lambda P$ the $m$-marked sub-polygraph of $P$ that contains all generators except $x$ and $y$. The morphism $\Lambda P \to P$ is then an equation.

Suppose now that we have a morphism $f:\Omega Q \to C$. This corresponds to a solution $(x, y)$ of the equation $\Lambda P \to P$. We then know that there exists another solution $(\bar{x}, \bar{y})$ of the equation where $\bar{x}$ is marked. Furthermore, as $P \coprod_{\Lambda P} P \to \text{Uni}_{\Lambda P}^{\text{coh}}(P)$ is an equation, it has solutions in $C$, and there exists a marked arrow $z':\bar{x} \to x$. By assumption, this implies that $x$ is marked. This shows that we can lift the morphism $f$ to $Q$.
\end{proof}
}
\begin{lemma}
\label{lemma:fibration_admits_solution_to_right_division}
Fibrant objects have the right lifting property against the equations $\rightdivision{\textbf{eq}}{n,n}$ and saturations $\rightdivision{\textbf{sat}}{n,n}$.
\end{lemma}
\begin{proof}
Consider a lifting problem of $\rightdivision{\textbf{eq}}{n,n}$ against $C$. This means that we have in $C$ an $n$-arrow $b$ and a marked $n$-arrow $a$ that share the same source.

Since $C$ is fibrant, it has, by definition, the right lifting property against $\cylinder{\textbf{eq}}{n}$ as in \cref{rem:generating_anodyne_cofiration_seen_as_an_equation}. Using the same notations as in \ref{rem:generating_anodyne_cofiration_seen_as_an_equation} for the generators of $\Lambda \cylinder{\textbf{Eq}}{n}$, we choose the image of $a_l$ in $C$ to be an identity for all $l<n$, and $a_n = a$. This gives us a span:
\[
\xymatrix{
\Lambda \cylinder{\textbf{Eq}}{n} \ar[r] \ar[d]_{\cylinder{\textbf{eq}}{n}} & C \\
\cylinder{\textbf{Eq}}{n} &
}
\]
which has a dotted diagonal filling $(x, y)$. But this pair verifies $y: x \#_{k-1} a \to b$, and is thus a solution to the lifting problem above.

The proof for the saturation $\rightdivision{\textbf{sat}}{n,n}$ is similar.
\end{proof}

\begin{lemma}
\label{prop:marked_arrows_admit_inverse_in_fibrant_objects}
In a fibrant $m$-marked $\infty$-category, all marked arrows are invertible. Moreover, their inverses are marked.
\end{lemma}
\begin{proof}
\cref{lemma:fibration_admits_solution_to_right_division} states that $C$ has the right lifting property against $\rightdivision{\textbf{eq}}{n,n}$ and $\rightdivision{\textbf{sat}}{n,n}$.

First, the right lifting property against $\rightdivision{\textbf{eq}}{n,n}$ shows that for any marked arrow $a$, there exists a pair $(a^{-1}, \nu)$ where $\nu$ is marked and
\[\nu: a^{-1} \#_{n} a \to \Ib.\]
The fact that $a^{-1}$ is marked follows from the right lifting property against $\rightdivision{\textbf{sat}}{n,n}$.

Using again the right lifting property against $\rightdivision{\textbf{eq}}{n,n}$, we deduce that there are two marked arrows $(a^{-1})^{-1}$ and $\beta$ such that:
\[\beta: (a^{-1})^{-1} \#_{n} a^{-1} \to \Ib.\]

Finally, in the same way, we obtain a marked arrow:
\[\beta^{-1}: \Ib \to (a^{-1})^{-1} \#_{n} a^{-1}.\]
We then define $\epsilon: a \#_n a^{-1} \to \Ib$ as the composite:
\[
\begin{tikzcd}
    {a \#_n a^{-1}} &&& \Ib \\
    {(a^{-1})^{-1} \#_n a^{-1} \#_n a \#_n a^{-1}} &&& {(a^{-1})^{-1} \#_n a^{-1}}
    \arrow["{\beta^{-1} \#_n a \#_n a^{-1}}"', from=1-1, to=2-1]
    \arrow["{(a^{-1})^{-1} \#_n \nu \#_n a^{-1}}"', from=2-1, to=2-4]
    \arrow["\beta"', from=2-4, to=1-4]
\end{tikzcd}
\]
As it is a composite of marked arrows, $\epsilon$ is also marked.
This then shows that $a^{-1}$ is an inverse of $a$.
\end{proof}

\begin{lemma}
\label{lem:fibrant_implies_prefibrant}
Fibrant objects are prefibrant.
\end{lemma}
\begin{proof}
\cref{prop:marked_arrows_admit_inverse_in_fibrant_objects} implies the first condition. For the second one, let $y:x \to b$ be a marked arrow where $b$ is marked. The right lifting property against $\rightdivision{\textbf{sat}}{n,n}$, choosing $a$ to be an identity, implies that $x$ is marked. Now suppose given a marked arrow $y:b \to x$ where $x$ is marked. We have a marked arrow $y^{-1}:x \to b$, and thus $b$ is also marked.
\end{proof}

\begin{prop}\label{char_fibrant_obj}
For an $m$-marked $\infty$-category $C$, the following assertions are equivalent:
\begin{enumerate}
\item $C$ is prefibrant in the sense of \cref{def:prefibrant}.
\item All equations have solutions in $C$, and whenever $a$ and $c:a\to b$ are marked, so is $b$.
\item $C$ has the right lifting property against all equations and saturations.
\item $C$ is fibrant for the left semi-model structure of \cref{th:canonical_left_semi_model}.
\end{enumerate}
\end{prop}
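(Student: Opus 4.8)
The plan is to prove the four conditions equivalent by closing the cycle $(1)\Rightarrow(2)\Rightarrow(3)\Rightarrow(4)\Rightarrow(1)$. Each arrow is a direct appeal to a result already established in this subsection, so essentially no new computation is required; the work lies in checking that the lemmas chain together correctly.

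First I would treat $(1)\Rightarrow(2)$. Assuming $C$ prefibrant, \cref{prop:C_prefibrant_implies_solution_to_left_division/saturation_and_right_division/saturation} shows that the division equations $\leftdivision{\textbf{Eq}}{k,n}$ and $\rightdivision{\textbf{Eq}}{k,n}$ have (weakly unique, hence in particular ordinary) solutions in $C$. Feeding this into \cref{lemma:left_division_and_right_division_implies_lifting_against_all_equation} yields that \emph{all} equations have solutions in $C$, which is the first half of (2). The second half, that whenever $a$ and $c\colon a\to b$ are marked then $b$ is marked, is exactly the second clause of \cref{def:prefibrant}, so it transfers immediately.

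The step $(2)\Rightarrow(3)$ needs no new argument: it is precisely \cref{lemma:condition_to_have_the_rlp_against_all_equation_and_saturations}. For $(3)\Rightarrow(4)$ I would recall, from the discussion following \cref{def:generating_anodyne}, that an object is fibrant exactly when it is naively fibrant, i.e.\ has the right lifting property against every generating anodyne cofibration $j_+\hatptens i$. Since the generating set $I=I^a\cup I^m$ consists of the maps $i_n$ and $s_n$, these generating anodyne cofibrations are the maps $j_+\hatptens i_n$ and $j_+\hatptens s_n$, which \cref{rem:generating_anodyne_cofiration_seen_as_an_equation} and \cref{rem:generating_anodyne_cofiration_seen_as_a_saturation} identify respectively with the equation $\cylinder{\textbf{eq}}{n}$ and the saturation $\cylinder{\textbf{sat}}{n}$. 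Hence the right lifting property against all equations and saturations forces the right lifting property against every generating anodyne cofibration, giving fibrancy. Finally $(4)\Rightarrow(1)$ is exactly \cref{lem:fibrant_imply_prefibrant}, which closes the cycle.

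The only point requiring genuine attention, and the one I would flag as the main (if mild) obstacle, is the $(3)\Rightarrow(4)$ identification: one must be sure that the generating anodyne cofibrations are \emph{exhausted} by the maps $\cylinder{\textbf{eq}}{n}$ and $\cylinder{\textbf{sat}}{n}$, so that lifting against all equations and saturations is genuinely at least as strong as naive fibrancy, and that naive fibrancy really does coincide with fibrancy as asserted after \cref{def:generating_anodyne}. Once this is pinned down, the remaining implications are pure citations.
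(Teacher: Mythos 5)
Your proof is correct and follows essentially the same route as the paper: the cycle $(1)\Rightarrow(2)\Rightarrow(3)\Rightarrow(4)\Rightarrow(1)$ via \cref{prop:C_prefibrant_implies_solution_to_left_division/saturation_and_right_division/saturation}, \cref{lemma:left_division_and_right_division_implies_lifting_against_all_equation}, \cref{lemma:condition_to_have_the_rlp_against_all_equation_and_saturations}, the identification of the generating anodyne cofibrations $j_+\hatptens i_n$ and $j_+\hatptens s_n$ with $\cylinder{\textbf{eq}}{n}$ and $\cylinder{\textbf{sat}}{n}$, and \cref{lem:fibrant_imply_prefibrant}. Your flagged concern about $(3)\Rightarrow(4)$ is exactly the point the paper settles with the same citations (and the equivalence of fibrancy with naive fibrancy from the proposition following \cref{def:generating_anodyne}), so nothing is missing.
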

\begin{proof}
The implication $(1) \Rightarrow (2)$ is a consequence of \cref{prop:C_prefibrant_implies_solution_to_left_division/saturation_and_right_division/saturation} and \cref{lemma:left_division_and_right_division_implies_lifting_against_all_equation}. \cref{lemma:condition_to_have_the_rlp_against_all_equation_and_saturations} states $(2) \Rightarrow (3)$. By \cref{prop:generating_anodyne_cof_are_equations_and_saturations}, generating anodyne cofibrations are either equations or saturations, and thus $(3) \Rightarrow (4)$. Eventually, the implication $(4) \Rightarrow (1)$ is the content of \cref{lem:fibrant_implies_prefibrant}.
\end{proof}

\subsection{Isofibrations}
\label{subsec:isofibration}

In this section, we provide a simpler characterization of fibrations between fibrant objects as the ``isofibrations'' in the following sense:

\begin{definition}
A morphism between $m$-marked $\infty$-categories is said to be an \emph{isofibration} if it has the lifting property against the maps:
\[ i^{+}_n : \Db_n^\flat \to (\Db_{n+1}, \overline{\{e_{n+1}\}}) \]
where $e_{n+1}$ is the unique non-identity arrow of $\Db_{n+1}$.
\end{definition}

\begin{notation}
\label{nota:lifting_properties}
Suppose given an equation $\Lambda P \to P$ and a lifting problem of the form:
\[
\begin{tikzcd}
    \Lambda P \ar[d] \ar[r] & C \ar[d,"p"] \\
    P \ar[r] \ar[ur,dotted] & D
\end{tikzcd}
\]
Given $a$ a generator of $P$, we will denote its image in $D$ also by $a$. If $a \in \Lambda P$, we denote by $\overline{a}$ its image in $C$. So in general $p(\overline{a}) = a$. If the dotted diagonal lift exists, or if we are in the process of constructing such a lift, the image of $x, y \in P$ in $C$ is also denoted $\overline{x}$ and $\overline{y}$, and we hence also have $p(\overline{x}) = x$ and $p(\overline{y}) = y$.
\end{notation}

Explicitly, a morphism $\pi: X \to Y$ between fibrant $m$-marked $\infty$-categories is an isofibration if for every $n$-dimensional arrow $f: a \to b$ in $X$, such that in $Y$ there is a parallel arrow $g: \pi(a) \to \pi(b)$ with a marked arrow $h: g \to \pi(f)$, then $g$ and $h$ can be lifted to arrows $\overline{g}: a \to b$ and $\overline{h}: \overline{g} \to f$ in $X$, with $\overline{h}$ marked, such that $\pi(\overline{g}) = g$ and $\pi(\overline{h}) = h$.

Note that it follows from \cref{lem:hemisphere_are_anodyne} that fibrations are isofibrations. We insist on the fact that we will only consider the notion of isofibration between \emph{fibrant} $m$-marked $\infty$-categories. We do not expect the definition given above to be very interesting outside this context.

\begin{lemma}\label{lem:dual_isofibration_lift}
Any isofibration between fibrant $m$-marked $\infty$-categories also has the lifting property against
\[ i^{-}_n : \Db_n^\flat \to (\Db_{n+1}, \overline{\{e_{n+1}\}}). \]
\end{lemma}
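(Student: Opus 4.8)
The plan is to reduce a single lifting problem against $i^-_n$ to two lifting problems against $i^+_n$ and $i^+_{n+1}$ (both of which the isofibration solves by hypothesis), using crucially that in a fibrant object every marked arrow has a marked inverse. Denote the isofibration by $p : X \to Y$. Unwinding the definitions, a lifting problem against $i^-_n$ consists of an $n$-arrow $f$ of $X$ together with a marked $(n+1)$-arrow $k : p(f) \to g$ in $Y$ (so that it is the \emph{source} of $k$, rather than its target, that is already lifted), and we must produce a marked $(n+1)$-arrow $\overline{k} : f \to \overline{g}$ in $X$ with $p(\overline{k}) = k$ and $p(\overline{g}) = g$.

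First I would invert $k$: since $Y$ is fibrant, $k$ has a marked inverse $k^{-1} : g \to p(f)$. The target of $k^{-1}$ is $p(f)$, the image of the arrow $f$ already present in $X$, so the defining lifting property against $i^+_n$ applies and yields a marked arrow $\ell : \overline{g} \to f$ in $X$ with $p(\ell) = k^{-1}$ and $p(\overline{g}) = g$. As $X$ is fibrant, $\ell$ in turn has a marked inverse $\ell^{-1} : f \to \overline{g}$, which is the natural candidate for $\overline{k}$.

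The main obstacle is that $\ell^{-1}$ need not lift $k$ on the nose: its image $p(\ell^{-1})$ is only \emph{some} inverse of $k^{-1}$, hence merely isomorphic to $k$ rather than equal to it. To rectify this I would perform one further lift. Both $k$ and $p(\ell^{-1})$ are marked inverses of $k^{-1}$ in the fibrant object $Y$, so the standard comparison of inverses, obtained by whiskering and composing the invertibility witnesses (these are marked by \cref{def:inverse}, and identities are marked and markings are closed under composition, so the whole composite is marked), furnishes a marked $(n+2)$-arrow $m : k \to p(\ell^{-1})$. Its \emph{target} is $p(\ell^{-1})$, the image of the arrow $\ell^{-1}$ already present in $X$, so applying the isofibration against $i^+_{n+1}$ to $m$ produces a fresh $(n+1)$-arrow $\overline{k}$, parallel to $\ell^{-1}$ and hence of the form $\overline{k} : f \to \overline{g}$, together with a marked $(n+2)$-arrow $\overline{m} : \overline{k} \to \ell^{-1}$ lifting $m$; in particular $p(\overline{k}) = k$ and $p(\overline{g}) = g$ as required.

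It then remains only to see that $\overline{k}$ is marked, which is where fibrancy re-enters: $\overline{m} : \overline{k} \to \ell^{-1}$ is marked and its target $\ell^{-1}$ is marked, so by the implication recorded immediately after \cref{def:prefibrant} the source $\overline{k}$ is marked as well. The pair $(\overline{g}, \overline{k})$ is then the desired diagonal filler. I expect the only delicate points to be the boundary bookkeeping needed to check that each auxiliary square genuinely commutes, and the verification that the comparison cell $m$ can be oriented so that its target, not its source, is the already-lifted arrow $\ell^{-1}$ — this is exactly what lets $i^+_{n+1}$ apply and avoids any circular appeal to $i^-_{n+1}$.
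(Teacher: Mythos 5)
Your proof is correct and takes essentially the same route as the paper's: both invert the given marked arrow in $Y$ using fibrancy, lift that inverse through the $i^+_n$ property, invert the resulting lift in $X$, compare the two inverses of $k^{-1}$ by a marked cell in $Y$, and lift that comparison through $i^+_{n+1}$. The only difference is cosmetic — you spell out why the final lift $\overline{k}$ is marked (marked cell into the marked arrow $\ell^{-1}$, plus fibrancy of $X$), a point the paper leaves implicit.
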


\begin{proof}
Let $\pi: X \to Y$ be an isofibration between fibrant $m$-marked $\infty$-categories, $f: a \to b$ an $n$-arrow in $X$, with $g: \pi(a) \to \pi(b)$ and $h: \pi(f) \to g$ two arrows in $Y$, where $h$ is marked.

As $Y$ is fibrant, according to \cref{prop:marked_arrows_admit_inverse_in_fibrant_objects}, the arrow $h$ admits an inverse, i.e., there is a marked arrow $h^{-1}: g \to \pi(f)$ and another marked arrow $t: h^{-1} \#_n h \to \Ib_g$ witnessing the inverse relation. One can then apply the isofibration property to lift $g$ and $h^{-1}$ to two arrows $\overline{g}: a \to b$ and $\overline{h^{-1}}: \overline{g} \to f$.

As $X$ is also fibrant, one can then consider an inverse $\overline{h'}$ of $\overline{h^{-1}}$ in $X$, whose image by $\pi$ will be a second inverse of $h^{-1}$ in $Y$, and again because $Y$ is fibrant, one can hence construct a marked arrow $h \to \pi(\overline{h'})$. Applying the isofibration property one more time then gives us a lift of $h$ and concludes the proof.
\end{proof}

\begin{lemma}
\label{lem:isofibration_lifts_all_equation}
An isofibration between fibrant $m$-marked $\infty$-categories has the right lifting property against all equations and saturations.
\end{lemma}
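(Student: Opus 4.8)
The plan is to turn the relative lifting problem into a transport argument, exploiting that both the source and the target of the isofibration are fibrant. Fix an isofibration $\pi \colon X \to Y$ between fibrant $m$-marked $\infty$-categories, an equation $P$, and a lifting problem of $\Lambda P \to P$ against $\pi$; write $(x,y)$ for the solution data picked out by the bottom map $P \to Y$. First I would manufacture an auxiliary solution on the total space: since $X$ is fibrant, \cref{char_fibrant_obj} gives that $X$ has the right lifting property against $\Lambda P \to P$, so the map $\Lambda P \to X$ extends to a solution $(\overline{x}_0,\overline{y}_0)\colon P \to X$. Applying $\pi$ produces a second solution $(\pi\overline{x}_0,\pi\overline{y}_0)$ of the image equation in $Y$, parallel to the given one $(x,y)$.

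Next I would compare these two solutions inside $Y$. Because $Y$ is fibrant and $P\coprod_{\Lambda P}P\to Uni^{coh}(P)$ is itself an equation, \cref{char_fibrant_obj} shows that $Y$ lifts it, and hence (by the remark following the construction of $Uni^{coh}(P)$) that $Y$ has the right lifting property against $P\coprod_{\Lambda P}P\to Uni(P)$. Choosing $\pi\overline{x}_0$ and $x$ as the two solution elements, this yields a marked arrow $z$ from $\pi\overline{x}_0$ to $x$, together with the coherence cell of $Uni^{coh}(P)$ which witnesses that the composite of $\pi\overline{y}_0$ with the whiskering of $z$ along the source decomposition of $P$ equals $y$ strictly.

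Finally I would transport $(\overline{x}_0,\overline{y}_0)$ along $z$. Viewing $\overline{x}_0$ as an arrow of $X$ lying over $\pi\overline{x}_0$ and $z$ as a marked arrow out of $\pi\overline{x}_0$, \cref{lem:dual_isofibration_lift} lifts $z$ to a marked arrow $\overline{z}\colon \overline{x}_0 \to \overline{x}$ in $X$ with $\pi\overline{x}=x$ and $\pi\overline{z}=z$, where \cref{prop:marked_cells_admit_inverse_in_fibrant_objetcs} supplies the inverse $\overline{z}^{-1}$ used to rewhisker. Taking $\overline{y}$ to be the composite of $\overline{y}_0$ with the whiskering of $\overline{z}^{-1}$ gives a solution $(\overline{x},\overline{y})\colon P \to X$, and the coherence cell from the previous step forces $\pi\overline{y}=y$, so $(\overline{x},\overline{y})$ is the desired diagonal filler. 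For a saturation $Q$ one argues identically, additionally using that $X$ lifts saturations (\cref{char_fibrant_obj}) to arrange $\overline{x}_0$ marked; then $\overline{x}$ is marked by the prefibrancy clause that $a$ and a marked $c\colon a\to b$ force $b$ to be marked, applied to $\overline{z}$.

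The hard part will be the matching in the third paragraph: guaranteeing that the transported witness $\overline{y}$ maps to $y$ on the nose, rather than merely up to a higher marked cell. This is exactly why one must invoke the coherent uniqueness object $Uni^{coh}(P)$ in $Y$ instead of bare weak uniqueness, and why both hemisphere liftings are needed—the definitional one against $i^{+}_n$ and its dual against $i^{-}_n$ from \cref{lem:dual_isofibration_lift}—so that the solution can be moved in the correct direction at the top dimension.
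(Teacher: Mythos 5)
Your first two steps are exactly the paper's: solve the equation in $X$ using \cref{char_fibrant_obj}, then compare the pushed-forward solution with the given one in $Y$ via $Uni^{coh}(P)$. The genuine gap is in your third paragraph, and it stems from a misreading of what $Uni^{coh}(P)$ provides. Its coherence cell is \emph{freely adjoined} by a pushout along $\partial\Db_{n+1}\to\Db_{n+1}$; it is a marked comparison cell $t$, not a strict relation. So a solution of $Uni^{coh}(P)$ in $Y$ only produces a marked arrow $t$ relating the whiskered composite to the other solution — never an equality. Consequently, for your $\overline{y}:=s[\overline{z}^{-1}]\#_{n}\overline{y}_0$ one has $\pi\overline{y}=s[z^{-1}]\#_{n}\pi\overline{y}_0$, which is connected to $y$ only through $t$ and the invertibility cells of $z$; the assertion that "the coherence cell forces $\pi\overline{y}=y$" is unjustified and false in general. (There is also an internal inconsistency: you take $z\colon\pi\overline{x}_0\to x$, yet describe the coherence cell as if the two solutions had been plugged into $Uni^{coh}(P)$ in the opposite order, where $y$ is the lone target.) As written, your $(\overline{x},\overline{y})$ solves the lifting problem only up to a higher marked cell, which is not what the lemma asserts.

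The repair is the paper's actual argument: do not manufacture $\overline{y}$ by composing inside $X$, but apply the isofibration lifting property a \emph{second} time. Order the two solutions so that $z\colon x\to\pi\overline{x}_0$ and the coherence cell is $t\colon s[x/z]\#_{n}\pi\overline{y}_0\to y$, with $y$ standing alone on one side. First lift the pair $(x,z)$ against $\overline{x}_0$ (the definitional direction of isofibrations), obtaining $\overline{x}$ and marked $\overline{z}\colon\overline{x}\to\overline{x}_0$ with $\pi\overline{x}=x$; then the arrow $s[\overline{x}/\overline{z}]\#_{n}\overline{y}_0$ of $X$ lies over the source of $t$, and the dual lifting property of \cref{lem:dual_isofibration_lift} applied to the pair $(y,t)$ produces $\overline{y}$ (and $\overline{t}$) with $\pi\overline{y}=y$ \emph{on the nose}. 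This second application is precisely where the strictness that your closing paragraph worries about is obtained; your proof names both hemisphere liftings but never performs the one that lifts $y$. Finally, the saturation case need not be "argued identically": since $\Omega Q\to Q$ is the identity on underlying $\infty$-categories it is an epimorphism, so lifts against it are unique when they exist, and the right lifting property of the fibrant object $X$ from \cref{char_fibrant_obj} then automatically transfers to the relative lifting problem for $\pi$ — this is the paper's one-line argument.
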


\begin{proof}
We will show that such a morphism has the lifting property against all left equations; the exact same argument shows that it also has the lifting property against all right equations.

Consider an isofibration $\pi: X \to Y$ between two fibrant $m$-marked $\infty$-categories and a lifting problem of $\pi$ against $\Lambda P \to P$:
\[\begin{tikzcd}
    {\Lambda P} & X \\
    P & Y
    \arrow["{(x,y)}"', from=2-1, to=2-2]
    \arrow["\pi", from=1-2, to=2-2]
    \arrow[from=1-1, to=1-2]
    \arrow[from=1-1, to=2-1]
\end{tikzcd}\]
We want to show that $x$ and $y$ can be lifted to $X$.

One first remarks that as $X$ is fibrant, the equation $\Lambda P \to P$ has solutions in $X$ according to \cref{char_fibrant_obj}. This implies that one can find a lift $(x', y'): P \to X$ that makes the upper triangle commutative.

\change{
Now, in $Y$, we have two solutions of the equation $\Lambda P \to P$, given by $(\pi(x'), \pi(y'))$ and $(x, y)$. As $Y$ is fibrant, $P \coprod_{\Lambda P} P \to \Uni_{\Lambda P}^{coh}(P)$ has solutions in $Y$, and there exist marked arrows:
\[ z: x \to \pi(x') \]
\[ w: s \#_{n} \pi(y') \to y \]
where $s$ is by construction a composite of $z$ with arrows in the source of $\pi(y')$.

By the isofibration property, there exists an arrow 
$$\overline{z}: \overline{x} \to x'$$
over $z$. This arrow induces an arrow $\overline{s}$ over $s$. 
By the dual isofibration property from \cref{lem:dual_isofibration_lift}, there exists an arrow 
$$\overline{w}: s \#_{n} y' \to \overline{y}$$
over $w$.
The pair $(\overline{x}, \overline{y})$ then induces the desired lift $P \to X$.

Now, to show that isofibrations have the right lifting property against saturations, one simply remarks that lifts against saturations are unique when they exist (saturations are epimorphisms), so as fibrant objects have the right lifting property against these maps, any map between fibrant objects also has the lifting property against all saturations.}
\end{proof}

\begin{prop}
\label{char_fibration}
A morphism between fibrant $m$-marked $\infty$-categories is a fibration if and only if it is an isofibration.
\end{prop}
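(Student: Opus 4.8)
The plan is to prove both directions of the equivalence. One direction is already essentially done: we noted right after the definition of isofibration that it follows from \cref{lem:hemisphere_are_anodyne} that every fibration is an isofibration, since $i^+_n$ is an anodyne cofibration and fibrations have the right lifting property against all acyclic cofibrations (anodyne cofibrations are acyclic). So the content of the proposition is the converse: an isofibration $\pi : X \to Y$ between fibrant objects is a fibration.

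To prove this converse, I would use the characterization of fibrations between fibrant objects established earlier. By the proposition stating that an arrow between fibrant objects is a fibration if and only if it is a naive fibration, it suffices to show that an isofibration between fibrant objects is a naive fibration, i.e. has the right lifting property against all the generating anodyne cofibrations $j_+ \hatptens i$ where $i$ ranges over the generating cofibrations in $I = I^a \cup I^m$. By \cref{rem:generating_anodyne_cofiration_seen_as_an_equation} and \cref{rem:generating_anodyne_cofiration_seen_as_a_saturation}, these generating anodyne cofibrations are precisely the left equations $\cylinder{\textbf{eq}}{n}$ and the left saturations $\cylinder{\textbf{sat}}{n}$.

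Thus the key step is exactly \cref{lem:isofibration_lifts_all_equation}, which asserts that an isofibration between fibrant $m$-marked $\infty$-categories has the right lifting property against all equations and saturations. In particular it lifts against the generating anodyne cofibrations, so $\pi$ is a naive fibration, hence a fibration. I would write the proof as: assume $\pi$ is an isofibration between fibrant objects; by \cref{lem:isofibration_lifts_all_equation} it lifts against all equations and saturations; since the generating anodyne cofibrations are equations or saturations (\cref{rem:generating_anodyne_cofiration_seen_as_an_equation}, \cref{rem:generating_anodyne_cofiration_seen_as_a_saturation}), $\pi$ is a naive fibration; and a naive fibration between fibrant objects is a fibration. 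Conversely, a fibration is an isofibration by \cref{lem:hemisphere_are_anodyne} together with the fact that fibrations lift against all acyclic (in particular anodyne) cofibrations.

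The main obstacle has already been absorbed into \cref{lem:isofibration_lifts_all_equation}, whose proof does the real work of turning the single lifting property defining an isofibration (against $i^+_n$, and its dual $i^-_n$ via \cref{lem:dual_isofibration_lift}) into lifting against the more complicated composite equations, using that both source and target are fibrant so that solutions and coherent uniqueness witnesses exist. Given that lemma, the proposition itself is a short bookkeeping argument assembling the earlier results, so I do not expect any genuine difficulty beyond correctly invoking the characterization of fibrations between fibrant objects as naive fibrations.
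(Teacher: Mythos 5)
Your proof is correct and follows essentially the same route as the paper: the forward direction via \cref{lem:hemisphere_are_anodyne}, and the converse by reducing fibrations between fibrant objects to naive fibrations, identifying the generating anodyne cofibrations as equations and saturations (\cref{rem:generating_anodyne_cofiration_seen_as_an_equation}, \cref{rem:generating_anodyne_cofiration_seen_as_a_saturation}), and then invoking \cref{lem:isofibration_lifts_all_equation}. The only difference is that you spell out the intermediate ``naive fibration'' step more explicitly than the paper does, which is fine.
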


\begin{proof}
According to \cref{lem:hemisphere_are_anodyne}, the morphism $i^{+}_n$ is an anodyne cofibration, so all fibrations (between fibrant objects) are isofibrations.

For the converse, as a morphism between fibrant objects is a fibration if and only if it has the right lifting property against generating anodyne cofibrations, which are either equations or saturations, \cref{lem:isofibration_lifts_all_equation} implies that isofibrations between fibrant objects are fibrations.
\end{proof}

As a consequence, we have:
\begin{cor}
\label{cor:Equations and saturations are acyclic cofibrations}
Equations and saturations are acyclic cofibrations.
\end{cor}
\change{
\begin{proof}
The \cref{lem:isofibration_lifts_all_equation} and the \cref{lem:isofibration_lifts_all_equation} implies that equations and saturations have the  lifting property against fibration between fibrants. By definition, this implies that these maps are  acyclic cofibrations.

\end{proof}
}
\subsection{Equivalences}
\label{subsec:equivalence}
We now turn to the characterization of weak equivalences between fibrant objects. 
\begin{definition}\label{def:equivalence}
A morphism $p: X \to Y$ between fibrant $m$-marked $\infty$-categories is an \emph{equivalence of $m$-marked $\infty$-categories} if:
\begin{enumerate}
 \item For any arrow $x \in X$, if $p(x)$ is marked in $Y$, then $x$ is marked in $X$.
 \item For any object $c \in Y$, there exists an object $\tilde{c} \in X$ and a marked arrow $e: p(\tilde{c}) \to c$.
 \item For any pair of parallel arrows $(a, b)$ in $X$, and any arrow $c: p(a) \to p(b)$ in $Y$, there exists an arrow $\tilde{c}: a \to b$ in $X$ and a marked arrow $e: p(\tilde{c}) \to c$ in $X$.
\end{enumerate}
\end{definition}

So informally, a functor is an equivalence if it is conservative, essentially surjective, and ``essentially surjective on each Hom $\infty$-category''.

\begin{prop}\label{char_equiv}
A morphism $f: X \to Y$ between fibrant objects in $\micat$ is a weak equivalence in the left semi-model structure of \cref{th:canonical_left_semi_model} if and only if it is an equivalence in the sense of \cref{def:equivalence}.
\end{prop}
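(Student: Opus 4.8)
The plan is to reduce the statement to properties of the two halves of a factorization of $f$, using the description of fibrations between fibrant objects from \cref{char_fibration}. Concretely, I factor $f$ as $f = p \circ i$ with $i \colon X \to Z$ an anodyne cofibration and $p \colon Z \to Y$ a naive fibration, via the small object argument. Since $Y$ is fibrant and naive fibrations compose, $Z$ is again fibrant, so $p$ is a naive fibration between fibrant objects and hence, by \cref{char_fibration}, an isofibration; meanwhile $i$, being anodyne, is an acyclic cofibration and in particular a weak equivalence. The proposition then follows from three ingredients: (A) trivial fibrations and acyclic cofibrations between fibrant objects are equivalences in the sense of \cref{def:equivalence}; (B) the class of equivalences of \cref{def:equivalence} is closed under composition and satisfies $2$-out-of-$3$; and (C) an isofibration between fibrant objects which is an equivalence is precisely a trivial fibration. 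Granting these, the forward direction runs: if $f$ is a weak equivalence then so is $p$ by $2$-out-of-$3$, whence $p$ is a trivial fibration (standard semi-model fact), so $f = p\circ i$ is a composite of equivalences and thus an equivalence; the converse runs: if $f$ is an equivalence then so is $p$ by $2$-out-of-$3$ for equivalences (as $i$ is one), hence $p$ is a trivial fibration by (C) and in particular a weak equivalence, so that $f = p\circ i$ is a weak equivalence.

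For (A), the trivial-fibration case is immediate from the lifting definitions: the right lifting property against $I^m$ is exactly condition (1) of \cref{def:equivalence}, while the right lifting property against the $\partial \Db_n \to \Db_n$ in $I^a$ supplies strict lifts of objects and of arrows, so that the witnessing marked arrow $e$ of conditions (2) and (3) may be taken to be an identity. The acyclic-cofibration case is the genuinely homotopical input: if $i \colon X \to Z$ is an acyclic cofibration between fibrant objects then, $X$ being fibrant, $i$ admits a retraction $r$ with $r i = \id_X$, and using the interval $I$ of \cref{cstr:interval} one builds a homotopy $i r \simeq \id_Z$ relative to $X$. Because the nontrivial arrow of $I$ is marked, this homotopy produces, for every cell of $Z$, a \emph{marked} comparison arrow between it and its image under $ir$. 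Conservativity of the marking (condition (1)) then follows from $ri = \id_X$ and functoriality of $r$, while essential surjectivity on objects and on homs (conditions (2) and (3)) follows by transporting cells of $Z$ to $X$ along $r$ and comparing along the marked homotopy, using that in a prefibrant object marked arrows are invertible with marked inverses (\cref{prop:marked_cells_admit_inverse_in_fibrant_objetcs}).

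The technical core is (C). Let $p \colon Z \to Y$ be an isofibration between fibrant objects satisfying (1)--(3); I must show $p$ has the right lifting property against every generating cofibration. Against $I^m$ this is exactly condition (1). Against $\partial \Db_n \to \Db_n$ with $n \geq 1$, a lifting problem provides a parallel pair $u,v$ in $Z$ and an $n$-arrow $\beta \colon p(u) \to p(v)$ in $Y$; condition (3) yields an $n$-arrow $\tilde c \colon u \to v$ in $Z$ together with a marked arrow $e \colon p(\tilde c) \to \beta$, and then the dual isofibration lifting of \cref{lem:dual_isofibration_lift}, applied to $\tilde c$ and $e$, promotes $\tilde c$ to an on-the-nose lift $\tilde\beta \colon u \to v$ with $p(\tilde\beta) = \beta$. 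The case $n = 0$ is handled identically from condition (2), correcting the object along the marked arrow via the $n=0$ instance of \cref{lem:dual_isofibration_lift}. This shows $p$ is a trivial fibration; the reverse inclusion is the easy half of (A) together with the fact, from \cref{lem:hemisphere_are_anodyne}, that $i^+_n$ is a cofibration. Combining (C) with the standard fact that a fibration between fibrant objects is a weak equivalence if and only if it is a trivial fibration identifies, for isofibrations, ``weak equivalence'' with ``equivalence in the sense of \cref{def:equivalence}''.

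I expect the main obstacle to be the homotopical half of (A)---constructing the marked deformation retraction $ir \simeq \id_Z$ for an acyclic cofibration between fibrant objects---together with the bookkeeping in (B), where checking $2$-out-of-$3$ for conditions (1)--(3) forces repeated use of invertibility of marked arrows and of the closure property ``$a$ and $c\colon a\to b$ marked imply $b$ marked'' from \cref{def:prefibrant} (for instance, to deduce conservativity of $p$ from that of $f=p\circ i$ one transports a cell of $Z$ to $X$ along $r$, compares markings through the marked homotopy, and cancels). The lifting argument in (C), by contrast, should be routine once \cref{lem:dual_isofibration_lift} is in hand.
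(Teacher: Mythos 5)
Your overall architecture --- factor $f = p\circ i$ into an anodyne cofibration followed by a naive fibration, use \cref{char_fibration} to identify $p$ with an isofibration, and transfer the statement along $2$-out-of-$3$ --- is a legitimate alternative to the paper's argument, and your ingredients (A) and (C) are essentially correct as sketched (the lifting arguments in (C) via \cref{lem:dual_isofibration_lift} in particular are sound). The paper proceeds quite differently and much more directly: it invokes the characterization of weak equivalences between fibrant objects by the \emph{weak} left lifting property against generating cofibrations (Theorem A.2.6 of \cite{henry2020weak}), exhibits $(\Db_n,\overline{\{e_n\}})$ as its own relative cylinder for the marking cofibrations in $I^m$ and, via \cref{lem:hemisphere_are_anodyne}, exhibits $(\Db_{n+1},\overline{\{e_{n+1}\}})$ as a relative cylinder for $\partial\Db_n \to \Db_n$, and then reads off conditions (1)--(3) of \cref{def:equivalence} verbatim from those lifting conditions. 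No factorization, no closure properties of the class of equivalences, and no $2$-out-of-$3$ are needed there.

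The genuine gap in your proposal is ingredient (B), which is exactly where your route concentrates all the difficulty and which you only assert. In the converse direction you need right cancellation: if $f = p\circ i$ and $i$ are equivalences in the sense of \cref{def:equivalence}, then so is $p$. This is not bookkeeping, and the one-line justification you offer for it is incorrect as stated: the homotopy $ir \simeq \id_Z$ does \emph{not} produce ``for every cell of $Z$ a marked comparison arrow between it and its image under $ir$''. By the Crans--Gray formulas (see the source/target formulas recalled in the proof of \cref{lem:hemisphere_are_anodyne}), the homotopy cell attached to an $n$-cell $z$ is a lax naturality square whose source and target are composites of $ir(z)$, respectively $z$, with the homotopy cells on the boundary of $z$; a direct marked arrow $ir(z)\to z$ exists only when the boundary of $z$ lies in the image of $X$, where the homotopy is constant (which is why your ingredient (A) does go through, since conditions (2) and (3) only involve such cells). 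To verify conditions (1) and (3) for $p$ at \emph{arbitrary} cells of $Z$ one must conjugate these lax squares by their (marked, hence invertible) boundary components and then cancel marked invertible factors out of marked composites --- e.g.\ to see that markedness of $a\#_k b$ and of $b$ forces markedness of $a$, one composes with $b^{-1}$, whiskers the witness $\epsilon\colon b\#_k b^{-1}\to \Ib$, and applies condition (2) of \cref{def:prefibrant}. All of this is provable in fibrant objects, but it is an inductive argument of roughly the same length as the paper's entire proof, and it is absent from your proposal. A smaller, fixable, point: ``anodyne, hence acyclic, hence a weak equivalence'' needs care in a left semi-model category when the domain is not cofibrant; your own retraction-plus-homotopy construction shows $i$ is a homotopy equivalence between fibrant objects, which is what actually yields that $i$ is a weak equivalence.
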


\begin{proof}
\change{
We will use the characterization of weak equivalences between fibrant objects given in \cref{prop:weak equivalence has the homotopy right lifting property}. We recall that in our left semi-model structure, the generating cofibrations are given by
\[ I^\partial = \{ i_n : \partial \Db_n \to \Db_n \mid n \geqslant 0 \} \qquad  I^{+m} = \{ \Db_n \to (\Db_n, \overline{\{e_n\}}) \mid n \geqslant 0 \} \]
To express the homotopy right lifting property, we need a relative cylinder object for each of these cofibrations.

For a map of the form $\Db_n \to (\Db_n, \overline{\{e_n\}})$, we have that the canonical map
\[ (\Db_n, \overline{\{e_n\}}) \coprod_{\Db_n} (\Db_n, \overline{\{e_n\}}) \to (\Db_n, \overline{\{e_n\}}) \]
is an isomorphism, so $(\Db_n, \overline{\{e_n\}})$ is already a cylinder object. In particular, the weak left lifting property against these maps is exactly the same as the ordinary left lifting property and it corresponds exactly to the first condition of \cref{def:equivalence}.

For the map $i_n: \partial \Db_n \to \Db_n$, one obtains a relative cylinder object by considering the factorization:
\[ \Db_n \coprod_{\partial \Db_n} \Db_n \cto (\Db_{n+1}, \overline{\{e_{n+1}\}}) \to \Db_n \]
The first map freely adds a (marked) $(n+1)$-arrow between the two non-trivial arrows of the domain, so it is a cofibration. And one of the two maps $\Db_n \to (\Db_{n+1}, \overline{\{e_{n+1}\}})$ was shown to be an anodyne cofibration in \cref{lem:hemisphere_are_anodyne}, hence proving that this is a relative cylinder object for this cofibration. Using this cylinder to express the weak lifting property against $i_n$, one obtains exactly the second condition (for $n = 0$) and the third condition (for $n > 0$) of \cref{def:equivalence}. Indeed, suppose given a weak lifting diagram: }
\[
 \begin{tikzcd}
   \partial \Db_n \ar[ddd, cto] \ar[rr] \ar[dr] & & X \ar[ddd,"p"] \\
   & \Db_n \ar[d] \ar[ur, dotted] & \\
   & (\Db_{n+1}, \overline{\{e_n\}}) \ar[dr, dotted] & \\
  \Db_n \ar[rr] \ar[ur] & & Y \\  
 \end{tikzcd}
\]
\change{
The solid part of the diagram corresponds to a pair of parallel $(n-1)$-arrows $(a, b)$ in $X$, together with an $n$-arrow $c: p(a) \to p(b)$ in $Y$. The top dotted morphism gives us an arrow $\tilde{c}: a \to b$, while the bottom dotted morphism corresponds to a marked $(n+1)$-arrow $e: p(\tilde{c}) \to c$. So this lifting condition corresponds exactly to the third point of \cref{def:equivalence} (with the second point corresponding to the case $n = 0$).
}
\end{proof}

\subsection{The Saturated Inductive Localization.}
\label{subsec:saturation}\cref{char_fibrant_obj} produces a characterization of fibrant objects of the left semi-model structure of \cref{th:canonical_left_semi_model}: a marked $\infty$-category is fibrant if the marked arrows have inverses and if an arrow isomorphic to a marked arrow is marked.

A careful reader might have noticed, however, that this is not sufficient to show that the marked arrows are exactly the arrows that have inverses in the sense of \cref{def:inverse}.

\begin{example} Let $C$ be a category, seen as an $\infty$-category with no non-identity arrows of dimension strictly superior to $1$. We endow $C$ with the marking $C^\flat$, where only the identity arrows are marked.

With this marking, $C$ is fibrant; indeed, it satisfies all the conditions of \cref{char_fibrant_obj}. But if the category $C$ has non-identity invertible arrows, these would be arrows that have inverses in the sense of \cref{def:inverse} without being marked.
\end{example}

In this section, we ``fix'' this problem by introducing a Bousfield localization in which the fibrant objects have these properties.

\begin{definition} A marked $\infty$-category $C$ is said to satisfy the \emph{$2$-out-of-$6$ property} if given three composable $n$-arrows $f$, $g$, and $h$ such that $f \#_{n-1} g $ and $ g \#_{n-1} h $ are marked, then $f$, $g$, and $h$ are marked. \end{definition}

\begin{remark}\label{rk:homotopy_category} If $C$ is a fibrant $m$-marked $\infty$-category, then the relation $f \sim g$ defined by $\exists c : f \to g$ a marked $(n+1)$-arrow, is an equivalence relation on $n$-arrows. Indeed, it is reflexive and transitive as identities are marked and composites of marked arrows are marked, and it is symmetric as marked arrows have inverses.

This equivalence relation is moreover compatible with all composition operations, so that one can define a ``homotopy $n$-category'' $h_n C$, which is an $n$-category whose $k$-(arrows for $k<n$ are those of $C$ and its $n$-arrows are equivalence classes for this relation. We will use in particular that given two parallel $(n-2)$-arrows $u, v$ in $C$ we have a category $h_nC(u,v)$ whose objects are $(n-1)$-arrows $u \to v$ and whose morphisms are equivalence classes of $n$-arrows between them. \end{remark}

\begin{lemma}\label{lem:2_out_of_6} For an $m$-marked $\infty$-category $C$, the following conditions are equivalent:

 \begin{enumerate}
 \item\label{lem:2_out_of_6:1} An arrow in $C$ is marked if and only if it has an inverse in the sense of \cref{def:inverse}.
 \item\label{lem:2_out_of_6:2} $C$ is fibrant in the \change{inductive  left semi-model structure $\micat_\ind$ of \cref{th:canonical_left_semi_model}} and satisfies the $2$-out-of-$6$ property.
 \end{enumerate}
 \end{lemma}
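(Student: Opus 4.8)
First I would establish the easy direction $(\ref{lem:2_out_of_6:1}) \Rightarrow (\ref{lem:2_out_of_6:2})$. Assuming that an arrow is marked if and only if it has an inverse, I must check both that $C$ is fibrant and that it satisfies $2$-out-of-$6$. Fibrancy follows from \cref{char_fibrant_obj}: I would verify the prefibrant conditions of \cref{def:prefibrant}. Marked arrows are invertible by hypothesis, and their inverses are themselves invertible (the original arrow serves as an inverse), hence marked; this gives condition (1). For condition (2), if $a$ and $c : a \to b$ are marked, then both are invertible, and composing $a^{-1}$ appropriately with witnesses built from $c$ and $c^{-1}$ exhibits $b$ as invertible, hence marked. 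For the $2$-out-of-$6$ property, suppose $f \#_{n-1} g$ and $g \#_{n-1} h$ are marked, hence invertible; I would use the standard categorical $2$-out-of-$6$ argument for invertibility (building inverses of $f$, $g$, $h$ out of the inverses of the two given composites) to conclude each of $f$, $g$, $h$ is invertible, and therefore marked by hypothesis.

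Next I would tackle the harder direction $(\ref{lem:2_out_of_6:2}) \Rightarrow (\ref{lem:2_out_of_6:1})$. Assume $C$ is fibrant and satisfies $2$-out-of-$6$. One inclusion is immediate: marked arrows are invertible by \cref{prop:marked_cells_admit_inverse_in_fibrant_objetcs}. The real content is the converse: if an $n$-arrow $a$ has an inverse $a^{-1}$ in the sense of \cref{def:inverse}, then $a$ must be marked. By definition there are \emph{marked} witnesses $\epsilon : a \#_n a^{-1} \to \Ib$ and $\nu : a^{-1} \#_n a \to \Ib$. The key idea is to produce three composable arrows whose pairwise composites are marked so that $2$-out-of-$6$ applies. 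I would work at the level of the homotopy category described in \cref{rk:homotopy_category}: in $h_{n+1} C$, the arrows $\epsilon$ and $\nu$ become identities (being marked $(n+1)$-cells), so $a$ and $a^{-1}$ become genuinely mutually inverse, and I can assemble a suitable zig-zag of $n$-arrows. Concretely, I would consider a chain involving $a$, $a^{-1}$, and $a$ again (or the analogous iterated composite), arrange via the marked witnesses $\epsilon$, $\nu$ that two overlapping composites are equal (up to a marked $(n+1)$-arrow) to identities, and hence marked, then invoke $2$-out-of-$6$ to conclude that each factor, in particular $a$, is marked.

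\textbf{The main obstacle} will be handling the dimension-shifting and the fact that the inverse relation of \cref{def:inverse} is only witnessed \emph{up to marked higher cells}, not on the nose. The composites $a \#_n a^{-1}$ and $a^{-1} \#_n a$ are not literally identities; they are only connected to identities by the marked arrows $\epsilon$ and $\nu$. So before applying $2$-out-of-$6$ I must promote these ``weak'' identities to genuine marked $n$-arrows: since $\Ib$ is marked and $\epsilon : a \#_n a^{-1} \to \Ib$ is a marked $(n+1)$-arrow, prefibrancy (specifically the second clause of \cref{def:prefibrant}, applied in its $b$-implies-source form noted after the definition) forces $a \#_n a^{-1}$ itself to be marked. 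The same gives that $a^{-1} \#_n a$ is marked. With both overlapping composites of the triple $(a, a^{-1}, a)$ now marked, $2$-out-of-$6$ applies directly and yields that $a$ is marked, completing the argument.
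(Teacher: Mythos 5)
Your proof is correct and follows essentially the same route as the paper's: both directions rest on the prefibrant characterization of fibrancy (\cref{char_fibrant_obj}), on the second prefibrancy clause (in its target-implies-source form) to mark the composites $a \#_{n} a^{-1}$ and $a^{-1} \#_{n} a$, and on applying $2$-out-of-$6$ to the triple $(a, a^{-1}, a)$. The only cosmetic difference is in the forward direction's $2$-out-of-$6$ check: the paper passes to the homotopy category $h_nC(u,v)$ of \cref{rk:homotopy_category} so as to quote $2$-out-of-$6$ for isomorphisms in an ordinary category, whereas you carry out the standard inverse-building argument by hand with marked witnesses -- both are valid.
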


\begin{proof}
\change{
We first consider $C$  an $m$-marked $\infty$-category which satisfies \ref{lem:2_out_of_6:1}, and we check it fulfills the conditions of \cref{def:prefibrant}. By \cref{char_fibrant_obj}, this will imply that $C$ is fibrant. The first condition of \cref{def:prefibrant} is immediate; we check the second condition. Let $b$ and $c : a \to b$ be two marked arrows. By assumption, $b$ is invertible, and there exists then an arrow $b^{-1}$ and two marked arrows $\epsilon : b^{-1} \#_n b \to \Ib$ and $\nu : b \#_n b^{-1} \to \Ib$. We then have marked arrows:
 \[ b^{-1} \#_n a \overset{b^{-1} \#_n c}{\to} b^{-1} \#_n b \overset{\epsilon}{\to} \Ib  \]
 \[  a \#_n b^{-1} \overset{ c \#_n b^{-1}}{\to}  b \#_n b^{-1} \overset{\epsilon}{\to} \Ib  \]}
This shows that $b^{-1}$ is also an inverse for $a$, and hence if all arrows with an inverse are marked, $a$ is marked as well. Note that if it is $a$ which is marked in the first place, then one can consider an inverse $c^{-1} : b \to a $ and apply the same argument.

Next, we show that $C$ satisfies $2$-out-of-$6$. For this, we can rely on \cref{rk:homotopy_category}. An $n$-arrow has an inverse in the sense of \cref{def:inverse} if and only if it is an isomorphism in the category $h_nC(u,v)$ where $u$ and $v$ are its $(n-2)$-dimensional source and target. Our assumption is then that an $n$-arrow is marked if and only if its equivalence class is invertible in the category $h_nC(u,v)$. The fact that marked arrows satisfy $2$-out-of-$6$ then follows from the fact that isomorphisms in a category satisfy the $2$-out-of-$6$ condition.

Conversely, assuming that $C$ satisfies condition \ref{lem:2_out_of_6:2}, we have that marked arrows have inverses because $C$ is fibrant and \cref{char_fibrant_obj}. If an arrow $a$ has an inverse $a^{-1}$, then both $a \#_{n-1} a^{-1}$ and $a^{-1} \#_{n-1} a$ are marked because they are equivalent to identities, and it follows from the $2$-out-of-$6$ condition that $a$ (and $a^{-1}$) is marked.
\end{proof}

\begin{theorem}\label{theo:saturated_inductive_model_structure} 
The inductive semi-model structure $\micat_\ind$ of \cref{th:canonical_left_semi_model} admits a Bousfield localization (as a left semi-model structure) in which the fibrant objects are the marked $\infty$-categories that satisfy the equivalent conditions of \cref{lem:2_out_of_6}.

We call this left semi-model structure the \emph{saturated inductive left semi-model structure} and denote it by $\micat_\satind$.
\end{theorem}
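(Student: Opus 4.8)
The plan is to realize the saturated inductive model structure as a left Bousfield localization of the inductive model structure of \cref{th:canonical_left_semi_model}, obtained by enlarging the set of generating anodyne cofibrations of \cref{subsec:Semi_model}, and then to identify its fibrant objects using the characterization of fibrant objects already established in \cref{char_fibrant_obj} together with \cref{lem:2_out_of_6}. First I would fix the maps to be inverted. For each $n\geqslant 0$ let $E_n$ be the special $m$-marked polygraph (\cref{def_special_marked_poly}) generated by two unmarked $n$-arrows $a,a^{-1}$ together with two marked $(n+1)$-arrows $\epsilon\colon a\#_n a^{-1}\to \Ib$ and $\nu\colon a^{-1}\#_n a\to \Ib$, so that a map $E_n\to C$ is exactly the datum of an arrow of $C$ equipped with a witness that it is invertible in the sense of \cref{def:inverse}. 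Let $\iota_n\colon E_n\to E_n^{m}$ be the map with the same underlying polygraph but the marking $\overline{M\cup\{a\}}$; it is a cofibration which is an isomorphism on underlying $\infty$-categories, hence a pushout of maps in $I^m$. I would take $S=\{\iota_n\mid n\geqslant 0\}$.

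Next I would run the construction of \cref{subsec:Semi_model} verbatim, but with the generating anodyne cofibrations of \cref{def:generating_anodyne} enlarged by the maps $j_+\hatptens\iota_n$. The feature that makes this enlargement harmless is that, because the forgetful functor $\micat\to\icat$ is monoidal and preserves corner-products and sends $\iota_n$ to an isomorphism (corner-product with an isomorphism is an isomorphism), each $j_+\hatptens\iota_n$ is again an isomorphism on underlying $\infty$-categories, i.e.\ a pure marking change. Consequently the analogue of \cref{lem:acyclic_cof_left_pushout_product} for the enlarged set follows exactly as before — corner-products of the new generators with cofibrations remain marking changes of the required form — so tensoring with the interval $I$ of \cref{cstr:interval} is still a strong Quillen functor and Theorem 6.12 of \cite{henry2020combinatorial} again applies, producing a left semi-model structure with the same cofibrations and trivial fibrations but more anodyne (hence more acyclic) cofibrations. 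I expect this verification, namely that the monoidal bookkeeping of \cref{prop:pushout_product_cofibrations} and \cref{lem:acyclic_cof_left_pushout_product} survives the enlargement so that the hypotheses of \cite{henry2020combinatorial} are met, to be where the genuine technical weight lies.

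Finally I would identify the fibrant objects. By the proposition preceding \cref{th:canonical_left_semi_model}, an object is fibrant for the localized structure precisely when it is fibrant for the original one (equivalently prefibrant, by \cref{char_fibrant_obj}) \emph{and} has the right lifting property against the extra generators $j_+\hatptens\iota_n$; as in \cref{rk:choice_def_anodyne}, for objects that are already fibrant this lifting condition does not depend on the cylinder factor and unwinds to $S$-locality, i.e.\ to the requirement that $\mathrm{Map}(E_n^m,C)\to\mathrm{Map}(E_n,C)$ be a weak equivalence for all $n$. The key observation is that, since $\iota_n$ is epimorphic and changes only markings, this map is the inclusion of the sub-simplicial-set of those maps sending $a$ to a marked arrow; and because in a prefibrant object the property ``$a$ is marked'' is invariant along marked arrows (condition (2) of \cref{def:prefibrant}), that sub-simplicial-set is a union of connected components. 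Hence the inclusion is an equivalence exactly when it is surjective on $\pi_0$, that is, when every invertible $n$-arrow is connected by a marked arrow to a marked one, i.e.\ is itself marked. Combined with \cref{prop:marked_cells_admit_inverse_in_fibrant_objetcs}, which gives that marked arrows in a fibrant object are always invertible, this says precisely that the marked arrows of $C$ coincide with the invertible ones — condition (1) of \cref{lem:2_out_of_6}. Thus the fibrant objects of the localization are exactly the marked $\infty$-categories satisfying the equivalent conditions of \cref{lem:2_out_of_6}, as claimed.
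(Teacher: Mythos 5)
Your choice of localizing maps is reasonable---forcing every invertible arrow (in the sense of \cref{def:inverse}) to be marked is equivalent, by \cref{lem:2_out_of_6}, to the condition the paper forces (the paper localizes at the $2$-out-of-$6$ maps $s_n$ instead of your $\iota_n$)---but the mechanism you use to localize is fatally flawed. Your map $\iota_n\colon E_n\to E_n^m$ is itself a pushout of the generating cofibration $s_n\colon \Db_n\to(\Db_n,\overline{\{e_n\}})\in I^m$, obtained by pushing $s_n$ out along the map $\Db_n\to E_n$ selecting $a$; and the corner product with $j_+$ preserves pushouts in the other variable (this is exactly the formal property the paper uses in the proof of \cref{lem:acyclic_cof_left_pushout_product}). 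Hence $j_+\hatptens\iota_n$ is a pushout of $j_+\hatptens s_n$, which is one of the \emph{original} generating anodyne cofibrations of \cref{def:generating_anodyne}. Enlarging the pseudo-generating set by the maps $j_+\hatptens\iota_n$ therefore changes nothing: the naive fibrations, anodyne cofibrations, acyclic cofibrations and fibrations are literally those of \cref{th:canonical_left_semi_model}, and your ``localization'' is the inductive model structure itself. This is where your third paragraph goes wrong: for a prefibrant $C$, the lifting property against $j_+\hatptens\iota_n$ does \emph{not} unwind to $S$-locality. Unwinding it honestly, it says only that if the $j_+$-end of a cylinder $I\ptens E_n\to C$ sends $a$ to a marked arrow then so does the $j_-$-end, and this holds automatically in any prefibrant object (transport markedness backwards along the marked image of the cell $e_1\otimes a$, using condition (2) of \cref{def:prefibrant} and invertibility of marked arrows). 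In particular $C^\flat$, for $C$ an ordinary category with non-identity isomorphisms, remains fibrant in your structure, so its fibrant objects are not those of \cref{lem:2_out_of_6}.

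The corner product with $j_+$ is precisely what must be omitted. The paper instead localizes at the cofibrations themselves, invoking the existing theory of left Bousfield localizations of left semi-model structures (\cite{batanin2020left}, \cite{henry2020combinatorial}): since the localizing maps only change markings, $B\coprod_A B=B$ for each such map $A\to B$, so all iterated homotopy codiagonals are isomorphisms, and the fibrant objects of the localization are exactly the old fibrant objects having the genuine right lifting property against the localizing maps---the $2$-out-of-$6$ property for the paper's $s_n$, or ``every invertible arrow is marked'' had one used your $\iota_n$; either way one lands on the conditions of \cref{lem:2_out_of_6}. Your final $\pi_0$/connected-components analysis of $\mathrm{Map}(E_n^m,C)\to\mathrm{Map}(E_n,C)$ is a correct description of what $S$-locality means here, but the model structure you build never imposes it.
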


As a Bousfield localization, this left semi-model structure has the same cofibrations and the same fibrations between fibrant objects as the left semi-model structure from \cref{th:canonical_left_semi_model}.

\begin{proof}
The key point here is that the $2$-out-of-$6$ condition for a marked $\infty$-category corresponds to the lifting property against certain cofibrations.

For each $n$, we consider the polygraphs $X_n$ generated by three composable $n$-arrows
 \[ \Db_n \coprod_{\Db_{n-1}} \Db_n \coprod_{\Db_{n-1}} \Db_n \]
where each pushout uses the target maps on the left and the source map on the right. We call $f$, $g$, and $h$ the three $n$-dimensional generators of $X_n$. We consider the map $s_n$:
 \[s_n : \left(X_n, \overline{\{f \#_{n-1} g, g \#_{n-1} h  \}} \right) \to \left(X_n, \overline{\{ f , g , h \} } \right)  \]
which is the identity of $X_n$ (with two different markings). $s_n$ is a cofibration, and a marked $m$-category has the right lifting property against all the $s_n$ if and only if it satisfies the $2$-out-of-$6$ property.

\change{
Using \cref{theo:left Bousfield localization}, we define $\micat_\satind$ as the left Bousfield localization of $\micat_\ind$ at the set $\{s_n\}_{n\in \Nb}$.}

\change{
\Cref{lemma:technical result on localizations} characterizes the fibrant objects of $\micat_\satind$ as the fibrant objects of $\micat_\ind$ that have the right lifting property against the $s_n$ and their higher homotopy codiagonal maps $\nabla^k(s_n)$. However, as $s_n$ is a cofibration $A \to B$ that only adds some marking, its codiagonal $B \coprod_A B \to B$ is an isomorphism, and so the $\nabla^k s_n$ can all be taken to be isomorphisms; we only need to check the right lifting property with respect to the maps $s_n$ themselves. Thus, fibrant objects of $\micat_\satind$ correspond to the marked $\infty$-categories that satisfy the equivalent conditions of \cref{lem:2_out_of_6}. This concludes the proof.} \end{proof}

\section{Comparison with Other Model Structures}
\subsection{Truncation Functors}
\label{subsec:truncation}
\begin{definition}
Let $m<p\leq \infty$.
There is a functor:
\[
\begin{array}{rccl}
\pi_{m}:&\micat[p] &\to& \micat[m] \\
&(X,M)&\mapsto & (X,\overline{M}).
\end{array}
\]
that marks every arrow of dimension $m+1$,
an obvious inclusion functor:
\[
\begin{array}{rccl}
\iota_{p}:&\micat[m]&\to& \micat[p]\\
&(X,M)&\mapsto & (X,M)
\end{array}
\]
and eventually, a functor:
\[
\begin{array}{rccl}
\tau_{m}:&\micat[p]&\to& \micat[m]\\
&(X,M)&\mapsto & (X\cap M_{>m},M)
\end{array}
\]
where $X\cap M_{>m}$ is the sub $\infty$-category of $X$ whose arrows of dimension strictly superior to $m$ are the ones in $M$. As $M$ is assumed to be closed under composition and contains the identities, $X\cap M_{>m}$ is indeed an $\infty$-category. 

These functors fit into the following adjunctions:
\[\pi_{m}\dashv\iota_{p}\dashv\tau_{m}.\]

\end{definition}

\begin{notation}
Because $\iota_p$ is the inclusion of a full subcategory, we will often identify $X$ and $\iota_p X$ in our notation. In the same way, for a morphism $f\in \Hom(X,\tau_m(Y))$, the corresponding morphism in $\Hom(\iota_p X,Y)$ will also be denoted $f$.
\end{notation}

\begin{prop}
\label{prop:pi iota and tau are quillen}
For $m<p$, the adjoint pairs $(\pi_{m}\dashv\iota_{p})$ and $(\iota_{p}\dashv\tau_{m})$ are Quillen pairs (definition \cref{defi:basic def on left semi-model structure}) \change{ both between $\icat^{+m}_\satind$ and   $\icat^{+p}_\satind$ and between $\icat^{+m}_\ind$ and   $\icat^{+p}_\ind$}. \end{prop}

\change{
\begin{proof}
The left adjoint functors $\pi_{m}$ and $\iota_{p}$ obviously preserve cofibrations. Their respective right adjoint functors $\iota_{p}$ and $\tau_{m}$ obviously preserve the isofibrations of \cref{subsec:isofibration}, and fibrant objects for either the inductive (characterized by \cref{def:prefibrant} and \cref{char_fibrant_obj}) or saturated inductive model structures (whose characterization is given in \cref{lem:2_out_of_6}). This implies that the right adjoint functors preserve fibrations between fibrant objects.  The left adjoint then also preserves
acyclic cofibrations as well, and this concludes the proof.
\end{proof}

\begin{prop}
\label{prop:iota is homotopically ff}
For any $m<p\leq \infty$, a morphism $f$ in $\micat[m]_{\satind}$ is a cofibration (resp. acyclic cofibration, resp. fibration, resp. acyclic fibration, resp. weak equivalence) if and only if $\iota_p(f)$ is in $\micat[p]_{\satind}$.
\end{prop}

\begin{proof}
This directly follows from \cref{prop:pi iota and tau are quillen} and from the fact that $\iota_p$ is the inclusion of a full subcategory.
\end{proof}

}

As mentioned in the introduction, we can consider the two towers of left semi-model structures:
\[
\icat^{+0}_{\satind} \overset{\pi_0}{\leftarrow} \icat^{+1}_{\satind} \overset{\pi_1}{\leftarrow} \icat^{+2}_{\satind} \overset{\pi_2}{\leftarrow} \dots \overset{\pi_{n-1}}{\leftarrow} \icat^{+n}_{\satind} \overset{\pi_n}{\leftarrow} \dots  
\]
\[
\icat^{+0}_{\satind} \overset{\tau_0}{\leftarrow} \icat^{+1}_{\satind} \overset{\tau_1}{\leftarrow} \icat^{+2}_{\satind} \overset{\tau_2}{\leftarrow} \dots \overset{\tau_{n-1}}{\leftarrow} \icat^{+n}_{\satind} \overset{\tau_n}{\leftarrow} \dots  
\]
and take the projective limit of either tower to get a definition of `strict $(\infty,\infty)$-categories''.

Our goal in this section is to show that the left semi-model structure $\icat^{+\infty}_{\satind}$ is equivalent to the limit of the second tower (with $\tau$ functors). Here, by projective limit, we mean a homotopy theoretic limit of these towers, that is, a homotopy limit of the corresponding tower of $(\infty,1)$-categories. Such projective limits of model structures have been studied in \cite{bergner2012homotopy} and \cite{harpaz2020lax}, and we will use the construction from these papers.

\begin{remark} \label{rk:limit_structure}
It should be noted that the results from \cite{bergner2012homotopy} and \cite{harpaz2020lax} are only proved for Quillen model structures, so they do not immediately apply to the left semi-model structures that we are using here. The proof from these two papers easily adapts to the setting of left semi-model structures with very few modifications, so it should be safe to assume these results can be applied here as well. Though to avoid relying on this, we will give an independent proof that the left semi-model structure we use as a model of these projective limits exists and state our main theorem as an equivalence with this left semi-model structure. The only aspect that still relies on applying the results of \cite{bergner2012homotopy} or \cite{harpaz2020lax} to left semi-model structures is in order to interpret our results as saying something about homotopy limits of towers. 
\end{remark}

\begin{definition}
We define the category $\lax_{n\in\mathbb{N}} \micat[n]$, the \change{\emph{putative lax limit of $\micat[m]$}}, whose objects are sequences
$X_\bullet=\{(X_n,f_n)\}_{n\in \mathbb{N}}$ where $X_n\in  \micat[n]$ and $f_n: X_n\to \tau_{n}X_{n+1}$. By adjunction, objects are in bijection with sequences
\[X_0\xrightarrow{{f_{0}}} X_1 \xrightarrow{{f_{1}}} \dots \xrightarrow{{f_{n-1}}} X_n\xrightarrow{{f_{n}}} \dots \]
where each $X_n\in  \micat[n]$.
\end{definition}

\begin{prop}
There exists a left semi-model structure on $\lax_{n\in\mathbb{N}} \micat[n]$, called \emph{the putative lax-limit left semi-model  structure} and denoted by $\lax_{n\in\mathbb{N}} \micat[n]_\satind$, where fibrations and weak equivalences are pointwise fibrations and weak equivalences of the saturated inductive left semi-model structure, and cofibrations are morphisms $h:X_\bullet\to Y_\bullet$ such that $h_0:X_0\to Y_0$ is a cofibration in $\micat[0]$, and for all $n$, the dotted morphism in the following diagrams is a cofibration in $\micat[i+1]$:
\[\begin{tikzcd}
    {X_n} & {X_{n+1}} \\
    {Y_n} & {Y_n\coprod_{X_n} X_{n+1}} \\
    && {Y_{n+1}}
    \arrow[from=1-1, to=2-1]
    \arrow[from=1-1, to=1-2]
    \arrow[from=1-2, to=2-2]
    \arrow[from=2-1, to=2-2]
    \arrow[curve={height=12pt}, from=2-1, to=3-3]
    \arrow[curve={height=-12pt}, from=1-2, to=3-3]
    \arrow[dotted, from=2-2, to=3-3]
\end{tikzcd}\]
\end{prop}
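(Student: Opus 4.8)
The plan is to build the two weak factorization systems of the putative left semi-model structure by induction along the tower, reducing every assertion to the corresponding property of the combinatorial left semi-model categories $\icat^{n}$ of \cref{th:canonical_left_semi_model}, together with the fact, recorded above, that each $\iota_{n+1}$ is a left Quillen functor. Throughout I use the adjunction $\iota_{n+1}\dashv\tau_n$ to rewrite the lax-section datum $f_n\colon X_n\to\tau_nX_{n+1}$ as a map $\hat f_n\colon\iota_{n+1}X_n\to X_{n+1}$ in $\icat^{n+1}$; under this identification the pushout $Y_n\coprod_{X_n}X_{n+1}$ of the statement is the latching object $L_{n+1}:=\iota_{n+1}Y_n\coprod_{\iota_{n+1}X_n}X_{n+1}$ computed in $\icat^{n+1}$, so that the cofibrations are precisely the Reedy cofibrations for the direct poset $\Nb$ twisted by the functors $\iota$.

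First I would record the easy structural facts: pointwise weak equivalences satisfy two-out-of-three and are closed under retracts because each $\icat^n$ is, and likewise for the pointwise fibrations and pointwise trivial fibrations. The category $\lax_{n}\icat^n$ is locally presentable (a lax limit of locally presentable categories along accessible right adjoints $\tau_n$), and each evaluation functor $\mathrm{ev}_n$ admits a left adjoint $F_n$, where $F_n(A)_k=\iota^k_nA$ for $k\ge n$ and $F_n(A)_k$ is initial for $k<n$. Since each $\mathrm{ev}_n$ preserves trivial fibrations and fibrations, the sets $\{F_n(i)\}$ and $\{F_n(j)\}$, with $i$ (resp. $j$) ranging over the generating cofibrations (resp. generating anodyne cofibrations) of the various $\icat^n$, are the candidate generating sets; this gives combinatoriality and licenses the cofibrant-domain small object argument of \cite{henry2020combinatorial}.

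Next comes the heart of the argument, the two inductive factorizations. Given $h\colon X_\bullet\to Y_\bullet$ I construct $X_\bullet\to Z_\bullet\to Y_\bullet$ level by level: having built $Z_{\le n}$, I form $L_{n+1}$ as above, take the induced map $L_{n+1}\to Y_{n+1}$ in $\icat^{n+1}$, and factor it there (as a cofibration followed by a trivial fibration for the first system, and as an acyclic cofibration followed by a fibration for the second), setting the middle term to be $Z_{n+1}$ and reading off $\hat f^{Z}_n$ from the composite $\iota_{n+1}Z_n\to L_{n+1}\to Z_{n+1}$. By construction $Z_\bullet\to Y_\bullet$ is a pointwise (trivial) fibration while $X_\bullet\to Z_\bullet$ has all latching maps (acyclic) cofibrations, i.e. is a Reedy (trivial) cofibration. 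The lifting of Reedy trivial cofibrations against pointwise fibrations is proved by the same scheme: at stage $n+1$ the outer square, together with the lift already chosen on $L_{n+1}$, assembles into a lifting problem in $\icat^{n+1}$ of an acyclic cofibration against a fibration, which is solvable.

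The main obstacle is that $\icat^n$ is only a \emph{left semi-}model category, so the good behaviour of acyclic cofibrations — existence of the (acyclic cofibration, fibration) factorization, stability under pushout, and the lifting property against fibrations — is guaranteed only when domains are cofibrant. The essential bookkeeping is therefore to propagate cofibrancy: restricting to cofibrant towers, I must check that each latching object $L_{n+1}$ is a cofibrant object of $\icat^{n+1}$, using that $\iota_{n+1}$ is left Quillen (hence preserves cofibrant objects and cofibrations) and that a pushout of a cofibrant object along a cofibration stays cofibrant, so that all the factorizations and liftings invoked in $\icat^{n+1}$ are legitimate. With cofibrancy available everywhere, the remaining Reedy lemmas — that a Reedy cofibration which is a pointwise weak equivalence has all latching maps acyclic cofibrations (so trivial cofibrations are correctly described), and that $X_{n+1}\to L_{n+1}$ is an acyclic cofibration whenever $X_n\to Z_n$ is — follow from two-out-of-three and the gluing lemma, both of which hold automatically between cofibrant objects. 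Assembling these verifications yields the two weak factorization systems, and hence the claimed left semi-model structure.
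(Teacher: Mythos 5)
Your proposal is correct, but it takes a genuinely more hands-on route than the paper. The paper's proof is a two-line reduction: it identifies $\lax_{n\in\mathbb{N}}\icat^n$ with the full subcategory of the functor category $\mathrm{Fun}(\mathbb{N},\icat^\infty)$ spanned by those $X$ with $X_n\in\icat^n$ (legitimate since each $\icat^n\subset\icat^\infty$ is a full subcategory whose inclusion preserves colimits), invokes the projective (equivalently Reedy, $\mathbb{N}$ being a direct category) model structure on that functor category, and asserts that this structure restricts to the subcategory. You never pass through $\mathrm{Fun}(\mathbb{N},\icat^\infty)$: you build both weak factorization systems intrinsically and level by level, using the adjunctions $\iota_{n+1}\dashv\tau_n$ to recognize the pushouts $Y_n\coprod_{X_n}X_{n+1}$ as latching objects, producing generating sets $F_n(i)$ from the left adjoints to the evaluations, and propagating cofibrancy so that the semi-model-categorical steps (factorization and lifting of acyclic cofibrations being available only over cofibrant domains) are justified. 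What the paper's route buys is brevity; what yours buys is completeness exactly where the paper is thin. The Reedy/projective machinery it cites is documented in the literature for Quillen model categories rather than left semi-model categories (a gap the authors themselves acknowledge in \cref{rk:limit_structure}), and the claim that the functor-category model structure ``restricts'' is not automatic: the small-object-argument factorizations performed in $\mathrm{Fun}(\mathbb{N},\icat^\infty)$ have no reason to produce towers lying in the subcategory, so a careful proof is forced back to precisely the levelwise factorization through latching objects that you carry out. In effect, your argument is the detailed version of what the paper's citation implicitly relies on; both rest on the same underlying Reedy idea that cofibrations are the maps whose relative latching maps $Y_n\coprod_{X_n}X_{n+1}\to Y_{n+1}$ are cofibrations.
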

\change{
\begin{proof}
First, let us notice that $\lax_{n\in\mathbb{N}} \micat[n]$ can be identified with the full subcategory of functors $X:\mathbb{N} \to \micat[\infty]$ such that $X_n \in \micat[n]$.

There is a left semi-model structure on such functors, where fibrations and weak equivalences are pointwise: the Reedy (or projective) model structure as presented at the end of \cref{sec:semi-model-categories}. 
The cofibrations of this model structure are as described in the proposition, and we claim that this model structure ``restricts'' to $\lax_{n\in\mathbb{N}} \micat[n]$.

By this last assertion, we mean that given two sequences $X_\bullet, Y_\bullet \in \lax_{n\in\mathbb{N}} \micat[n]$ and a map $X \to Y$, the factorizations of $f$ as (cofibration, acyclic fibration) or (acyclic cofibration, fibration) in the Reedy left semi-model structure can be done within $\lax_{n\in\mathbb{N}} \micat[n]$, which shows that one can deduce all the properties in the definition of semi-model structures from the fact that they are satisfied by the Reedy model structure.

We will prove the claim for the (acyclic cofibration, fibration) factorization system, the proof for the other one being identical.
We can construct by induction on $n$ a functorial factorization $X_n\to E_n\to Y_n$ of $p_n$ such that $X_0\to E_0\to Y_0$, and $X_n \coprod_{X_{n-1}} E_{n-1} \to E_n \to Y_n$ for $n>0$, is an acyclic cofibration followed by a fibration of $\micat[n]_{\satind}$. As the functor $\iota_\infty:\micat{n}_\satind\to \micat_\satind$ is both a left and right Quillen functor, it preserves acyclic cofibrations and fibrations, and the resulting factorization $X\to E\to Y$ is an acyclic cofibration followed by a fibration of the Reedy left semi-model structure.

We can then deduce that the Reedy left semi-model structure ``restricts'' to $\lax_{n\in\mathbb{N}} \micat[n]$, which concludes the proof.
\end{proof}}
\begin{definition}
\label{defi:definition_of_the_adjunction_between_limits}
We have an adjunction
\[\begin{tikzcd}
    {\lax_{n\in\mathbb{N}} \micat[n]} && {\micat[\infty]}
    \arrow[""{name=0, anchor=center, inner sep=0}, "c", curve={height=-18pt}, from=1-1, to=1-3]
    \arrow[""{name=1, anchor=center, inner sep=0}, "\tau", curve={height=-18pt}, from=1-3, to=1-1]
    \arrow["\dashv"{anchor=center, rotate=-90}, draw=none, from=0, to=1]
\end{tikzcd}\]
where the left adjoint sends a sequence  $X_\bullet$ to its colimit:
\[c(X_\bullet) := \colim_{n\in \mathbb{N}} X_n,\]
and the right adjoint sends an $\infty$-marked $\infty$-category $X$ on the sequence
\[\tau_0(X)\to \dots \to \tau_n(X)\to \dots \]
\end{definition}

\begin{prop}
\label{prop:adjunction_between_the_lax_limit_model_structure_and_the_inductive_one}
This adjunction induces a Quillen adjunction between $\lax_{n\in\mathbb{N}} \micat[n]_\satind$ and $\micat[\infty]_{\satind}$ where the left adjoint preserves weak equivalences and fibrant objects.
\end{prop}

\begin{proof}
\change{
The functor $ c $ preserves cofibrations and acyclic cofibrations because of \cref{lem:Reedy_colim}, and hence is a left Quillen functor.

Secondly, because the left semi-model structure on $ \micat[\infty] $ is $ \omega $-combinatorial, its weak equivalences are closed under $ \omega $-filtered colimits (this is shown for Quillen model structures as Proposition 7.3 of \cite{dugger2001combinatorial}, and for left semi-model structures as Proposition 7.7 of \cite{henry2020minimal}). This implies that the functor $ c $ also preserves weak equivalences: if $ f:X_\bullet \to Y_\bullet $ is an equivalence in $ \lax_{n \in \mathbb{N}} \micat[n] $, then the map
\[
c(f): \colim_{n \in \mathbb{N}} X_n \to \colim_{n \in \mathbb{N}} Y_n
\]
is a filtered colimit of weak equivalences, and so is a weak equivalence. This implies that  $c $ also preserves acyclic cofibrations, which concludes the proof.
}
\end{proof}

\begin{prop}
\label{prop:the_limit_model_structure}
There is a left Bousfield localization of $\lax_{n\in\mathbb{N}} \micat[n]_\satind$, called the \emph{putative limit structure} and denoted by $\plim_{n\in\mathbb{N}} \micat[n]_\satind$, where $X_\bullet$ is fibrant if and only if it is fibrant in the putative lax-limit left semi-model  model structure and if for all integers $n$, $f_n: X_n\to \tau_{n}X_{n+1}$ is a weak equivalence. Moreover, weak equivalences between fibrant objects are pointwise equivalences.
\end{prop}

\begin{remark}
  According to our (unproven) claim (see \cref{rk:limit_structure}) that the results of \cite{bergner2012homotopy} or \cite{harpaz2020lax} can be applied to left semi-model structures, the $\infty$-category obtained as the localization of this Bousfield localization would be equivalent to the limit of the $\infty$-categories obtained as the localization of the $\micat[n]$ (with the $\tau_n$ functors as transitions). 
\end{remark}

We need to introduce certain constructions before proving the proposition:

\begin{construction}
  Let $k$ be any positive integer. We define
  \[ \lax_{i\in\{k,k+1\}} (\micat[i],\tau_i) \]
  to be the category whose objects are triples $(X,X',f:X\to \tau_k(X'))$ where $X$ and $X'$ are respectively $k$-marked and $(k+1)$-marked $\infty$-categories. By adjunction, these objects are in bijection with sequences:
\[X\xrightarrow{{f}} X'\]
where $X$ and $X'$ are respectively $k$-marked and $(k+1)$-marked $\infty$-categories.

There is an adjunction
\[\begin{tikzcd}
    {\lax_{i\in\{k,k+1\}} (\micat[i],\tau_i)} && {\lax_{i\in\mathbb{N}} (\micat[i],\tau_i)}
    \arrow[""{name=0, anchor=center, inner sep=0}, "\alpha_k", curve={height=-12pt}, shorten <=9pt, shorten >=9pt, from=1-1, to=1-3]
    \arrow[""{name=1, anchor=center, inner sep=0}, "\beta_k", curve={height=-12pt}, shorten <=17pt, shorten >=17pt, from=1-3, to=1-1]
    \arrow["\dashv"{anchor=center, rotate=-90}, draw=none, from=0, to=1]
  \end{tikzcd}\]

where the left adjoint $\alpha_k$ sends $X \to Y$ to the sequence
\[\emptyset \to \dots \to \emptyset \to X \xrightarrow{f} Y \to Y \to \dots \to Y \to \dots \]
while the right adjoint $\beta_k$ sends $X_\bullet$ to 
\[X_k \xrightarrow{{f}} X_{k+1}.\]
\end{construction}

\begin{lemma}
\label{label:Cofibration_in_Ik}
Let $i:A \cto B$ be a cofibration between cofibrant objects in $\micat[k]$ and $ I_A B$ a relative cylinder object for $i$ (as in \cref{prop:weak equivalence has the homotopy right lifting property}). Let $\phi$ be the morphism in  $\lax_{i\in \{k,k+1\}} (\micat[i],\tau_i)$ given by the square:
  \[\begin{tikzcd}
      A \ar[r] \ar[d] & B \ar[d] \\
      B \ar[r] & I_A B
\end{tikzcd}\]
There exists a morphism $\psi$ in  $\lax_{i\in \{k,k+1\}} (\micat[i],\tau_i)$ corresponding to a square
\begin{equation}
\label{eq:label}
\begin{tikzcd}
      B \coprod_A B \ar[r] \ar[d] & I_A B \coprod_B I_A B \ar[d] \\
      I_A B \coprod_B I_A B \ar[r] & W
\end{tikzcd}
\end{equation}
where $W$ is a relative cylinder object for $B \coprod_A B \to I_A B$, and such that $\alpha_k(\psi)$ is a relative cylinder  for $\alpha_k(\phi)$.
\end{lemma}

\begin{proof}
\change{
One can first observe that the horizontal map $B \coprod_A B \cto I_A B \coprod_B I_A B$ is already a relative cylinder object for $A \cto B$. 
By definition of the putative lax-limit left semi-model  structure, we then have to construct a square of shape \eqref{eq:label}, with $W$ a relative cylinder object for $B \coprod_A B \to I_A B$, and such that the canonical map

$$ \big( I_A B \coprod_B I_A B\big)\coprod_{(B\coprod_AB)}\big(I_A B \coprod_B I_A B\big)\to W$$ is a weak equivalence.}

We will proceed in three steps. We first factorize the leftmost map:
    \[\begin{tikzcd}
        B \coprod_A B \ar[r] \ar[d,cto] & I_A B \coprod_B I_A B \ar[dd] \\
       I_A B \coprod_B I_A B \ar[d,"\sim"]  & \\
      B  \ar[r] & I_A B
    \end{tikzcd}\]
We then forms the pushout $P$:
      \[\begin{tikzcd}
          B \coprod_A B \ar[r] \ar[d,cto] \ar[dr,phantom,"\ulcorner"very near end] & I_A B \coprod_B I_A B \ar[d,cto] \\
       I_A B \coprod_B I_A B \ar[d]  \ar[r] & P \ar[d]  \\
      B  \ar[r] & I_A B
    \end{tikzcd}\]
 Eventually, we factor the map $P \to I_A B$ can into a cofibration followed by a weak equivalence.
        \[\begin{tikzcd}
          B \coprod_A B \ar[r] \ar[d,cto] \ar[dr,phantom,"\ulcorner"very near end] & I_A B \coprod_B I_A B \ar[d,cto] \\
          I_A B \coprod_B I_A B \ar[dd]  \ar[r] & P \ar[d,cto]  \\
           & W \ar[d,"\sim"] \\
      B  \ar[r] & I_A B
    \end{tikzcd}\]
Which gives a relative cylinder object, and hence a homotopy codiagonal for $\phi$ of the form:
        \[\begin{tikzcd}
          B \coprod_A B \ar[r] \ar[d,cto]  & I_A B \coprod_B I_A B \ar[d,cto] \\
          I_A B \coprod_B I_A B  \ar[r] & W
    \end{tikzcd}\]
But one can see that the object $W$ we constructed above is itself a relative cylinder object for the map $B \coprod_A B \to I_A B \coprod_B I_A B$, which concludes the proof.
\end{proof}

\change{
\begin{proof}[Proof of \cref{prop:the_limit_model_structure}] 
Let $I_k$ be the set of cofibrations of ${\lax_{i\in\mathbb{N}} (\micat[i],\tau_i)}$ of the form
$$\{\alpha_k(A\to B) \to \alpha_k(B \to I_A B)\}$$
where $i: A \cto B$ is a generating cofibration of $\micat[k]$ and $I_A B$ is a relative cylinder object for $i$.

We then define the putative limit left semi-model structure as the left Bousfield localization of the lax-putative limit left semi-model structure by all sets $I_k$ (for all values of $k$). The existence of this localization is asserted by Theorem 7.3 of \cite{henry2020combinatorial}. By \cref{lemma:technical result on localizations}, fibrant objects correspond to morphisms having the right lifting property against iterated homotopy codiagonals of maps in $I_k$. Since weak equivalences between fibrant objects of the localized left semi-model structure correspond to weak equivalences in the unlocalized left semi-model structure, they also correspond to pointwise weak equivalences.

To show that the adjunction given in \cref{defi:definition_of_the_adjunction_between_limits} induces an adjunction between the putative limit left semi-model structure and the inductive left semi-model structure, one has to demonstrate that for any integer $k$, and $\phi \in I_k$, $c(\phi)$ is a weak equivalence of the inductive left semi-model structure. Let $i: A \cto B$ be the generating cofibration of $\micat[k]$ such that $\phi$ is 
$$\alpha_k(A \to B) \to \alpha_k(B \to I_A B).$$
The morphism $c(\phi)$ then corresponds to $B \to I_A B$, which is a weak equivalence by the definition of a relative cylinder object.

To conclude the characterization of fibrant objects of this left semi-model structure, we will show that for any fibrant object $(X_i,f_i)$ of the unlocalized left semi-model structure, the following conditions are equivalent:
\begin{enumerate}
\item $(X_i,f_i)$ has the right lifting property against all maps in $I_k$.
\item For any $k$, $f_k: X_k \to \tau_k X_{k+1}$ is a weak equivalence.
\item $(X_i,f_i)$ has the right lifting property against all maps of the form $\{\alpha_k(A \to B)\to \alpha_k(B\to I_AB)\}$ where $A \to B$ is an arbitrary cofibration in $\micat[k]$.
\item $(X_i,f_i)$ has the right lifting property against iterated homotopy codiagonals of maps in $I_k$.
\end{enumerate}
The implications $(1) \Rightarrow (2)$ and $(2) \Rightarrow (3)$ are a reformulation of \cref{prop:weak equivalence has the homotopy right lifting property}. The implication $(3) \Rightarrow (4)$ is \cref{label:Cofibration_in_Ik}. Finally, the implication $(4) \Rightarrow (1)$ is straightforward.
\end{proof}
}

\begin{theorem}
  The Quillen adjunction between the putative limit left semi-model structure of \cref{prop:the_limit_model_structure} and the inductive left semi-model structure is a Quillen equivalence.

\[ \plim_{n\in\mathbb{N}} \micat[n]_\satind \simeq \micat[\infty]_\satind \]
\end{theorem}

\begin{proof}
As the left adjoint preserves weak equivalence and fibrant objects of the unsaturated left semi-model structure by \cref{prop:adjunction_between_the_lax_limit_model_structure_and_the_inductive_one}, one has to show that for every fibrant $\infty$-marked $\infty$-category $X$, and for every cofibrant and fibrant sequence $X_\bullet$ of the putative limit left semi-model structure, we have two weak equivalences:
\[c \tau X \to X~~~~\text{ and }~~~~X_\bullet \to \tau c X_\bullet.\]
The first one is immediate because
\[X \cong \colim_{n\in\mathbb{N}} \tau_n X.\]

Let $X_\bullet$ be a cofibrant and fibrant object of the putative limit left semi-model structure. Because $X_\bullet$ and $\tau c X_\bullet$ are fibrant, the second comparison morphism is a weak equivalence if and only if for all $k$, $X_k \to \colim_{n\in\mathbb{N}} \tau_k(X_n)$ is a weak equivalence. In order to show this, consider the diagram:
\[\begin{tikzcd}
    {X_0} & {...} & {X_k} & {X_k} & {...} & {X_k} & {...} \\
    {X_0} & {...} & {X_k} & {\tau_k(X_{k+1})} & {...} & {\tau_k(X_{n})} & {...}
    \arrow["\sim", from=1-1, to=2-1]
    \arrow["\sim", from=1-3, to=2-3]
    \arrow["\sim", from=1-4, to=2-4]
    \arrow["\sim", from=1-6, to=2-6]
    \arrow["\sim"', from=2-3, to=2-4]
    \arrow["\sim"', from=2-4, to=2-5]
    \arrow[from=2-1, to=2-2]
    \arrow[from=2-2, to=2-3]
    \arrow["\sim"', from=2-5, to=2-6]
    \arrow["\sim"', from=2-6, to=2-7]
    \arrow["\sim", from=1-3, to=1-4]
    \arrow["\sim", from=1-4, to=1-5]
    \arrow["\sim", from=1-5, to=1-6]
    \arrow["\sim", from=1-6, to=1-7]
    \arrow[from=1-1, to=1-2]
    \arrow[from=1-2, to=1-3]
\end{tikzcd}\]
where, by two out of three, all the vertical morphisms are weak equivalences. The previous diagram corresponds to a weak equivalence in the unlocalized left semi-model structure on $\lax_{n\in\mathbb{N}} \micat[n]$ between $X_{\min(\bullet,k)}$ and $(\tau_k(X_{\bullet})$.
Because the left adjoint $c$ preserves weak equivalences of the unlocalized left semi-model structure by proposition \cref{prop:adjunction_between_the_lax_limit_model_structure_and_the_inductive_one}, this induces a weak equivalence:
$$X_k \cong \colim_{n\in \mathbb{N}} X_{\min(n,k)} \to \colim_{n\in\mathbb{N}} \tau_k(X_n)$$

\end{proof}

\subsection{Coinductive Localization and Comparison with $\icat_\can$}
\label{subsec:Folk}

\change{
  Following \cite[Definition 4.2]{lafont2010folk}, we can also give a coinductive notion of invertible arrows in an $\infty$-category. In short, an $n$-arrow $a \colon \pi_{n-1}^- a \to \pi_{n-1}^+ a$ is said to be coinductively invertible if there is an $n$-arrow $\tilde{a}:\pi_{n-1}^+ a \to \pi_{n-1}^- a$ and two coinductively invertible $(n+1)$-arrows
  \[ c \colon \tilde{a}\#_{n-1}a \to \Ib_{\pi^-_{n-1} a} \]
  \[ c'\colon a\#_{n-1}\tilde{a} \to \Ib_{\pi^+_{n-1} a} \]
  The notion is called ``weakly invertible'' in \cite{lafont2010folk}. Note that this is a coinductive definition, that is an arrow is coinductively invertible if there are two such arrows $c$ and $c'$, which themselves have such ``weak inverses'' $\tilde{c}$ and $\tilde{c'}$ with four witness $n+2$ arrows, which are themselves coinductively invertible, i.e., have weak inverses and there are eight $(n+3)$-arrows witnessing this, and so on... We can make this definition more formal as follows:
}

\begin{definition}
\label{InvertibilitySet}
Let $D$ be an $\infty$-category. An \emph{invertibility set}
in $D$ is a set $E = \coprod_{n>0} E_n$ with $E_n \subset D_n$ 
such that, 
for all $n > 0$ and $a \in E_n$, there exists $\tilde{a} \in E_n$
of the form
$
\tilde{a} \colon \pi^+_{n-1}a \to \pi^-_{n-1}a
$
and $c, c' \in E_{n+1}$ of the form
\[
c \colon \tilde{a}\#_{n-1}a \to \Ib_{\pi^-_{n-1} a} \quad\text{ and }\quad
c' \colon a\#_{n-1}\tilde{a} \to \Ib_{\pi^+_{n-1} a}.
\]
\end{definition}

\begin{definition}
  \label{def:coinductively_invertible}
Let $D$ be an $\infty$-category and $n > 0$. Given $a \in D_n$, the $n$-arrow $a$ is \emph{coinductively invertible} 
if there exists an invertibility set $E$ such that $a \in E$.
\end{definition}

\begin{prop}\label{CoinductiveDefinitionEquivalences}
Let $D$ be an $\infty$-category and $n > 0$. An $n$-arrow $a$ is coinductively invertible if and only if there exists an $n$-arrow $\tilde{a}$ of the form
$
\tilde{a} : \pi^+_{n-1}a \to \pi^-_{n-1}a
$
and two coinductively invertible $(n+1)$-arrows $c, c'$ of the form
\[
c \colon \tilde{a}\#_{n-1}a \to \Ib_{\pi^-_{n-1} a} \quad\text{ and }\quad
c' \colon a\#_{n-1}\tilde{a} \to \Ib_{\pi^+_{n-1} a}.
\]
\end{prop}
\begin{proof}
This is \cite[Lemme 1.1.8]{loubaton2021conditions}.
\end{proof}

\begin{lemma}
\label{lemma:somes_property_of_coinductive_arrows}
Let $X$ be an $\infty$-category, and $M$ the set of coinductively invertible arrows. The set $M$ satisfies the two following properties:
\begin{enumerate}
\item $M = \overline{M}$.
\item For all $c : a \to b$ in $M$, $a \in M \Leftrightarrow b \in M$.
\end{enumerate}
\end{lemma}
\begin{proof}
The first point is the third and the fourth point of example $1.1.9$ of \cite{loubaton2021conditions}, and the second one is a consequence of proposition $1.1.10$ of \textit{loc. cit}.
\end{proof}

\begin{prop}
\label{prop:marked arrows are coinductively invertible}
If $X$ is a fibrant $m$-marked $\infty$-category, all marked arrows in $X$ are coinductively invertible in the underlying $\infty$-category.
\end{prop}
\begin{proof}
The \cref{prop:marked_arrows_admit_inverse_in_fibrant_objects} directly implies that the set of all marked arrows is an invertibility set. By definition, all marked arrows are then coinductively invertible.
\end{proof}

\begin{prop}
\label{prop:if_invertible_arrows_are_marked_then_X_fibrant}
Let $X$ be an $\infty$-category and $M$ the set of coinductively invertible arrows. The marked $\infty$-category $(X,M)$ is then fibrant in the saturated inductive semi-model structure.
\end{prop}
\begin{proof}

\change{\Cref{CoinductiveDefinitionEquivalences} shows that $(X,M)$ satisfies point \ref{lem:2_out_of_6:1} of \cref{lem:2_out_of_6}, which is a characterization of the fibrant objects in the saturated inductive semi-model structure (see \cref{theo:saturated_inductive_model_structure}).} 
% We show that $(X,M)$ verifies the conditions of \cref{def:prefibrant}. By \cref{char_fibration}, it will imply that this object is fibrant.
% By definition, marked arrows of $(X,M)$ have inverses in the sense of \cref{def:inverse}, and the first condition of \cref{def:prefibrant} is fulfilled. The second condition is implied by \cref{lemma:somes_property_of_coinductive_arrows}.
\end{proof}

Next we remark that coinductively invertible arrows can be characterized using a lifting property:

\begin{definition}
Let $G_1$ be the $\infty$-category obtained from the factorization of $\Db_1 \to \Db_{0}$ as a cofibration $k_1: \Db_1 \to G_1$ followed by an acyclic fibration $t_1: G_1 \to \Db_{1}$. We then define $G_n := \Sigma^{n-1} G_1$ and $k_n := \Sigma^{n-1} k_1: \Db_n \to G_n$, $t_n := \Sigma^{n-1} t_1: G_n \to \Db_{n-1}$. Let us recall that the definition of the functor $\Sigma^{n-1}$ is given in \cref{defi:suspension}. As the suspension preserves acyclic fibrations and cofibrations, the pair $(k_n, t_n)$ is a factorization of $\Db_n \to \Db_{n-1}$ into a cofibration followed by an acyclic fibration. 
\end{definition}

\begin{prop}
\label{prop:Gn_represents_ci}
Let $X$ be an $\infty$-category, and $f$ an $n$-arrow of $X$.
There exists a lifting in the following diagram:
\[\begin{tikzcd}[ampersand replacement=\&]
	{\Db_n} \& X \\
	{G_n}
	\arrow["f", from=1-1, to=1-2]
	\arrow["{k_n}"', from=1-1, to=2-1]
	\arrow[dashed, from=2-1, to=1-2]
\end{tikzcd}\]
if and only if $f$ is coinductively invertible.
\end{prop}
\begin{proof}
This is a reformulation of lemma $4.36$ of \cite{lafont2010folk}.
\end{proof}

We recall now the model structure on $\icat$ constructed in \cite{lafont2010folk}.
\change{
\begin{theorem}
\label{theo:canonical_model_structure}
There exists a model structure on $\icat$, called the \textit{canonical model structure} and denoted by $\icat_\can$ such that
\begin{enumerate}
\item cofibrant $\infty$-categories are polygraphs.
\item Acyclic fibrations are the morphisms having the left lifting property with respect to the set of morphisms $\{\partial \Db_n \to \Db_n , n \in \Nb\}$.
\item Fibrations are the morphisms having the left lifting property with respect to the set of morphisms $\{\Db_n \xrightarrow{i_n^+} \Db_{n+1} \xrightarrow{k_{n+1}} G_{n+1} , n \in \Nb\}$.
\change{\item Cofibrations and acyclic cofibrations are morphisms having the right lifting property against, respectively, acyclic fibrations and fibrations.}.
\end{enumerate}
\end{theorem}
\begin{proof}
This is Theorem 4.39 and 5.3 of \cite{lafont2010folk}. The first point is the main result of \cite{metayer2008cofibrant}.
\end{proof}
}

\begin{definition}
\label{defi:coinductive_model_structure}
The \emph{coinductive left semi-model structure} on $\micat[\infty]$, denoted by $\micat[\infty]_{\coind}$, is the left Bousfield localization of the left semi-model structure on $\micat[\infty]_{\satind}$ by the set of morphisms:
\[\{(G_n,\overline{\emptyset}) \to \Db_{n-1}^\flat, n \in \mathbb{N}^*\}\]
\end{definition}

\begin{remark}
\label{rem:fibrant_in_the_localization}
Remark that if we define $\tilde{G_n} := \pi_{n-1}(G_n, \overline{\emptyset})$, the sequence     
\[(G_n, \overline{\emptyset}) \xrightarrow{p_n} \tilde{G_n} \xrightarrow{\tilde{k_n}} \Db_{n-1}^\flat\] is a factorization as a cofibration followed by an acyclic fibration in the inductive left semi-model structure. Using the terminology of \cite{henry2020combinatorial}, we will say that the cofibration $p_n$ \textit{represents} the morphism $(G_n, \overline{\emptyset}) \to \Db_{n-1}^\flat$. As we can see in the construction of the left Bousfield localization provided in the proof of Theorem 7.3 of $\textit{op cit}$, a marked $\infty$-category $X$ is fibrant in the coinductive left semi-model structure if and only if $X$ is fibrant in the inductive left semi-model structure and has the right lifting property against morphisms $k_n$ and iterated homotopy codiagonals of $k_n$ for all $n > 0$.
\end{remark}

\begin{prop}
\label{prop:if_X_fibrant_marked_arrow_correspond_to_invertible_one}
Let $X$ be a fibrant $\infty$-marked $\infty$-category in the inductive left semi-model structure. Then $X$ is fibrant in the coinductive left semi-model structure if and only if marked arrows are exactly the coinductively invertible arrows of the underlying $\infty$-category.
\end{prop}
\begin{proof}
Suppose first that $X$ is fibrant in the coinductive left semi-model structure and let $f$ be a coinductively invertible arrow of the underlying $\infty$-category. By \cref{prop:Gn_represents_ci}, this corresponds to a morphism $f:(G_n, \overline{\emptyset}) \to X$. As remarked in \cref{rem:fibrant_in_the_localization}, $X$ has the right lifting property against $k_n$, which implies that $f$ can be lifted to $\pi_{n-1}(G_n)$. That shows that $f$ is marked. Moreover, \cref{prop:marked_arrows_admit_inverse_in_fibrant_objects} states that all marked arrows are coinductively invertible. This shows that marked arrows exactly correspond to coinductively invertible ones.

For the other direction, suppose that $X$ is a marked $\infty$-category, fibrant in the inductive left semi-model structure, whose marked arrows are the coinductively invertible ones. We want to show that $X$ is fibrant in the coinductive left semi-model structure. According to \cref{prop:if_invertible_arrows_are_marked_then_X_fibrant}, $X$ is fibrant in the nonlocalized left semi-model structure. We then have to show that for all integers $n > 0$, $X$ has the left lifting property against $k_n$ and iterated homotopy codiagonals of $k_n$. Remark now that, as $\tilde{G_n} \coprod_{(G_n, \overline{\emptyset})} \tilde{G_n} = \tilde{G_n}$, all the iterated homotopy codiagonals are identities. To conclude, it is enough to show that $X$ has the left lifting property against morphisms $k_n$ for $n > 0$, which is obvious by assumption and by the \cref{prop:Gn_represents_ci}.
\end{proof}

\change{
\begin{lemma}
\label{lemma:intermediate}
Let $X$ be an $\infty$-category, and let $M$ be the set of coinductive invertible arrows.
The canonical morphism $X^\flat \to (X, M)$ is an anodyne cofibration of the coinductive left semi-model structure.
\end{lemma}
\begin{proof}
We denote by $(X, M')$ the marked $\infty$-category obtained as the pushout of the following span:
\[\begin{tikzcd}[ampersand replacement=\&]
	{\coprod_{\Hom(G_n, X)} \tilde{G_n}} \& {\coprod_{\Hom(G_n, X)} G_n^{\flat}} \& {X^\flat}
	\arrow["{\coprod p_n}"', from=1-2, to=1-1]
	\arrow[from=1-2, to=1-3]
\end{tikzcd}\]
By stability by coproducts and pushouts, the canonical morphism $X^\flat \to (X, M')$ is an anodyne cofibration of the coinductive left semi-model structure.

Moreover, Lemma 4.9 of \cite{lafont2010folk} applied to the acyclic fibration $G_n\to \Db_{n-1}$ implies that any arrow of $G_n$ of dimension higher or equal to $n$ is coinductively invertible. In particular, every marked arrow of $\tilde{G_n}$ is coinductively invertible. We then have $M' \subset M$, and \ref{prop:Gn_represents_ci} implies that $M \subset M'$. Furthermore, \cref{prop:if_X_fibrant_marked_arrow_correspond_to_invertible_one} implies that $(X, M)$ is a fibrant object of the coinductive left semi-model structure.
\end{proof}
}

\begin{theorem}
\label{theo;equivalenc eentre can et coind}
\change{
The adjunction 
\[\begin{tikzcd}[ampersand replacement=\&]
	{(\uvar)^{\flat}:\icat} \& {\micat[\infty]:U}
	\arrow[shift left=3, from=1-1, to=1-2]
	\arrow[shift left=3, from=1-2, to=1-1]
\end{tikzcd}\]
induces a Quillen equivalence between $\icat_{\can}$ and $\micat[\infty]_{\coind}$.
}
\end{theorem}
\change{
\begin{proof}
We first show that this adjunction is a Quillen adjunction.

Remark that the left adjoint obviously preserves generating cofibrations. Furthermore, for any integer $n$, the morphism 
$$\Db_n^\flat\xrightarrow{i_n^-} \Db_{n+1}^\flat\xrightarrow{k_{n+1}} G_{n+1}^\flat$$ admits a retract given by the weak equivalence $G_{n+1}^\flat\to \Db_n^\flat$, and so it is a acyclic cofibration of $\micat_{\can}$. The left adjoint then preserves cofibration and acyclic cofibration, which implies that the  adjunction is a Quillen adjunction. 

We now  show that this adjunction is a Quillen equivalence. Let $X$ be a cofibrant $\infty$-category and let $M$ be the set of coinductive invertible arrows of $X$. The lemma \cref{prop:if_X_fibrant_marked_arrow_correspond_to_invertible_one} and \cref{lemma:intermediate} imply that $(X,M)$ is the fibrant replacement of $X^{\flat}$. The derived unit then corresponds to the isomorphism $U (X^{\flat})_{fib}\cong U(X,M)\cong X$. 

Remark now that the right adjoint preserves colimits and cofibrations. It is then sufficient to compute the derived counit on cofibrant and fibrant objects of $\micat[\infty]_{\coind}$. Given such an object $(X,M)$, we then have $((U(X,M))_{cof})^{\flat}\cong X^{\flat}$. As \cref{prop:if_X_fibrant_marked_arrow_correspond_to_invertible_one} states that $M$ is the set of coinductive invertible arrows of $X$, \cref{lemma:intermediate} implies that the derived counit $X^\flat\to (X,M)$ is a weak equivalence.
\end{proof}
\begin{theorem}
\label{theo;equivalenc eentre can et coind2}
The full subcategory of fibrant objects of $\micat[\infty]_{\coind}$ is isomorphic  $\icat$.
 Moreover, a morphism between fibrant objects of $\micat[\infty]_{\coind}$ is a weak equivalence (resp. fibration, resp. acyclic fibration) if and only if the underlying morphism in $\icat_{\can}$ is a weak equivalence (resp. fibration, resp. acyclic fibration).
\end{theorem}

\begin{proof}
The first claim directly follows from \cref{prop:if_X_fibrant_marked_arrow_correspond_to_invertible_one} and from the fact that any functor between $\infty$-categories preserves coinductively invertible arrows.

For the second claim, suppose we are given a morphism $p:(X,M)\to (Y,N)$ between fibrant objects of $\micat[\infty]_{\coind}$. If $U(p)$ is a weak equivalence, so is $p$ by \cref{theo;equivalenc eentre can et coind}

Suppose now that $U(p)$ is an acyclic fibration in $\icat_{\can}$. The morphism $p$ then as the right lifting property against the set $I^{\partial}$ (defined in \cref{def:generating_cofibrations}).
To demonstrate that $p$ is an acyclic fibration, it remains to show that an arrow is marked in $X$ if and only if its image in $Y$ is. Since $M$ and $N$ correspond respectively to the set of coinductively invertible arrows of $X$ and $Y$, this follows from Lemma 4.9 of \cite{lafont2010folk}.

Finally, suppose that $U(p)$ is a fibration in $\icat_{\can}$. As $(X,M)$ and $(Y,N)$ are, by definition, fibrant in $\micat[\infty]_{\coind}$, we need to show that $p$ is an isofibration. Applying Lemma 4.9 of \cite{lafont2010folk} to $G_n\to \Db_{n-1}$, we find that the marked arrows of $\tilde{G_n}$ correspond to coinductively invertible arrows of $G_n$. This marked $\infty$-category is, in particular, fibrant in $\micat[\infty]_{\coind}$. Since $\Db_{n-1}^\flat$ is also fibrant, and since $U$ induces an equivalence between the subcategories of fibrant objects, $p$ has the right lifting property against $\Db_{n-1}^\flat\xrightarrow{i_n^+} (\Db_{n-1},\overline{{e_n}})\to \tilde{G_n}$. Finally, since $(Y,N)$ has by definition the right lifting property against $(\Db_{n-1},\overline{{e_n}})\to \tilde{G_n}$, $p$ has the right lifting property against $\Db_{n-1}^\flat\xrightarrow{i_n^+} (\Db_{n-1},\overline{{e_n}})$ and is thus an isofibration.
\end{proof}
}

Note that if $m < \infty$, then every $m$-marked $\infty$-category which is fibrant for the saturated inductive left semi-model structure is also fibrant for the coinductive left semi-model structure. Hence, when restricting the previous theorem to $m$-marked objects for $m < \infty$, we no longer need to move to the coinductive left semi-model structure and we directly obtain the following:

\begin{cor}
\label{cor:link_beetwen_m_saturated_inductive_and_folk}
If $m<\infty$, the full subcategory of fibrant objects of $\micat[m]_{\satind}$ is isomorphic to the subcategory of $\icat$ composed of $\infty$-categories whose arrows of dimension strictly superior to $m$ are coinductively invertible. Moreover, a morphism between fibrant $m$-marked $\infty$-categories is a weak equivalence (resp. fibration, resp. acyclic fibration) in $\micat[m]_{\satind}$ if and only if the underlying morphism in $\icat$ is a weak equivalence (resp. fibration, resp. acyclic fibration) in $\icat_{\can}$.
\end{cor}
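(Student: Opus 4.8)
The plan is to show that the forgetful functor $\phi \colon \icat^m \to \icat$ discarding the marking restricts to an isomorphism from the full subcategory of saturated-inductive-fibrant objects onto the full subcategory of $\icat$ on those $X$ all of whose arrows of dimension $>m$ are coinductively invertible, and that under this identification weak equivalences correspond to folk weak equivalences. The heart of the matter is an explicit description of the marking: I claim that if $(X,M)$ is fibrant for the saturated inductive model structure (\cref{theo:saturated_inductive_model_structure}) with $m<\infty$, then $M$ is exactly the set of coinductively invertible arrows of the underlying $\infty$-category (\cref{def:coinductively_invertible}).

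First I would establish this identification. By \cref{lem:2_out_of_6}, an arrow lies in $M$ if and only if it admits an inverse in the sense of \cref{def:inverse}, that is, an inverse whose witnessing cells are again marked. That every marked arrow is coinductively invertible holds for all $m$ and is \cref{prop:marked cells are coinductively invertible}. The reverse inclusion is where $m<\infty$ is essential, and I expect this to be the main obstacle: I would prove by \emph{downward} induction on the dimension $n$, starting at $n=m$, that every coinductively invertible $n$-arrow is marked. For $n=m$ the $(n+1)$-cells witnessing coinductive invertibility have dimension $>m$ and so are marked by definition of an $m$-marked $\infty$-category, whence the arrow has a marked inverse and is itself marked. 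For $n<m$ the witnessing $(n+1)$-cells are themselves coinductively invertible, hence marked by the inductive hypothesis, so again the arrow has a marked inverse and is marked; arrows of dimension $>m$ are marked outright. This induction terminates precisely because $m$ is finite — for $m=\infty$ there is no top dimension to anchor it, which is exactly why the coinductive localization was needed in that case.

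With the description of $M$ in hand the object-level isomorphism is routine. A saturated-inductive-fibrant $(X,M)$ has all arrows of dimension $>m$ coinductively invertible (being marked), so $\phi$ lands in the asserted subcategory; conversely, for an $\infty$-category $X$ whose arrows of dimension $>m$ are coinductively invertible, marking exactly the coinductively invertible arrows yields a valid $m$-marking (closure under composition and the source/target condition being \cref{lemma:somes_property_of_coinductive_cells}) which is saturated-inductive-fibrant directly by \cref{lem:2_out_of_6}, since coinductive invertibility supplies coinductively invertible — hence marked — witnesses, and conversely a marked inverse is a coinductively invertible one. Because coinductive invertibility is intrinsic to the underlying $\infty$-category, any functor between two such $\infty$-categories automatically preserves it, hence preserves the marking; thus $\phi$ is fully faithful and restricts to an isomorphism onto the stated full subcategory.

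Finally, for weak equivalences I would reduce to the case $m=\infty$ already treated. Since a left Bousfield localization does not alter the weak equivalences between objects fibrant in the localization, a map between saturated-inductive-fibrant objects is a weak equivalence if and only if it is an equivalence of marked $\infty$-categories in the sense of \cref{def:equivalence}, via \cref{char_equiv}. Viewing our objects inside $\icat^\infty$ through the fully faithful inclusion that regards an $m$-marked category as an $\infty$-marked one, the identification of $M$ with the coinductively invertible arrows shows by \cref{prop:if_X_fibrant_marked_cell_correspond_to_invertible_one} that they become fibrant for the coinductive model structure, while the conditions of \cref{def:equivalence} are literally unchanged by this inclusion, referring only to the marking and the $\infty$-categorical structure. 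Hence a map of such objects is a weak equivalence exactly when its image in $\icat^\infty$ is, which by the theorem preceding this corollary holds exactly when the underlying functor $\phi(f)$ is a weak equivalence of the folk model structure.
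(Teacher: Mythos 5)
Your proposal is correct and takes essentially the same route as the paper: the paper deduces the corollary by restricting the preceding theorem on the coinductive model structure on $\icat^\infty$, the bridge being the observation that for $m<\infty$ every fibrant object of the saturated inductive model structure on $\icat^m$ is already fibrant for the coinductive model structure, which is exactly your identification of the marking with the coinductively invertible arrows. Your downward induction anchored at dimension $m$ (where the witnessing cells have dimension $>m$ and are thus marked by definition) is precisely the justification that the paper leaves implicit in that one-line remark.
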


\subsection{The Canonical Model Structure vs the Limit of the $\pi$-Tower}
\label{sec:lim_of_pi_tower}

In this section, we will compare the canonical model structure with the limits of the tower of $\pi$ functors as considered in \cref{subsec:truncation}. 

Given a strict $\infty$-category $C$, it is possible to define an $(\infty,m)$ localization $\pi_m X$, and this defines an object of the limit of the tower of $\pi$ functors. But this construction does not produce an equivalence between this limit and the canonical model structure, contrary to what seemed to have been believed previously. \change{Here we are using ``limit'' as ``the (homotopy) limit of the corresponding tower of associated $(\infty,1)$-categories,'' without referring to any specific model.}

We will show this by building a {morphism $C_\infty \to D_\infty$} that is not an equivalence of the {coinductive} model structure, but becomes invertible in the limit of the $\pi$-tower. Though we believe this is not the case, this still leaves open the possibility that the limit of the $\pi$-tower is equivalent to a further localization of the coinductive left semi-model structure,  \change{where this morphism (and probably others) would become invertible}. If this were the case, then the limit of the $\pi$-tower would be equivalent to a localization $\icat_\can$.

More precisely, we will show:
\begin{prop}
\label{prop_conterexemple}
There exists a morphism $f:C_\infty \to D_\infty$ between cofibrant $\infty$-marked $\infty$-categories such that 
\begin{enumerate}
\item  $f$ is not a weak equivalence in the coinductive left semi-model structure on $\infty$-marked $\infty$-categories defined in \cref{def:coinductively_invertible},
\item for all integers $n$, $\pi_nf$ is a weak equivalence in the saturated inductive left semi-model structure on $n$-marked $\infty$-categories defined in \cref{theo:saturated_inductive_model_structure}.
\end{enumerate}
\end{prop}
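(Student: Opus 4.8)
The plan is to realize a ``phantom''-type phenomenon: a morphism whose only obstruction to being a folk (coinductive) equivalence is the non-\emph{coinductive}-invertibility of a single distinguished arrow $\phi$, while that arrow nonetheless becomes invertible after each finite truncation $\pi_n$. The conceptual point is that coinductive invertibility (\cref{def:coinductively_invertible}) is a genuine limit condition, demanding a single coherent infinite tower of invertible witnesses, whereas the family of conditions ``$\pi_n\phi$ is invertible for every finite $n$'' only asks, for each $n$ separately, for a quasi-inverse whose failure to be invertible is concentrated in dimension $>n$; these finite-stage data need not assemble into a coherent whole. Concretely, I would build $B$ as an explicit infinite (cofibrant) marked polygraph carrying $\phi$ together with a \emph{staggered}, non-coherent tower of witnessing cells — so that for each $n$ there is a quasi-inverse of $\phi$ whose defect lives above dimension $n$, but no single quasi-inverse works in all dimensions at once — and let $A$ be the analogous object in which $\phi$ has been collapsed to an identity, with $f\colon A\to B$ the evident comparison map. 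Both are cofibrant, being marked polygraphs.

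To establish (1), that $f$ is not a weak equivalence of the coinductive model structure, I would use that folk weak equivalences between cofibrant objects are detected by the weak left lifting property against the generating cofibrations (as in the proof of \cref{char_equiv}, via Theorem A.2.6 of \cite{henry2020weak}), together with the criterion of \cref{prop:Gn represente ci} identifying liftings against $\Db_n\to G_n$ with coinductive invertibility. It then suffices to exhibit that $\phi$ is \emph{not} coinductively invertible in $B$, equivalently that the relevant map $\Db_n\to G_n$ admits no lift in the folk-fibrant replacement. This is forced by the non-coherence of its tower of witnesses: any coinductively invertible inverse would require a compatible choice at all levels, and the construction is arranged so that no such choice exists. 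Hence the distinguished cell is seen as a non-equivalence by the folk structure, and $f$ fails one of the conditions of \cref{def:equivalence} after folk-fibrant replacement.

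To establish (2), that $\pi_n f$ is a weak equivalence in the saturated inductive model structure on $\icat^n$ for every $n$, I would invoke \cref{cor:link_beetwen_m_saturated_inductive_and_folk}: for $m=n<\infty$ the fibrant objects of that structure are exactly the $\infty$-categories whose arrows of dimension $>n$ are coinductively invertible, and weak equivalences between fibrant objects are the folk equivalences of the underlying $\infty$-categories. Since $\pi_n$ marks all cells of dimension $>n$, a fibrant replacement of $\pi_n B$ coinductively inverts every such cell; the staggered witnesses of $\phi$ of dimension $>n$ thereby become coinductively invertible, and a descending argument then makes $\phi$ itself invertible in the replacement. Thus $f$ becomes a folk equivalence after applying $\pi_n$ and fibrantly replacing, which is precisely the assertion that $\pi_n f$ is a weak equivalence; one checks the conservativity, essential surjectivity, and ``essential surjectivity on homs'' clauses of \cref{def:equivalence} directly once $\phi$ has been inverted.

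The main obstacle is the construction itself: engineering a single morphism for which the obstruction to coinductive invertibility lives genuinely only in the inverse limit — a $\lim^1$-like, or phantom, obstruction — so that every finite truncation inverts the distinguished cell while the coinductive structure does not. The two delicate points are, on the one hand, verifying rigorously that $\phi$ is not coinductively invertible, which requires controlling enough of the coinductive homotopy data of $B$ (presumably through its description as a strict $\infty$-groupoid, hence a crossed complex), and, on the other hand, checking the saturated-inductive equivalence uniformly in $n$ while working with cofibrant rather than fibrant objects, so that the fibrant replacements and the descending invertibility argument must be handled with care.
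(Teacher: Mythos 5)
Your overall plan coincides with the paper's actual construction: the paper builds polygraphs $P_k$ by freely adjoining to a single $1$-arrow an ``inverse up to dimension $k{+}1$'' (pushouts along $\Db_{j}\to E_{j}$, where $E_j$ is a suspended weak-inverse cell), glues all the $P_k$ along that $1$-arrow to form $C_\infty$, collapses the arrow to a point to get $D_\infty$, and takes $f\colon C_\infty^\flat\to D_\infty^\flat$ --- exactly your ``staggered, non-coherent tower'' idea (note, however, that the evident comparison map goes \emph{from} the object containing $\phi$ \emph{to} the collapsed one, not in the direction $A\to B$ you wrote). The problem is that your proposal defers precisely the two steps that constitute the proof, and the hints you give for filling them would not work as stated. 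For point (1), saying the non-invertibility of $\phi$ is ``forced by the non-coherence of its tower of witnesses'' is circular: coinductive invertibility need not be witnessed by the designated cells --- an inverse tower could a priori be assembled out of arbitrary composites --- so one must rule out \emph{every} candidate tower. The paper does this with a combinatorial lemma: in a polygraph, a nonidentity coinductively invertible $k$-arrow forces an infinite sequence of generators $(g_n)$, with $g_n$ of dimension $k+n$ and each $g_n$ appearing in the source of $g_{n+1}$; in $C_\infty$ every generator lies in a single $P_m$, whose generators have bounded dimension, a contradiction. Your suggested tool for this step (crossed complexes via strict $\infty$-groupoids) is inapplicable, since $B$ is not an $\infty$-groupoid --- none of its cells is strictly invertible. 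Moreover, one needs the stronger conclusion that \emph{no} nonidentity arrow of $C_\infty$ or $D_\infty$ is coinductively invertible: that is what makes $C_\infty^\flat$ and $D_\infty^\flat$ fibrant in the coinductive model structure, and fibrancy is what licenses testing $f$ against \cref{def:equivalence} via \cref{char_equiv}. The lifting criterion you invoke from \cite{henry2020weak} characterizes weak equivalences between \emph{fibrant} objects, not cofibrant ones, so without this fibrancy your step (1) has no usable criterion, and working with a coinductive fibrant replacement instead (as you suggest) would be far harder to control.

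For point (2), your ``descending argument'' after fibrant replacement is likewise only asserted, and verifying the clauses of \cref{def:equivalence} inside an uncontrolled fibrant replacement of $\pi_n C_\infty$ is exactly the kind of step that needs an argument. The paper avoids fibrant replacement entirely: it first shows that $\pi_{n+1}E_{n+1}\to\pi_n E_{n+1}$ is an acyclic cofibration of the saturated inductive structure (it only adds markings; fibrant objects lift against it uniquely by the $2$-out-of-$6$ property), and then propagates this through the defining pushouts, using that weak equivalences between cofibrant objects are stable under pushout together with $2$-out-of-$3$, to obtain first $\pi_n P_k\to\Db_0$ and then $\pi_n C_\infty\to\pi_n D_\infty$ as weak equivalences. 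So while your architecture is the right one, the explicit construction, the combinatorial non-invertibility lemma, and the pushout-based argument for (2) all remain to be supplied; as written, the proposal is a plan rather than a proof.
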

As an immediate consequence, we get:
\begin{cor}
\change{
The $(\infty,1)$-functor from the $(\infty,1)$-category associated to $\icat_{\can}$ to 
the limit of the diagram of $(\infty,1)$-categories associated to $(\micat[n]_{\satind}, \pi_n)$ induced by the diagram}
% https://q.uiver.app/#q=WzAsNCxbMSwwLCJcXG1pY2F0W1xcaW5mdHldX3tcXGNvaW5kfSJdLFsxLDEsIlxcbWljYXRbbl1fe1xcY29pbmR9Il0sWzAsMSwiLi4uIl0sWzIsMSwiLi4uIl0sWzAsMSwiXFxwaV9uIl0sWzEsMiwiXFxwaV97bi0xfSJdLFszLDEsIlxccGlfbiJdLFswLDNdLFswLDJdXQ==
\[\begin{tikzcd}[ampersand replacement=\&]
	\& {\micat[\infty]_{\coind}} \\
	{...} \& {{\micat[n]_{\satind}}} \& {...}
	\arrow[from=1-2, to=2-1]
	\arrow["{\pi_n}", from=1-2, to=2-2]
	\arrow[from=1-2, to=2-3]
	\arrow["{\pi_{n-1}}", from=2-2, to=2-1]
	\arrow["{\pi_n}", from=2-3, to=2-2]
\end{tikzcd}\]
is not an equivalence.
\end{cor}

\begin{construction}
Let $E_1$ denote the following $2$-polygraph: 
\[\begin{tikzcd}
	b && b \\
	& a && a
	\arrow[from=1-1, to=2-2]
	\arrow["f"{description}, from=2-2, to=1-3]
	\arrow[from=1-3, to=2-4]
	\arrow[""{name=0, anchor=center, inner sep=0}, "{\Ib_b}", from=1-1, to=1-3]
	\arrow[""{name=1, anchor=center, inner sep=0}, "{\Ib_b}"', from=2-2, to=2-4]
	\arrow[shorten <=5pt, shorten >=3pt, Rightarrow, from=0, to=2-2]
	\arrow[shorten <=5pt, shorten >=3pt, Rightarrow, from=1, to=1-3]
\end{tikzcd}\]
and $E_n:=\Sigma^{n-1}E_1$. Let us recall that the definition of the functor $\Sigma^{n-1}$ is given in \cref{defi:suspension}. When writing $\Db_n\to E_n$, we will always consider the morphism representing the $n$-arrow $\Sigma^{n-1}f$. We define by induction a sequence of polygraphs $(P_n)_{n\in \mathbb{N}}$. We set $P_0:=\Db_1$ and $P_n$ as the pushout: 
\[\begin{tikzcd}
	{\coprod_{(P_n)_{n+1}}\Db_{n+1}} & {P_n} \\
	{\coprod_{(P_n)_{n+1}}E_{n+1}} & {P_{n+1}}
	\arrow[from=2-1, to=2-2]
	\arrow[from=1-1, to=1-2]
	\arrow[from=1-1, to=2-1]
	\arrow[from=1-2, to=2-2]
	\arrow["\lrcorner"{anchor=center, pos=0.125, rotate=180}, draw=none, from=2-2, to=1-1]
      \end{tikzcd}\]
\change{where $(P_n)_{n+1}$ is the set of $(n+1)$-arrows of $P_n$.}

Informally, taking a pushout along $\Db_{n+1} \to E_n$ means freely adding a left and a right inverse to an arrow $f$ (except there is no marking yet) and so $P_{n+1}$ is constructed by freely adding left and right inverses to all $(n+1)$-arrows of $P_n$.

When writing $\Db_1\to P_n$, we will always consider the morphism representing the $1$-arrow $P_0\to P_n$.
Finally, for $n \in \mathbb{N}\cup\{\infty\}$ we define $C_n$ and $D_n$ as the following pushouts:
\[\begin{tikzcd}
	{\coprod_{k< n}\Db_1} & {\Db_1} & {\Db_0} \\
	{\coprod_{k<n}P_k} & {C_n} & {D_n}
	\arrow[from=1-1, to=1-2]
	\arrow[from=2-1, to=2-2]
	\arrow[from=1-2, to=2-2]
	\arrow[from=1-1, to=2-1]
	\arrow[from=1-2, to=1-3]
	\arrow[from=2-2, to=2-3]
	\arrow[from=1-3, to=2-3]
	\arrow["\lrcorner"{anchor=center, pos=0.125, rotate=180}, draw=none, from=2-2, to=1-1]
	\arrow["\lrcorner"{anchor=center, pos=0.125, rotate=180}, draw=none, from=2-3, to=1-2]
\end{tikzcd}\]
\end{construction}

The morphism $C_\infty\to D_\infty$ will be the map $f$ of \cref{prop_conterexemple}. The informal idea is that in $C_\infty$ the $1$-arrow corresponding to the vertical map $\Db_1 \to C_\infty$ has ``coinductive inverse up to height $n$'' for all $n$, but is not coinductively invertible. So when $C_\infty$ is seen as an object of the canonical (or coinductive) model structure this $1$-arrow is not invertible, but as soon as we localize to make all the $n$-arrows invertible for some integer $n$, then this $1$-arrow will become invertible. In contrast in $D_\infty$ this arrow becomes an identity, so it is invertible from the start. In the rest of the section, we will justify this rigorously.

\bigskip

We begin by showing the first point of \cref{prop_conterexemple}, namely that $C_\infty\to D_\infty$ is not a weak equivalence in the coinductive left semi-model structure.

\begin{lemma}
\label{lemma:technical lemma}
Let $P$ be a polygraph and $f$ a coinductively invertible $k$-arrow in $P$. For every $k$-generator $g$ appearing in the decomposition of $f$, there exists a sequence of generating arrows $(g_n)_{n\in\mathbb{N}}$ such that 
\begin{enumerate}
\item for $n>0$, $g_n$ is a $(n+k)$-generator and $g_0 = g$,
\item for $n>0$, $g_n$ appears in the decomposition of the source of $g_{n+1}$.
\end{enumerate}
\end{lemma}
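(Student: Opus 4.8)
The plan is to build the sequence $(g_n)_{n\in\mathbb{N}}$ by induction on $n$, climbing one dimension at each step through the coinductively invertible witness cells supplied by \cref{def:coinductively_invertible}. The combinatorial heart of the argument is the following sublemma, which I would isolate and prove first: \emph{if $c$ is a $(j+1)$-arrow of a polygraph and $g'$ is a $j$-generator appearing in $\pi_j^-(c)$ but not in $\pi_j^+(c)$, then some $(j+1)$-generator $g''$ appears in $c$ with $g'$ appearing in $\pi_j^-(g'')$.} The hypothesis that $g'$ is absent from the target is exactly what guarantees that a genuine generator can be extracted, rather than the search dissolving into an identity cell.

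To prove the sublemma I would fix an expression of $c$ as an iterated composite of $(j+1)$-generators and identities and argue by structural induction on it. If $c$ is a single generator, take $g''=c$. The case $c=\Ib_s$ is vacuous, since then $\pi_j^-(c)=\pi_j^+(c)=s$ and the hypothesis fails. For a composite $c=c_1\#_i c_2$ with $i=j$, the source and target formulas give $\pi_j^-(c)=\pi_j^-(c_1)$ and $\pi_j^+(c)=\pi_j^+(c_2)$, together with the gluing identity $\pi_j^+(c_1)=\pi_j^-(c_2)$; then either $g'$ is absent from $\pi_j^+(c_1)$ and I recurse into $c_1$, or $g'$ appears in $\pi_j^+(c_1)=\pi_j^-(c_2)$ and, being absent from $\pi_j^+(c_2)=\pi_j^+(c)$, I recurse into $c_2$. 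For $i<j$ the formula $\pi_j^\epsilon(c)=\pi_j^\epsilon(c_1)\#_i\pi_j^\epsilon(c_2)$ applies, and since the $j$-generators occurring in a composite along a dimension $i<j$ are exactly the union of those occurring in the two factors, $g'$ appears in one of $\pi_j^-(c_1)$, $\pi_j^-(c_2)$ while occurring in neither target, and I recurse into that factor.

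With the sublemma available, I would run the main induction while carrying along, besides $g_n$, an auxiliary coinductively invertible $(k+n)$-arrow $\gamma_n$ in which $g_n$ appears; one starts with $\gamma_0=f$ and $g_0=g$. Given $(g_n,\gamma_n)$, unfolding \cref{def:coinductively_invertible} for $\gamma_n$ produces a coinductively invertible $(k+n+1)$-arrow $\delta_n$ whose source is $\gamma_n\#_{k+n-1}\gamma_n^{-1}$ and whose target is an identity $(k+n)$-cell. Since $g_n$ appears in $\gamma_n$ it appears in $\pi_{k+n}^-(\delta_n)$, and since an identity cell contains no generator of its own dimension, $g_n$ does not appear in $\pi_{k+n}^+(\delta_n)$. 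The sublemma, applied with $j=k+n$, then yields a $(k+n+1)$-generator $g_{n+1}$ appearing in $\delta_n$ with $g_n$ appearing in $\pi_{k+n}^-(g_{n+1})$; setting $\gamma_{n+1}=\delta_n$ closes the induction. This produces, for every $n$, a $(k+n)$-generator $g_n$ with $g_n$ appearing in the source of $g_{n+1}$, which is the assertion.

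The only delicate point, and the one I would treat as the main obstacle, is the risk that the portion of a witness cell carrying $g_n$ degenerates to an identity, leaving nothing to extract. This is dealt with uniformly by the absence-from-the-target condition in the sublemma, which is available at every stage precisely because the witnessing cells $\alpha,\beta$ of \cref{def:coinductively_invertible} always have identity targets; consequently the tracked generator, living one dimension below, can never appear in those targets.
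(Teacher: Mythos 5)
Your proof is correct and follows essentially the same route as the paper's: iterate through the witnessing cells of coinductive invertibility, at each stage extracting a generator one dimension higher whose source contains the generator currently being tracked. The only difference is one of rigor --- the paper merely asserts the extraction step (that a $(k+1)$-generator $\beta$ appearing in $\alpha$ exists with $g$ in the decomposition of its source), whereas you isolate it as a sublemma, prove it by structural induction on the composite expression, and make explicit that it is the identity target of the witness cells which guarantees the extraction never degenerates; this fills in a detail the paper leaves implicit.
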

\begin{proof}
We show this result by coinduction on $k$. Suppose the result is true for all $(k+1)$-arrows, and let $f:a\to b$ be a coinductively invertible $k$-arrow, and $g$ a $k$-generator appearing in the decomposition of $f$. There exists a $k$-arrow $f':b\to a$ and a coinductively invertible $(n+1)$-arrow $\alpha:f\#_{k-1}f'\to \Ib_a$. As $g$ is a $k$-generator appearing in the decomposition of $f\#_{k-1}f'$ (which is the source of $\alpha$), we can find a $(k+1)$-generator $\beta$ appearing in the decomposition of $\alpha$ and such that $g$ is in the decomposition of the source of $\beta$. As $\alpha$ is coinductively invertible, one can continue this process coinductively starting from $\beta$ to build a sequence of generators $(\beta_n)_{n\in\mathbb{N}}$ satisfying the desired property. We then set $g_0:=g$, and $g_n:=\beta_{n-1}$. This sequence also satisfies the desired property.
\end{proof}
\begin{cor}
  The $\infty$-categories $C_\infty$ and $D_\infty$ have no coinductively invertible arrows except identities.
\end{cor}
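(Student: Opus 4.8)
The plan is to deduce this from the preceding lemma, which shows that any coinductively invertible arrow gives rise to an infinite strictly ascending tower of generators; the point will be that $C_\infty$ and $D_\infty$ are assembled from finite-dimensional pieces and so cannot support such a tower away from identities.

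First I would reduce to a statement about generators. If $f$ is a $k$-arrow of $C_\infty$ which is not an identity, then some $k$-generator $g$ must appear in its decomposition, since a $k$-arrow in which no $k$-generator appears is a composite of identities and hence itself an identity. Were $f$ coinductively invertible, the preceding lemma would furnish a sequence $(g_n)_{n\in\mathbb{N}}$ with $g_0=g$, with $g_n$ a $(k+n)$-generator, and with $g_n$ appearing in the source of $g_{n+1}$. The whole proof then amounts to showing that no such sequence exists.

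Next I would record the relevant structure of $C_\infty$. By construction it is the pushout gluing the copies of $P_0,P_1,P_2,\dots$ along the base $1$-arrow $\Db_1=P_0$, and each attaching map $P_0\to P_k$ is a polygraphic inclusion, so $C_\infty$ is a polygraph. Two features matter. First, each $P_k$ is finite-dimensional, with generators of dimension at most $k+1$: it is built from $P_0$ by successively attaching copies of $E_{j+1}=\Sigma^{j}E_1$, and such an attachment only creates generators of dimensions $j+1$ and $j+2$. Second, because the gluing identifies only the $1$-dimensional polygraph $P_0$, every generator of dimension $\geq 2$ lies in a unique copy $P_k$, and the source of such a generator is a composite of generators of that same copy. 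In particular, if a generator of dimension $\geq 2$ appears in the source of another generator, the two lie in the same copy.

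I would then close the argument by a dimension count. Pick $N$ with $k+N\geq 2$; for $n\geq N$ the generator $g_n$ has dimension $\geq 2$, hence lies in a single copy $P_{j_n}$, and the source-appearance relation forces $j_n=j_{n+1}$. Thus all $g_n$ with $n\geq N$ lie in one copy $P_j$, whose generators have dimension at most $j+1$; this contradicts $\dim g_n=k+n\to\infty$. Hence $C_\infty$ has no non-identity coinductively invertible arrow. For $D_\infty$ one argues identically, the only extra input being that the pushout along $\Db_1\to\Db_0$ merely collapses the base $1$-arrow to an identity and fuses its endpoints: this leaves $D_\infty$ a polygraph whose generators of dimension $\geq 2$ are again exactly those of $C_\infty$, still partitioned among the finite-dimensional pieces $P_k$. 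The one step that genuinely needs care is this structural analysis of the generators---especially verifying that $D_\infty$ remains a polygraph of the expected form and that sources of higher generators never leave their piece---after which the finiteness contradiction is automatic.
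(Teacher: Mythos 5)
Your proof is correct and follows essentially the same route as the paper's: contradiction via the preceding lemma, observing that (after discarding finitely many terms) the resulting sequence of generators must all lie in a single glued-in copy $P_j$, whose generators have bounded dimension. You simply spell out details the paper leaves implicit, notably the explicit bound $\dim \leq k+1$ on generators of $P_k$ and the verification that $D_\infty$ is still a polygraph with the same higher generators after collapsing the base $1$-arrow.
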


\begin{proof}
  We will show this assertion for $C_\infty$; the proof for $D_\infty$ is essentially the same. We proceed by contradiction: let $f$ be a non-identity coinductively invertible $k$-arrow of $C_\infty$. As $f$ is not an identity, there must be at least one $k$-generator $g$ appearing in its decomposition. Since $C_\infty$ is a polygraph, one can apply \cref{lemma:technical lemma} to obtain a sequence $(g_m)_{m \in \mathbb{N}}$ of generators of $C_\infty$. Eventually shifting the sequence, one can freely assume that $g_0$ is of dimension strictly greater than $1$. The generators of $C_\infty$ are obtained by gluing the generators of $P_n$ for all $n$ at the unique generator of $\Db_1$, so this $g_0$ must be in one of the $P_n$. It then follows by induction that all the $g_m$ are in the same $P_n$, but this leads to a contradiction as the dimension of the generators of $P_n$ is bounded above.
\end{proof}

\begin{cor}
The marked $\infty$-categories $C_\infty^\flat$ and $D_\infty^\flat$ are fibrant in the coinductive left semi-model structure.
\end{cor}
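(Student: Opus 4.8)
The plan is to deduce this immediately from the preceding corollary together with \cref{prop:if_invertible_cells_are_marked_then_X_fibrant,prop:if_X_fibrant_marked_cell_correspond_to_invertible_one}, so almost no new work is required. The first thing I would do is identify the marking of $C_\infty^\flat$ with the set of coinductively invertible arrows. Since we are in the case $m=\infty$, the marking $\overline{\emptyset}$ of $C_\infty^\flat$ consists exactly of the identity (trivial) arrows. On the other hand every identity arrow is coinductively invertible, being its own inverse with all the witnessing higher cells again identities, so the set $M$ of coinductively invertible arrows of $C_\infty$ contains all identities; and by the preceding corollary it contains nothing else. Hence $M$ coincides exactly with the marking of $C_\infty^\flat$, i.e. $C_\infty^\flat = (C_\infty, M)$ where $M$ is the set of coinductively invertible arrows of the underlying $\infty$-category.

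Next I would invoke \cref{prop:if_invertible_cells_are_marked_then_X_fibrant}, which tells us that $(C_\infty, M)$ is fibrant in the inductive model structure. Since by the identification above the marked arrows of this object are precisely the coinductively invertible arrows of the underlying $\infty$-category, the hypotheses of \cref{prop:if_X_fibrant_marked_cell_correspond_to_invertible_one} are met, and that proposition yields that $C_\infty^\flat$ is fibrant in the coinductive model structure.

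Finally, the identical argument applies verbatim to $D_\infty^\flat$, using the assertion of the preceding corollary for $D_\infty$ in place of the one for $C_\infty$. There is essentially no obstacle here: the only point requiring a moment's care is the observation that, for $m=\infty$, the $\flat$-marking is precisely the set of identities and that identities are coinductively invertible. This is what guarantees that the $\flat$-marking equals the full set of coinductively invertible arrows rather than being merely contained in it, which is exactly the equality needed to apply \cref{prop:if_X_fibrant_marked_cell_correspond_to_invertible_one}.
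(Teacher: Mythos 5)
Your proof is correct and follows essentially the same route as the paper: both rest on the preceding corollary together with \cref{prop:if_X_fibrant_marked_cell_correspond_to_invertible_one}. The only cosmetic difference is that the paper verifies prefibrancy of $C_\infty^\flat$ and $D_\infty^\flat$ directly (via \cref{def:prefibrant}) to get inductive fibrancy, whereas you obtain it by first identifying the $\flat$-marking with the set of coinductively invertible arrows and then citing \cref{prop:if_invertible_cells_are_marked_then_X_fibrant} --- the same check, packaged in an already-proved proposition.
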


\begin{proof}
It is immediate that $C_\infty^\flat$ and $D_\infty^\flat$ fulfills the conditions of  \cref{def:prefibrant} and hence they are fibrant in the inductive left semi-model structure by \cref{char_fibrant_obj}. Hence, by \cref{prop:if_X_fibrant_marked_arrow_correspond_to_invertible_one}, we only need to check that all their coinductively invertible arrows are marked. By the previous corollary, only their identity arrows are coinductively invertible, which concludes the proof.
\end{proof}

\begin{lemma}
\label{lem:conterexemple_1}
The morphism $C_\infty \to D_\infty$ is not a weak equivalence in $\micat[\infty]_{\coind}$.
\end{lemma}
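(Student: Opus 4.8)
The plan is to reduce the statement to the failure of a single, easily-checked condition, namely conservativity. First I would record that both $C_\infty^\flat$ and $D_\infty^\flat$ are fibrant in the coinductive model structure, which is exactly the content of the corollary proved just above. Since the coinductive model structure is a left Bousfield localization of the inductive one, and since a morphism between local (here: coinductively fibrant) objects is a local weak equivalence precisely when it is a weak equivalence of the base model structure, the morphism $C_\infty^\flat \to D_\infty^\flat$ is a weak equivalence of the coinductive model structure if and only if it is a weak equivalence of the inductive model structure of \cref{th:canonical_left_semi_model}. As both objects are in particular fibrant for that structure, \cref{char_equiv} then applies and tells us that this happens if and only if $C_\infty^\flat \to D_\infty^\flat$ is an equivalence of marked $\infty$-categories in the sense of \cref{def:equivalence}.

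Next I would show that the very first condition of \cref{def:equivalence}, conservativity, already fails. Write $w$ for the distinguished $1$-arrow of $C_\infty$, i.e. the image of the non-trivial $1$-arrow under the map $\Db_1 \to C_\infty$. Because $C_\infty$ is a polygraph and $w$ is one of its generators, $w$ is not an identity; moreover, by the corollary stating that $C_\infty$ has no non-identity coinductively invertible arrows, $w$ is not coinductively invertible. Hence, in the coinductively fibrant object $C_\infty^\flat$, whose marked arrows are exactly the coinductively invertible ones (here only the identities), the arrow $w$ is unmarked. On the other hand, by construction $D_\infty = C_\infty \coprod_{\Db_1} \Db_0$ is obtained by collapsing $w$ to a point, so the image of $w$ in $D_\infty$ is an identity and is therefore marked in $D_\infty^\flat$. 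Thus we exhibit an arrow $w$ whose image is marked while $w$ itself is unmarked, contradicting condition (1) of \cref{def:equivalence}.

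Combining the two steps: since $C_\infty^\flat \to D_\infty^\flat$ fails to be an equivalence of marked $\infty$-categories, it is not a weak equivalence of the inductive model structure, and hence not a weak equivalence of the coinductive model structure, which is what we wanted. The only step requiring any care, and the place I would be most worried about, is the passage between the three notions of weak equivalence (coinductive; inductive, via the Bousfield-localization comparison; and ``equivalence of marked $\infty$-categories'', via \cref{char_equiv}); all three comparisons are, however, either standard facts about left Bousfield localizations or direct citations of earlier results, so no genuinely new argument is needed. An alternative, essentially equivalent route would bypass \cref{char_equiv} and instead invoke the identification of the fibrant objects of the coinductive structure with $\icat$, together with the fact that a folk weak equivalence reflects coinductive invertibility; this would again conclude from $w$ being non-coinductively-invertible in $C_\infty$ while its image in $D_\infty$ is an identity.
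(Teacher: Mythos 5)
Your proposal is correct and follows essentially the same route as the paper's own proof: both pass from the coinductive to the inductive structure using that the map lies between coinductively fibrant objects and that the coinductive structure is a left Bousfield localization, then invoke \cref{char_equiv} and observe that condition (1) of \cref{def:equivalence} (conservativity) fails for the $1$-arrow coming from $\Db_1 \to C_\infty$, which is unmarked in $C_\infty^\flat$ but collapses to an identity (hence marked) in $D_\infty^\flat$. Your write-up merely spells out in more detail why that arrow is non-identity and non-coinductively-invertible, which the paper leaves implicit.
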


\begin{proof}
As both $C_\infty$ and $D_\infty$ are fibrant in the coinductive left semi-model structure, which is a Bousfield localization of the inductive left semi-model structure, this map is a coinductive equivalence if and only if it is an inductive equivalence. Hence, one can test whether it is an equivalence using \cref{def:equivalence} and \cref{char_equiv}, but this map fails to satisfy condition (1) of \cref{def:equivalence}, as the $1$-arrow of $C_\infty$ corresponding to the vertical map $\Db_1 \to C_\infty$ is not marked and maps to an identity arrow (hence marked) in $D_\infty$.
\end{proof}

Let us now show the second point, namely that for any integer $n$, $\pi_n C_\infty \to \pi_n D_\infty$ is a weak equivalence of $\micat[n]_{\satind}$.

\change{
\begin{lemma}
\label{lemma:conterexample 1.5}
For any $n>0$, the map $(\Db_n,\overline{\{e_n\}})\to \pi_{n-1} E_n$ is an acyclic cofibration of $\micat[\infty]_{\satind}$.
\end{lemma}

\begin{proof}
This map is the composition of pushouts along the equations $\big(\leftdivision{\textbf{eq}}{n,n}\big)^{d_{n+1}}$,  $\big(\leftdivision{\textbf{eq}}{n,n}\big)^{d_{n+1}’}$ and the saturations $\big(\leftdivision{\textbf{sat}}{n,n}\big)^{d_{n+1}}$,  $\big(\leftdivision{\textbf{sat}}{n,n}\big)^{d_{n+1}’}$, where $(\uvar)^{d_n}$ is the duality that inverts the direction of $(n+1)$-arrows, and $(\uvar)^{d’_{n+1}}$ is the duality that inverts the direction of both $n$-arrows and $(n+1)$-arrows. By \cref{cor:Equations and saturations are acyclic cofibrations}, this concludes the proof.
\end{proof}
}

\begin{lemma}
\label{lemma:conterexemple_2}
For any $n>0$, the map $\pi_{n+1} E_{n+1} \to \pi_n E_{n+1}$ is an acyclic cofibration in $\micat[\infty]_{\satind}$.
\end{lemma}
\begin{proof}
One should first note that this map is an isomorphism of the underlying $\infty$-categories and corresponds to marking all the $n$-arrows. In particular, it is a cofibration. Moreover, $\pi_{n+1} E_{n+1}$ is cofibrant as its underlying $\infty$-category is a polygraph. Using the characterization of fibrant objects in the saturated inductive left semi-model structure (see \cref{lem:2_out_of_6} and \cref{theo:saturated_inductive_model_structure}), one easily sees that fibrant objects have the unique left lifting property against $\pi_{n+1} E_{n+1} \to \pi_n E_{n+1}$.

\change{The class of morphisms having the unique left lifting property against this map then contains every morphism $C \to 1$ where $C$ is fibrant. As this class is closed under left cancellation, it includes any map between fibrant objects, and so in particular, any fibration between fibrant objects.
It follows that $\pi_{n+1} E_{n+1} \to \pi_n E_{n+1}$ is an acyclic cofibration.}
\end{proof}

\begin{lemma}
\label{lemma:piP to D0 is a we}
For all $n$, $\pi_n P_n \to \Db_0$ is a weak equivalence in $\micat[\infty]_{\satind}$.
\end{lemma}
\begin{proof}
We will proceed by induction. The case $n=0$ is obvious. Suppose proven that $\pi_{n} P_{n} \to \Db_0$ is a weak equivalence. 
We define $\tilde{P}_{n+1}$ as the pushouts:
\[\begin{tikzcd}
	{\coprod_{(P_n)_{n+1}} \Db_{n+1}} & {P_n} \\
	{\coprod_{(P_n)_{n+1}} \pi_{n} E_{n+1}} & {\tilde{P}_{n+1}}
	\arrow[from=1-1, to=1-2]
	\arrow[from=2-1, to=2-2]
	\arrow[from=1-1, to=2-1]
	\arrow[from=1-2, to=2-2]
	\arrow["\lrcorner"{anchor=center, pos=0.125, rotate=180}, draw=none, from=2-2, to=1-1]
\end{tikzcd}\]
By \cite[Corollary 2.4.4]{henry2020weak}, and Lemmas \ref{lemma:conterexemple_2} and \ref{lemma:conterexample 1.5}, all morphisms labeled by $\sim$ in the following diagrams are acyclic cofibrations, and hence weak equivalences:
\[\begin{tikzcd}[ampersand replacement=\&]
	{\coprod_{(P_n)_{n+1}} \Db_{n+1}} \& {P_n} \& {\coprod_{(P_n)_{n+1}} \Db_{n+1}} \& {P_{n+1}} \\
	{\coprod_{(P_n)_{n+1}} \pi_{n+1} E_{n+1}} \& {\pi_{n+1} P_{n+1}} \& {\coprod_{(P_n)_{n+1}} (\Db_{n+1}, \overline{\{e_n\}})} \& {\pi_{n} P_{n}} \\
	{\coprod_{(P_n)_{n+1}} \pi_{n} E_{n+1}} \& {\tilde{P}_{n+1}} \& {\coprod_{(P_n)_{n+1}} \pi_{n} E_{n+1}} \& {\tilde{P}_{n+1}}
	\arrow[from=2-1, to=2-2]
	\arrow[from=1-2, to=2-2]
	\arrow[from=1-1, to=2-1]
	\arrow["\sim", from=2-4, to=3-4]
	\arrow[from=1-1, to=1-2]
	\arrow["\sim"', from=2-1, to=3-1]
	\arrow["\sim", from=2-2, to=3-2]
	\arrow[from=3-1, to=3-2]
	\arrow[from=1-3, to=2-3]
	\arrow["\sim"', from=2-3, to=3-3]
	\arrow[from=1-3, to=1-4]
	\arrow[from=2-3, to=2-4]
	\arrow[from=1-4, to=2-4]
	\arrow["\lrcorner"{anchor=center, pos=0.125, rotate=180}, draw=none, from=3-2, to=2-1]
	\arrow["\lrcorner"{anchor=center, pos=0.125, rotate=180}, draw=none, from=2-2, to=1-1]
	\arrow["\lrcorner"{anchor=center, pos=0.125, rotate=180}, draw=none, from=2-4, to=1-3]
	\arrow["\lrcorner"{anchor=center, pos=0.125, rotate=180}, draw=none, from=3-4, to=2-3]
	\arrow[from=3-3, to=3-4]
\end{tikzcd}\]
By two out of three, and using the assumption that $\pi_{n} P_{n} \to \Db_0$  is a weak equivalence, the map $\tilde{P}_{n+1} \to \Db_0$ is a weak equivalence, and by stability by composition, so is the map  $\pi_{n+1} P_{n+1} \to \Db_0$.
\end{proof}

\begin{lemma}
\label{lemma:pi_n_C_equal_pi_n_D}
For all $n$, the induced morphism $\pi_n C_\infty \to \pi_n D_\infty$ is a weak equivalence in $\micat[n]_{\satind}$
\end{lemma}
\begin{proof}
By \cref{prop:iota is homotopically ff}, it is sufficient to show that $\pi_n C_\infty \to \pi_n D_\infty$ is a weak equivalence in $\micat[\infty]_{\satind}$.

Using \cref{lemma:piP to D0 is a we} and since weak equivalences between cofibrant objects are stable by pushout, we have a diagram where all morphisms labeled by $\sim$ are weak equivalences:
\[\begin{tikzcd}
	{\coprod_{k\in\mathbb{N}}\pi_n\Db_1} & {\coprod_{k\in\mathbb{N}}\pi_n{P_k}} & {(\coprod_{k< n}P_k) \coprod (\coprod_{k\geq n}\Db_0)} \\
	{\pi_n\Db_1} & {\pi_nC_{\infty}} & {D_n} \\
	{\Db_0} & {\pi_n D_{\infty}} & {D_n}
	\arrow[from=1-1, to=2-1]
	\arrow[from=2-1, to=3-1]
	\arrow[from=3-1, to=3-2]
	\arrow[from=2-1, to=2-2]
	\arrow[from=1-1, to=1-2]
	\arrow[from=1-2, to=2-2]
	\arrow[from=1-3, to=2-3]
	\arrow[""{name=0, anchor=center, inner sep=0}, "\sim", from=2-2, to=2-3]
	\arrow[""{name=1, anchor=center, inner sep=0}, "\sim", from=1-2, to=1-3]
	\arrow["\sim", from=3-2, to=3-3]
	\arrow[Rightarrow, no head, from=2-3, to=3-3]
	\arrow["\lrcorner"{anchor=center, pos=0.125, rotate=180}, draw=none, from=2-2, to=1-1]
	\arrow[from=2-2, to=3-2]
	\arrow["\lrcorner"{anchor=center, pos=0.125, rotate=180}, draw=none, from=3-2, to=2-1]
	\arrow["\lrcorner"{anchor=center, pos=0.125, rotate=180}, draw=none, from=2-3, to=1-2]
	\arrow["\lrcorner"{anchor=center, pos=0.125, rotate=180}, draw=none, from=3-3, to=1-3]
\end{tikzcd}\]
By two out of three, this shows the result.
\end{proof}

\begin{proof}[Proof of \cref{prop_conterexemple}]
We choose $f$ to be the morphism $C^\flat_\infty\to D^\flat_\infty$. The first point follows from \cref{lem:conterexemple_1} and the second from \cref{lemma:pi_n_C_equal_pi_n_D}.
\end{proof}

\subsection{Complicial Sets and Stratified Street Nerve}

\label{subsec:complicial}

In this section, we show that the Street nerve can be made into a right Quillen functor from the saturated inductive left semi-model structure on $\micat[\infty]$ to the Ozornova-Rovelli-Verity model structure for complicial sets. We refer to \cite{verity2008weak} and \cite{riehl2018complicial} for a detailed introduction to complicial sets; we will simply recall the important definitions below.

\begin{definition}
A \emph{stratified simplicial set} is a simplicial set $X$, together with a set $M \subset \coprod_{k > 0} X_n$ of simplices of positive dimension called \emph{thin} simplices that includes all degenerate simplices.

A morphism of stratified simplicial sets is a morphism between the underlying simplicial sets that sends thin simplices to thin simplices. The category of stratified simplicial sets is denoted $\strat$.
\end{definition}

The \textit{join} is an important operation for simplicial sets, which is defined on representables by the formula 
\[
\Delta[n] \star \Delta[m] := \Delta[n+m+1].
\]
We can extend it to any pair of simplicial sets by setting
\[
X \star Y := \colim_{\Delta^+_{/X}\times \Delta^+_{/Y}} \Delta[n] \star \Delta[m]
\]
\change{
where $\Delta^+$ is the augmented simplex category whose objects are possibly empty finite ordered sets and where we set the convention $$\Delta[n] \star \Delta[-1] := \Delta[n] =: \Delta[n-1] \star \Delta[n].$$
The set of $n$-simplices of $X \star Y$ is then in bijection with the set}
$$\{x\star y, (x,y)\in \coprod_{k < n} X_k \times Y_{n-k-1}\}\cup \{x\star \emptyset, x\in X_n\}\cup \{\emptyset\star y, y\in Y_n\}$$
See, for example, \cite[Definition 1.2.8.1]{lurie2009higher} and below. We now define it for stratified simplicial sets as follows:

\begin{definition}
If $(X,M)$ and $(Y,N)$ are two stratified simplicial sets, we define $M \star N$ as the set of simplices of $X \star Y$ of the form $x \star y$ where either $x$ or $y$ is thin, with the convention that $\emptyset$ is not thin. We then define
\[
(X,M) \star (Y,N) := (X \star Y, M \star N),
\]
\end{definition}

\begin{definition}
We define several marked simplicial sets whose underlying simplicial set is $\Delta[n]$:
\begin{enumerate}
\item $\Delta[n]$, where \change{degenerate simplices are thin}.
\item $\Delta[n]_t$, where the top $n$-simplex is thin.
\item $\Delta^k[n]$, where all simplices that include $\{k-1,k,k+1\} \cap [n]$  and \change{degenerate simplices are thin}.
\item $(\Delta^k[n])'$, where all simplices that include $\{k-1,k,k+1\} \cap [n]$, together with the $(k-1)$-face and the $(k+1)$-face, and all the degenerate simplices are thin.
\item $(\Delta^k[n])''$, where all simplices that include $\{k-1,k,k+1\} \cap [n]$, together with the $(k-1)$-face, the $k$-face, and the $(k+1)$-face are thin and all the degenerate simplices are thin.
\item $\Delta[3]^{eq}$, where simplices of dimension strictly higher than $2$, together with $[0,2]$ and $[1,3]$, and all degenerate simplices are thin.
\item $\Delta[n]^\sharp$, where all simplices are thin.
\end{enumerate}
Eventually, we will consider $\Lambda^k[n]$ endowed with the following marking: a simplex is marked in $\Lambda^k[n]$ if and only if it is marked in $\Delta^k[n]$.
\end{definition}

\begin{definition}[{\cite[Definition 1.19]{ozornova2020model}}]
An \emph{elementary anodyne extension} is one of the following:
\begin{enumerate}
\item The \emph{complicial horn inclusions:}
\[
\Lambda^k[n] \to \Delta^k[n], \quad n \geq 1, \quad n \geq k \geq 0.
\]
\item The \emph{complicial thinness extensions:}
\[
(\Delta^k[n])' \to (\Delta^k[n])'', \quad n \geq 2, \quad n \geq k \geq 0.
\]
\item The \emph{saturation extensions:}
\[
\Delta[n] \star \Delta[3]^{eq} \to \Delta[n] \star \Delta[3]^{\sharp}, \quad n \geq -1.
\]
\item The \emph{$m$-triviality extensions:}
\[
\Delta[n] \to \Delta[n]_t, \quad n > m.
\]
\end{enumerate}
\end{definition}

\begin{remark}
In the case where $m = \infty$, there is no $m$-triviality extension.
\end{remark}

\begin{definition}
An \emph{$m$-complicial set} is a marked simplicial set having the right lifting property against all elementary anodyne extensions.
\end{definition}

\begin{remark}
In the definition of complicial sets, we have included the "saturation extension" as part of our elementary anodyne extensions. These are not always included and play a role similar to the saturated localization of the inductive left semi-model structure considered in \cref{subsec:saturation}. See also \cite{riehl2018complicial} for a more general discussion of saturation for complicial sets.
\end{remark}

As demonstrated in \cite{loubaton2022n}, $m$-complicial sets are a model for $(\infty, m)$-categories. For example, $0$-complicial sets and $1$-complicial sets are essentially the same as Kan complexes and quasicategories, respectively.

\begin{theorem}[Verity \cite{verity2008weak}, Riehl \cite{riehl2018complicial}, Ozornova-Rovelli \cite{ozornova2020model}]
There is a model structure on $\strat$, \change{which we will call the \emph{Verity} model structure,} where the cofibrations are all monomorphisms, and the acyclic cofibrations are generated by elementary anodyne extensions. Fibrant objects of this model structure are the $m$-complicial sets. We denote $\mstrat_V$ the category $\strat$ endowed with this model structure.
\end{theorem}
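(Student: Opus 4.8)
The statement is an established result, and the honest answer is to cite \cite{verity2008weak}, \cite{riehl2018complicial}, and \cite{ozornova2020model}; what follows is the shape a proof would take. The plan is to realize this model structure as an instance of the general machinery producing cofibrantly generated model structures on a locally presentable category whose cofibrations are exactly the monomorphisms. First I would note that $\strat$ is locally presentable, and that its monomorphisms constitute the left class of a cofibrantly generated weak factorization system, generated by the boundary inclusions $\partial\Delta[n]\to\Delta[n]$ (with their minimal markings) together with the elementary marking maps $\Delta[n]\to\Delta[n]_t$ that declare the top simplex thin. These will serve as generating cofibrations, so the cofibrations of the sought model structure are fixed from the start as all monomorphisms.

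The cleanest route to the model structure itself is Cisinski's theory of model structures on presheaf-type categories. The inputs are the class of cofibrations (the monomorphisms, generated as above), a cylinder functor, and a class of anodyne extensions. For the cylinder I would tensor with a stratified interval built from $\Delta[1]$ with a suitable marking, using the join/stratified product recalled just before the statement, and I would take the class of anodyne extensions to be the saturation — under pushout, transfinite composition, retract, and pushout-product with cofibrations — of the four families of elementary anodyne extensions (the complicial horn inclusions, the thinness extensions, the saturation extensions, and the $m$-triviality extensions). Cisinski's criterion then yields a cofibrantly generated model structure in which the cofibrations are precisely the monomorphisms and the fibrant objects are exactly those having the right lifting property against every anodyne extension.

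It then remains to identify the fibrant objects with the (saturated) $m$-complicial sets. Since the anodyne extensions form the saturated class generated by the elementary ones, having the right lifting property against all anodyne extensions is equivalent to having it against the four generating families, and this is exactly the defining condition of an $m$-complicial set as given above. The weak equivalences produced by Cisinski's construction are characterized on fibrant objects via naive homotopy through the chosen cylinder, which one recognizes as the usual notion of equivalence of complicial sets.

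The main obstacle — and the genuine mathematical content of the cited papers — is the verification that the class of elementary anodyne extensions is compatible with the chosen cylinder, i.e.\ that the pushout-product of an elementary anodyne extension with a generating cofibration is again anodyne, and more generally that the generated class satisfies all the hypotheses of Cisinski's criterion. This is an intricate combinatorial analysis of how the complicial horns, the thinness extensions, the equivalence (saturation) extensions, and the $m$-triviality extensions interact with the markings and with the join. I would not reproduce this analysis; it is carried out in detail in \cite{verity2008weak} and streamlined in \cite{ozornova2020model}, and I would simply invoke their work.
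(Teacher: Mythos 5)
The paper offers no proof of this statement at all: it is quoted as an external result and attributed to the cited works of Verity, Riehl, and Ozornova--Rovelli, which is exactly what you do, so your proposal matches the paper's approach. Your additional sketch of how those references proceed (Cisinski--Olschok-style machinery on the locally presentable category $\strat$, with monomorphisms generated by boundary inclusions and marking maps, and fibrancy checked against the saturated class generated by the elementary anodyne extensions) is a fair summary of the cited proofs and appropriately defers the hard combinatorial verifications to them.
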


We will use the join to define the adjunction between stratified simplicial sets and marked $\infty$-categories.

\begin{definition}
 Let $(C,M)$ and $(D,N)$ be two marked $\infty$-categories. The \emph{join of $(C,M)$ and $(D,N)$}, denoted $(C,M) \star (D,N)$, is the colimit of the following diagram:
\[\begin{tikzcd}[ampersand replacement=\&]
	{C \ltens \{0\} \ltens D \coprod C \ltens \{1\} \ltens D} \& {C \ltens \Db_1 \ltens D} \\
	{C \coprod B} \& {C \star B}
	\arrow[from=1-1, to=2-1]
	\arrow[from=1-2, to=2-2]
	\arrow[""{name=0, anchor=center, inner sep=0}, from=1-1, to=1-2]
	\arrow[from=2-1, to=2-2]
	\arrow["\lrcorner"{anchor=center, pos=0.125, rotate=180}, draw=none, from=2-2, to=0]
\end{tikzcd}\]
As noted in Proposition 3.3.11 of \cite{arapreparation} at the level of $\infty$-categories, this is the usual join of $\infty$-categories, as defined in Paragraph 6.30 of \cite{ara2016join}.
\change{By the definition of the operation $\ltens$, we then have $(C,M) \star (D,N) \cong (C \star D, \overline{M \star N})$, where 
$$M \star N := \{x \star y \mid x \in M, y \in N\} \cup \{x \star \emptyset \mid x \in M\} \cup \{\emptyset \star y \mid y \in N\}.$$}
\end{definition}

\begin{prop}
\label{prop:acyclic_cofibration_stable_by_joint}
Let $X \to Y$ be a cofibration and $K \to L$ an acyclic cofibration of $\micat[\infty]_{\satind}$. The morphisms
\[K \star Y \coprod_{X \star K} L \star X \to L \star Y \quad \text{and} \quad Y \star K \coprod_{K \star X} X \star L \to Y \star L\]
are acyclic cofibrations of $\micat[\infty]_{\satind}$.
\end{prop}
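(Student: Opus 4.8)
The plan is to present the join bifunctor as a colimit built from the lax‑Gray tensor product and then bootstrap the statement from the pushout‑product properties of $\ltens$ already established in \cref{prop:pushout_product_cofibrations} and \cref{lem:acyclic_cof_left_pushout_product}. Write the defining colimit of the join as the pushout
\[ A \star B \;=\; \big(A \ltens \Db_1 \ltens B\big) \coprod_{A \ltens \partial\Db_1 \ltens B} \big(A \coprod B\big), \]
and note that each of the three bifunctors appearing here — the cylinder $J(A,B) := A \ltens \Db_1 \ltens B$, its boundary $\partial J(A,B) := A \ltens \partial\Db_1 \ltens B \cong (A \ltens B)^{\coprod 2}$, and the coproduct $S(A,B) := A \coprod B$ — preserves colimits in each variable. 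Since the Leibniz (corner) construction turns colimits of maps into colimits, it suffices to treat the case where $X \to Y$ is a generating cofibration and $K \to L$ is a generating anodyne cofibration; this has the additional benefit that all the objects in sight are then cofibrant, which will be needed below. Throughout, the bookkeeping of markings is handled by commuting the saturation $\overline{(\,\cdot\,)}$ past the colimits via \cref{lem:compatibility_of_lax_and_pseudo_with_saturation} and \cref{lem:compatibility_of_gray_tensor_and_saturation}, so that the problem reduces to a statement about the underlying colimit diagrams.

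The heart of the matter is the cylinder bifunctor. Using associativity of $\ltens$ to write $J(A,B) = A \ltens (\Db_1 \ltens B)$, the Leibniz corner of $J$ with respect to $f : X \to Y$ and $g : K \to L$ is exactly $g \,\hatltens\, (\Db_1 \ltens f)$. As $\Db_1$ is cofibrant, $\Db_1 \ltens f$ is a cofibration whenever $f$ is, so \cref{lem:acyclic_cof_left_pushout_product} shows that this corner is an acyclic (indeed anodyne) cofibration as soon as $g$ is. The same lemma applied to the boundary identifies the corner of $\partial J$ with $(g \,\hatltens\, f)^{\coprod 2}$, again an acyclic cofibration, while the corner of $S$ is readily checked to be an isomorphism. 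These are the three ``vertical'' weak equivalences that will feed the gluing step.

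It remains to assemble the pieces. Because the corner construction preserves the pushout presentation of $\star$, each of the two displayed maps is the map induced on pushouts by a levelwise weak equivalence of spans: the span computing the corner of $S$ (an isomorphism) and the span computing the corner of $J$ (an acyclic cofibration), glued along the corner of $\partial J$ (an acyclic cofibration). The cofibration part — that the displayed map is a cofibration when $f$ and $g$ are — follows directly from \cref{prop:pushout_product_cofibrations}, together with extra pushouts along the marking generators to install the correct markings. The main obstacle is the weak‑equivalence part, and it is caused by the collapse maps $A \ltens \partial\Db_1 \ltens B \to A \coprod B$: these are projections, not cofibrations, so the naive gluing lemma does not apply. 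The way around this is to observe that the \emph{other} leg of each span — the inclusion $A \ltens \partial\Db_1 \ltens B \cto A \ltens \Db_1 \ltens B$ of the two ends of the cylinder, obtained by tensoring the cofibration $\partial\Db_1 \to \Db_1$ with cofibrant objects — is itself a cofibration, and that after the reduction to generators all objects involved are cofibrant. Hence each defining square is a homotopy pushout, and the gluing lemma for homotopy pushouts (valid here precisely because everything is cofibrant) upgrades the levelwise weak equivalences of the previous paragraph to a weak equivalence between the source and target of the displayed map.

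Combining the two parts, the displayed map is a cofibration which is a weak equivalence between cofibrant objects, hence an acyclic cofibration, as desired. The second displayed map is obtained by the symmetric argument, exchanging the roles of the two collapsed ends of the cylinder (equivalently, of the two variables of $\star$). I expect essentially all the difficulty to be concentrated in verifying that the cofibration legs of the cylinder make the relevant squares homotopy pushouts despite the non‑cofibrant collapse legs; once this is granted, the rest is the formal manipulation of corner constructions and the saturation‑compatibility lemmas.
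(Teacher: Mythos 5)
Your overall skeleton is the same as the paper's: present $A\star B$ as the pushout of $A\coprod B \leftarrow A\ltens\partial\Db_1\ltens B \to A\ltens\Db_1\ltens B$, observe that the relevant corner maps of the cylinder and boundary terms are acyclic cofibrations via \cref{lem:acyclic_cof_left_pushout_product}, and conclude by a gluing argument using that the inclusion legs $A\ltens\partial\Db_1\ltens B \cto A\ltens\Db_1\ltens B$ make the defining squares homotopy pushouts (the paper gets the cofibration part from Proposition 7.5 of the Ara--Maltsiniotis paper rather than re-deriving it, but that difference is cosmetic). However, your proof has a genuine gap in the very first step: the reduction of $K\to L$ to generating anodyne cofibrations. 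In this left semi-model structure, ``acyclic cofibration'' (\cref{def:generating_anodyne}) means a cofibration with the left lifting property against naive fibrations between fibrant objects; this class is saturated, but it is \emph{not} the saturation of the generating anodyne maps --- that saturation is the class of \emph{anodyne} cofibrations, which is a priori strictly smaller. So your saturation argument only proves the proposition when $K\to L$ is anodyne. This is not a harmless weakening: the proposition is applied in \cref{lemma:realization_send_horn_on_ac} to realizations of complicial horns, which are equations, and equations are only shown to be \emph{acyclic} cofibrations (corollary to \cref{char_fibration}), never anodyne ones. The paper's proof sidesteps this entirely by performing no reduction on that variable: it feeds the arbitrary acyclic cofibration $K\to L$ directly into \cref{lem:acyclic_cof_left_pushout_product}, whose acyclic case was itself proved by a pullback-exponential (adjunction) argument precisely because generator reduction is unavailable for acyclic cofibrations.

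Two further points. First, your justification for the reduction is incorrect as stated: the coproduct bifunctor $S(A,B)=A\coprod B$, and hence the join itself, does \emph{not} preserve colimits in each variable (e.g.\ $\emptyset\coprod B = B \neq \emptyset$); it only preserves connected colimits. The Leibniz calculus for pushouts, transfinite compositions and retracts survives, since those are connected, so this part is repairable --- but it must be said correctly. Second, note that abandoning the reduction in the acyclic variable (as you must) destroys your stated source of cofibrancy for the homotopy-pushout step, which you flag as essential. The paper's proof quietly has the same issue --- it asserts the row colimits are homotopy colimits without discussing cofibrancy --- and in the only application all objects are realizations of simplicial sets, hence polygraphs, hence cofibrant; but within your architecture you would need to either add a cofibrancy hypothesis explicitly or find a different argument for the gluing step.
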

\begin{proof}
\change{
By construction, we have a cocartesian square
\[\begin{tikzcd}[ampersand replacement=\&]
	{K \ltens \Db_1 \ltens Y \cup L \ltens \partial \Db_1 \ltens Y \cup L \ltens \Db_1 \ltens X} \& {K \star Y \coprod_{X \star K} L \star X} \\
	{L \ltens \Db_1 \ltens Y} \& {L \star Y}
	\arrow[from=1-1, to=2-1]
	\arrow[""{name=0, anchor=center, inner sep=0}, from=1-1, to=1-2]
	\arrow[from=2-1, to=2-2]
	\arrow[from=1-2, to=2-2]
	\arrow["\lrcorner"{anchor=center, pos=0.125, rotate=180}, draw=none, from=2-2, to=0]
\end{tikzcd}\]
By \cref{lem:acyclic_cof_left_pushout_product}, the left-hand vertical morphism is an acyclic cofibration, and so is the right one. We proceed analogously for the second morphism.}
\end{proof}

\begin{definition}
The terminal category $1$ has a monoid structure for this join operation. The multiplication $1 \star 1 \to 1$ is the unique morphism to the terminal $\infty$-category.

By the universal property of the category $\Delta$, this induces a cosimplicial object $|\uvar| : \Delta \to \micat[\infty]$ where
\[|\Delta[n]| := 1 \star 1 \star \ldots \star 1.\]
The $\omega$-category $|\Delta[n]|$ is traditionally called the $n^{th}$ oriental.
We denote $|\uvar| : \textbf{Sset} \to \micat[\infty]$ the extension by colimits of this cosimplicial object.

For all $n$, $|\Delta[n]|$ is an $n$-polygraph that admits only one $n$-generator.
If $M$ is a marking for $K$, we denote $|M|$ the set of arrows obtained as composition:
\[\Db_n \to \Delta[n] \xrightarrow{|v|} K\]
where the left morphism corresponds to the top arrow of the $n^{th}$ orientals, and the right morphism is in $M$.
We can now extend the strictification functor to stratified simplicial sets:
\[\begin{array}{rclr}
|\uvar| :& \strat &\to& \micat \\
&(K,M) &\mapsto & (|K|, \overline{|M|})
\end{array}\]
This functor is cocontinuous and induces an adjunction:
\[\begin{tikzcd}
    \strat && \micat
    \arrow[""{name=0, anchor=center, inner sep=0}, "{|\_|}", curve={height=-12pt}, from=1-1, to=1-3]
    \arrow[""{name=1, anchor=center, inner sep=0}, "N", curve={height=-12pt}, from=1-3, to=1-1]
    \arrow["\dashv"{anchor=center, rotate=-90}, draw=none, from=0, to=1]
\end{tikzcd}\]
The right adjoint is called the \emph{stratified Street nerve}. 
\end{definition}

\begin{remark}
In the case $m=\infty$, this adjunction models the forgetful functor from strict $\infty$-categories to weak $(\infty,\infty)$-categories (given by the stratified Street nerve $N$). The left adjoint corresponds to the ``strictification functor'' that sends a weak $(\infty,\infty)$-category to a strict $\infty$-category in a universal way.
\end{remark}

\begin{prop}
\label{prop:nerve of fibrant object}
The stratified Street nerve sends fibrant objects of $\micat_{\satind}$ to fibrant objects of $\mstrat_V$.
\end{prop}
\begin{proof}
Suppose first that $m < \infty$ and let $(X,M)$ be a fibrant $m$-marked $\infty$-category for the saturated inductive left semi-model structure. According to \cref{cor:link_beetwen_m_saturated_inductive_and_folk}, $M$ consists of coinductively invertible arrows of $X$, and $N((X,M))$ is equal to the stratified simplicial set associated with the Street nerve of $X$ defined in \cite[Définition 5.2.1]{loubaton2021conditions}. Theorem 5.2.12 of \textit{op. cit.} then implies that the stratified Street nerve sends fibrant objects of the saturated inductive left semi-model structure on $\micat[m]$ to $m$-complicial sets.

Now, let $C$ be a fibrant $\infty$-marked $\infty$-category for the saturated inductive left semi-model structure. As the stratified {Street} nerve preserves directed colimits, there is an isomorphism
\[N(C) \cong \colim_{n \in \mathbb{N}} N(\tau_n C)\]
For all $n$, $\tau_n C$ is fibrant for the saturated inductive left semi-model structure for $n$-marked $\infty$-categories, and $N(\tau_n C)$ is then a fibrant object of the model structure for $n$-complicial sets. As the model structure for $\infty$-complicial sets is $\omega$-combinatorial, fibrant objects are stable under directed colimits, and $N(C)$ is fibrant.
\end{proof}

\change{
\begin{lemma}
\label{lemma:strictification_computes_with_join}
Let $(K,M)$ be a stratified simplicial set and $L$ a simplicial set. We denote $N$ the set of degenerate simplices of $L$. There exists an isomorphism
$$|(K,M) \star (L,N)| \cong |(K,M)| \star |(L,N)|$$
natural in $K$ and $L$.
\end{lemma}
\begin{proof}
Proposition 7.13 of \cite{ara2016join} provides a natural isomorphism $|K \star L| \cong |K| \star |L|$. Moreover, \cref{lem:compatibility_of_coproduct_and_saturation} implies that $\overline{\overline{|M|} \star \overline{|N|}} = \overline{|M| \star |N|}$. Since we have $|(K,M) \star (L,N)| \cong (|K \star L|, \overline{|M \star N|})$ and $(|K| \star |L|, \overline{\overline{|M|} \star \overline{|N|}})$, this concludes the proof.
\end{proof}
}

\begin{lemma}
\label{lemma:realization_send_horn_on_ac}
The strictification functor sends complicial horn inclusions to acyclic cofibrations of the saturated inductive left semi-model structure for $m$-marked $\infty$-categories.
\end{lemma}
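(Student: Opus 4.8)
The plan is to show that the realization of each complicial horn inclusion is, literally, one of the equations of \cref{defi:equation_in_a_marked_category} (or one of the saturations of \cref{defi:saturation_in_a_marked_category}), and then to invoke the corollary following \cref{char_fibration}, which states that every equation and every saturation is an acyclic cofibration. No separate argument that $|\_|$ preserves cofibrations is then needed, since the maps $\Lambda P \to P$ are cofibrations by construction.

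First I would make the realization explicit. As $|\_|$ is cocontinuous and $|\Delta[n]|$ is the $n$-th oriental $O_n$ -- an $n$-polygraph with a single generator $c_S$ of dimension $|S|-1$ for each nonempty $S \subseteq \{0,\dots,n\}$ -- the monomorphism $\Lambda^k[n]\hookrightarrow \Delta[n]$ realizes to the polygraphic inclusion of $O_n$ discarding exactly the two generators the horn omits: the codimension-one face $c_k$ of dimension $n-1$, and the top generator $u = c_{\{0,\dots,n\}}$ of dimension $n$. Writing $P := |\Delta^k[n]|$, which is a special $m$-marked polygraph because each thin simplex realizes to a marked generator, I would check that $|\Lambda^k[n]| = \Lambda P$ for the choice $x := c_k$ and $y := u$, and that the marking $\Lambda P$ inherits from $P$ agrees with the induced marking on $\Lambda^k[n]$: a face $c_i$ is thin in $\Delta^k[n]$ precisely when $i \notin \{k-1,k,k+1\}$.

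The heart of the proof is axioms (4) and (5) of \cref{defi:equation_in_a_marked_category}. Axioms (1)-(2) are immediate, since $u$ is the unique top generator and is marked (the top simplex is thin). For (4)-(5) I would use the standard description of the source and target of the top cell of $O_n$ (Street's orientals): each of $\pi^-_{n-1}u$ and $\pi^+_{n-1}u$ is a composite of the codimension-one faces $c_i$, split according to the parity of $i$, in which the lower-dimensional gluing is always by \emph{identities} of lower faces. Because $k-1$ and $k+1$ have parity opposite to $k$, the face $c_k$ lies on one side of $u$ while $c_{k-1},c_{k+1}$ lie on the other; moreover every $c_i$ with $i \neq k$ on the same side as $c_k$ has $i \notin \{k-1,k,k+1\}$ and is therefore marked. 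Since identities are always marked, every factor occurring in the boundary composite on the side of $c_k$, other than $c_k$ itself, is marked and $c_k$ occurs exactly once; this is precisely the nested decomposition $l_{n-1}\#_{n-2}(\cdots(l_1 \#_0 x \#_0 r_1)\cdots)\#_{n-2} r_{n-1}$ demanded by (4), with the $l_i,r_i$ taken to be the relevant marked faces and marked identities. On the opposite side $c_k$ does not occur, giving (5). This exhibits $P$ as a left equation or a right equation according to the parity of $k$, with $x$ unmarked whenever $n-1 \leq m$, which is axiom (3).

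It remains to treat two boundary cases. When $n-1 > m$ the generator $c_k$ is automatically marked and the side of $u$ not containing $c_k$ is a composite of $(n-1)$-generators of dimension $> m$, hence marked; the same data then presents $P$ as a saturation in the sense of \cref{defi:saturation_in_a_marked_category}, which is equally covered by the corollary. The base case $n=1$ is the map $\Db_0 \to \Db_1^\sharp = I$, namely $j_-$ or $j_+$, which is an acyclic cofibration by \cref{rk:choice_def_anodyne}. In every case the realized horn inclusion is thus an acyclic cofibration. I expect the main obstacle to be the explicit bookkeeping in axiom (4): matching the genuine molecule/pasting shape of $\partial^\pm u$ to the strictly nested bracketing of the definition, and verifying that every whiskering appearing in $\partial^\pm u$ is by an identity of a lower face (hence marked) rather than by a possibly unmarked lower face -- a verification that rests on the combinatorics of orientals (associativity and the interchange law) rather than on any new idea.
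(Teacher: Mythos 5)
Your route is genuinely different from the paper's, and it has a real gap at its central step. The paper never analyzes the boundaries of orientals: it writes $\Delta^k[n]=\Delta[k-2]\star \Delta^1[2]\star \Delta[n-k-2]$, expresses $\Lambda^k[n]$ as the corresponding union of joins of boundaries and horns, uses that $|\uvar|$ commutes with the join, and then applies \cref{prop:acyclic_cofibration_stable_by_joint} to reduce the whole lemma to the three $2$-dimensional horns, which are identified by hand with explicitly known equations. Your plan instead requires proving, for every $n$ and $k$, that the side of the top cell $u$ of $|\Delta[n]|$ containing $d_k$ admits exactly the nested decomposition of axiom (4) of \cref{defi:equation_in_a_marked_category}, with every factor $l_i,r_i$ marked. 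You assert this on the grounds that ``every whiskering appearing in $\partial^{\pm}u$ is by an identity of a lower face''. That is false as stated: already in $|\Delta[3]|$ one side of the top cell is $c_{\{0,2,3\}}\#_1\bigl(c_{\{0,1,2\}}\#_0 c_{\{2,3\}}\bigr)$, so the face $d_3$ is whiskered by the edge $c_{\{2,3\}}$, which is a generator, not an identity (and $d_0$ is whiskered by $c_{\{0,1\}}$ on the other side). For the outer horns this is harmless only because those particular edges happen to be thin in $\Delta^n[n]$ and $\Delta^0[n]$; for the inner horns with $n\geq 3$ \emph{no} edge of $\Delta^k[n]$ is thin at all, so your argument survives only if one proves that $d_k$ itself receives no whiskering by non-identity cells of dimension too low to be thin. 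That claim is plausible (for instance, the inner faces $d_k$, $0<k<n$, share their $0$-source and $0$-target with $u$, so they need no $\#_0$-whiskering), but controlling all the higher-codimension whiskerings and matching the resulting pasting to the strictly nested bracketing of axiom (4) requires Street's explicit description of oriental boundaries and a genuine induction; it is the actual mathematical content of the lemma, and deferring it to ``the combinatorics of orientals'' leaves the proof incomplete. This is precisely the work the paper's join argument is designed to avoid.

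Two smaller points. First, your boundary case $n-1>m$ is misdescribed: a saturation map $\Omega Q\to Q$ is an identity on underlying polygraphs (only the marking of $x$ changes), whereas the realized horn inclusion always adds the two generators $d_k$ and $u$, so it is never a saturation. No separate case is needed anyway, because axiom (3) of \cref{defi:equation_in_a_marked_category} only constrains $x$ when its dimension is at most $m$; for $n-1>m$ the same data is still an equation with $x$ marked. Second, your parity argument for the codimension-one faces, the identification $|\Lambda^k[n]|=\Lambda P$, and the base case $n=1$ are all correct, and the final appeal to the corollary of \cref{char_fibration} is the right way to conclude once the equation structure is actually established.
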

\begin{proof}
The morphism $|\Lambda^1[2]| \to |\Delta[2]^1|$ corresponds to the following inclusion of marked $\infty$-categories:
\[\begin{tikzcd}
    & \bullet &&& \bullet \\
    \bullet && \bullet & \bullet && \bullet
    \arrow[from=2-1, to=1-2]
    \arrow[""{name=0, anchor=center, inner sep=0}, from=1-2, to=2-3]
    \arrow[""{name=1, anchor=center, inner sep=0}, from=2-4, to=1-5]
    \arrow[from=1-5, to=2-6]
    \arrow[""{name=2, anchor=center, inner sep=0}, from=2-4, to=2-6]
    \arrow["\sim"', shorten <=3pt, Rightarrow, from=2, to=1-5]
    \arrow[shorten <=13pt, shorten >=13pt, hook, from=0, to=1]
\end{tikzcd}\]
which is obviously an equation. The two morphisms $|\Lambda^0[2]| \to |\Delta^0[2]|$ and $|\Lambda^2[2]| \to |\Delta^2[2]|$ are respectively equal to $\leftdivision{\textbf{eq}}{1,1}$ and $\rightdivision{\textbf{eq}}{1,1}$. Furthermore, we can see that for all $0 < k < n$, we have:
\[\Delta^k[n] = \Delta[k-2] \star \Delta^1[2] \star \Delta[n-k-2] \]
and
$\Lambda^k[n]$ is the sub-object:
\[
\begin{array}{rl}
&\partial \Delta[k-2] \star \Delta^1[2] \star \Delta[n-k-2]\\
\cup& \Delta[k-2] \star \Lambda^1[2] \star \Delta[n-k-2]\\
\cup& \Delta[k-2] \star \Delta^1[2] \star \partial \Delta[n-k-2].
\end{array}
\]
By \cref{lemma:strictification_computes_with_join}, the strictification functor commutes with the join.
\cref{prop:acyclic_cofibration_stable_by_joint} then implies that $|\Lambda^k[n]| \to |\Delta^k[n]|$ is an acyclic cofibration. We proceed analogously for the cases $k=0$ and $k=n$.
\end{proof}

\begin{theorem}
The strictification functor and the stratified Street nerve form a Quillen adjunction between the model structure for $m$-complicial sets and the saturated inductive left semi-model structure on $\micat[m]$.
\end{theorem}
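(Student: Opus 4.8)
The plan is to establish $|\_| \dashv N$ as a Quillen adjunction by checking that the left adjoint $|\_|$ is left Quillen, i.e. that it preserves cofibrations and acyclic cofibrations. Since the model structure on $\strat^m$ is cofibrantly generated, it suffices to verify this on the generators, so the argument naturally splits into the two families of generating cofibrations and the four families of elementary anodyne extensions. Throughout I would use that $|\_|$ is cocontinuous and that $|K \star L| = |K| \star |L|$. Note that because the saturation extensions are among the generating acyclic cofibrations of $\strat^m$, the relevant target is the \emph{saturated} inductive model structure of \cref{theo:saturated_inductive_model_structure}.

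First I would treat the generating cofibrations of $\strat$, namely the boundary inclusions $\partial\Delta[n] \to \Delta[n]$ and the marking maps $\Delta[n] \to \Delta[n]_t$. Applying $|\_|$ to the former yields the inclusion $|\partial\Delta[n]| \to |\Delta[n]|$ of the boundary of the $n$-th oriental; as $|\Delta[n]|$ is an $n$-polygraph with a single $n$-generator and $|\partial\Delta[n]|$ is the sub-polygraph obtained by removing it, this is a polygraphic inclusion and hence a cofibration. The map $\Delta[n] \to \Delta[n]_t$ is sent to a morphism that is the identity on underlying $\infty$-categories and merely marks the top cell, so it is a pushout of maps in $I^m$ (see \cref{def:generating_cofibrations}) and therefore a cofibration. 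This shows $|\_|$ preserves cofibrations.

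For acyclic cofibrations it is enough to send each elementary anodyne extension to an acyclic cofibration. The complicial horn inclusions are exactly \cref{lemma:realization_send_horn_on_ac}. The $m$-triviality extensions $\Delta[n] \to \Delta[n]_t$ with $n>m$ are sent to isomorphisms, since the top cell of $|\Delta[n]|$ has dimension $n>m$ and is automatically marked in every object of $\icat^m$. For the complicial thinness extensions $(\Delta^k[n])' \to (\Delta^k[n])''$ and the saturation extensions $\Delta[n]\star\Delta[3]^{eq} \to \Delta[n]\star\Delta[3]^\sharp$, I would use join decompositions analogous to the one in \cref{lemma:realization_send_horn_on_ac} together with \cref{prop:acyclic_cofibration_stable_by_joint} to reduce to the base cases $(\Delta^1[2])' \to (\Delta^1[2])''$ and $\Delta[3]^{eq} \to \Delta[3]^\sharp$ respectively. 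In the first base case one identifies the realization, inside the oriental $|\Delta[2]|$, with a saturation in the sense of \cref{defi:saturation_in_a_marked_category} (the map forcing the diagonal $1$-cell to be marked once the two outer faces and the witnessing $2$-cell are), which is an acyclic cofibration by the corollary that equations and saturations are acyclic cofibrations. In the second base case one computes the relevant cells of $|\Delta[3]|$ and checks that marking the remaining simplices amounts to marking a cell already invertible up to marked arrows, so that the realized map is one of the localizing maps defining the saturated inductive model structure in \cref{theo:saturated_inductive_model_structure}, hence an acyclic cofibration.

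The main obstacle is precisely these two explicit identifications inside the low-dimensional orientals: one must compute the cells of $|\Delta[2]|$ and $|\Delta[3]|$, read off the source and target data imposed by the thinness and equivalence markings, and recognize the result as, respectively, a saturation and a saturation-localizing map. Once the base cases are settled, \cref{prop:acyclic_cofibration_stable_by_joint} propagates acyclicity along the joins with $\Delta[n]$ and its boundary, and the four families together show that $|\_|$ sends generating acyclic cofibrations to acyclic cofibrations, completing the verification that $|\_| \dashv N$ is a Quillen adjunction.
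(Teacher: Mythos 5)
Your handling of the generating cofibrations, of the complicial horn inclusions (via \cref{lemma:realization_send_horn_on_ac}), and of the $m$-triviality extensions is correct --- for the latter, your observation that $|\Delta[n]|\to|\Delta[n]_t|$ is an isomorphism when $n>m$ is even more direct than what the paper does --- and you are right that the relevant target is the saturated structure of \cref{theo:saturated_inductive_model_structure}. The gap lies in the two remaining families. First, the thinness extensions do not admit the join reduction you propose, because the markings of $(\Delta^k[n])'$ and $(\Delta^k[n])''$ are \emph{not} join markings: in $\Delta[k-2]\star(\Delta^1[2])'\star\Delta[n-k-2]$ every simplex of the form $x\star [k-1,k]\star z$ is thin (for instance the edge $[k-1,k]$ itself), whereas for $n\geq 3$ the only thin simplices of $(\Delta^k[n])'$ beyond those of $\Delta^k[n]$ are the two codimension-one faces. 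So \cref{prop:acyclic_cofibration_stable_by_joint} gives no reduction to $n=2$; your base-case identification of $|(\Delta^1[2])'|\to|(\Delta^1[2])''|$ as a saturation in the sense of \cref{defi:saturation_in_a_marked_category} is essentially correct but covers only that single case, and the general map $|(\Delta^k[n])'|\to|(\Delta^k[n])''|$ marks the $(n-1)$-cell of the oriental corresponding to the $k$-th face, which is not built out of equations and saturations in any evident way. Second, $|\Delta[3]^{eq}|\to|\Delta[3]^{\sharp}|$ is \emph{not} one of the localizing maps of \cref{theo:saturated_inductive_model_structure}: those maps $s_n$ live on the polygraph of three composable $n$-arrows and mark the composites $f\#_{n-1}g$ and $g\#_{n-1}h$, whereas $\Delta[3]^{eq}$ marks the diagonal edges $[02]$ and $[13]$ of the $3$-oriental, which are generating cells distinct from the composites $[12]\#_0[01]$ and $[23]\#_0[12]$ (and the $2$-cells relating them are not marked in $\Delta[3]^{eq}$, whose only thin cells of dimension $\leq 2$ are these two edges). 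No pushout or composite of the $s_n$ produces this map, so that identification fails as well.

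What the paper does instead avoids computing these realizations entirely, and this is the idea your proposal is missing. For $i$ any thinness, saturation, or $m$-triviality extension, $|i|$ is an identity on the underlying $\infty$-category, hence it is a cofibration (a pushout of maps in $I^m$) and, crucially, lifts against it are \emph{unique} when they exist. By adjunction, \cref{prop:nerve of fibrant object} --- the stratified nerve of a fibrant object is a complicial set, which is where all the hard oriental combinatorics is hidden, via Theorem 5.2.12 of the second author's earlier paper --- says exactly that every fibrant object of the saturated structure has the right lifting property against $|i|$; uniqueness of lifts then upgrades this to the right lifting property for every \emph{morphism} between fibrant objects, and a cofibration with the left lifting property against all fibrations between fibrant objects is acyclic in a left semi-model structure. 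To carry out your direct approach you would, in effect, have to reprove by hand that fibrant marked $\infty$-categories lift against all $|(\Delta^k[n])'|\to|(\Delta^k[n])''|$ and against $|\Delta[3]^{eq}|\to|\Delta[3]^{\sharp}|$, i.e., redo the combinatorics that the paper imports wholesale through \cref{prop:nerve of fibrant object}.
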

\begin{proof}
Because of \cref{lemma:realization_send_horn_on_ac}, it remains to show that complicial thinness extensions, saturation extensions, and $m$-triviality extensions are sent to acyclic cofibrations.
Let $i$ be such a morphism. According to \cref{prop:nerve of fibrant object}, any fibrant object of the saturated inductive left semi-model structure has the right lifting property against $|i|$. As $|i|$ is an identity on the underlying $\infty$-category, lifts against it are unique if they exist. This implies that any morphism between fibrant objects has the right lifting property against $|i|$, and this morphism is then an acyclic cofibration. This concludes the proof.
\end{proof}

We can use this to generalize the results from \cite{loubaton2021conditions}: The stratified Street nerve:
\[ \Ncal : \icat \to \mstrat \]
introduced in \cite{loubaton2021conditions}, is exactly the stratified Street nerve $N$ of the present paper combined with the fully faithful inclusion $\icat \subset \micat$ constructed in \cref{subsec:Folk}, which makes all coinductively invertible arrows marked. Hence:

\begin{prop}\label{prop:Marked_nerve_pres_fib}
 Let $f: X \to Y$ be a fibration (resp. an acyclic fibration, resp. a weak equivalence) of the canonical model structure 
 $\icat_\can$, then its stratified Street nerve $\Ncal(f) : \Ncal(X) \to \Ncal(Y)$ is a fibration (resp. an acyclic fibration, resp. a weak equivalence) in the Verity model structure $\change{\mstrat_V}$.
\end{prop}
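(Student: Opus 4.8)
The plan is to exploit the factorization $\Ncal = N \circ \iota$ recorded just above the statement, where $\iota : \icat \to \micat$ is the fully faithful inclusion marking exactly the coinductively invertible arrows, and to run each of the three cases through the two functors in turn. By \cref{prop:if_invertible_cells_are_marked_then_X_fibrant} and \cref{prop:if_X_fibrant_marked_cell_correspond_to_invertible_one}, the functor $\iota$ identifies $\icat$ with the full subcategory of objects that are fibrant for the coinductive model structure, and under this identification folk weak equivalences between objects of $\icat$ correspond precisely to coinductive weak equivalences between coinductive-fibrant objects (the main theorem of \cref{subsec:Folk}). So it suffices to prove that $N$ sends coinductive fibrations (resp. weak equivalences) between coinductive-fibrant objects to fibrations (resp. weak equivalences) of complicial sets, the trivial-fibration case then being automatic since a map that is both a complicial fibration and a complicial weak equivalence is a complicial trivial fibration, and a folk trivial fibration is both a folk fibration and a folk weak equivalence.

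First I would upgrade the Quillen adjunction $|\uvar| \dashv N$ established above to the coinductive model structure. The key remark is that every coinductive-fibrant object is also fibrant for the saturated inductive model structure: since its marked arrows are its coinductively invertible ones, unwinding \cref{def:inverse} and \cref{def:coinductively_invertible} shows that an arrow has an inverse with marked witnesses exactly when it is coinductively invertible, so condition \ref{lem:2_out_of_6:1} of \cref{lem:2_out_of_6} holds. Consequently coinductive-fibrant objects have the right lifting property against all the realized elementary anodyne extensions $|i|$ (these are either equations, or else identities on underlying $\infty$-categories with unique lifts, by \cref{lemma:realization_send_horn_on_ac} and the proof of the Quillen adjunction), whence each $|i|$ is a coinductive acyclic cofibration. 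Thus $|\uvar|$ is left Quillen for the coinductive structure and $N$ is right Quillen for it. The same inclusion of fibrant objects lets \cref{prop:nerve of fibrant object} apply, so $N(\iota X)$ is an $\infty$-complicial set for every $X \in \icat$.

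With this in hand the two essential cases follow. Being right Quillen for the coinductive structure, $N$ preserves fibrations; and by Ken Brown's lemma it preserves weak equivalences between coinductive-fibrant objects, which settles the weak-equivalence case (the images being fibrant by the previous paragraph). For fibrations I would use that a fibration between coinductive-fibrant objects is the same thing as a fibration between those objects in the inductive structure (fibrations between fibrant objects are unchanged by a left Bousfield localization), hence an isofibration by \cref{char_fibration}; the isofibration lifting condition of \cref{subsec:isofibration}, read off with marked $=$ coinductively invertible, is exactly the lifting of coinductively (weakly) invertible cells, which every folk fibration enjoys through \cref{prop:Gn represente ci}. Therefore $\iota(f)$ is a coinductive fibration and $N(\iota f)$ a complicial fibration.

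The main obstacle is precisely this fibration identification: one must match the abstract defining lifting property of folk fibrations (right lifting against the trivial cofibrations $k_n : \Db_n \to G_n$) with the concrete weakly-invertible-cell lifting packaged into the isofibration condition, and check that the two markings (coinductively invertible arrows on each side) line up. This is exactly where \cref{prop:Gn represente ci} — the statement that $\Db_n \to G_n$ represents weak invertibility — carries the argument, and verifying that the lifted cells are again marked is the one step requiring genuine care rather than formal manipulation of Quillen adjunctions.
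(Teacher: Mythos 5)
Your proposal is correct and follows essentially the same route as the paper: factor $\Ncal = N \circ \iota$, use that $N$ is a right Quillen functor (hence preserves fibrations, trivial fibrations, and, by Ken Brown, weak equivalences between fibrant objects), and use the \cref{subsec:Folk} identification of $\icat$ with the coinductive-fibrant marked $\infty$-categories to handle $\iota$. The only difference is one of detail: where the paper simply cites \cref{subsec:Folk} for the claim that $\iota$ preserves fibrations, you actually supply the argument (isofibration characterization via \cref{char_fibration} plus \cref{prop:Gn represente ci}), which is a welcome elaboration rather than a different approach.
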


The main result of \cite{loubaton2021conditions} corresponds to the special case of preservation of fibrant objects.

Note that, in particular, the proposition shows that the stratified Street nerve from \cite{loubaton2021conditions}, while not being a right Quillen functor, is still a morphism of Brown categories of fibrant objects (\cite{brown1973abstract}), and so it does define a limit-preserving functor on the corresponding associated $(\infty,1)$-categories. 
\begin{proof}
 As the stratified Street nerve $N : \micat_\satind \to \mstrat_V$ is a right Quillen functor, it preserves fibrations and acyclic fibrations, as well as weak equivalences between fibrant objects. Moreover, we have shown in \cref{theo;equivalenc eentre can et coind2} that the functor sending a strict $\infty$-category to the marked one where the marked arrows are the coinductively invertible ones, preserves fibrations, acyclic fibrations, and weak equivalences.
\end{proof}

\change{Finally, we want to clarify that this ``forgetful'' functor from strict $\infty$-categories to weak $\infty$-categories is not an ``inclusion'' in the sense that it is not fully faithful in any reasonable homotopy-theoretic sense. That is, this functor is not just an inclusion of strict $(\infty,m)$-categories into weak $\infty$-categories, but exhibits strict $\infty$-categories as objects with more structure than weak $\infty$-categories, even though the exact nature of this additional structure is not quite clear, beyond some special cases. Below we reproduce an example, mostly due to Dimitri Ara in \cite{ara2019quillen}, and just slightly adjusted to our setting, showing that this functor is not fully faithful on the homotopy category:}

\begin{example} For $M$ a commutative monoid, we write $B^n M$ for the strict $\infty$-category with only one cell in dimension $k <n$, the monoid $M$ of cell in dimension $n$ and only identity arrows in dimension $k>n$. All composition operations are given by the operation of $M$. We then claim that

\[ [ B^2 \mathbb{N}, B^4 \mathbb{Z} ]_{\micat[m]_{\satind}} \simeq \{0\} \qquad [ N(B^2 \mathbb{N}), N(B^4 \mathbb{Z}) ]_{\strat_V} \simeq \mathbb{Z} \]

Where $[ \_, \_ ]$ denote the set of homotopy class of maps (i.e. morphism in the homotopy category). We have not specified the marking, but these results do not depend on markings as long as the nerve $N$ is applied to a fibrant replacement (as it is a right Quillen functor).

First, we need to notice that $ B^2 \mathbb{N}$ is the $\infty$-category freely generated by one object $*$ and a $2$-cell whose source and target are the identity of $*$. In particular it is a polygraph, and hence is a cofibrant object.

On the other hand $B^4 \mathbb{Z}$ is a strict $\infty$-groupoid (every cell is strictly invertible) so, whatever marking we start from on $B^4 \mathbb{Z}$, a fibrant replacement will just be $B^4(\mathbb{Z})^\sharp$ as marking cells that are invertible is an acyclic cofibration and once every cell is marked it is a fibrant object by (\cref{lem:2_out_of_6} and \cref{theo:saturated_inductive_model_structure}). So, the set of maps $[ B^2 \mathbb{N}, B^4 \mathbb{Z} ]_{\micat[m]_{\satind}}$ in the homotopy category can be computed as homotopy class of maps from  $B^2 \mathbb{N}$ to $B^4 \mathbb{Z}^\sharp$, but as there is no non-trivial $2$-cell in $B^4 \mathbb{Z}$ the only such maps in the constant maps equal to $0$. Hence
\[ [ B^2 \mathbb{N}, B^4 \mathbb{Z} ]_{\micat[m]_{\satind}} \simeq \{0\}. \]

We now move to the computation of the hom set in complicial sets. In order to apply the Street nerve in a homotopy relevant way, we need to take fibrant replacement of both object. For $B^4 \mathbb{Z}$ we discuss this above and it corresponds to take $B^4 \mathbb{Z}^\sharp$. For $B^2 \mathbb{N}$, as it has no non-identity invertible cells, $B^2 \mathbb{N}^\flat$ is already fibrant. In particular $N(B^4 \mathbb{Z})$ is a complicial set whose cells are all thin (marked). Hence the marking we put in $N(B^2 \mathbb{N})$ actually do not matter in the computation and

\[ [(B^2\mathbb{N})^\flat ,  (B^4 \mathbb{Z})^\sharp]_{\mstrat_V} = [( B^2\mathbb{N})^\sharp ,  (B^4\mathbb{Z})^\sharp]_{\mstrat_V}  \]

Hence we need to compute a set of homotopy class of maps between two complicial sets where every cell is marked - so this boils down to computing a set of homotopy class of maps in the Kan-Quillen model structure on simplicial set, using the unmarked Street nerve. We can now rely on two results from \cite{ara2019quillen} to show arrive at our result:

Theorem 4.7 of \cite{ara2019quillen} shows that for any group $G$, $N(B^n G)$ is an Eilenberg MacLane space $K(\pi,n)$. Theorem 4.9 (and especially example 4.10) shows that $N(B^2 \mathbb{N})$ is homotopically equivalent to $N(B^2 \mathbb{Z})$ and hence is also an Eilenberg MacLane $K(2,\mathbb{Z})$, so using the well known equivalence between simplicial sets and spaces, we can write 
\[ [ N(B^2 \mathbb{N}), N(B^4 \mathbb{Z}) ]_{\strat_V} \simeq [K(2,\mathbb{Z}), K(4, \mathbb{Z}) ]_{\text{Space}} \]
and for this final hon set we can use methods from topology: $K(2,\mathbb{Z})$ can be realized as $\mathbf{CP}^\infty$ and hence
\[ [K(2,\mathbb{Z}), K(4, \mathbb{Z}) ]_{\text{Space}}  = H^4( \mathbf{CP}^\infty ) = \mathbb{Z} \]
where these last claim can be found in many algebraic topology textbook, for example \cite{hatcher2002algebraic}.

\end{example}

\appendix

\section{Left Semi-model categories} \label{sec:semi-model-categories}
Semi-model categories were first introduced by Spitzweck in \cite{spitzweck2001operads}, following a remark by Hovey in \cite{hovey1998monoidal} that given a combinatorial symmetric monoidal model category $\Vcal$, the category of monoids in $\Vcal$ carries such a structure without assuming that $\Vcal$ satisfies the "monoid axiom." This observation is sufficient for studying the homotopy theory of monoids in $\Vcal$. A more general (but not equivalent) notion of semi-model structure was later introduced by Fresse in Section 12 of \cite{fresse2009modules}.

Contrary to what the name might suggest, a left semi-model category is not "half of a model category." It is a minor weakening of the definition of a Quillen model category that allows for nearly all standard homotopical constructions but is somewhat easier to define. This minor weakening often eliminates technical or unnatural assumptions in certain theorems, such as the monoid axiom mentioned above or the requirement of properness when constructing localizations (see \cref{theo:left Bousfield localization} below).

In brief, a left semi-model category is similar to a model category, but certain axioms, such as the lifting property and the existence of factorizations, are only required to hold for morphisms with cofibrant domains. Since any map can be replaced by an equivalent one with a cofibrant domain, and only maps between cofibrant and fibrant objects contribute directly to the homotopy theory, this restriction does not significantly alter the theory. The primary drawback of using left semi-model structures is practical: most of the literature focuses on Quillen model structures, so results must be re-proven for semi-model structures. A substantial body of work (see below) has been completed on this topic, and no serious difficulties have arisen so far.

In this paper, all Quillen model structures and left semi-model structures we encounter are "combinatorial" (in the sense of \cref{defi:basic def on left semi-model structure} below). In particular, they have fully formed weak factorization systems, rather than the weakened version assumed in \cite{spitzweck2001operads}, \cite{fresse2009modules}, or \cite{henry2020weak}. Assuming the existence of full factorization systems simplifies the definition, which we will adopt here. In \cite{henry2020combinatorial}, these are referred to as "factorization left semi-model categories," which is not the most general definition found in the literature.

\begin{definition}\label{def:premodel}
A \emph{premodel category} is a complete and cocomplete category $\Ccal$ equipped with two weak factorization systems: (\emph{anodyne cofibrations}, \emph{fibrations}) and (\emph{cofibrations}, \emph{anodyne fibrations}), where the anodyne cofibrations are also cofibrations, or equivalently, the anodyne fibrations are fibrations.
\end{definition}

\begin{definition}
An object $C$ is \textit{fibrant} if the map $C \to 1$ is a fibration. An object is \textit{cofibrant} if the map $\emptyset \to C$ is a cofibration.
\end{definition}

\begin{definition}\label{def:SemiModelCat}
A \emph{(Spitzweck factorization) left semi-model category} is a premodel category with a class $\Wcal$ of morphisms, called weak equivalences, satisfying the following conditions:
\begin{enumerate}
    \item The class $\Wcal$ contains all isomorphisms and satisfies the 2-out-of-3 property.
    \item A fibration is anodyne if and only if it is in $\Wcal$.
    \item A cofibration with a cofibrant domain is anodyne if and only if it is in $\Wcal$.
\end{enumerate}
\end{definition}

Note that if we remove the restriction "with cofibrant domain" in the third axiom, we recover the definition of a Quillen model structure. In the remainder of the paper, we will simply refer to these structures as left semi-model categories.

\begin{remark}
We should clarify the terminology here compared to what we used, for instance, in \cref{def:generating_anodyne}. Often, as in the present paper, we begin with a premodel category with two weak factorization systems (anodyne cofibrations, fibrations) and (cofibrations, anodyne fibrations) that does not itself form a left semi-model category. However, we use a "saturation" construction described in Section 4 of \cite{henry2020combinatorial}, which adjusts the weak factorization systems without altering the underlying category, the cofibrations with cofibrant domains, or the fibrations with fibrant domains. The resulting premodel category is a left semi-model category. These new factorization systems are typically called "trivial" or "acyclic" instead of "anodyne." See Sections 3 and 4 of \cite{henry2020combinatorial} for more details on this process.

In this paper, this distinction means that, contrary to what \cref{def:SemiModelCat} might suggest, \cref{th:canonical_left_semi_model} does not imply that a cofibration (with a cofibrant domain) that is an equivalence is an anodyne cofibration as defined in \cref{def:generating_anodyne}. Instead, the premodel structure that \cref{th:canonical_left_semi_model} asserts to be a left semi-model category involves weak factorization systems for (acyclic cofibrations, fibrations) and (cofibrations, acyclic fibrations). Therefore, a cofibration with a cofibrant domain is an equivalence if and only if it is an acyclic cofibration.
\end{remark}

In particular, the full subcategory of $\Ccal$ consisting of cofibrant objects forms a model category, except that it may not be closed under limits and colimits—hence the need to consider the non-cofibrant objects of $\Ccal$ as well.

The basic theory of left semi-model categories operates similarly to Quillen model categories: the homotopy category can be defined by formally inverting the maps in $\Wcal$ or by defining a homotopy relation between bifibrant objects. See \cite{spitzweck2001operads} or \cite{henry2020weak}\footnote{Semi-model categories are particular cases of weak model structures as defined in \cite{henry2020weak}, so the results from {this} work can be applied.}. The $\infty$-categorical localization is also considered in the appendices of \cite{lomonaco2022large} (under the assumption that the factorization systems are functorial, which will always be the case in this paper), and it functions similarly to the corresponding localization in Quillen model categories.

\begin{definition}
\label{defi:basic def on left semi-model structure}
\begin{itemize}
  \item[]
  \item A premodel category is said to be \emph{combinatorial} if its underlying category is locally presentable and both factorization systems are cofibrantly generated. It is said to be $\omega$-combinatorial if furthermore  the  underlying category is locally $\omega$-presentable and the codomains of the generating cofibrations and acyclic cofibrations are $\omega$-small.
  \item A \emph{Quillen adjunction} between premodel categories is an adjunction $L : \Ccal \leftrightarrows \Dcal : R$ such that $L$ sends cofibrations and anodyne cofibrations to cofibrations and anodyne cofibrations, or equivalently, such that $R$ sends fibrations and anodyne fibrations to fibrations and anodyne fibrations.
  \item A \emph{monoidal premodel category} is a premodel category $\Ccal$, endowed with a monoidal closed structure, such that the monoidal unit is cofibrant, and for each pair of cofibrations $i : A \to B$ and $j : C \to D$, the map
    \[
    i \widehat{\otimes} j : B \otimes C \coprod_{A \otimes C} A \otimes D  \to B \otimes D
    \]
    is also a cofibration. Moreover, if $i$ or $j$ is anodyne, then $i \widehat{\otimes} j$ is also anodyne.
\end{itemize}

A left semi-model category is said to be \emph{combinatorial} or \emph{monoidal} if its underlying category is, and an adjunction between left semi-model categories is said to be a \emph{Quillen adjunction} if it is a Quillen adjunction of the underlying premodel categories.
\end{definition}There are more general notions of monoidal structures or Quillen adjunctions for left semi-model structures that only involve the cofibrations between cofibrant objects, such as the "weak Quillen functors" discussed in \cite{henry2020weak}. However, we do not need these generalizations in the present paper.

Similarly to what happens with Quillen model categories, Quillen adjunctions between left semi-model categories induce adjunctions between their homotopy categories and even between their $\infty$-categorical localizations (see, for example, \cite{henry2020weak}).

\begin{definition}
  A Quillen adjunction $F:C \leftrightarrows D:R$ is a \emph{Quillen equivalence} if the induced adjunction between their homotopy categories is an equivalence of categories.
\end{definition}

Various equivalent characterizations of Quillen equivalences can be found in Proposition 2.4.5 of \cite{henry2020weak}.

The following result will be used to characterize weak equivalences, or at least the weak equivalences between fibrant objects, in various left semi-model structures:

\begin{prop}
\label{prop:weak equivalence has the homotopy right lifting property}
  Let $\Ccal$ be a left semi-model category, and let $f:X \to Y$ be a morphism between two fibrant objects. Then $f$ is a weak equivalence if and only if $f$ has the so-called "homotopy right lifting property" against all cofibrations between cofibrant objects. That is, for each cofibration $i:A \cto B$ with cofibrant domain in $\Ccal$ and any commutative square:
\[  \begin{tikzcd}
    A \ar[d,"i"] \ar[r] & X \ar[d,"f"] \\
    B \ar[r] & Y 
  \end{tikzcd} \]
  there exist dotted morphisms making the following diagram commute:
\[ \begin{tikzcd}
    A \ar[ddd,"i"] \ar[rr] \ar[dr] & & X \ar[ddd,"f"] \\
    & B  \ar[ur,dotted] \ar[d] &\\
    & I_A B \ar[dr,dotted] & \\
    B \ar[ur] \ar[rr] & & Y 
  \end{tikzcd}
\]
  where $I_A B$ is a relative cylinder object for $i$, that is, a middle object of some (cofibration, anodyne fibration) factorization of the codiagonal map of $i$:
\[ B \coprod_A B \cto I_A B \overset{\sim}{\fto} B \]
  Moreover, if $I$ is a generating set of cofibrations, then it is sufficient to check this for $i \in I$.
\end{prop}

This is well known for Quillen model categories and proved in the more general setting of weak model categories in Appendix A of \cite{henry2020weak} (see Remark A.2.7).

\bigskip

We will occasionally need to take left Bousfield localizations of left semi-model categories. This is actually easier than Bousfield localization of Quillen model categories as it no longer requires any properness assumptions. It was shown in \cite{batanin2024left} that left Bousfield localization of combinatorial left semi-model categories at a set of maps yields another left semi-model category. This result was later reproved and generalized in \cite{henry2020combinatorial} to include both left and right Bousfield localizations of combinatorial and accessible left semi-model categories, but we will only need the version from \cite{batanin2024left} here:

\begin{theorem}
\label{theo:left Bousfield localization}
  Let $\Ccal$ be a combinatorial left semi-model category, and let $S$ be a set of morphisms between cofibrant objects in $\Ccal$. Then there is another left semi-model category $\Ccal_S$, called the \emph{left Bousfield localization of $\Ccal$ at $S$}, with the same underlying category as $\Ccal$, such that:

  \begin{itemize}
  \item $\Ccal_S$ has the same cofibrations as $\Ccal$, and the identity functor $\Ccal \to \Ccal_S$ is a left Quillen functor.
  \item A left Quillen functor $\Ccal \to \Dcal$ to any other left semi-model structure is a left Quillen functor $\Ccal_S \to \Dcal$ if and only if it sends the morphisms in $S$ to weak equivalences.
  \end{itemize}
\end{theorem}

The fibrant objects of $\Ccal_S$ are the objects that are fibrant in $\Ccal$ and are "S-local". However, in order to define $S$-local objects, one needs to define mapping spaces. To avoid this, we provide the following characterization:

\begin{lemma}
\label{lemma:technical result on localizations}
  Let $\Ccal_S$ be a left Bousfield localization of $\Ccal$. Assume that all morphisms in $S$ are cofibrations between cofibrant objects (or have been replaced by equivalent cofibrations). For each cofibration $i:A \to B \in S$, let $\nabla i$ be a cofibration between cofibrant objects homotopy equivalent to the map $B \coprod_A B \to B$, for example a factorization
  \[ B \coprod_A B \overset{\nabla i}{\cto} I_A B \overset{\sim}{\fto} B\]
  and let $\nabla^k i$ be a series of cofibrations obtained by iterating this process, that is, $\nabla^k i = \nabla(\nabla^{k-1} i)$. Then an object is fibrant in $\Ccal_S$ if and only if it is fibrant in $\Ccal$ and has the right lifting property against $\nabla^k i$ for all $k$ and all $i \in S$.
\end{lemma}

\bigskip

Finally, we can form Reedy model structures in this context as well. This is very similar to the treatment of classical Reedy model structures (see, for example, Chapter 5.2 in \cite{hovey1999models}).

Given a Reedy category $R$ and $\mathcal{C}$ a premodel category, the category of functors $\mathcal{C}^R$ has a premodel structure whose (anodyne) fibrations and (anodyne) cofibrations are the Reedy (anodyne) fibrations and cofibrations. That is, a natural transformation $f_r:X(r) \to Y(r)$ in $\mathcal{C}^R$ is an (anodyne) cofibration if and only if for each $r \in R$ the natural map
\[ X(r) \coprod_{L_r X} L_r Y \to Y(r) \]
where
\[ L_r X = \Colim_{r' \to r \in R^+ \atop r' \neq r } X(r')\]
is an (anodyne) cofibration. Dually, this natural transformation is an (anodyne) fibration if the natural map
\[ X(r) \to Y(r) \times_{M_r Y} M_r X \]
where
\[ M_r X = \Lim_{r \to r' \in R^- \atop r' \neq r } X(r') \]
is an (anodyne) fibration. We have:

\begin{theorem}
  If $\mathcal{C}$ is a premodel category and $R$ is a Reedy category, then $\mathcal{C}^R$ is a premodel category with the class of maps described above.

  Furthermore, if $\mathcal{C}$ is a left semi-model category, then $\mathcal{C}^R$ is a left semi-model category with the weak equivalences being the levelwise weak equivalences.
\end{theorem}

A more detailed treatment of Reedy model structures in the context of weak model categories, with more detailed proofs, can be found in Appendix C.2 of \cite{Bardomiano2024homotopy}. Though most of it is devoted to dealing with weakened assumptions regarding the existence of limits and colimits, which are not relevant in the present context.

\begin{proof}
  The proof carries over essentially unchanged from the case of Quillen model structures. The proof that these form weak factorization systems on $\mathcal{C}^R$ as in Theorem 5.2.5 of \cite{hovey1999models} relies on the fact that we have weak factorization systems on $\mathcal{C}$ and hence carries over to the case of premodel categories. The other key argument can be found in the proof of Theorem 5.1.3 of \cite{hovey1999models} and shows that, because of the $2$-out-of-$3$ property for weak equivalences, a Reedy fibration is a Reedy anodyne fibration if and only if it is a weak equivalence, and by the exact same argument, a Reedy cofibration with cofibrant domain is a Reedy anodyne cofibration if and only if it is a levelwise equivalence.
\end{proof}

A lemma that plays a significant role in this proof and that we will use at some points is:

\begin{lemma}\label{lem:Reedy_colim}
  If $R$ is a direct category (that is, a Reedy category with $R = R^+$) and $A \to B$ is a Reedy (anodyne) cofibration in $\mathcal{C}^R$, then the comparison map
\[ \Colim_{r \in R} A(r) \to \Colim_{r \in R} B(r) \]
  is an (anodyne) cofibration.
\end{lemma}

\begin{proof}
  This is essentially Corollary 5.1.5 of \cite{hovey1999models}. The simplest way to prove it is to observe that the colimit functor is the left adjoint to the "constant" functor, and the constant functor clearly sends the fibrations and anodyne fibrations of $\mathcal{C}$ to Reedy fibrations and anodyne Reedy fibrations in $\mathcal{C}^R$, as Reedy fibrations for a direct category are just levelwise fibrations.
\end{proof}

\bibliography{Biblio}{}
\bibliographystyle{plain}
\end{document}